\numberwithin{equation}{section}
\newcommand{\bC}{{\mathbb C}}
\newcommand{\bR}{{\mathbb R}}
\newcommand{\bZ}{{\mathbb Z}}
\newcommand{\cA}{{\mathcal A}}
\newcommand{\cC}{{\mathcal C}}
\newcommand{\cE}{{\mathcal E}}
\newcommand{\cF}{{\mathcal F}}
\newcommand{\cG}{{\mathcal G}}
\newcommand{\cM}{{\mathcal M}}
\newcommand{\cP}{{\mathcal P}}
\newcommand{\cR}{{\mathcal R}}
\newcommand{\cS}{{\mathcal S}}
\newcommand{\cT}{{\mathcal T}}
\newcommand{\ro}{{\mathrm o}}
\newcommand{\sD}{{\EuScript D}}
\newcommand{\sH}{{\EuScript H}}
\newcommand{\sJ}{{\EuScript J}}
\newcommand{\sL}{{\EuScript L}}
\newcommand{\sN}{{\EuScript N}}
\newcommand{\sQ}{{\EuScript Q}}
\newcommand{\sT}{{\EuScript T}}
\newcommand{\vp}[1][\!]{\vec{p}^{\, #1}}
\newcommand{\vx}[1][\!]{\vec{x}^{\, #1}}
\newcommand{\vy}[1][\!]{\vec{y}^{\, #1}}
\newcommand{\vsig}[1][\!]{\vec{\check{\sigma}}^{\, #1}}
\newcommand{\vpsi}[1][\!]{\vec{\psi}^{\, #1}}
\newcommand{\vPsi}[1][\!]{\vec{\Psi}^{\, #1}}
\newcommand{\vI}[1][\!]{\vec{I}^{\, #1}}
\newcommand{\vQ}[1][\!]{\vec{Q}^{\, #1}}
\newcommand{\vP}[1][\!]{\vec{P}^{\, #1}}
\newcommand{\vR}[1][\!]{\vec{R}^{\, #1}}
\newcommand{\vPil}[1][\!]{\vec{\Pil}^{\, #1}}
\newcommand{\Hom}{\operatorname{Hom}}
\newcommand{\id}{\operatorname{id}}
\newcommand{\into}{\hookrightarrow}
\newcommand{\res}{\operatorname{res}}
\newcommand{\ev}{\operatorname{ev}}
\newcommand{\sol}{\operatorname{sol}}
\newcommand{\C}{C}
\newcommand{\bfX}{\mathbf{X}}
\newcommand{\bfJ}{\mathbf{J}}
\newcommand{\bfK}{\mathbf{K}}
\newcommand{\Rbar}{\overline{\cR}}
\renewcommand{\dbar}{\bar{\partial}}
\newcommand{\Stasheff}{\cT}
\newcommand{\Stasheffbar}{\overline{\cT}}
\newcommand{\Shrub}{\cS}
\newcommand{\Shrubbar}{\overline{\cS}}
\newcommand{\Shrubbarext}{\Shrubbar^{\mathrm{ext}}}
\newcommand{\Shrubhat}{\hat{\cS}}
\newcommand{\Champ}{\cC}
\newcommand{\Champbar}{\overline{\cC}}
\newcommand{\Grad}{\cM}
\newcommand{\Gradbar}{\overline{\cM}}
\newcommand{\Champext}{\overline{\cC}^{\mathrm{ext}}}
\newcommand{\Pil}{\cP}
\newcommand{\Pilbar}{\overline{\cP}}
\newcommand{\inte}{\operatorname{int}}
\newcommand{\Simp}{\operatorname{Simp}}
\newcommand{\Morse}{\operatorname{Morse}}
\newcommand{\M}{\mathrm{M}}
\renewcommand{\S}{\mathrm{S}}
\newcommand{\F}{\mathrm{F}}
\newcommand{\Fuk}{\operatorname{Fuk}}
\newcommand{\ins}{\mathrm{in}}
\renewcommand{\mid}{\mathrm{mi}}
\newcommand{\Crit}{\operatorname{Crit}}
\def\co{\colon\thinspace}
\newtheorem{thm}{Theorem}[section]
\newtheorem{cor}[thm]{Corollary}
\newtheorem{lem}[thm]{Lemma}
\newtheorem{prop}[thm]{Proposition}
\newtheorem{defin}[thm]{Definition}
\newtheorem{def-lem}[thm]{Definition-Lemma}
\theoremstyle{remark}
\newtheorem{rem}[thm]{Remark}
\newtheorem*{claim}{Claim}
\newcommand{\superscript}[1]{\ensuremath{^{\textrm{#1}}} }
\renewcommand{\th}[0]{\superscript{th}}
\newcommand{\st}[0]{\superscript{st}}
\newcommand{\noproof}{\qed}
\title[Fukaya categories of plumbings]{A topological model for the Fukaya categories of plumbings}
\author[M.~Abouzaid]{Mohammed Abouzaid} \date{\today}
\thanks{ This research was conducted during the period the author served as a Clay Research Fellow. }
\begin{document}

\begin{abstract}
We prove that the algebra of singular cochains on a smooth manifold, equipped with the cup product, is equivalent to the $A_{\infty}$ structure on the Lagrangian Floer cochain group associated to the zero section in the cotangent bundle.  More generally, given embeddings with isomorphic normal bundles of a closed manifold $B$ into manifolds $Q_1$ and $Q_2$, we construct a differential graded category from the singular cochains of these spaces, and prove that it is equivalent to the $A_{\infty}$ category obtained by considering exact Lagrangian embeddings of $Q_1$ and $Q_2$ which intersect cleanly along $B$.
\end{abstract}

\maketitle
\setcounter{tocdepth}{1}
\tableofcontents
\parskip1em

\section{Introduction}
In \cite{FO}, Fukaya and Oh proved that counts of holomorphic discs in a cotangent bundle with boundary conditions along exact Lagrangian sections agree, in a certain degeneration, with a count of gradient trees.  This has been widely expected to lead to an equivalence between the $A_{\infty}$ structure defined on Floer cochains of a Lagrangian $Q$, and the chain level cup product on the classical (\v{C}ech, simplicial, singular) models for its ordinary cohomology.  Over $\bR$, Kontsevich and Soibelman in \cite{KS} described an argument using the result of Fukaya and Oh and Homological perturbation theory on de Rham cohomology, which proves this result.  A different proof, also over $\bR$, is given by Fukaya, Oh, Ohta, and Ono's in their book on Lagrangian Floer cohomology \cite{FOOO}*{Section 33}.  One corollary of this paper is that such an equivalence holds over the integers:
\begin{thm} \label{thm:cotangent_bundle}
There is an $A_{\infty}$ equivalence
\begin{equation}  C^{*}(Q)   \to  CF^{*}(Q,Q) . \end{equation}
\end{thm}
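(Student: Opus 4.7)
The plan is to build the equivalence as a composition of two $A_\infty$ quasi-isomorphisms factoring through a Morse-theoretic model. First I would fix a Morse function and a Riemannian metric on $Q$, and for each rooted ribbon tree $T$ with $d$ leaves consider the compactified moduli space $\Gradbar(T)$ of gradient trees on $Q$ of combinatorial type $T$ with prescribed critical-point asymptotics. Counts of rigid elements of $\Gradbar(T)$ define an $A_\infty$ structure on the Morse complex $CM^*(Q)$. The theorem of Fukaya-Oh identifies, in a suitable adiabatic limit, the moduli spaces of holomorphic strips and discs in $T^*Q$ defining $CF^*(Q,Q)$ with these gradient tree moduli; gluing over the adiabatic parameter produces a parametrised moduli space whose boundary expresses the comparison as an $A_\infty$ quasi-isomorphism $CM^*(Q) \to CF^*(Q,Q)$ over $\bZ$.

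The second and crucial step is to construct an $A_\infty$ quasi-isomorphism $C^*(Q) \to CM^*(Q)$ directly at the level of singular cochains, without invoking the de Rham homological perturbation argument of \cite{KS} that forces work over $\bR$. I would enlarge the picture to moduli spaces of gradient trees with additional interior marked points along the edges, and define the higher structure maps $(C^*(Q))^{\otimes d} \to CM^*(Q)$ by an evaluation-pullback construction: given singular cochains $\alpha_1, \ldots, \alpha_d$ whose total degree matches the expected dimension, pull them back along the evaluation map $\Gradbar(T) \to Q^d$ at the interior marked points, and count the resulting zero-dimensional fibre products with integer coefficients. Transversality at the singular chain level can be arranged by restricting to a general-position subcomplex of smooth singular chains transverse to all relevant evaluation maps, noting that the inclusion of such a subcomplex is a quasi-isomorphism.

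The main obstacle is verifying the $A_\infty$ relations with integer signs. The codimension-one boundary of the relevant compactified moduli spaces decomposes into strata corresponding to breakings of gradient trajectories (reproducing the $A_\infty$ composition on $CM^*(Q)$), edge collapses (producing the cup product of $C^*(Q)$), and collisions of interior marked points (which must cancel the combinatorial boundary of the universal family of decorated trees). Matching signs between these geometric boundary contributions and the algebraic $A_\infty$ equation requires a careful orientation analysis on the Stasheff polytope and its multiplihedron variant, and this is where the majority of the bookkeeping should live. Once the $A_\infty$ functor is built, it is automatically a quasi-isomorphism because both sides compute $H^*(Q; \bZ)$ and the induced map on cohomology is the identity, so composition with the first map yields the desired equivalence.
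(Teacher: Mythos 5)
Your architecture (factor through a Morse $A_\infty$ model, then invert the quasi-isomorphisms) matches the paper, but both of your two legs differ from what the paper actually does, and the second leg has a genuine gap.

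For the Floer-to-Morse comparison, you propose invoking the Fukaya--Oh adiabatic degeneration and gluing over the adiabatic parameter. The paper explicitly does \emph{not} take this route: instead it introduces moduli spaces of ``mushroom maps,'' in which a gradient tree (the stem) carries holomorphic ``caps'' with one boundary arc on a Lagrangian foliation $\sL_q$ of $T^*Q$ parametrised by $q \in Q$. The abstract moduli space of mushrooms gives a new realisation of the multiplihedron, and the comparison functor counts rigid mushroom maps with no adiabatic parameter at all. This avoids the hard parametrised gluing analysis over the degeneration that your sketch defers to; that gluing has not been carried out over $\bZ$ in the literature, and making it work with integral orientations is a substantial task, not a routine one.

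The genuine gap is in your $C^*(Q) \to CM^*(Q)$ construction. You work with singular cochains and propose (i) to pull them back along evaluation maps at interior marked points of gradient trees and ``count zero-dimensional fibre products,'' and (ii) to arrange transversality by ``restricting to a general-position subcomplex of smooth singular chains.'' Two problems. First, the collection of singular simplices transverse to a fixed countable family of evaluation maps is not closed under face maps, so it is not a subcomplex, and there is no canonical chain-level Poincar\'e dual of a singular cochain to form fibre products with. Second, and more fundamentally, when two interior marked points collide (or an incoming leaf contracts), the boundary of your moduli space must produce the \emph{cup product} of singular cochains on the nose; but the Alexander--Whitney cup product is a combinatorial operation with no geometric intersection-theoretic description, so there is no reason this boundary stratum reproduces it. Over $\bR$ with de Rham forms this works because wedge product \emph{is} the degeneration of separated marked points, which is exactly why \cite{KS} and \cite{FOOO} work over $\bR$. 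The paper circumvents this by using simplicial cochains $C^*(\sQ)$ of a fixed triangulation together with the dual cell subdivision $\check\sQ$, where Lemma \ref{lem:cup_is_intersection} gives a controlled flow-perturbed \emph{geometric} model for the simplicial cup product as a transverse intersection of dual cells, and the perturbation data for shrubs are chosen compatibly with that structure. Without a replacement for Lemma \ref{lem:cup_is_intersection} your argument does not close up, and that lemma is not available for singular cochains.
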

\begin{rem}
In order to define  $CF^{*}(Q,Q)$ as an honest $A_{\infty}$ algebra (rather than work with partially defined algebraic structures as in \cite{KS}) we follow the approach used by Seidel in \cite{seidel-book}.  The main idea is to make choices of perturbations to ensure the genericity of all moduli spaces that enter into the definition of the operations.  Seidel proves that the resulting structure is independent, up to $A_{\infty}$ equivalence, of these choices.  
\end{rem}
As in \cite{KS} we pass through a Morse model in order to prove this equivalence. First, we imitate the construction of the Fukaya category to obtain an $A_{\infty}$ structure on Morse cochains by counting perturbed gradient trees.  Writing $CM^{*}(Q)$ for this $A_{\infty}$ algebra, we prove the existence of $A_{\infty}$ functors
\begin{equation}  C^{*}(\sQ) \to  CM^{*}(Q) \leftarrow CF^{*}(Q,Q) \end{equation}
inducing  isomorphisms on cohomology.  Here,  $C^{*}(\sQ)$ are the cochains of a simplicial triangulation of $Q$.  Theorem \ref{thm:cotangent_bundle} follows from the fact that $A_{\infty}$ maps inducing an isomorphism on cohomology admit (quasi)-inverses whenever the underlying groups are free abelian.

We do not rely on the degeneration technique of \cite{FO} in order to prove the equivalence of the $A_{\infty}$ structures coming from Floer and Morse theory.  Rather, we consider moduli spaces that are built from gradient trees and holomorphic discs with one Lagrangian boundary condition along an arbitrary leaf of the foliation of the cotangent bundle by fibres.    The corresponding abstract moduli spaces give a new realisation of the multiplihedra  controlling $A_{\infty}$ functors foreseen by Stasheff in \cite{stasheff}.

Theorem \ref{thm:cotangent_bundle} is the special case of a more general result:   within an exact symplectic manifold $W$ with vanishing first chern class (and a choice of a complex volume form), we define a brane to be an exact Lagrangian $Q$ which is relatively spin, and such that the restriction of the complex volume form to some Weinstein neighbourhood of $Q$ is isotopic to the complexification of a (real) volume form on $Q$ (more precisely, we fix this isotopy).  These extra choices of data are necessary in order for the Fukaya category to be defined over $\bZ$ and admit  natural gradings.

Given a pair of exact Lagrangian branes $Q_1$ and $Q_2$ intersecting cleanly along a submanifold $B$,  the Lagrangian condition implies that the normal bundles of $Q_1 \cap Q_2$  in the two sheets are isomorphic.   In Section \ref{sec:simp}, we construct a differential graded category
\begin{equation} \Simp(Q_1,Q_2)  \end{equation}
with morphism spaces given by simplicial cochains on $Q_i$, $NB$, and $(NB, \partial NB)$ (here $N B$ is the unit normal bundle of $B$ in either $Q_1$ or $Q_2$).   The main result of this paper relates this category  to the full subcategory of the Fukaya category of $W$ with objects $Q_1$ and $Q_2$, and to a Morse theoretic model $\Morse(Q_1,Q_2)$:
\begin{thm} \label{thm:general_result}
If $\omega$ vanishes on $\pi_{2}(M, Q_1 \cup Q_2)$, there are $A_{\infty}$ equivalences
\begin{equation} \Simp(Q_1,Q_2) \to \Morse(Q_1,Q_2) \leftarrow \Fuk(Q_1,Q_2). \end{equation}
\end{thm}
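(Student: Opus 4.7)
The plan is to construct the two $A_\infty$ functors separately, and to invoke the standard principle used already for Theorem~\ref{thm:cotangent_bundle}: an $A_\infty$ functor between $A_\infty$ categories whose morphism complexes are free abelian admits a quasi-inverse as soon as it induces an isomorphism on cohomology. Thus it is enough to produce functors $\Simp(Q_1,Q_2)\to\Morse(Q_1,Q_2)$ and $\Fuk(Q_1,Q_2)\to\Morse(Q_1,Q_2)$ and verify the cohomological statement. The overall architecture is parallel to the cotangent bundle case; the new geometric input required is a good understanding of the morphism spaces supported along the clean intersection locus $B$, which in the combinatorial model are the simplicial cochains of $NB$ and $(NB,\partial NB)$.

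\textbf{The simplicial-to-Morse comparison.} For this I would choose a Morse function on each $Q_i$ whose unstable manifolds refine a simplicial triangulation $\sQ_i$, and a Morse function on $NB$ adapted to its $S^{k-1}$-bundle structure so that it produces critical points of two flavours, responsible for the summands $C^*(NB)$ and $C^*(NB,\partial NB)$ of the mixed morphism spaces. The $A_\infty$ functor is then built by counting perturbed gradient trees whose leaves are constrained to lie on prescribed simplices; this is a direct generalisation of the construction used in the cotangent case, and at the cohomological level it reduces to the usual comparison between simplicial and Morse cohomology, applied separately to $Q_i$, $NB$, and $(NB,\partial NB)$.

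\textbf{The Floer-to-Morse comparison.} This is the technical heart of the paper. I would define hybrid moduli spaces whose elements consist of a tree of perturbed gradient flow lines terminating at a root point, decorated at one distinguished internal vertex by a holomorphic disc with one boundary component mapped to a leaf of a foliation of a Weinstein neighbourhood of $Q_1\cup Q_2$ by Lagrangians transverse to the $Q_i$. The abstract parameter spaces for these data give the new realisation of Stasheff's multiplihedra referenced in the introduction; here one must extend the construction to accommodate inputs lying on the clean intersection, with the holomorphic disc carrying Lagrangian boundary conditions on both $Q_1$ and $Q_2$ along $B$. Counting rigid configurations defines the structure maps of the functor, and the $A_\infty$ relations come from the codimension-one boundary, which decomposes into: tree breaking at an interior edge, disc bubbling off at the Floer end, and a distinguished vertex becoming a pure gradient-tree vertex or a pure Floer vertex.

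\textbf{Main obstacle.} The principal difficulty is analytic: proving compactness and transversality for the hybrid moduli spaces when the boundary conditions meet cleanly along $B$. Monotonicity estimates must be adapted to the non-transverse intersection, and one must rule out disc bubbles whose boundary sweeps across $B$ in an unexpected way. Here the hypothesis that $\omega$ vanishes on $\pi_{2}(M,Q_1\cup Q_2)$ is used in an essential way to suppress bubbling. Once the moduli spaces are in place, that the composite functor is a quasi-isomorphism on mixed morphism spaces follows from a neck-stretching argument along a tubular neighbourhood of $B$: the resulting broken configurations are controlled by a local model on the conormal bundle of $B$, where the cohomological computation reduces to a Thom isomorphism that matches the $C^*(NB)$ and $C^*(NB,\partial NB)$ summands built into $\Simp(Q_1,Q_2)$.
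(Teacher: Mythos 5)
Your top-level architecture matches the paper exactly: two $A_\infty$ functors into a common Morse-theoretic intermediary, each a quasi-isomorphism, with invertibility obtained from the freeness of the morphism groups. The simplicial-to-Morse half is also broadly on target, though the paper does not ask that unstable manifolds refine the triangulation; it instead works with arbitrary Morse--Smale functions and uses a dual cell subdivision $\check{\sQ}_i$, together with a perturbation scheme on shrubs whose gradient flows are compatible with the simplicial structure so that cup product becomes an honest perturbed intersection (Lemma \ref{lem:cup_is_intersection}).

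Where you have a genuine gap is in the structure of the hybrid moduli spaces. You describe ``a tree of perturbed gradient flow lines terminating at a root point, decorated at one distinguished internal vertex by a holomorphic disc.'' A single disc at an internal vertex does not produce the multiplihedron. The paper's mushroom has a \emph{stem} (a shrub, i.e.\ a gradient tree) whose \emph{incoming leaves} each carry a \emph{cap} --- a holomorphic disc with one boundary arc on a leaf $\sL_q$ of the foliation and the remaining arcs on the $Q_i$ --- and $q$ is required to agree with the image of that leaf under the stem map. It is precisely this configuration, with all the discs at the top and all the Morse theory at the bottom, that reproduces the Boardman--Vogt multiplihedron as the parameter space $\overline{\Champ}_{d}$; a single disc at an internal tree vertex would again be parametrized by an associahedron and the resulting operations would not satisfy the $A_\infty$ functor relations. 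Relatedly, your list of codimension-one boundary types misses the two ``internal'' degenerations (a pair of incoming stem vertices colliding, or two adjacent caps gluing) that cancel against each other and are crucial to the $A_\infty$ relation in Equation \eqref{a_oo-floer-morse}.

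Your argument that the composite is a quasi-isomorphism is also different from the paper's. You propose a neck-stretching along a tubular neighbourhood of $B$ and a Thom-isomorphism matching of the $C^*(NB)$ summands; the paper instead imposes condition \eqref{eq:morse-floer-compatible-conditions}, which forces a bijection between Floer generators and Morse critical points, and then shows via the action/energy filtration (cf.\ the computation around Equation \eqref{eq:action_integral}) that the matrix of $\cG^1$ is upper triangular with units on the diagonal. Your route could plausibly be made to work, but it is not what is done, and you would need to set up the neck-stretching degeneration carefully since the caps have moving boundary on the foliation leaves rather than fixed Lagrangian boundary; the paper avoids this by an essentially soft Stokes/energy argument. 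Finally, the role of the hypothesis that $\omega$ vanishes on $\pi_2(M, Q_1\cup Q_2)$ is as you say: in the exact setting it precludes disc and sphere bubbling, but the actual compactness proof for caps goes via the Liouville-convexity of the auxiliary domains $M^{\ins}$ and $M^{\mid}$ (Lemmas \ref{lem:convexity_weak_assumptions}--\ref{lem:compactness_2_boundaries}), not a general monotonicity estimate.
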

\begin{rem}
 For clarity of exposition,  we have chosen to write all arguments first whenever $B$ is a point, with the case of clean intersection relegated to Appendix \ref{sec:case-clean-inters}.  The proofs we give all extend straightforwardly to the clean intersection case, so the main point of the appendix is to explain the proper definitions and constructions in the general situation. 
\end{rem}

At the level of cohomology, Theorem \ref{thm:general_result} is not too difficult to prove from Po{\'z}niak's thesis \cite{pozniak}.   A standard application of Weinstein's neighbourhood theorem shows that a neighbourhood of $Q_{1} \cup Q_2$ in $W$ is symplectomorphic to the result of gluing the cotangent bundles of $Q_1$ and $Q_2$ along the cotangent bundle of $B$.  The diffeomorphism type of the resulting manifold is called the \emph{plumbing} of the two cotangent bundles \cite{milnor}.  Up to an appropriate notion of deformation equivalence which does not affect the Fukaya category, there is a canonical symplectic form on this plumbing.

The experts will observe that the condition that $\omega$ vanish on $\pi_{2}(M, Q_1 \cup Q_2)$ is stronger than the mere exactness of $Q_1$ and $Q_2$.  One may think of it as a strict notion of exactness for the union of $Q_1$ and $Q_2$ as an immersed Lagrangian submanifold of $M$.  For example, as we allow for $B$ to be disconnected, our result gives a description of the category generated by a ``cycle'' of Lagrangians  $\{ Q_i \}_{i=1}^{d}$ such that $Q_i$ intersects only $Q_{i-1}$ and  $Q_{i+1}$ with the understanding that the labels are computed $\mod d$.  However, our description is only valid in the case we know in addition that a loop running around this cycle cannot bound any holomorphic discs (for energy reasons).

We recall that, in  \cite{ruan}, Ruan extended the degeneration results of Fukaya and Oh to the case of an immersed Lagrangian $Q \in M$ with double points such that $\pi_{2}(M,Q)$ vanishes, and proved that counts of rigid holomorphic discs can be made to agree with counts of gradient trees.  This indicates a straightforward generalisation of  Theorem \ref{thm:general_result}: given two embeddings $B \to Q $  whose images do not intersect and with isomorphic normal bundles $N B$, it is easy to extend the construction of $ \Simp(Q_1,Q_2)  $  to produce an associative product on the direct sum
\begin{equation}  \label{eq:immersed_floer}  C^{*}(Q) \oplus C^{*}(NB)  \oplus   C^{*}(NB, \partial NB) \end{equation}
which satisfies the Leibniz rule with respect to the obvious differential.  This algebra is then equivalent, as an $A_{\infty}$ algebra, to the Lagrangian Floer cochains of $Q$ considered as an immersed Lagrangian in the ``self-plumbing'' of its cotangent bundle along $B$.  Floer cohomology for immersed Lagrangians was described in great generality by Akaho and Joyce in \cite{akaho}, producing an $A_{\infty}$ structure on a vector space which, up to choosing different model for a cochain-level theory, is equivalent to the direct sum \eqref{eq:immersed_floer}.  One may interpret our result as a computation, in the exact case, of the $A_{\infty}$ structure assigned by Akaho and Joyce to an immersed Lagrangian.  Indeed, their $A_{\infty}$ structure  comes from the count of perturbed holomorphic discs; the perturbations are necessary because of the abundance of constant discs with marked points which do not form regular moduli spaces  as soon as the number of marked points is greater than $3$.  As is often the case with such approaches, it seems impossible to give an explicit direct counts of the number of discs after perturbations.

Finally, we note that while every $A_{\infty}$ category is abstractly equivalent to a differential graded category, there is no construction which produces a finite dimensional differential graded category when the morphism spaces are finite rank abelian groups.  One consequence of Theorem \ref{thm:general_result} is the existence of such a finite dimensional differential graded model for  Fukaya categories in the case of plumbings.  We insist, however, that our construction requires working with cochains on $Q_1$ and $Q_2$ even if these manifolds happen to be formal (i.e have the property that their cohomology are quasi-isomorphic to their cochain algebras).  As explained in Chapter 20 of \cite{seidel-book}, it is already true that one can find $5$ exact Lagrangians in a twice punctured genus $2$ curve, satisfying our exactness condition, such that the subcategory of the Fukaya category they generate is not formal (despite the fact that the circle is formal). 

\subsection*{Acknowledgments}
I would like to thank Paul Seidel for answering my many questions about Chapter 20 of \cite{seidel-book}.

\section{Three categories}

For the remainder of this paper, we fix a pair of closed smooth manifolds $Q_1$ and $Q_2$ with base points $b_i$, and open sets $U_i$ containing $b_i$.  In the most basic situation, $U_i$ is the interior of a closed (embedded) ball.  We shall write $Q_{1,2}$ for the closure of $U_1$ and $Q_{2,1}$ for the closure of $U_{2}$. 

In addition, we equip $Q_1$ and $Q_2$ with simplicial triangulations $\sQ_1$ and $\sQ_2$, and choose top dimensional simplices $\Delta_{1}$ and $\Delta_{2}$ respectively containing $b_1$ and $b_2$.   Recall that the condition that a triangulation be simplicial implies that we have an ordering of all vertices; in particular, $\Delta_i$ has a unique identification $\iota_i$ with the standard $n$-simplex $\Delta$ which preserves the  ordering (see pages 102 and 107 of \cite{hatcher} for a discussion of different notions of triangulations).  
\subsection{A simplicial category from a triangulation of $Q_i$} \label{sec:simp}
We begin by constructing a differential graded category with two objects called $Q_1$ and $Q_2$ and morphisms the simplicial cochain complexes
\begin{align} \label{eq:morphisms_simp}
\Hom_{*}^{\S}(Q_i,Q_i) & = C^{*}(\sQ_i) \\
\Hom_{*}^{\S}(Q_1,Q_2) & = C^{*}(\Delta) \\
\Hom_{*}^{\S}(Q_2, Q_1) & = C^{*}(\Delta, \partial \Delta).
\end{align}
In particular, the rank of $\Hom(Q_1,Q_2)$ is $2^n-1$, while the rank of $\Hom(Q_2, Q_1)$ is $1$.  The cochain complexes $C^{*}(\sQ_i)$ are differential graded algebras with respect to the cup product; we shall extend this product to a differential graded composition law by noting that the inclusions of $\Delta_i$ in $\sQ_i$ induce ring homomorphisms
\begin{equation}
 \iota_{i}^{*} \co C^{*}(\sQ_i) \to C^*(\Delta).
\end{equation}
We define compositions using cup-product on the left and on the right with the restricted classes
\begin{align} \label{eq:multiplication_cochain_delta}
C^*(\Delta)  \otimes C^{*}(\sQ_1)   \stackrel{\id \otimes \iota_{1}^*}{\longrightarrow}  & C^*(\Delta) \otimes  C^*(\Delta) \stackrel{\cup}{\longrightarrow}    C^*(\Delta) \\ 
C^{*}(\sQ_2) \otimes  C^*(\Delta) \stackrel{\iota_{2}^*\otimes \id}{\longrightarrow}  &  C^*(\Delta) \otimes  C^*(\Delta) \stackrel{\cup}{\longrightarrow}   C^*(\Delta) .
\end{align}
Since the maps $\iota_{i}^{*}$ are maps of differential graded algebras, and the Leibniz and associativity formulae hold for the cup product and differential on $C^*(\Delta)$, we readily conclude that the multiplication \eqref{eq:multiplication_cochain_delta} satisfies the Leibniz rule, and moreover defines an associative composition.  For example, we have a commutative diagram
\begin{equation}
 \xymatrix @C=-15pt{  C^{*}(\sQ_2) \otimes  C^*(\Delta)   \otimes C^{*}(\sQ_1)   \ar[rr] \ar[dd] \ar[rd] & & C^{*}(\Delta) \otimes  C^*(\Delta)   \otimes C^{*}(\sQ_1)  \ar[dd] \ar[dl] \\
&  C^{*}(\Delta) \otimes  C^*(\Delta)   \otimes C^{*}(\Delta) \ar[dr] & \\ 
C^{*}(\sQ_2) \otimes  C^*(\Delta)   \otimes C^{*}(\Delta)   \ar[rr] \ar[ru] & & C^{*}(\Delta)
}
\end{equation}

In order to define this composition in the other direction, we use the fact that the cup product also makes $C^*(\Delta, \partial \Delta)$ into a bimodule over $C^*(\Delta)$.  In particular, we define composition maps
\begin{align} \label{eq:multiplication_rel_cochain_delta}
 C^{*}(\sQ_1) \otimes  C^*(\Delta, \partial \Delta) \stackrel{\iota_{1}^*\otimes \id}{\longrightarrow}  &  C^*(\Delta) \otimes  C^*(\Delta, \partial \Delta) \stackrel{\cup}{\longrightarrow}   C^*(\Delta, \partial \Delta) \\
 C^*(\Delta, \partial \Delta)  \otimes C^{*}(\sQ_2)   \stackrel{\id \otimes \iota_{2}^*}{\longrightarrow}  & C^*(\Delta, \partial \Delta) \otimes  C^*(\Delta) \stackrel{\cup}{\longrightarrow}    C^*(\Delta, \partial \Delta).
\end{align}
Again, the proof that these maps satisfy the properties required of a differential graded category follows immediately from the fact that $C^*(\Delta, \partial \Delta)$ is in fact a differential graded bi-module over $C^*(\Delta)$ (this is trivial since the differential on this relative cochain group vanishes).  Finally, the inclusions of $\Delta_i$ in $\sQ_i$ induce maps
\begin{equation}
{\iota_{i}}_{!} \co C^*(\Delta,\partial \Delta) \to C^{*}(\sQ_i).
\end{equation}
Since $C^*(\Delta,\partial \Delta)$ has rank $1$, there is an obvious formula for these maps; the image is simply the $n$-dimensional cochain with value $1$ on $\Delta_i$ and which vanishes on every other chain.  More conceptually, this map is induced by a composition
\begin{equation}
C^*(\Delta,\partial \Delta) \stackrel{\sim}{\to} C^{*}(\sQ_i, \sQ_i - \inte(\Delta_i)) \to C^{*}(\sQ_i),
\end{equation}
where the first map is an excision isomorphism.  Using this, we define the final two composition maps
\begin{align} \label{eq:multiplication_shriek}
C^*(\Delta, \partial \Delta)  \otimes C^*(\Delta)  \stackrel{\cup}{\longrightarrow}  &  C^*(\Delta,\partial \Delta)  \stackrel{{\iota_{1}}_{!}}{\longrightarrow} C^{*}(\sQ_1) \\
C^*(\Delta) \otimes C^*(\Delta, \partial \Delta)    \stackrel{\cup}{\longrightarrow}  & C^*(\Delta, \partial \Delta) \stackrel{{\iota_{2}}_{!} }{\longrightarrow}    C^*(\sQ_2) .
\end{align}
There are $8$ associativity diagrams whose commutativity remains to be checked.  Even though the construction is not completely symmetric, we only discuss the following four maps:
\begin{align} 
\notag C^{*}(\sQ_2) \otimes C^*(\Delta) \otimes C^*(\Delta, \partial \Delta)   \longrightarrow &    C^*(\sQ_2) \\
\notag C^*(\Delta) \otimes C^{*}(\sQ_1) \otimes C^*(\Delta, \partial \Delta)   \longrightarrow &    C^*(\sQ_2) \\
\notag C^*(\Delta) \otimes C^*(\Delta, \partial \Delta)  \otimes C^{*}(\sQ_1) \longrightarrow &    C^*(\sQ_2) \\
C^*(\Delta, \partial \Delta)  \otimes C^*(\Delta) \otimes C^*(\Delta, \partial \Delta)  \longrightarrow  &   C^{*}(\Delta). \label{eq:simp_chains_2_1_2}
\end{align}
Associativity of the first three of the above operations follows immediately from the property that $C^*(\Delta, \partial \Delta)$ is a differential graded bi-module over $C^*(\Delta)$.  The last operation requires in addition the fact that the composition
\begin{equation}
 C^*(\Delta,\partial \Delta) \stackrel{ {\iota_{i}}_{!}}{\to} C^{*}(\sQ_i)  \stackrel{\iota_{i}^{*}}{\to} C^*(\Delta).
\end{equation}
agrees with the natural inclusion of relative cochains into ordinary cochains (alternatively, one can notice that the two maps we are comparing in Equation \eqref{eq:simp_chains_2_1_2} vanish altogether).  We have provided all the necessarily ingredients to construct a differential graded category:

\begin{defin}
The morphisms in Equation \eqref{eq:morphisms_simp}, and the composition laws given by Equations \eqref{eq:multiplication_cochain_delta}, \eqref{eq:multiplication_rel_cochain_delta}, and \eqref{eq:multiplication_shriek} satisfy the axioms of a differential graded category.  We denote the corresponding $A_{\infty}$ category where the differential and the composition are twisted by a sign
\begin{align} \label{eq:twist_diff_dg_to_A_infty}
\mu_{1}^{\S}(\check{\sigma}) & = (-1)^{\deg(\check{\sigma})} d \check{\sigma} \\
 \mu_{2}^{\S}(\check{\sigma},\check{\tau}) & = (-1)^{\deg(\check{\tau})} \check{\sigma} \cup \check{\tau}
\end{align}
by $\Simp(\sQ_1,\sQ_2)$.
\end{defin}

We shall also need a geometric interpretation of the cup product as an intersection product on the chains of subdivisions dual to the chosen triangulations of $Q_{i}$.  Let us therefore fix such dual  subdivisions $\check{\sQ}_{i}$, whose top dimensional cells are identified, by smooth charts, with polyhedra in $\bR^{n}$.   In particular, the $n-1$-dimensional cells intersect cleanly.  

\begin{rem} \label{rem:sign_difference}
The sign change in \eqref{eq:twist_diff_dg_to_A_infty} is particularly convenient from the point of view of the dual cell subdivision: if we think geometrically, every generator of $C_{*}(\sQ)$ corresponds to a  simplex carrying an orientation, and the differential is the restriction of this orientation to the boundary.  If we want to think of the differential $d$  on $C^{*}(\sQ)$ as taking a dual cell to its boundary, we find that the resulting operation differs from the natural restriction of orientations exactly by a sign $(-1)^{1+\deg(\check{\sigma})}$.  The additional sign of $1$ comes from the fact that an outward pointing vector at the boundary of a cell corresponds to an inward pointing vector from the dual subdivision's point of view.
\end{rem}

We shall also require that the identification $\Delta_{1} \cong \Delta_{2}$ map the cells of $\check{\sQ}_{1} \cap \Delta_{1}$ to the corresponding cells of $\check{\sQ}_{2} \cap \Delta_{2}$.  In particular, we obtain a subdivision dual to the trivial triangulation of the standard simplex $\Delta$.  Given a cell $\sigma$ of $\sQ_i$, we write $\check{\sigma}$ for the dual cell of $\check{\sQ}_i$, as well as for the dual generator of $C^{*}(\sQ_i)$.  A slight generalization of the main observation in Appendix E of \cite{abouzaid} is the existence of vector fields $X_i$ on $Q_i$ which generate flows $\phi^{i}_{t}$ such that the following Lemma holds

\begin{lem} \label{lem:cup_is_intersection}
Whenever $\sigma$, $\tau$, and $\rho$ are cells of $\sQ_i$ such that 
\begin{equation}
 \check{\sigma} \cup \check{\tau} = \check{\rho},
\end{equation}
there is a smooth map
\begin{equation}
\check{\rho} \times [0,1] \to Q_i
\end{equation}
such that the image of $\check{\rho} \times \{t\}$ agrees with $\check{\sigma} \cap \phi^{i}_{t}(\check{\tau})$ for any sufficiently small $t$. \noproof
\end{lem}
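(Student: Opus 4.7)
My strategy is to read off the combinatorics of the cup product from Alexander--Whitney and then design $X_i$ locally so that its flow converts the non-transverse meeting of $\check{\sigma}$ and $\check{\tau}$ along their common boundary stratum into a transverse intersection of the correct dimension. Recall that for simplicial cochains with ordered vertices, $\check{\sigma} \cup \check{\tau} = \check{\rho}$ holds exactly when $\sigma = [v_0, \ldots, v_p]$ is the front face and $\tau = [v_p, \ldots, v_{p+q}]$ is the back face of $\rho = [v_0, \ldots, v_{p+q}]$, so that $\sigma \cup \tau = \rho$ and $\sigma \cap \tau = \{v_p\}$ as vertex sets. Consequently $\dim\sigma + \dim\tau = \dim\rho$, and a simplex $\rho'$ of $\sQ_i$ contains both $\sigma$ and $\tau$ iff it contains $\rho$, so the closed dual cells satisfy $\overline{\check{\sigma}} \cap \overline{\check{\tau}} = \overline{\check{\rho}}$ as stratified subsets of $Q_i$.

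Next, I would work in a polyhedral chart around the barycenter of $\rho$. Along $\check{\rho}$, the cells $T\check{\sigma}$ and $T\check{\tau}$ each contain $T\check{\rho}$, as subspaces of codimension $q$ and $p$ respectively, and the dimension count
\begin{equation*}
(n-\dim\sigma)+(n-\dim\tau)-n \;=\; n-\dim\rho \;=\; \dim\check{\rho}
\end{equation*}
shows that a generic perturbation of $\check{\tau}$ meets $\check{\sigma}$ in a submanifold of the correct dimension. I would then choose $X_i$ so that, at each point of $\check{\rho}$ and for every ordered splitting $\rho = \sigma * \tau$ of the above form, $X_i$ is tangent to $\check{\sigma}$ with a strictly positive component in the direction that pushes $\check{\tau}$ off of $\check{\rho}$ into the interior of $\check{\sigma}$. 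For small $t > 0$ the flow then realises $\check{\sigma} \cap \phi^i_t(\check{\tau})$ as a smoothly varying copy of $\check{\rho}$ sliding transversally into $\check{\sigma}$; the required map $\check{\rho} \times [0,1] \to Q_i$ is obtained by following flow lines transverse to this family.

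The main obstacle is global compatibility. Around a single open cell $\check{\rho}$ there may be several ordered splittings of the surrounding simplex $\rho$, and each imposes its own tangency-and-transversality constraint at every point of $\check{\rho}$; these constraints must be simultaneously satisfiable and must patch together smoothly as one crosses into cells of smaller codimension. I would address this by inducting on the codimension of dual cells --- constructing $X_i$ first on the deepest strata and extending outward via partitions of unity --- invoking the combinatorial compatibility verified in Appendix~E of \cite{abouzaid}, which treats the single-manifold case. The ``slight generalization'' needed here is only that $X_1$ and $X_2$ be chosen to restrict compatibly on the identified top simplices $\Delta_1 \cong \Delta_2$, which is automatic: the construction depends only on the ordered combinatorial data of $\sQ_i \cap \Delta_i$, which is identified by hypothesis.
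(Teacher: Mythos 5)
Your reading of the Alexander--Whitney combinatorics, the dimension count, and the identity $\overline{\check{\sigma}}\cap\overline{\check{\tau}}=\overline{\check{\rho}}$ are all correct, and the paper indeed relegates the global construction to Appendix~E of \cite{abouzaid}. The gap is in the local condition you impose on $X_i$ at points of $\check{\rho}$. You require, \emph{for every ordered splitting} $\rho = \sigma * \tau$, that $X_i$ be tangent to $\check{\sigma}$ with a positive component pointing into $\inte\check{\sigma}$. These constraints are not simultaneously satisfiable: as $p$ ranges over $\{0,\ldots,\dim\rho\}$, the tangent spaces $T\overline{\check{\sigma}}$ at a point of $\check{\rho}$ form a nested chain from all of $T Q_i$ (for $\sigma = [v_0]$) down to $T\check{\rho}$ (for $\sigma=\rho$), so the intersection of the tangency constraints is exactly $T\check{\rho}$, which has no normal component and therefore admits no ``positive push.'' Moreover, even for a single nontrivial splitting, if $X_i$ lies entirely in $T\overline{\check{\sigma}}$ then in the normal slice $V_\sigma\oplus V_\tau$ to $\check{\rho}$ the $V_\tau$-component of $X_i$ vanishes; the resulting intersection point of $\check{\sigma}$ with $\phi^i_t(\check{\tau})$ is the image of the corner stratum $\check{\rho}\subset\partial\overline{\check{\tau}}$, a degenerate boundary intersection rather than a transverse interior one. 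The correct pointwise condition is that the $V_\sigma$-component of $X_i$ point \emph{into} $C_\sigma$ while the $V_\tau$-component point \emph{out of} $C_\tau$; in a $2$-simplex $\rho=[v_0,v_1,v_2]$ with $\sigma=[v_0,v_1]$, $\tau=[v_1,v_2]$, this allows only a proper subcone of $\check{[v_0]}$ (the part bounded by the ray toward $v_0$ and the ray $\check{[v_0,v_1]}$).

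The paper avoids this by not formulating the constraint at $\check{\rho}$ at all. Instead it imposes a single condition along \emph{every} stratum $\check{\mu}$ of the dual subdivision (``lies in the tangent cone of $\check{[v_0(\mu)]}$, $v_0(\mu)$ the minimal vertex of $\mu$''), and the constraint you need at $\check{\rho}$ is then forced by continuity from the constraints on the adjacent strata $\check{\sigma}$ and $\check{\tau}$, which use the minimal vertices of $\sigma$ and $\tau$ separately rather than the minimal vertex of $\rho$. (In the example above, it is the half-space conditions along $\check{[v_0,v_1]}$ and $\check{[v_1,v_2]}$ that carve the correct subcone out of $\check{[v_0]}$.) If you reformulate your local condition this way --- in particular dropping the tangency requirement --- the rest of your strategy (induction on codimension, partitions of unity, the compatibility of $X_1$ and $X_2$ on $\Delta_1\cong\Delta_2$) goes through and matches what the paper does.
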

Since  $\check{\sigma} \cup \check{\tau}$ either vanishes or is of the form $\check{\rho}$ for some cell $\rho$, we conclude that the cup product $\check{\sigma} \cup \check{\tau}$ vanishes whenever $\check{\sigma} \cap \phi^{i}_{t}(\check{\tau})$ is empty for sufficiently small $t$, and we interpret the above Lemma to state that the cup product is given by the intersection product of the dual cells, provided that they are appropriately perturbed.

The construction of the vector fields $X_i$ is  elementary; since $\sQ_i$ is a simplicial triangulation, its vertices are ordered, hence so are the top dimensional cells of dual subdivision.  We may choose $X_i$ to be any vector field satisfying the following property
\begin{defin}
A  vector field $X_i$ is said to be {\bf compatible with the simplicial triangulation} if along each cell $\check{\sigma}$ the restriction of $X_i$ lies in the tangent space of the top dimensional cell which is minimal among those adjacent to $\sigma$. 
\end{defin}
By abuse of language, we shall often call such vector fields {\bf simplicial}. The existence of such a vector field follows from a local argument described in Appendix E of \cite{abouzaid}. Since the above condition is respected by taking positive linear combinations, we may and shall require the restrictions of $X_1$ and $X_2$ to $\Delta_1$ and $\Delta_2$ to be intertwined by the maps identifying these two simplices.  Figure \ref{fig:simplicial_vector_field} shows the restriction of such a vector field to the $1$-skeleton of the dual subdivision of a $2$ dimensional simplex.

\begin{figure}[h] 

 \input{simplicial_vector_field.pstex_t}
   \caption{}
   \label{fig:simplicial_vector_field}
\end{figure}

In terms of the dual subdivisions $\check{\sQ}_i$, the restriction of cochains corresponds to taking the inverse image of a cell, while the map $\iota_{i !}$ takes the vertex of the subdivision dual to $\Delta$ to its image under the inclusions into $Q_i$.  Since composition maps in the category $\Simp(\sQ_1,\sQ_2)$ are defined using these operations together with the cup product, Lemma  \ref{lem:cup_is_intersection} provides an intersection theoretic interpretation of this category.

\subsection{A Morse category} \label{sec:stasheff}
Fix a pair of open sets $U_i \subset Q_i$ together with diffeomorphisms
\begin{equation} \label{eq:diffeo_open_sets_Q_i,j}
 \bar{U}_1 \cong Q_{1,2} \cong Q_{2,1} \cong \bar{U}_2,
\end{equation}
where $Q_{1,2}$ and $Q_{2,1}$ are manifolds with boundary (there is no gain in notational simplicity in assuming anything more).    To make the connection with the rest of the paper, we shall later restrict to the situation where $U_1$ and $U_2$ are both balls, corresponding to neighbourhoods in $Q_1$ and $Q_2$ of the point $b$ along which we shall perform the plumbing.

The identification between these manifolds shall be fixed throughout the following discussion, and we use it to define the space
\begin{equation*}  Q = Q_{1} \cup_{U_1 \cong U_{2}} Q_{2} \end{equation*}
which is a non-Hausdorff smooth manifold whose locus of points which cannot be separated from each other is the union $\partial \bar{U}_1 \cup \partial \bar{U}_2$.  By construction, this space admits canonical maps from the manifolds $Q_{1,2}$ and $Q_{2,1}$ as well.  For the remainder of the paper, we sometimes also write  $Q_{i,i}$ for $Q_i$ as a convenient notational device.

We assume that we have chosen Riemannian metrics on $Q_i$ whose restrictions to $U_i$ are intertwined by these diffeomorphisms, and we fix a choice of Morse-Smale functions 
\begin{align}
f_{i,j} & \co Q_{i,j}  \to \bR,
\end{align}
such that $f_{1,2}$ and $f_{2,1}$ are respectively inward and outward pointing on the boundary.  In particular, we also choose functions $f_{1,1}$ and $f_{2,2}$ on $Q_1$ and $Q_2$.   Writing $CM^{*}(f)$ for Morse cochains, we shall define an $A_{\infty}$ category with morphism space
\begin{equation} \label{eq:morphisms_morse}
 Hom_{*}^{\M}(Q_i,Q_i) = CM^{*}(f_{i,j}) = \bigoplus_{x \in \Crit(f_{i,j}) } | \ro_{x} |
\end{equation}
where  $| \ro_{x} |$ is the orientation line associated to $\ro_{x} \equiv  \lambda(W^{s}(x))$, i.e. the quotient of the free abelian group generated by a choice of orientation of the stable manifold of $x$ by the relation that the sum of generators associated to opposite orientations vanishes.  This rank $1$ group has degree equal to the dimension of the stable manifold of $x$.  The differential counts negative gradient lines of $f_{i,j}$; the detailed description is given in Section \ref{sec:orientations}.

To define higher compositions in this category, we consider moduli spaces of gradient trees in $Q$ which are defined later in this section.  Given any ribbon tree $T$ with $d+1$ leaves, we shall assume that one of the leaves has been distinguished as outgoing, which determines the unique ordering on the remaining $d$ leaves (called incoming) illustrated in Figure \ref{fig:ordering_convention_trees}.  We shall write $\cE(T)$ for the set of edges of $T$ (including the external edges).  Our conventions are that all edges include their endpoints and, except for the outgoing edge which is always infinite,  are allowed to have length $0$.  

\begin{figure}[h] 

 \input{tree_labeling.pstex_t}
   \caption{}
   \label{fig:ordering_convention_trees}
\end{figure}

The edges adjacent to any vertex of a ribbon tree $T$ are by definition cyclically ordered, and we define a labelling $(i(e),j(e))$ of the edges $e \in \cE(T)$ by elements of the set $\{ 1, 2 \}$  to be compatible with the cyclic ordering if $i(e') = j(e)$ whenever $e'$ follows $e$ with respect to the cyclic ordering at some vertex.  The following result may be proved by induction, or by embedding $T$ in a disc labelling the complementary regions appropriately.
\begin{lem}
A sequence $\vI = (i_0, \ldots, i_{d})$ determines a unique labelling of all edges of $T$ which is compatible with the cyclic ordering, such that the incoming leaves are labelled by $(i_k,i_{k+1})$.  Moreover, the outgoing leaf is labelled by $(i_0,i_{d})$. \noproof
\end{lem}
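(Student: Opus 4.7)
The plan is to realize the labels geometrically using the disc embedding provided by the ribbon structure of $T$. Since $T$ is a ribbon tree with $d+1$ leaves, it admits an embedding $T \hookrightarrow D^{2}$ with the leaves attached to $d+1$ distinct boundary points in the cyclic order determined by the ribbon structure (with the outgoing leaf in the distinguished position), unique up to isotopy. The complement $D^{2} \setminus T$ then consists of exactly $d+1$ connected regions, one bordering each of the $d+1$ arcs of $\partial D^{2}$ lying between consecutive leaves.

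For existence, I would label these $d+1$ regions by the entries of $\vI$: the region whose boundary arc runs between the outgoing leaf and the first incoming leaf is labelled $i_{0}$, the region between incoming leaves $k$ and $k+1$ is labelled $i_{k}$, and the region between the last incoming leaf and the outgoing one is labelled $i_{d}$. Every edge $e \in \cE(T)$ is shared by exactly two regions, so their labels provide a pair $(i(e), j(e))$ once the planar embedding tells us which is on the left and which is on the right of $e$. Compatibility with the cyclic ordering at an interior vertex $v$ is then automatic: if $e'$ cyclically follows $e$ at $v$, the region wedged between them at $v$ is simultaneously the region supplying $j(e)$ and the region supplying $i(e')$, so $j(e) = i(e')$. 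The conditions on the leaves hold by construction: incoming leaf number $k$ picks up the consecutive pair $(i_{k}, i_{k+1})$ (in the paper's indexing), and the outgoing leaf picks up the ``wraparound'' pair $(i_{0}, i_{d})$.

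For uniqueness, the labels on the $d$ incoming leaves are prescribed by $\vI$, and I would propagate inward. At any vertex $v$ all of whose incident edges but one have been labelled, the cyclic compatibility uniquely pins down the label on the remaining edge. Since $T$ is a tree, an easy induction (repeatedly prune a vertex once only one unlabelled edge is left at it) exhausts $\cE(T)$; connectedness and the absence of cycles guarantee the construction never encounters an obstruction or an inconsistency. The one point I would handle with care — and really the only obstacle — is the small asymmetry in the statement: the outgoing leaf carries the pair $(i_{0}, i_{d})$, whose entries are in the opposite order from what the local cyclic rule would naively produce at the root vertex. This asymmetry is precisely the statement that the outgoing edge is read with reversed orientation at the root, and it falls directly out of the disc picture once one is careful about which complementary region is ``left'' and which is ``right'' of the outgoing edge.
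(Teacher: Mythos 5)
Your route matches the paper's own hint --- embed $T$ in the disc, label the $d+1$ complementary regions with the entries of $\vI$, and prune inductively for uniqueness --- so the strategy is exactly right. The gap is the claim that compatibility at interior vertices is ``automatic.'' The rule you state (``$j(e)$ is the region in the wedge that cyclically follows $e$ at $v$'') is not well-defined for an \emph{interior} edge $e$: if $e$ runs from $v$ to $v'$, the wedge immediately counterclockwise past $e$ at $v$ and the wedge immediately counterclockwise past $e$ at $v'$ lie on \emph{opposite} sides of $e$, so the two endpoints assign opposite regions to $j(e)$. As a consequence the literal condition $i(e')=j(e)$, imposed for every cyclically consecutive pair at every vertex including the wrap-around past the root-ward edge, is over-determined and generically unsatisfiable --- try the trivalent tree with two interior vertices and $\vI=(1,2,2,1)$, where propagating from the two caps forces $i_0=i_2$. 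The asymmetry you notice at the outgoing leaf is therefore not special to it: it is exactly the same issue at every interior edge.

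The single missing ingredient, and what the paper's word ``appropriately'' is quietly invoking, is an explicit orientation convention. Orient every edge of $T$ toward the output leaf and let $(i(e),j(e))$ be the two flanking regions taken in a fixed left/right order relative to that orientation. This is well-defined once and for all, it reproduces the prescribed labels $(i_{k-1},i_k)$ on the incoming leaves and $(i_0,i_d)$ on the outgoing one, and the relation $i(e')=j(e)$ then holds precisely for consecutive pairs of edges that both point \emph{into} their common vertex --- which is the content actually used later. Make that convention explicit and restrict the compatibility check accordingly, rather than asserting it is automatic; as written, ``automatic'' hides the one step in the argument that genuinely requires care.
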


In order to resolve potential problems with transversality in defining the Morse category, we introduce perturbation data, following Seidel's construction for Lagrangian Floer theory in \cite{seidel-book}.    Let us illustrate the case of the cup product on $CM^{*}(Q_1)$.  Given three critical points $x_0$, $x_1$ and $x_2$, the $x_0$ component of  $\mu_2^{\M}([x_2],[x_1]) $ should count triple intersections between the ascending manifold of $x_0$, and the descending manifolds of $x_1$ and $x_2$.  This space is necessarily empty unless $x_1=x_2$, in which case the intersection is not transverse.   

This triple intersection admits a description as the moduli space of maps from the unique trivalent tree with three infinite external edges, which converge at the respective ends to the critical points $x_0$, $x_1$ and $x_2$, and which on each edge solve the gradient flow equation for the function $f_{1,1}$.  We shall perturb this moduli space by changing the differential equation obeyed by each edge.  Concretely, as each edge is isometric to $[0,+\infty)$, our perturbation will vanish on $(1,+\infty)$ and will be essentially arbitrary on $[0,1]$.  Integrating the perturbed gradient flow along the interval $[0,1]$, we obtain a triple $\phi_0$, $\phi_1$, and $\phi_2$ of diffeomorphisms of $Q_1$.   Changing our perspective once again, we interpret the moduli space of  perturbed gradient trees  as the intersection of the images under $\phi_i$ of the appropriate ascending and descending manifolds.  As the diffeomorphisms $\phi_i$ are essentially arbitrary (and independent of each other) we achieve transversality (and hence an honest count) upon perturbing the gradient flow.  One must still prove that this product is the quadratic term of an $A_{\infty}$ structure, which requires defining compatible perturbations of the gradient flow equation on higher dimensional moduli spaces of trees.

Let us return to the general discussion, and fix a sequence of labels $\vI$ as above:
\begin{defin} \label{defin:pert_datum}
A {\bf gradient flow perturbation datum}  on $(T,g_{T})$ is a choice, for each edge $e \in \cE(T)$ of a family of vector fields 
\begin{equation}
 X_{e} \co e \to \C^{\infty}(TQ_{i(e),j(e)})
\end{equation}
which vanish away from a bounded subset of $e$ and such that the restriction of the vector field
\begin{equation} \label{eq:perturbed_gradient_flow}
 -\nabla f_{i(e),j(e)} + X_{e}
\end{equation}
to the boundary of $Q_{i(e),j(e)}$ is (i) is outward pointing if $(i(e),j(e)) = (1,2)$ and (ii) inward pointing $(i(e),j(e)) = (2,1)$.
\end{defin}
\begin{rem}
 Since the gradient vector fields of $f_{1,2}$ and $f_{2,1}$ are respectively inward and outward pointing, the vanishing perturbation datum satisfies the above properties.
\end{rem}

We shall now specialise and consider the trees which control the $A_{\infty}$ structure on the Morse category.  Let us write $\Stasheff_d$ for the moduli space of {\bf Stasheff trees} $(T,g_T)$, i.e. metric ribbon trees with $d+1$ infinite external edges, and $\Stasheff_{\vI}$ for the moduli space of trees whose inputs are labelled by the successive elements of a sequence $\vI$ (this is a copy of $\Stasheff_{|\vI|}$).  By allowing singular trees, we obtain compactifications $\Stasheffbar_d$ and $\Stasheffbar_{\vI}$ which are in fact polyhedra.   The collection of spaces $\Stasheffbar_d$ for varying $d$ were shown by Stasheff to form an $A_{\infty}$ operad; i.e. to control $A_{\infty}$ algebras. In particular whenever $d=d_1+d_2-1$, we may construct a singular Stasheff tree with $d$ inputs by attaching the output of a singular tree with $d_2$ inputs to any of the inputs of an element of $\Stasheffbar_{d_1}$.  This construction yields a collection of $d_1$ maps
\begin{equation} \label{eq:compose_stasheff}
\Stasheffbar_{d_1} \times \Stasheffbar_{d_2} \to \Stasheffbar_{d}.
\end{equation}
whose images for all pairs $(d_1,d_2)$ cover the boundary of $\Stasheffbar_{d}$.  This familiar construction can also be done with labels: if $\vI[1] = (i^{1}_0, \ldots, i^{1}_{d_1})$ and $\vI[2]=(i^{2}_0, \ldots, i^{2}_{d_2})$ are sequences  such that $(i^{1}_k,i^{1}_{k+1})$ is the label for the output of a tree in $\Stasheff_{\vI[d]}$, we obtain a natural map
\begin{equation}
 \label{eq:compose_stasheff_labels}
\Stasheffbar_{\vI[1]} \times \Stasheffbar_{\vI[2]} \to \Stasheffbar_{\vI},
\end{equation}
where $ \vI = (i^{1}_0, \ldots, i^{1}_{k} = i^{2}_{0}, i^{2}_{1}, \ldots, i^{2}_{d_2} = i^{1}_{k+1}, \ldots,  i^{1}_{d_1})$ and the boundary of $\Stasheffbar_{\vI}$ is again covered by the images of such maps.

\begin{defin}
A {\bf universal consistent perturbation datum for trees}  is a choice $\bfX^{\Stasheff}$ of a smooth family of perturbation data for elements of $\Stasheffbar_{\vI}$ for  every sequence $\vI$,  which is compatible with the gluing maps \eqref{eq:compose_stasheff_labels} and is invariant under the automorphisms of each tree.
\end{defin}

\begin{rem}
The meaning of a smoothness is clarified in Definition \ref{defin:smooth_pert_data_stasheff}.  A priori, each edge in a singular ribbon tree is equipped with at least two perturbation data: one comes from the singular tree, the other from the (smooth) tree wherein the edge lies.  Compatibility with the maps \eqref{eq:compose_stasheff_labels} is the requirement that these two perturbation data agree.  The condition of invariance implies that the perturbation datum vanishes whenever there is only one input (because the $\bR$ translation symmetry forces a non-vanishing perturbation datum to have non-compact support, contradicting the boundedness requirement in Definition \ref{defin:pert_datum}).  
\end{rem}

Every edge $e$ of a Stasheff tree is isometric to a segment in $\bR$.  Writing $t_{e}$ for the induced coordinate, we define a (perturbed) gradient flow line on $e$ to be a map $\psi_e$ from $e$ to $ Q_{i(e),j(e)}$ which is a solution to the differential equation
\begin{equation}
 d\phi_{e} (\partial_{t_e}) = -\nabla f_{i(e),j(e)} + X_{e}.
\end{equation}

\begin{defin}
A {\bf perturbed gradient tree} $\psi$ on $(T,g_T)$ with labels $\vI$ is a continuous map
\begin{equation}
\psi \co T \to Q
\end{equation}
whose restriction to  every edge $e \in \cE(T)$  lifts to a map
\begin{equation}
 \psi_{e} \co e \to  Q_{i(e),j(e)}
\end{equation}
which is a gradient trajectory of $f_{i(e),j(e)}$.
\end{defin}
\begin{rem}
One usually imposes a balancing condition at vertices of $T$; i.e. the sum of all vector fields associated to edges adjacent to $v$ is required to vanish.  This constraint is not part of our setup for gradient trees, and in fact our choices of perturbation data are such that we can choose the vector fields defined by the various edges which share a common vertex to have arbitrary and independent values, as long as the conditions on the boundary of $Q_{1,2}$ or $Q_{2,1}$, described in Definition \ref{defin:pert_datum}, are satisfied.
\end{rem}

If $\vx \equiv (x_{1}, \ldots, x_{d})$ is a sequence of critical points of the functions $f_{i_{k},i_{k+1}}$, and $x_0$ is a critical point of $f_{i_0, i_d}$,  and define
\begin{equation} \Stasheff_d( x_0, \vx ) \end{equation}
to be the set of isomorphism classes of perturbed gradient trees $\psi$ such that the image of the $k$\th incoming leaf is $x_k$, and the image of the unique outgoing leaf is $x_0$.  This moduli space admits a bordification $\Stasheffbar_d(x_0, \vx)$ by adding all singular gradient trees. 

We begin by showing that these moduli spaces are compact:
\begin{lem} \label{lem:compactness_stasheff}
Given $x_0$ a critical point of $f_{i_0,i_d}$, and  $(x_1, \ldots, x_d)$ critical points of the functions $f_{i_k,i_{k+1}}$, the moduli space $\Stasheffbar(x_0, \vx)$ is compact.
\end{lem}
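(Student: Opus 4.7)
The plan is to split the compactness statement into two ingredients: (a) compactness of the moduli of domains $\Stasheffbar_{\vI}$, which is already known to be a polytope, and (b) compactness of the space of perturbed gradient trees over a fixed (possibly singular) domain $(T,g_T)$ with prescribed asymptotic critical points. Given any sequence $\psi_n \in \Stasheffbar(x_0,\vx)$, one passes first to a subsequence so that the underlying domains $(T_n,g_{T_n})$ converge in $\Stasheffbar_{\vI}$ to some possibly singular $(T_\infty,g_{T_\infty})$, and then extracts a limiting perturbed gradient tree on $T_\infty$.

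To carry out step (b), I would edge-by-edge extract a subsequential limit. For an edge $e$ whose length stays bounded and bounded away from $0$, the perturbation $X_e$ is a bounded smooth vector field of compact support, hence the restrictions $\psi_{n,e} \co e_n \to Q_{i(e),j(e)}$ solve a uniformly bounded ODE; Arzelà--Ascoli applied on compact subintervals, together with continuous dependence on initial data, produces a $C^\infty_{\mathrm{loc}}$-subsequential limit. The boundary behaviour demanded in Definition \ref{defin:pert_datum} guarantees that each trajectory remains in $Q_{i(e),j(e)}$, which is essential since the target $Q$ is non-Hausdorff and we must control which sheet a limit lives on. For the outgoing semi-infinite edge, the perturbation vanishes outside a bounded set, so outside that set $\psi_{n,e}$ satisfies the unperturbed negative gradient equation for a Morse function with hyperbolic critical point $x_0$; the usual exponential convergence argument at a Morse critical point gives $C^\infty$ subconvergence on the entire edge up to the asymptotic marked point $x_0$.

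Two degenerations remain to absorb. If the length of some internal edge of $T_n$ tends to $+\infty$, the standard Morse-theoretic argument (finite energy forces convergence to a critical point) breaks the trajectory into two perturbed gradient trajectories joined at a new critical point of $f_{i(e),j(e)}$, producing an element of a boundary stratum of $\Stasheffbar(x_0,\vx)$; if the length of some internal edge tends to $0$, the edge collapses to a vertex in $(T_\infty,g_{T_\infty})$ and the adjacent perturbed trajectories are glued there, which is again an allowed singular gradient tree. The only real obstacle is the uniform $C^0$-bound on interior edges of bounded length: here one uses that a perturbed gradient trajectory satisfies $\frac{d}{dt}f_{i(e),j(e)}(\psi_{n,e}) = -|\nabla f|^2 + \langle \nabla f, X_e\rangle$, so the values of $f_{i(e),j(e)}$ along the edge are a priori controlled by their values at the two endpoints (which are bounded because the endpoints are shared with adjacent edges, and ultimately with the fixed critical points $x_0,\vx$); this keeps the trajectory in a compact subset of $Q_{i(e),j(e)}$ and feeds back into the Arzelà--Ascoli step.
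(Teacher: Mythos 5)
Your outline reproduces the standard Morse/Gromov compactness scaffolding — Arzelà--Ascoli on bounded edges, exponential decay at the asymptotic ends, breaking and collapse of internal edges — and all of that is fine. However, the one step that is the actual content of the paper's sketch is precisely the one you assert without argument, and the mechanism you invoke for it is reversed in one of the two cases.

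You write that the boundary condition in Definition \ref{defin:pert_datum} ``guarantees that each trajectory remains in $Q_{i(e),j(e)}$.'' That individual trajectories lie in $Q_{i(e),j(e)}$ is true by definition (the lift is part of the data of a gradient tree), and is not the issue. The danger for compactness is a uniform one: a sequence of gradient trees might have the image of some $(1,2)$- or $(2,1)$-labelled edge escape toward $\partial Q_{1,2}$ or $\partial Q_{2,1}$ — the non-Hausdorff locus of $Q$ — so that the extracted limit does not lie in $\Stasheffbar(x_0,\vx)$. The boundary condition cannot rule this out edge-by-edge: for $(1,2)$-labelled edges, Definition \ref{defin:pert_datum} requires $-\nabla f_{1,2} + X_e$ to be \emph{outward} pointing along $\partial Q_{1,2}$, so the perturbed negative gradient flow pushes such a trajectory \emph{toward} the boundary as time advances, not away from it; no local monotonicity argument on a single edge can produce the a priori bound you need. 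What the paper actually proves (its Claim, which is the whole point of its sketch) is a global statement: there is a fixed neighbourhood of $\partial Q_{i,j}$ ($i\neq j$), uniform over the consistent family of perturbation data, which no such edge of any gradient tree can enter. The proof traces descending and ascending arcs in the ribbon tree along which the label is constant, exploits the uniform sign of the transverse component of the perturbed flow in that neighbourhood, and terminates at an incoming or outgoing leaf whose asymptotic critical point is bounded away from the boundary, yielding a contradiction. The energy monotonicity you invoke controls the $f_{i,j}$-values along edges but does not by itself keep a trajectory away from $\partial Q_{i,j}$, so it is not a substitute for this argument.
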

\begin{proof}[Sketch of proof]
When studying Morse theory on a manifold with boundary, one must first show that gradient trees cannot escape to the boundary:
\begin{claim}
There exists a neighbourhood of the boundary of $Q_{i,j}$ whose image in $Q$ is disjoint from the image of every edge $e$ labelled by $Q_{i,j}$ under a gradient tree
\begin{equation}
 \psi \co (T,g_T) \to Q.
\end{equation}
\end{claim}
\begin{proof}[Proof of claim]
The assumption that the perturbation data are consistent implies that there is a uniform (i.e. independent of $(T,g_T)$ in a fixed space $\Stasheff_{d}$) neighbourhood of the boundary  $Q_{1,2}$ (respectively $Q_{2,1}$) in which the perturbed gradient flow \eqref{eq:perturbed_gradient_flow} associated to every edge labelled by $(1,2)$ (or $(2,1)$) of a tree in $\Stasheff_{d}$ is inward (respectively outward) pointing in the sense that it increases the distance to the boundary.  In particular, the gradient flows of $f_{1,2}$ and $f_{2,1}$ are themselves inward or outward pointing in this neighbourhood.  By shrinking these neighbourhoods, we may assume that they are identified by the fixed diffeomorphism $Q_{1,2} \cong Q_{2,1}$.  We shall prove that the images of edges labelled by $Q_{1,2}$ or $Q_{2,1}$ cannot intersect this neighbourhood.

The case of edges labelled by $Q_{2,1}$ is simplest.  Note that if $e$ is such an edge, then it must lie on a descending arc $(e_{0}, \ldots, e_{r} = e)$ such that $e_{0}$ is an incoming leaf, and each edge of the arc is labelled by $Q_{2,1}$.  Assuming by contradiction that the image of $e$ intersects the neighbourhood of the boundary where the  perturbed negative gradient flow points inward, we conclude that $e_0$ is contained in this fixed neighbourhood.  This implies that the image of this incoming leaf is a critical point which is also contained in this neighbourhood, which contradicts the fact that $f_{2,1}$ has no critical points near the boundary.

If an edge is labelled by $Q_{1,2}$, there are two possibilities: either (i) there is a descending arc $(e, e_1, \ldots, e_{r})$ with $e_{r}$ the outgoing leaf such that all edges succeeding $e$ are labelled by $Q_{1,2}$ or (ii) there  exists a descending arc $(e=e_0,e_1, \ldots, e_{r})$ all of whose edge are labelled by $Q_{1,2}$ and an ascending arc $(e'=e'_0, \ldots, e'_{s})$ whose edges are labelled by $Q_{2,1}$ such that  $e_{r}$ and $e'$ are adjacent.  Using the same argument as in the previous case, we conclude that the critical point of $f_{1,2}$ (or $f_{2,1}$) whither $e_{r}$ (or $e'_{s}$) limits must be contained in the previously fixed neighbourhood of the boundary of $Q_{2,1}$ (or $Q_{1,2}$) contradicting our assumptions that the gradient flow may not vanish in these regions.
\end{proof}
This implies that whenever the image of an edge under a family of gradient trees converges toward the singularities of $Q$, the edge must be labelled by $Q_{1,1}$ or $Q_{2,2}$, so all analysis can be done locally in either manifold.   The standard proof of compactness for gradient trees for compact smooth Riemannian manifolds can then be used to prove the desired result, see \cite{FO}.
\end{proof}

The following is the main result proved in Section \ref{app:stasheff}:
\begin{lem} \label{lem:existence_morse_category}
For a generic choice of universal perturbation data, all spaces $ \Stasheffbar_d(x_0, \vx)$ are naturally compact topological manifolds with boundary of dimension
\begin{equation}
 d -2 + \deg(x_0) - \sum_{1 \leq k \leq d} \deg(x_k).
\end{equation}
\end{lem}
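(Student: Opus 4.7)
The plan is to proceed by induction on the number of incoming leaves $d$, constructing $\bfX^{\Stasheff}$ and verifying transversality of the moduli spaces $\Stasheff_d(x_0, \vx)$ simultaneously. The base case $d = 1$ reduces to the classical fact that, for a generic Morse-Smale choice of data on each $Q_{i,j}$, the space of unparametrized negative gradient trajectories between critical points forms a smooth manifold of dimension $\deg(x_0) - \deg(x_1) - 1$ whose compactification by broken trajectories is a topological manifold with boundary. For the inductive step, I assume consistent perturbation data have been chosen on all $\Stasheffbar_{\vec{J}}$ with $|\vec{J}| \leq d$, yielding transversely cut out moduli spaces of the predicted dimension, and construct data on every $\Stasheffbar_{\vI}$ of length $d+1$.

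For such an $\vI$, consistency together with the gluing maps \eqref{eq:compose_stasheff_labels} already prescribes the perturbation data on $\partial \Stasheffbar_{\vI}$. One first checks, as a routine compatibility verification, that these prescriptions assemble into a smooth family in the sense of Definition \ref{defin:smooth_pert_data_stasheff}. Extending arbitrarily to a smooth family on the interior, I would then form the universal moduli space parametrized by all such extensions: a point is a perturbed gradient tree in the sense of Definition \ref{defin:pert_datum} on some $(T, g_T)$ in the interior of $\Stasheffbar_{\vI}$, together with a choice of extension. A standard Fredholm/Banach manifold argument shows this universal space is a smooth Banach manifold; the crucial surjectivity point is that on the interior of $\Stasheffbar_{\vI}$ one may freely perturb $X_e$ on any single edge by a bump function supported on that edge, which is sufficient to hit any cokernel element of the linearized edge equation. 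Sard-Smale then yields a residual set of universal perturbation data for which every $\Stasheff_d(x_0, \vx)$ is a smooth manifold of dimension $(d-2) + \deg(x_0) - \sum_k \deg(x_k)$, where $d-2$ is the dimension of $\Stasheff_d$ and the remainder is contributed by the intersection conditions among stable and unstable manifolds at the leaves.

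The main obstacle is establishing the topological manifold-with-boundary structure across the codimension-one strata of $\Stasheffbar_d(x_0, \vx)$, which requires a gluing theorem. Given a broken configuration in the image of \eqref{eq:compose_stasheff_labels} built from transversely cut out pieces provided by the induction hypothesis, one inserts an edge of length $1/\rho$ at the breaking point to produce an approximate gradient tree, and then invokes an implicit function theorem argument to obtain, for each sufficiently small $\rho > 0$, a unique honest gradient tree nearby. This furnishes continuous collar maps from $[0, \epsilon)$ times the broken stratum into $\Stasheffbar_d(x_0, \vx)$. Compactness, established in Lemma \ref{lem:compactness_stasheff}, together with the inductively verified transversality on all lower-dimensional strata, ensures that these collars and the smooth interior exhaust the compactification. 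Compatibility of the gluing construction with iterated applications of \eqref{eq:compose_stasheff_labels} then assembles the collars consistently along higher-codimension corners, completing the inductive step.
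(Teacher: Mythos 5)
Your overall outline is close to the paper's, but several steps you call routine are in fact where the work lies, and your transversality argument is cast in a setting that does not match how gradient trees actually behave.

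First, the assertion that the prescriptions on $\partial\Stasheffbar_{\vI}$ ``assemble into a smooth family, as a routine compatibility verification'' skips the content of Lemma \ref{lem:inductive_construction_pert_data_stasheff}. Smoothness here is defined via the identification $e^{-x}\colon[0,+\infty]\cong[0,1]$ and the requirement that the perturbation on each edge, viewed via a flag $(e,v)$, extend to a smooth map on $[0,+\infty]^{|\cE(T)|-|\cE(T^\infty)|}\times\bR$. The nontrivial step is the extension at $t_e=+\infty$: the paper shows the two flag descriptions of an edge are related by $(t_e,y)\mapsto(t_e,y+t_e)$ (hence smoothness is automatic at $t_e=0$) but at $t_e=+\infty$ one must exhibit an explicit ``preglued'' extension exploiting the compact support of $X_e$, vanishing on the middle third of a long edge and pulling back the two flag data on the outer thirds. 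This is a specific construction, not a verification.

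Second, your surjectivity argument is phrased in a PDE/Banach-manifold idiom (``hit any cokernel element of the linearized edge equation'') that does not match the situation. The equation on an edge is a first-order ODE, so the single-edge linearization has no cokernel; the transversality problem lives entirely in the matching conditions at the vertices and at the critical points. The paper avoids Sard--Smale on Banach manifolds altogether by observing that perturbation data, being compactly supported on each edge, integrate to diffeomorphisms $\phi_k$ of the $Q_{i,j}$'s, so that the moduli space for a fixed source $(T,g_T)$ is the preimage of the diagonal in $Q^{d+1}$ under a finite-dimensional evaluation map built from $\phi_k$ and inclusions of stable and unstable manifolds; the freedom to move each $\ev_k$ independently (the unnamed lemma preceding Lemma \ref{lem:stasheff_smooth}) then feeds into an ordinary Sard argument. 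Your approach may be made to work, but as written the surjectivity claim is misdirected.

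Third, the gluing step has a real subtlety you do not address. Because the perturbation data vary smoothly over $\Stasheffbar_d$, a tree whose metric is a pregluing of two boundary trees carries a perturbation that is \emph{not} the naive pregluing of the two boundary data; it only converges to that as the gluing length goes to infinity. The paper handles this by introducing the finite-length constraint sets $N^{R}(\cdots)$ and using the smoothness condition to show the corresponding evaluation maps converge in $C^\infty$ to $N^\infty(\cdots)$, after which Lemma \ref{lem:KM} and elementary differential topology give the collar. Without tracking this dependence of the perturbation on the gluing parameter, the collar map does not land where you want it to, and the uniqueness of the glued solution for each $\rho$ is not established.
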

\begin{rem}
Technically, $Q$ is a smooth non-Hausdorff manifold, with the set of points which cannot be separated from each other diffeomorphic to $\partial Q_{1,2}$.  Any local construction for smooth manifolds is valid on $Q$ as long as it performed away from this subset.  In addition, if we are working near this subset, and we have additional data that distinguishes one of the sheets, then we may again use standard constructions from the study of smooth manifolds.  Note that this is precisely what happens if we are considering an edge of a gradient tree labelled by $(i,i)$ whose image in $Q$ happens to intersect the non-Hausdorff locus; by definition, we have a lift to the relevant (usual) manifold $Q_i$. The key component in the proof of Lemma \ref{lem:existence_morse_category} is the Claim appearing in Lemma \ref{lem:compactness_stasheff} which asserts that there is a neighbourhood of the non-Hausdorff locus of $Q$ which (perturbed) gradient edges labelled by $Q_{i,j}$ with $i \neq j$ cannot intersect if they form part of a gradient Stasheff tree.  
\end{rem}

Given a sequence $\vx$ such that
\begin{equation}
  d -2 + \deg(x_0) - \sum_{1 \leq k \leq d} \deg(x_k) = 0 
\end{equation}
we conclude that the moduli space $\Stasheff_d(x_0, \vx )$ consists of finitely many points.  The description of this moduli space as a fibre product determines an isomorphism
\begin{equation}   \label{eq:iso_det_bunldes_stasheff} \lambda(\Stasheff(x_0, \vx)) \otimes \lambda(Q^{d+1}) \cong \lambda(Q) \otimes \lambda(\Stasheff_{d}) \otimes \lambda(W^{s}(x_0)) \otimes \lambda (W^{u}(\vx)) \end{equation}
where $W^{u}(\vx)$ is the product of the descending manifolds of the critical points $x_k$.  Our conventions are explained in Section \ref{sec:orientations}. Whenever $\psi$  is a rigid tree in $\Stasheff(x_0, \vx)$, the above isomorphism and Equation \eqref{eq:decom_tangent_space_crit_points} give a natural map
\begin{equation} \ro_{x_d} \otimes \cdots \otimes \ro_{x_1} \to \ro_{x_0} . \end{equation}
We define the $d$\th higher product
\begin{equation}  
\mu^{\M}_{d}  \co CM^{*}(f_{i_{d-1},i_d}) \otimes \cdots \otimes CM^{*}(f_{i_1,i_2})  \to CM^{*}(f_{i_1,i_d}) \end{equation}
 to be a sum over the induced maps $\mu_{\psi}$ on orientation bundles
\begin{equation}
[x_d] \otimes \cdots \otimes [x_{1}]  \mapsto \sum_{\stackrel{ x_0}{\psi \in \Stasheff(x_0, \vx)}}  (-1)^{(n+1) (\deg(x_0) + \dagger( \vx))} \mu_{\psi}  ( [x_d] \otimes \cdots \otimes [x_{1}])  \end{equation}
where the sign is given by:
\begin{equation}  \label{eq:sign_twisting_morse} \dagger( \vx) = \sum_{k=1}^{d} k \deg(x_{k}).  \end{equation}

The following result follows directly from Proposition \ref{lem:existence_morse_category} and the proof of the analogous result in Appendix C of \cite{abouzaid}. 
\begin{prop}
The operations $\mu^{\M}_d$ satisfy the axioms of an $A_{\infty}$ category.  \noproof
\end{prop}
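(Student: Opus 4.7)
The plan is to derive the $A_\infty$ relations from the structure of the boundary of the one-dimensional moduli spaces of perturbed gradient trees. Concretely, I would fix a sequence $\vx = (x_1,\ldots,x_d)$ of critical points together with an output critical point $x_0$ such that
\begin{equation}
d - 2 + \deg(x_0) - \sum_{1 \leq k \leq d} \deg(x_k) = 1,
\end{equation}
and consider the moduli space $\Stasheffbar_{d}(x_0,\vx)$, which by Lemma \ref{lem:existence_morse_category} is a compact topological $1$-manifold with boundary. Since the signed count of boundary points of a compact oriented $1$-manifold vanishes, the desired quadratic $A_\infty$ identity among the $\mu^{\M}_d$ will follow once the boundary is identified with the appropriate composition strata and the signs are computed.

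Next, I would identify the boundary of $\Stasheffbar_{d}(x_0,\vx)$ using the consistency of the perturbation data $\bfX^{\Stasheff}$. Each boundary point corresponds to a broken configuration: a Stasheff tree with $d_1$ inputs carrying the output, glued to a Stasheff tree with $d_2$ inputs at one of its inputs, with $d = d_1 + d_2 - 1$ and an intermediate critical point $y$ serving as the matching condition. Consistency with the gluing maps \eqref{eq:compose_stasheff_labels} ensures that such a broken tree genuinely lies in the closure of $\Stasheff_d(x_0,\vx)$ and, conversely, that every boundary point arises this way via a standard gluing argument (a pre-gluing plus Newton iteration for perturbed gradient flow lines, whose details are the ones deferred to Appendix C of \cite{abouzaid}). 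Combined with the labelling compatibility of $\vI$, this decomposes $\partial\Stasheffbar_d(x_0,\vx)$ as the disjoint union, over all admissible $(d_1,d_2,k,y)$, of fibre products
\begin{equation}
\Stasheff_{d_1}(x_0,(x_1,\ldots,x_k,y,x_{k+d_2+1},\ldots,x_d)) \times \Stasheff_{d_2}(y,(x_{k+1},\ldots,x_{k+d_2})).
\end{equation}

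The final step is the sign comparison. Using the isomorphism \eqref{eq:iso_det_bunldes_stasheff} applied to both factors in each boundary stratum, together with the boundary orientation convention for the Stasheff polyhedra induced by \eqref{eq:compose_stasheff}, one compares the induced map on orientation lines to the composition $\mu^{\M}_{d_1}\circ_k \mu^{\M}_{d_2}$. The Koszul-type twist $\dagger(\vx)$ defined in \eqref{eq:sign_twisting_morse} and the $(-1)^{(n+1)(\deg(x_0)+\dagger(\vx))}$ prefactor in the definition of $\mu^{\M}_d$ are exactly tuned so that the sum of contributions of boundary points reproduces the signed $A_\infty$ relation
\begin{equation}
\sum_{d_1+d_2 = d+1}\sum_{k} (-1)^{\maltese_k} \mu^{\M}_{d_1}\bigl(x_d,\ldots,x_{k+d_2+1}, \mu^{\M}_{d_2}(x_{k+d_2},\ldots,x_{k+1}), x_k,\ldots,x_1\bigr) = 0,
\end{equation}
with $\maltese_k = \sum_{j \leq k}(\deg(x_j)-1)$.

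The main obstacle I would expect is the verification that the signs work out, which amounts to a careful bookkeeping of orientation line isomorphisms induced by the gluing \eqref{eq:compose_stasheff_labels} and the identification of the boundary of $\Stasheffbar_d$ with the product $\Stasheffbar_{d_1}\times\Stasheffbar_{d_2}$. The analytical input (transversality, gluing, compactness up to breakings) is fully packaged in Lemma \ref{lem:compactness_stasheff} and Lemma \ref{lem:existence_morse_category}; the remaining sign argument is precisely the one carried out for the Fukaya-category analogue in Appendix C of \cite{abouzaid}, and the prefactor $(-1)^{(n+1)(\deg(x_0)+\dagger(\vx))}$ has been inserted exactly so that the translation of that argument to the Morse setting goes through verbatim.
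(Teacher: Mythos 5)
Your proposal takes the same route the paper implicitly follows: the proposition is stated with no proof and the preceding text says it ``follows directly from Proposition \ref{lem:existence_morse_category} and the proof of the analogous result in Appendix C of \cite{abouzaid},'' which is precisely the one-dimensional moduli space argument you sketch — compactness and regularity from Lemma \ref{lem:existence_morse_category}, identification of boundary strata with compositions via the consistency of $\bfX^{\Stasheff}$ with the gluing maps \eqref{eq:compose_stasheff_labels}, and the sign bookkeeping with the $(-1)^{(n+1)(\deg(x_0)+\dagger(\vx))}$ prefactor imported from Appendix C of the cited paper. Your sign $\maltese_k=\sum_{j\le k}(\deg(x_j)-1)$ agrees modulo $2$ with the paper's convention $\maltese_k=k+\sum_{j\le k}\deg(x_j)$, and the boundary decomposition you give subsumes the edge-breaking cases ($d_1=1$ or $d_2=1$, giving the differential terms) together with the internal-edge degenerations in $\partial\Stasheffbar_d$, as it should.
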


In particular, we define the $A_{\infty}$ category $\Morse(Q_1,Q_2)$ to have objects and morphisms as in Equation \eqref{eq:morphisms_morse}, and (higher) compositions defined by the operations $\mu^{\M}_d$.

\begin{rem}
 The orientation convention used in \cite{abouzaid} differs in a very minor way from the one introduced here.  In particular, Equation \eqref{eq:sign_twisting_morse} is a simplification of Equation (C.2) in the previous paper.
\end{rem}

\subsection{The Fukaya category of the skeleton of a plumbing} \label{sec:introduce_fukaya}
On $\bC^{n}$ equipped with the standard symplectic form, complex structure, and with the coordinates $(x_1, \ldots, x_n, y_1, \ldots, y_n) = (\vx, \vy)$, the Lagrangians 
\begin{align*}
 L_1=\bR^{n} & = \{ \vy = 0 \} \\
 L_2 = i \bR^{n} & = \{ \vx = 0 \}
\end{align*}
intersect transversely at the origin.  We define the plumbing
\begin{equation} D^{*} L_1 \# D^* L_2 \end{equation}
to be the open neighbourhood of radius $1$ of the union of $L_1$ and $L_2$ (in the standard euclidean metric).   

We shall now use this local model to construct a Liouville manifold by gluing cotangent bundles:  pick a Riemannian metric on $Q_i$ whose restriction to a neighbourhood of $b_i$ is isometric to the ball of radius $4$ in $\bR^{n}$ with $b_i$ mapping to the origin and $U_i$ to the ball of radius $2$.   We obtain a fixed symplectomorphism between  an open subset of $D^* Q_i$ and those points in $D^*L_i$ consisting of cotangent vectors lying over a point in $L_i$ of euclidean norm bounded by $4$.   By identifying the points of $D^* Q_1$ and $D^* Q_2$ whose images agree in $D^{*} L_1 \# D^* L_2$ we obtain a symplectic manifold
\begin{equation}
M = D^* Q_1 \#_{(b_1,b_2)} D^* Q_2,
\end{equation}
which is diffeomorphic to the classical plumbing construction in differential topology \cite{milnor}.  We shall write $b$ for the image of $b_1$ and $b_2$ in $M$, and $M_{b}$ for the fixed neighbourhood of $b$ in $M$ along which the gluing is performed and $M_{1}$ and $M_{2}$ for the two components of the complement of $M_{b}$ in $M$ (these are symplectomorphic to the unit disc (cotangent) bundles of the manifolds obtained by removing balls from $Q_1$ and $Q_2$).

Since the boundary of $M$ has corners, it is convenient to define a smooth submanifold $M^{\ins} \subset M$ as follows:  Choose a convex smooth function $\chi \co [0,+\infty)^{2} \to [0,+\infty)$ which is a small $C^0$ perturbation of the maximum of the coordinates, and which agrees with this maximum away from a neighbourhood of the diagonal.  Writing the squared euclidean norms as $\rho(\vx,\vy) =( |\vx|^2,  |\vy|^{2} ) $ we consider
\begin{equation} \label{eq:inside_manifold_near_0}
  M_{0}^{\ins} = (\chi \circ \rho)^{-1}(\epsilon) \subset  D^{*} L_1 \# D^* L_2;
\end{equation}
\begin{figure}
  \centering
\epsfxsize=2in
\epsffile{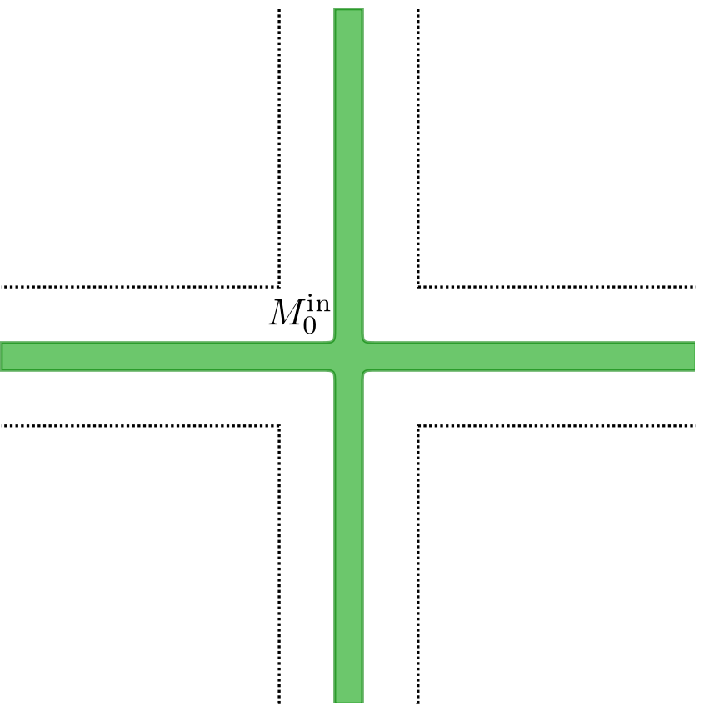} 
  \caption{ }
  \label{fig:inner_manifold}
\end{figure}
an example for $n=1$ is shown in Figure \ref{fig:inner_manifold}.  The smoothing parameter for $\rho$ should be sufficiently small that the following properties hold:
\vspace{-10pt}
\begin{enumerate}
\item Every point in $M_{0}^{\ins}$ lies within $2 \epsilon$ of a point in $L_1 \cup L_2$.
\item If $ |\vx| \geq 2\epsilon$, then $(\vx, \vy) \in M_{0}^{\ins} $  if and only if $|\vy|  \leq \epsilon$ (and vice versa with $x$ replaced by $y$).
\end{enumerate}
In words the second condition says that, away from the disc of radius $2 \epsilon$ in $L_j$, $ M_{0}^{\ins} $ agrees with the subset of the cotangent bundle consisting of vectors whose length is bounded by $\epsilon$.  Writing $M_{i}^{\ins}$ for the set of vectors in $T^*Q_i$ of length bounded by $\epsilon$ which project to points in $Q_i$ of distance greater than $2 \epsilon$ from $b_i$, we define a smooth manifold with boundary
\begin{equation*}
  M^{\ins} = M_{1}^{\ins} \cup M_{b}^{\ins} \cup M_{2}^{\ins} \subset M .
\end{equation*}


The symplectic form on $M$ can be written as the differential of a primitive $\theta$ for which the embedding of each $Q_i$ is exact (i.e., the restriction of $\theta$ to $Q_i$ is the differential of a function).   Moreover, the tangent space of $M$ is equipped with a homotopy class of complex volume forms (with respect to any almost complex structure compatible with the symplectic form) whose restriction to the cotangent bundle of $Q_i$ is homotopic to a form obtained by complexifying a volume form on $Q_i$.  In particular, $Q_i$ admits a \emph{grading} so that the intersection point between $Q_1$ and $Q_2$ is assigned a degree; we omit the choice of grading from our notation, but note that it may be normalised so that
\begin{equation} \label{eq:normalisation_floer_complex}
 \parbox{36em}{$CF^{*}(Q_1,Q_2)$ is supported in degree $0$.}
\end{equation}

\begin{defin}
The category $\Fuk(Q_1,Q_2)$ is the full subcategory of the Fukaya category of $M$ whose objects are $Q_1$ and $Q_2$.
\end{defin}
The existence of an $A_{\infty}$ category with exact Lagrangians as objects follows from \cite{seidel-book}.  The construction depends on certain choices of almost complex structures and Hamiltonian perturbations, but the category is independent, up to $A_{\infty}$ equivalence, of these choices.  In order to fix these choices, let us consider a second subdomain $M^{\mid}$ of $M$ containing $M^{\ins}$, whose construction we shall specify in Section \ref{sec:comp-curv-leaves} (we shall only use it to prove compactness for a certain moduli space of holomorphic discs entering in the construction of the functor from Floer to Morse theory). We shall now pick Floer data from the following spaces:
\begin{equation} \label{eq:allowed_J-H}
\parbox{36em}{Let $\sJ$ denote the space of almost complex structures which are of contact type near $\partial M^{\ins}$ and $\partial M^{\mid}$  and  let $\sH = \C_{c}^{\infty}(M^{\ins}, \bR)$ denote the space of smooth functions on $M$ whose support is contained in $M^{\ins}$.}
\end{equation}
In the next section we briefly recall how to use perturbations of the $\dbar$ equation coming from families valued in $\sJ$ and $\sH$ in order to construct $\Fuk(Q_1,Q_2)$.

\subsubsection{Review of the construction of the Fukaya category} \label{app:review_seidel}
We follow the discussion in \cite{seidel-book}, keeping our notation as close as possible.  First, we must define the morphism spaces (i.e., Floer complexes) 
\begin{defin} \label{lem:floer_data}
A {\bf Floer datum} for each pair $(i,j)$, is the choice of (i) a family $J^{i,j} = \{ J^{i,j}_{t} \in \sJ \}_{t \in [0,1]}$ of almost complex structures and (ii) a function $H^{i,j} \in \sH$.
\end{defin}

This choice determines a perturbed $\dbar$ equation on the strip
\begin{equation} \label{eq:perturbed_dbar_strip}
\dbar^{i,j}  u = ( d u - Y^{i,j}\otimes dt)^{0,1}
\end{equation}
where $Y^{i,j}$ is the Hamiltonian flow of $H^{i,j}$, and the $(0,1)$ part is taken with respect to the $t$-dependent almost complex structure $J^{i,j}_{t}$.  Given a pair $(p_0,p_1)$ of time-$1$ chords  the Hamiltonian flow of $H^{i,j}$ with endpoints on  $Q_j$ and  $Q_i$,  we write  
\begin{multline}
\cR(p_0, p_1) = \{ u \co [0,1] \times \bR \to M |\dbar^{i,j} u = 0 , \, u(0,s) \in Q_i, \, u(1,s) \in Q_j , \\
\lim_{s \to + \infty} u(s,t) = p_{0} , \, \lim_{s \to - \infty} u(s,t)= p_{1} \} / \bR
\end{multline}
where the $\bR$ action comes from translation on the source.  Using the fact that the complex structures which are allowed are of contact type near $\partial M^{\ins}$, one can use Lemma 7.2 of \cite{abouzaid-seidel} (see also Lemma \ref{lem:convexity_weak_assumptions} bellow) to prove the analogue of Gromov compactness and conclude:
\begin{lem} \label{lem:strips_moduli_spaces_cpact}
If the image of  $Q_i$ under the time-$1$ Hamiltonian flow generated by $H^{i,j}$ is transverse to $Q_j$, then a generic choice of family $J^{i,j}_t$ ensures that all moduli spaces of gradient trajectories $ \cR(p_0, p_1)$ for $p_0 \neq p_1$ are regular, and hence of expected dimension
\begin{equation}
 \deg(p_0) - \deg(p_1) -1.
\end{equation} 
Moreover, the Gromov bordification  $\Rbar(p_0, p_1)$ is compact, and whenever $\deg(p_0) - \deg(p_1) = 2$, it is a manifold with boundary
\begin{equation}
 \partial \Rbar(p_0, p_1) = \coprod_{\deg(p) = \deg(p_1) + 1}  \cR(p_0, p) \times \cR(p,p_1).
\end{equation} \noproof
\end{lem}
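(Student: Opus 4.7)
The plan is to prove this lemma by the now-standard three-step package of Floer theory adapted to the open setting: transversality via generic perturbation, compactness via a maximum principle that confines strips to $M^{\ins}$, and identification of the boundary via a gluing theorem. First I would set up the Banach manifold of maps $u\co [0,1]\times\bR\to M$ with asymptotics $p_0,p_1$ and Lagrangian boundary conditions on $Q_i,Q_j$, together with the Banach bundle whose fiber at $u$ is $L^p(\Lambda^{0,1}\otimes u^*TM)$ (with respect to $J^{i,j}_t$). The section $u\mapsto \dbar^{i,j}u$ is Fredholm of index $\deg(p_0)-\deg(p_1)$; after quotienting by the $\bR$-action one gets the claimed dimension $\deg(p_0)-\deg(p_1)-1$.

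For regularity I would follow the argument in \cite{seidel-book}: the universal moduli space parametrized by $t$-dependent almost complex structures $\{J^{i,j}_t\}\subset\sJ$ is cut out transversely because away from the asymptotic ends any solution has a point where $\partial_s u\neq 0$, so the linearization is surjective by Floer--Hofer--Salamon type arguments. The projection from the universal moduli space to the space of $J^{i,j}_t$ is Fredholm, and Sard--Smale then gives a Baire-generic set of almost complex structures for which every $\cR(p_0,p_1)$ is a smooth manifold of the expected dimension. This step is standard and I would only sketch it.

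The compactness of $\Rbar(p_0,p_1)$ is the main content. Exactness of $Q_1$ and $Q_2$ together with a primitive $\theta$ for the symplectic form gives an a priori energy bound $E(u)\leq \cA(p_0)-\cA(p_1)$ in terms of the action values. Disk and sphere bubbling is ruled out by exactness (or by Seidel's convention: relative/first Chern class vanishing plus exactness). The key non-standard point is to prevent escape to the boundary of $M^{\ins}$ (or of $M^{\mid}$, into which the Hamiltonian is supported): since $H^{i,j}$ vanishes near $\partial M^{\ins}$ and the almost complex structures are of contact type near both $\partial M^{\ins}$ and $\partial M^{\mid}$, the maximum principle cited as Lemma 7.2 of \cite{abouzaid-seidel} (our forthcoming Lemma \ref{lem:convexity_weak_assumptions}) applies to the composition of $u$ with a plurisubharmonic defining function, forcing the image of $u$ to stay in a fixed compact subset of $M^{\ins}$. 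With uniformly bounded image and energy, Gromov compactness produces a limit which, in the absence of bubbles, must be a broken strip.

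Finally, for the boundary structure when $\deg(p_0)-\deg(p_1)=2$, by the above compactness the only possible boundary strata are once-broken trajectories $\cR(p_0,p)\times\cR(p,p_1)$ with $\deg(p)=\deg(p_1)+1$, since any rigid component has nonnegative index and indices add. A standard gluing theorem (pregluing plus Newton iteration, using that both factors are regular and rigid) produces a collar neighborhood of each broken configuration in $\Rbar(p_0,p_1)$, exhibiting it as a topological $1$-manifold with the claimed boundary. The main obstacle is really the confinement to $M^{\ins}$: without the contact-type hypothesis on $\sJ$ near $\partial M^{\ins}$ and the fact that $H^{i,j}\in\sH$ is supported in $M^{\ins}$, strips could drift to the boundary and compactness would fail. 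Once that is handled by the maximum principle, the rest follows the standard template.
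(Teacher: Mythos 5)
Your proposal is correct and matches the paper's implicit approach: the paper leaves this lemma without written proof precisely because it is the standard Floer-theoretic package (Sard--Smale regularity as in \cite{seidel-book}, Gromov compactness, gluing), with the only non-standard ingredient being confinement of strips to a compact region, which the paper handles by the Stokes-type argument of Lemma 7.2 of \cite{abouzaid-seidel} (restated as Lemma \ref{lem:convexity_weak_assumptions}), exactly the point you identify as the main obstacle. One minor remark: for the strip moduli spaces $\cR(p_0,p_1)$ with boundary on $Q_i$ and $Q_j$ only $M^{\ins}$ is needed for confinement; the auxiliary domain $M^{\mid}$ enters later only for caps whose outgoing boundary lies on a leaf $\sL_q$ near $b$.
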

The Floer chain complex
\begin{equation}
 CF^{*}(Q_i, Q_j)
\end{equation}
is a direct sum of free abelian groups $| \ro_{p} |$  associated to each time-$1$ chord $p$ from $Q_i$ to $Q_j$ for the Hamiltonian $H^{i,j}$.  As the notation suggests, $| \ro_{p} |$ is the orientation line associated to a rank $1$ space $\ro_{p}$, whose construction we shall not recall.  The key fact we use is the existence of a canonical isomorphism
\begin{equation}  \label{eq:index_iso_strip} \lambda(\cR(p_0, p_1)) \cong \ro_{p_0} \otimes \ro_{p_1}^{\vee}\end{equation}

In general, the degree $\deg(p)$ of a generator of $CF^{*}(Q_i, Q_j)$ is determined by an additional choice of grading on $Q_i$ and $Q_j$.  This choice has been fixed (up to a global shift) by requiring that the rank $1$ group $CF^{*}(Q_1, Q_2)$ is supported in degree $0$.  Whenever $\deg(p_0) - \deg(p_1)= 1$,   $\lambda(\cR(p_0, p_1)) $ is canonically trivialised by the translative action of $\bR$, so  Equation \eqref{eq:index_iso_strip}  naturally assigns to each element $u$ of  $\cR(p_0,p_1)/ \bR$ a homomorphism
\begin{equation*} \mu_{u}^{F} \co | \ro_{p_1}|  \to | \ro_{p_0} |. \end{equation*}
The sum of all such homomorphisms is the matrix coefficient of $ | \ro_{p_1}| $ and $  | \ro_{p_0} |$ in the differential in the Floer complex.  Writing $[p_1]$ for a generator of $| \ro_{p_1}| $, we obtain:
\begin{equation}
 \mu_{1}^{\F}([ p_1 ]) = \sum_{u \in \cR(p_0,p_1)/ \bR} (-1)^{\deg(p_1)}  \mu_{u}^{F} ([ p_1 ]) 
\end{equation}

To define the $A_{\infty}$ structure, we must choose perturbation data for moduli spaces of discs with an arbitrary number of marked point.  We write $\cR_{d}$ for the moduli space of holomorphic discs with $d+1$ marked points, $d$ of which are marked as incoming, and, given a sequence $\vI = (i_0, \ldots, i_d)$, $\cR_{\vI}$ for a copy of the moduli space $\cR_{d}$  in which each incoming arc is labelled by an elements of the set $\{ 1, 2 \}$; in particular, the incoming marked points are labelled by pairs $(i_k, i_{k+1})$, and the outgoing marked point by $(i_0, i_d)$.  This space admits a compactification $\Rbar_{\vI}$ which is a manifold with corners whose points are in bijective correspondence to $\Stasheffbar_{\vI}$, and in the setup of Equation \eqref{eq:compose_stasheff_labels}, is equipped with maps
\begin{equation} \label{eq:boundary_strata_moduli_discs}
 \Rbar_{\vI[1]} \times  \Rbar_{\vI[2]} \to  \Rbar_{\vI}
\end{equation}
which cover its boundary.  In addition, we choose strip-like ends $\{ \epsilon_{i_k} \}_{k=0}^{d}$ near the marked points of all surfaces $\Sigma$ in $\Rbar_{\vI}$; this choice may be done globally in a smooth family over the moduli space. 
\begin{defin}
A {\bf perturbation datum} on $\Sigma$ is a choice of an $\sH$ valued $1$-form $K^{\vI}$ on $\Sigma$ and a function $J^{\vI} \co \Sigma \to \sJ$.  The pull-backs of these data under a strip-like end corresponding to a marked point labelled $(i,j)$ must agree with the Floer data for the pair $(i,j)$.

A {\bf universal perturbation datum for discs} $(\bfJ^{\cR}, \bfK^{\cR})$ is a choice of perturbation data for all elements of $\Rbar_{\vI}$, varying smoothly over the moduli space, and which is consistent; i.e. compatible with the maps \eqref{eq:boundary_strata_moduli_discs}.
\end{defin}

The choice of a perturbation datum on $\Sigma$ defines a perturbed  $\dbar$ equation analogous to \eqref{eq:perturbed_dbar_strip}: the Hamiltonian vector field associated to an element of $\sH$ and the choice of $K^{\vI}$ define a vector-field valued $1$-form $Y^{\vI}$ on every surface, so we consider the operator
\begin{equation} \dbar_{\Sigma}^{\vI} (u) = \left( du - Y^{\vI} \right)^{(0,1)}  \end{equation}
where the $(0,1)$ part is taken with respect to the almost complex structure $J^{\vI}$ on the target (and of course the underlying complex structure on $\Sigma$ on the source).

 In particular, given a universal perturbation datum, and a sequence $(p_0,\vp) = (p_0, p_1, \ldots, p_d)$ with $p_0$ a chord from $Q_{i_0}$ to $Q_{i_d}$, and $p_{k}$ for $1 \leq k \leq d$ chords from $Q_{i_{k-1}}$ to $Q_{i_{k}}$ (for the appropriate Hamiltonians), we define 
\begin{equation}
\cR(p_0, \vp) = \coprod_{\Sigma \in \cR_{\vI}} \{ u \co \Sigma \to M | u (\partial \Sigma) \subset Q_{1} \cup Q_2,  \lim_{s \to \pm \infty} u(\epsilon_{i_k}(s,t)) = p_{i_k} , \dbar_{\Sigma}^{\vI} u =0\}
\end{equation}
where the condition on the boundary is more precisely stated as follows:  the image of a segment in $\partial \Sigma$ labelled by $i_k$ is contained in $Q_{i_k}$.

The following result  generalises Lemma  \ref{lem:strips_moduli_spaces_cpact} to discs with multiple inputs, and follows from the results proved in \cite{seidel-book}:
\begin{prop}
For a generic choice of perturbation data, all moduli spaces $\cR(p_0, \vp)$ are regular, and hence have the expected dimension
\begin{equation}
 d-2 + \deg(p_0) - \sum_{1 \leq k \leq d} \deg(p_k).
\end{equation}
Their Gromov bordifications $\Rbar(p_0, \vp)$ are compact, and those which have expected dimension $1$  are manifolds with boundary. \noproof
\end{prop}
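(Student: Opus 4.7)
The plan is to follow the inductive scheme from Seidel's book \cite{seidel-book}, treating regularity, the dimension formula, and compactness in turn, each of which amounts to a local adaptation of the standard Lagrangian Floer machinery to the present setup where almost complex structures are constrained to be of contact type near $\partial M^{\ins}$ (and $\partial M^{\mid}$) and Hamiltonians are supported in $M^{\ins}$.

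First I would establish regularity by a Sard--Smale argument on the universal moduli space. Fix $\vI$ and the chords $(p_0, \vp)$, and consider the Banach manifold of all triples $(\Sigma, u, (\bfJ^{\cR},\bfK^{\cR}))$ where $(\bfJ^{\cR},\bfK^{\cR})$ ranges over a Banach-space neighbourhood of perturbation data whose boundary behaviour matches the already-chosen Floer data, and where $u$ satisfies the perturbed $\dbar$ equation. Linearizing in all variables, surjectivity of the universal operator is a standard consequence of unique continuation together with the fact that $\sH$ and $\sJ$ are large enough to realize arbitrary first-order variations at a point of $\Sigma$ away from the marked points and boundary; the $\sJ$-component handles interior points, while the Hamiltonian part of $\sH$ handles boundary points mapping into the interior of the Lagrangians. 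Sard--Smale then yields a residual set of universal perturbation data for which every $\cR(p_0, \vp)$ is cut out transversally. One carries this out inductively on $d$, extending regular perturbations chosen on lower-dimensional strata using the maps of Equation \eqref{eq:boundary_strata_moduli_discs}, exactly as in Section (9) of \cite{seidel-book}.

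Next, the dimension count is an index calculation: the linearized operator at a map $u \in \cR(p_0,\vp)$ is a Cauchy--Riemann operator on a disc with $d+1$ boundary punctures and totally real boundary conditions, whose Fredholm index equals $n\cdot\chi + \deg(p_0) - \sum_{k=1}^{d}\deg(p_k)$ in the usual convention, while the parameter space $\cR_{d}$ contributes its dimension $d - 2$. Adding these gives the asserted dimension once one accounts for the normalisation \eqref{eq:normalisation_floer_complex}, and the isomorphism on determinant lines analogous to \eqref{eq:index_iso_strip} is then recorded for use in the orientation arguments of later sections.

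For compactness, I would argue first that the images of all $u$ are confined to a fixed compact set of $M$. Since the Hamiltonians lie in $\sH$, their flows preserve $\partial M^{\ins}$, and since all almost complex structures are of contact type there, the integrated maximum principle of \cite{abouzaid-seidel}*{Lemma 7.2} (cf.\ the references invoked for Lemma \ref{lem:strips_moduli_spaces_cpact}) forbids a holomorphic curve from touching $\partial M^{\ins}$ from inside; the same argument applied at $\partial M^{\mid}$ handles the outer boundary. With a uniform $C^0$ bound in hand, standard Gromov compactness for discs with Lagrangian boundary conditions (there is no sphere or disc bubbling since $\omega$ is exact on $M$ and the $Q_i$ are exact Lagrangians) shows that sequences converge modulo bubbling of the conformal structure of the domain. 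The limiting configurations are therefore broken perturbed $J$-holomorphic discs, in correspondence with the degenerations of $\Rbar_{\vI}$ described by the maps \eqref{eq:boundary_strata_moduli_discs}; this identifies $\partial \Rbar(p_0,\vp)$, in the one-dimensional case, as the disjoint union of products $\cR(p_0,\ldots,p,\ldots)\times\cR(p,\ldots)$ over the admissible factorisations, and a standard gluing argument promotes $\Rbar(p_0,\vp)$ to a topological manifold with boundary.

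The main obstacle is the inductive construction of the universal perturbation datum: one must simultaneously achieve transversality on the interior of every $\cR_{\vI}$ and insist that the perturbation datum extend consistently across every boundary stratum of the form \eqref{eq:boundary_strata_moduli_discs}. This is precisely the point where the hypothesis that $\omega$ vanishes on $\pi_2(M, Q_1\cup Q_2)$, together with the containment of Hamiltonian support in $M^{\ins}$ and the contact type boundary behaviour, is decisive: it rules out the problematic bubbles that would otherwise spoil both the a priori energy bound and the inductive extension of regular data from strata of lower dimension. Once this is in place, the remaining compactness and manifold-with-boundary statements are routine consequences of the standard theory.
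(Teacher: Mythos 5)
The paper gives no proof of this proposition, stating only that it follows from the results in \cite{seidel-book}; your reconstruction follows Seidel's Sard--Smale, index-theoretic, and Gromov-compactness scheme and is therefore the same approach. Two points deserve correction, though. First, $M^{\mid}$ plays no role for the moduli spaces $\cR(p_0,\vp)$ appearing here: the paper introduces $M^{\mid}$ only to establish compactness for moduli spaces of caps in the Floer-to-Morse functor, whereas for $\Fuk(Q_1,Q_2)$ itself the contact type condition at $\partial M^{\ins}$ already confines the curves, since both Lagrangians, all Hamiltonian supports (by the definition of $\sH$), and hence all chords $(p_0,\vp)$ lie inside $M^{\ins}$. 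Second, and more substantively, the hypothesis $\omega|_{\pi_2(M,Q_1\cup Q_2)}=0$ is not what underwrites this proposition: the exactness of each $Q_i$ individually together with the exactness of $\omega$ already excludes disc and sphere bubbling and supplies the a priori energy bound for any sequence of chords, by Stokes's theorem applied to the chosen primitives on $Q_1$ and $Q_2$; this is precisely why $\Fuk(Q_1,Q_2)$ is defined for any pair of exact branes in the sense of Section \ref{sec:introduce_fukaya}. The $\pi_2$ hypothesis enters only in the statement and proof of Theorem \ref{thm:general_result}, not in the internal consistency of the Fukaya category, so your claim that it is ``decisive'' for ruling out bubbles here conflates two different hypotheses.
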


The gluing theorem for elliptic operators on the disc gives an isomorphism
\begin{equation} \lambda(\cR(p_0, \vp)) \cong \lambda(\cR_{d}) \otimes \ro_{p_0} \otimes \ro_{p_1}^{\vee} \otimes \ldots \otimes \ro_{p_d}^{\vee} ,\end{equation}
where $\lambda$ is the top exterior power, and hence every rigid holomorphic disc $u$ determines an isomorphism
\begin{equation} \ro_{p_1}\otimes \ldots \otimes \ro_{p_d} \to  \ro_{p_0} . \end{equation}
 Writing $\mu_{u}$ for the induced map on the orientation lines, we use Seidel's conventions, and define the $d$\th higher product
\begin{equation*}  
\mu^{\F}_{d}  \co CF^{*}(Q_{i_{d-1}},Q_{i_d}) \otimes \cdots \otimes CF^{*}(Q_{i_1} ,Q_{i_2}) \to CF^{*}(Q_{i_{1}}, Q_{i_d}) 
\end{equation*}
as a sum of the contributions of all holomorphic discs
\begin{equation*}
[p_d] \otimes \cdots \otimes [p_{1}] \mapsto (-1)^{\dagger( \vp)} \sum_{\stackrel{ p_0}{u \in \cR(p_0, \vp)}} \mu_{u}  ( [p_d] \otimes \cdots \otimes [p_{1}])  \end{equation*}
where the sign is given by
\begin{equation}  \label{eq:sign_twisting_fukaya} \dagger( \vp) = \sum_{k=1}^{d} k \deg(p_{k}).  \end{equation}

\section{From simplicial to Morse cochains} \label{sec:simp_to_morse}
In this section, we construct an $A_{\infty}$ equivalence
\begin{equation} \label{eq:functor_simp_morse}
\cF \co \Simp(\sQ_1,\sQ_2) \to \Morse(Q_1,Q_2) 
\end{equation}
The main ingredients in the construction of such an equivalence is the interpretation of the cup product as an appropriate (perturbed) intersection product discussed in Section \ref{sec:simp}, and a moduli space of shrubs $\Shrub_{d}$  introduced in \cite{abouzaid}; it seems that our spaces are combinatorially equivalent to those later introduced by Forcey \cite{forcey} under the name composihedra.  These moduli spaces are quotients of multiplihedra, and control $A_{\infty}$ functors from differential graded algebras to $A_{\infty}$ algebras.  From now on, we shall assume
\begin{equation} \label{cond:morse_simp_compatible}
\parbox{36em}{The cells of $\check{\sQ_{i}}$ intersect the boundary of $\bar{U}_i$ transversely, and there are nested weak homotopy equivalences $\check{\Delta}_{i} \subset U_i \subset \Delta_{i}$.  Moreover, the identification $\Delta_{1} \cong \Delta_{2}$ restricts on $U_i$ to the diffeomorphism of Equation \eqref{eq:diffeo_open_sets_Q_i,j}.}
\end{equation}
In particular for $i \neq j$, we obtain a cellular subdivision $\check{\sQ}_{i,j}$ of $Q_{i,j}$, which is transverse to the boundary, by pulling back the dual subdivision to $\sQ_{i}$, and the diffeomorphism $Q_{1,2} \cong Q_{2,1}$ respects this cellular subdivision.

We shall briefly review the construction of $\Shrub_{d}$, and focus on the choices of perturbation data which are necessary to bypass the transversality problems that would arise if we use only gradient flow lines as in \cite{abouzaid}.

\begin{defin}
The moduli space of {\bf shrubs} $\Shrub_{d}$ is the space of isomorphism classes of metric ribbon trees $(S,g_S)$ with one infinite outgoing edge, and $d$  finite incoming edges whose endpoints are equidistant to the outgoing edge.
\end{defin}

The moduli space of shrubs admits a natural compactification $\Shrubbar_{d}$ by allowing finite edges to become infinite, or incoming edges to shrink to $0$ length.  The property of equidistance required of the outgoing leaves implies that whenever a sequence in $\Shrub_{d}$ converges to a point on the boundary where the length of some finite edge becomes infinite, every path from the outgoing leaf to an incoming one must contain such an edge.  We can reconstruct such limit points as follows: whenever $\sum_{k=1}^{r} d_k = d$ we obtain a singular shrub by attaching the outputs of $r$ shrubs (the $k$\th shrub having $d_k$ inputs) to a Stasheff tree with $r$ inputs.  This construction defines a map
\begin{equation} \label{eq:boundary_shrub_tree}
\Stasheffbar_{r} \times \Shrubbar_{d_1} \times \ldots \times \Shrubbar_{d_r} \to \Shrubbar_{d}.
\end{equation}
Similarly, whenever the length of some incoming edge shrinks to $0$, there must be at least one other (adjacent) edge which is also collapsed, so that we have for each integer $k$ between $1$ and $d-1$ a map
\begin{equation} \label{eq:boundary_shrub_lenght_0}
\vee_{k} \co \Shrubbar_{d-1} \to \Shrubbar_{d}
\end{equation}
obtained by grafting a pair of edges of length $0$ at the $k$\th external vertex.  The following result appears in Appendix B of \cite{abouzaid}:
\begin{lem}
The moduli space $\Shrubbar_{d}$ is a compact manifold with boundary whose boundary is covered by the images of the maps \eqref{eq:boundary_shrub_tree} and \eqref{eq:boundary_shrub_lenght_0}.
\end{lem}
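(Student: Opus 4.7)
The plan is to stratify $\Shrubbar_d$ by the combinatorial type of the underlying ribbon tree, parametrize each top-dimensional stratum by explicit Euclidean coordinates, and separately identify the codimension 1 degenerations that produce interior walls from those producing boundary faces. For each trivalent ribbon tree $S$ with $d$ inputs, I would parameterize the stratum $\Shrub_S \subset \Shrub_d$ by the lengths $e_1, \ldots, e_{d-2}$ of its internal edges and the common distance $L > 0$ from each input to the outgoing edge. The equidistance condition then determines the length of the $k$\th incoming edge as $l_k = L - \sigma_k$, where $\sigma_k$ is the sum of internal edge lengths on the path from input $k$ to the outgoing edge; hence $\Shrub_S$ is canonically identified with the open subset of $\bR_{>0}^{d-1}$ cut out by the inequalities $L > \sigma_k$, and in particular is a smooth $(d-1)$-manifold.

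I would next enumerate the three kinds of codimension 1 degeneration and classify each as either an interior wall or a boundary face. If an internal edge $e_j \to 0$, collapsing produces a quadrivalent vertex admitting a second trivalent resolution $S'$; using $e_j$ and the analogous coordinate $e_{j'}$ of $S'$ as two halves of a single signed variable glues $\Shrub_S$ and $\Shrub_{S'}$ smoothly across the wall, so this is an interior transition. If an incoming edge $l_i \to 0$, the equidistance condition forces the sibling subtree of input $i$ to collapse simultaneously, and at codimension 1 this occurs precisely when the sibling is itself an adjacent input $i \pm 1$; the resulting degenerate shrub lies in the image of $\vee_k$. If an internal edge $e_j \to +\infty$, cutting along that edge separates $S$ into a lower component (connected to the outgoing edge, which after the cut becomes an element of $\Stasheffbar_r$) and several upper components (each a shrub with some $d_k$ inputs, with $\sum d_k = d$), producing an element in the image of \eqref{eq:boundary_shrub_tree}.

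The main technical step is to exhibit, near every stratum, smooth local coordinate charts for $\Shrubbar_d$ whose boundary faces are exactly the images of \eqref{eq:boundary_shrub_tree} and \eqref{eq:boundary_shrub_lenght_0}. This is handled by induction on $d$: the same analysis applied to the lower-dimensional factors $\Shrubbar_{d_k}$ appearing in the image of \eqref{eq:boundary_shrub_tree}, combined with the known manifold-with-corners structure of the Stasheff polytopes $\Stasheffbar_r$, yields the desired charts at corners where several boundary faces meet. Compactness then follows from the finiteness of the stratification and the fact that we have adjoined all limits of the three degenerations above: any divergent sequence of edge lengths in a top-dimensional stratum admits a subsequence along which a specified subset of edges converges to zero or to infinity, placing the limit in one of the compactified strata.
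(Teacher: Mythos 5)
Your strategy — parametrize the open strata of $\Shrub_d$ by coordinates $(e_1,\ldots,e_{d-2},L)$, then classify the codimension-one degenerations — is a reasonable way to approach this (the paper itself just cites Appendix B of \cite{abouzaid} and does not give a proof), but your enumeration of degenerations is incomplete in a way that is fatal for the compactness claim and for showing that the boundary is covered by the stated maps.

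In your chart, the free parameters are the $d-2$ internal edge lengths and the common height $L$, subject to $e_j>0$ and $L>\sigma_k$ for all $k$. The boundary faces of the closure of this region are then $e_j=0$, $e_j=\infty$, $L=\max_k\sigma_k$, \emph{and} $L=\infty$. You address the first three, but omit $L\to\infty$ with all $e_j$ bounded. This is a genuine codimension-one face: every incoming edge $l_i = L - \sigma_i$ diverges while the stem retains its finite internal geometry, and the limiting singular shrub consists of a Stasheff tree with internal edges $e_1,\ldots,e_{d-2}$ and $d$ infinite external edges, together with $d$ trivial one-input shrubs. This is exactly the $r=d$ image of the map \eqref{eq:boundary_shrub_tree}, namely $\Stasheffbar_d\times\Shrubbar_1^{\times d}\to\Shrubbar_d$, which has dimension $d-2$ and is a top-dimensional boundary stratum. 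Your compactness paragraph asserts that ``we have adjoined all limits of the three degenerations above,'' but a sequence with $L\to\infty$ and all $e_j$ bounded is not a limit of any of those three.

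A secondary issue worth flagging: in the $e_j\to+\infty$ case, the equidistance condition forces more than you acknowledge. If $e_j\to\infty$, then $\sigma_i\to\infty$ for every input $i$ in the subtree beyond $e_j$, hence $L\to\infty$, hence $l_k\to\infty$ for every input $k$ whose $\sigma_k$ stays bounded. So cutting along the single edge $e_j$ gives exactly two components, not ``a Stasheff tree and several upper shrubs with $\sum d_k=d$''; to get the singular shrub in the image of \eqref{eq:boundary_shrub_tree} you must cut along the entire collection of edges whose lengths diverge, and the way this collection is organized (a lower piece where lengths relative to the root stay bounded, and several upper pieces whose internal geometry stabilizes) is what needs to be identified. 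Your phrase ``several upper components'' with $\sum d_k=d$ suggests you had the right picture in mind, but it is inconsistent with ``cutting along that edge'' and the forced breaking should be stated explicitly. Fixing both points — adding the $L\to\infty$ face, and describing the full set of divergent edges when some $e_j\to\infty$ — would close the argument.
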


As in the previous section,  a sequence $\vI$ induces a unique labelling, compatible with the cyclic order, of all edges by pairs $(i(e), j(e))$, such that the incoming edges are labelled by $(i_k, i_{k+1})$.  Moreover, the maps \eqref{eq:boundary_shrub_lenght_0} and \eqref{eq:boundary_shrub_tree} have their analogues for labelled shrubs
\begin{align} \label{eq:boundary_shrub_labelled-tree}
\Stasheffbar_{\vR}  \times \Shrubbar_{\vI[1]} \times \ldots \times \Shrubbar_{\vI[r]}   & \to \Shrubbar_{\vI} \\
\label{eq:boundary_shrub_labelled-length-0}
 \Shrubbar_{\vI - \{i_{k} \}} & \to \Shrubbar_{\vI},
\end{align}
where the labels on the output of a shrub in $ \Shrubbar_{\vI[k]}$ agree with the  labels of the $k$\th incoming leaf of a Stasheff tree in $\Stasheffbar_{\vR}$.  A choice of $\vI$ determines a gradient flow equation on every edge of a shrub $(S,g_S)$, which we perturb as follows:
\begin{defin}
A {\bf universal consistent perturbation datum for shrubs} is a choice  $\bfX^{\Shrub}$ of a smooth family of perturbation data on $\coprod_{\vI}\Shrub_{\vI}$ which are compatible with the maps \eqref{eq:boundary_shrub_labelled-tree} and \eqref{eq:boundary_shrub_labelled-length-0}.
\end{defin}

The discussion of smoothness is relegated to Section \ref{app:stasheff}.  Recall that the definition of a perturbation datum requires that the resulting perturbed negative gradient flow be outward (respectively inward) pointing on the boundary of $Q_{1,2}$ (respectively $Q_{2,1}$).  The choice of dual cell subdivision gives another condition
\begin{defin}
A universal perturbation datum is {\bf compatible with $\check{\sQ}$} if, whenever $e_k$ and $e_{k+1}$ are successive incoming leaves of length $0$ in a shrub $(S,g_S)$, the vector field
\begin{equation} \label{eq:simplicial_condition_perturbation}
- \nabla f_{i_{k},i_{k+1}} + X_{e_{k+1}} - \left( - \nabla f_{i_{k-1},i_{k}} + X_{e_{k}} \right)
\end{equation}
defined on the intersection of the images of $Q_{i_{k-1},i_{k}}$ and $Q_{i_{k},i_{k+1}}$ in $Q$ is simplicial.
\end{defin}

By Condition \eqref{cond:morse_simp_compatible} the behaviour of the (perturbed) gradient flow near the boundary is compatible with a requirement that the vector field be simplicial.  Indeed, the restriction of the tangent space of $Q_{i,j}$ to every cell $\check{\tau}$ of $\check{\sQ}_{i,j}$ contains an open cone $C_{\tau}$ consisting of vectors pointing in the direction of the top dimensional cell adjacent to $\check{\tau}$ which is minimal with respect to the ordering; the simplicial condition is the requirement that the restriction of a vector field to $\check{\tau}$ lie in $C_{\tau}$.  As the boundary of $C_{\tau}$  is covered by the tangent spaces of the codimension $1$ faces meeting at $\check{\tau}$,  $T \check{\tau}$ is included in its closure.  Since $\partial Q_{i,j}$ intersects $\check{\tau}$ transversely, we conclude that the tangent space  of the boundary separates $C_{\tau}$ into two non-empty open components.  Assuming that $ f_{i_{k},i_{k+1}}$, $f_{i_{k-1},i_{k}}$ and $X_{e_{k}}$ have been chosen already, we can always choose $X_{e_{k+1}}$ deep inside the appropriate cone so that \eqref{eq:simplicial_condition_perturbation} is a simplicial vector field.
\begin{lem}
There is a non-empty open subset of the space of universal consistent perturbation data for shrubs which consists of data that are compatible with $\check{\sQ}$.
\end{lem}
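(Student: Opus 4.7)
Openness is immediate: for each cell $\check{\tau}$ of $\check{\sQ}$ the condition that a vector field on $Q_{i,j}$ lie in the open cone $C_{\tau}$ along $\check{\tau}$ is preserved by sufficiently small $C^{\infty}$ perturbations, so a $C^{\infty}$-neighborhood of any compatible datum still consists of compatible data. The content of the lemma is therefore the existence of at least one universal consistent perturbation datum $\bfX^{\Shrub}$ compatible with $\check{\sQ}$, which I would construct by induction on $d = |\vI|$. The base case $d=1$ is vacuous by translation invariance. For $d \geq 2$, suppose compatible data have been fixed on every $\Shrubbar_{\vI'}$ with $|\vI'| < d$; consistency with the gluing maps \eqref{eq:boundary_shrub_labelled-tree} and \eqref{eq:boundary_shrub_labelled-length-0}, together with the inductive hypothesis and a previously chosen universal tree datum $\bfX^{\Stasheff}$, then prescribes $\bfX^{\Shrub}$ on the entire boundary $\partial \Shrubbar_{\vI}$.

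The critical observation is that the simplicial condition is imposed only on the stratum $Z_{k,k+1} \subset \partial \Shrubbar_{\vI}$ where two successive incoming leaves $e_k$ and $e_{k+1}$ have length zero, and on that stratum the data $X_{e_k}, X_{e_{k+1}}$ are unconstrained by consistency, since the edges $e_k$ and $e_{k+1}$ themselves collapse away under the map $\Shrubbar_{\vI - \{i_k\}} \to \Shrubbar_{\vI}$ and therefore inherit no prescription from the smaller shrub. Pick a cell $\check{\tau}$ of $\check{\sQ}$ contained in the intersection of the images of $Q_{i_{k-1},i_k}$ and $Q_{i_k,i_{k+1}}$ in $Q$, and let $C_{\tau}$ denote the open cone in the restriction of the tangent bundle described in Section \ref{sec:simp}. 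By Condition \eqref{cond:morse_simp_compatible} the boundaries of $Q_{i_{k-1},i_k}$ and $Q_{i_k,i_{k+1}}$ meet $\check{\tau}$ transversely, so the tangent space of the boundary splits $C_{\tau}$ into two non-empty open half-cones; exactly one of these half-cones corresponds to the inward/outward pointing requirement of Definition \ref{defin:pert_datum} on $X_{e_{k+1}}$. Given $f_{i_{k-1},i_k}$, $f_{i_k,i_{k+1}}$, and $X_{e_k}$, we may therefore choose $X_{e_{k+1}}$ deep within this admissible half-cone so that the vector field \eqref{eq:simplicial_condition_perturbation} is simplicial. Because $C_{\tau}$ varies continuously with the cell and the ambient data already vary smoothly over $Z_{k,k+1}$, the pointwise prescription assembles into a smooth family, yielding a compatible extension to $Z_{k,k+1}$.

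Once $\bfX^{\Shrub}$ has been prescribed on $\partial \Shrubbar_{\vI}$ so as to already enforce the simplicial condition on $Z_{k,k+1}$, the extension to the interior of $\Shrubbar_{\vI}$ places no further constraint and is effected by a partition-of-unity argument in a collar neighborhood of $\partial \Shrubbar_{\vI}$, respecting the automorphism symmetries of trees. The main obstacle is the global coherence of these local prescriptions: the values induced at a corner stratum by the two gluing maps \eqref{eq:boundary_shrub_labelled-tree} and \eqref{eq:boundary_shrub_labelled-length-0} whose images meet there must agree. This is a standard consequence of the recursive structure of the consistency relations and is handled, as in Appendix B of \cite{abouzaid} and the analogous constructions of \cite{seidel-book}, by ordering the induction so that corner strata of higher codimension are filled in before lower-codimension faces. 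The resulting datum is compatible with $\check{\sQ}$ by construction, and in combination with the openness noted at the outset, this completes the proof.
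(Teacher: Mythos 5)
Your proof is correct and takes essentially the same approach as the paper's: the key observation in both is that the simplicial compatibility condition only constrains the data on the pair of length-zero leaves $e_k, e_{k+1}$, which are created by the map $\vee_k$ and hence receive no prescription from the inductive choices made on $\Shrubbar_{\vI'}$ for $|\vI'| < |\vI|$, so that one can always choose (say) $X_{e_{k+1}}$ deep inside the appropriate half-cone. The paper's version is much terser, packaging the non-emptiness of the half-cone and the independence of edge data into the discussion preceding the lemma, whereas you unpack the induction and the openness more explicitly; the one small imprecision is your phrase that the edges ``collapse away under the map'' --- they are rather created by $\vee_k$ and simply do not exist in the source --- but the intended content is the same.
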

\begin{proof}
Since universal perturbation data are constructed inductively, it suffices to see that the compatibility condition on $\Shrubbar_{\vI}$ does not restrict the choice of perturbation data for $\Shrubbar_{\vI[']}$ whenever $\vI[']$ is a subset of $\vI$.  This is essentially obvious since our perturbation data on each edge are chosen independently of each other, and the compatibility condition restricts edges in $\Shrubbar_{\vI}$ which do not exist in $\Shrubbar_{\vI[']}$.
\end{proof}
From now on when we speak of a universal perturbation datum for shrubs, we shall assume that it is compatible with $\check{\sQ}$.  With the choice of such a datum, we can now define the moduli spaces which shall be used to construct the functor of Equation \eqref{eq:functor_simp_morse}.  Let us write $\vsig = (\check{\sigma}_1, \ldots, \check{\sigma}_{d})$ for a sequence of cells $\check{\sigma}_{k} \in \check{\sQ}_{i_k,i_{k+1}}$, and $x_0$ for a critical point of $f_{i_0,i_d}$:
\begin{defin} \label{defin:shrub_map}
The moduli space $\Shrub(x_0,\vsig)$ of (perturbed) gradient shrubs with inputs $\vsig$ and output $x_0$ is the set of maps
\begin{equation}
 \psi \co (S,g_S) \to Q
\end{equation}
such that (i) each edge $e$ of $S$ lifts as a perturbed gradient flow segment to the labelling manifold $Q_{i(e),j(e)}$, (ii) the lift of the outgoing leaf is the critical point $x$ and (iii) the lift of the $k$\th incoming leaf lies on $\check{\sigma}_k$.
\end{defin}
The moduli space $\Shrub(x_0,\vsig)$ admits a natural stratification coming from the stratification of $\Shrub_{d}$ by the topological type of the tree, and a bordification $\Shrubbar(x,\vsig)$ by allowing edges to become infinite.   In particular, we have a subset 
\begin{equation}
 \Shrubbar_{\vee_{k} }(x_0,\vsig) \subset \Shrubbar(x_0,\vsig)
\end{equation}
consisting of maps whose domain lies in the image of the inclusion of Equation \eqref{eq:boundary_shrub_labelled-length-0}; i.e. singular shrubs whose $k$ and $k+1$\st incoming edges both have length $0$.  

\begin{lem} \label{lem:simplicial_perturbation_shrubs_gives_cup}
If the cup product of $\check{\sigma}_{k}$ and $\check{\sigma}_{k+1}$ vanishes, then there is a neighbourhood of the image of $\vee_k$ in $\inte\left(\Shrubbar_{\vI} \right)$ whose inverse image under the forgetful map
\begin{equation}
 \Shrubbar(x_0,\vsig) \to \Shrubbar_{\vI}
\end{equation}
is empty.  In particular, this stratum of $\Shrubbar(x_0,\vsig)$ does not lie  in the closure of the top dimensional part of the moduli space $\Shrub(x_0,\vsig) $.
\end{lem}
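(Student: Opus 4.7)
The plan is to analyze perturbed gradient shrubs whose underlying tree lies near the image of $\vee_k$ in $\Shrubbar_{\vI}$. In such a shrub, the $k$-th and $(k+1)$-st incoming leaves $e_k$ and $e_{k+1}$ share a common internal vertex $v$. The defining equidistance property of shrubs---that the endpoints of the incoming leaves are equidistant from the outgoing edge---forces the lengths of $e_k$ and $e_{k+1}$ to coincide, say both equal to some small $t \geq 0$; the singular image of $\vee_k$ corresponds to $t = 0$. This equality is the geometric heart of the argument: without it, there would be no symmetry between the two flows, and the compatibility condition on perturbations could not be exploited.

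For a perturbed gradient shrub over such a tree, let $p_k \in \check{\sigma}_k$ and $p_{k+1} \in \check{\sigma}_{k+1}$ denote the images of the two short leaves, and let $q$ be the image of $v$ in $Q$. Setting $V_1 = -\nabla f_{i_{k-1},i_k} + X_{e_k}$ and $V_2 = -\nabla f_{i_k,i_{k+1}} + X_{e_{k+1}}$ for the two perturbed gradient fields, the edge equations give $q = \phi_t^{V_1}(p_k) = \phi_t^{V_2}(p_{k+1})$, so the existence of such a shrub is equivalent to $\check{\sigma}_k \cap \Psi_t^{-1}(\check{\sigma}_{k+1}) \neq \emptyset$, where $\Psi_t^{-1} := \phi_{-t}^{V_1} \circ \phi_t^{V_2}$. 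The compatibility of the perturbation datum with $\check{\sQ}$ guarantees that $V := V_2 - V_1$ is simplicial on the overlap of the two sheets, and a direct expansion yields $\Psi_t^{-1}(p) = \phi_t^V(p) + O(t^2)$. Since the local argument of Lemma \ref{lem:cup_is_intersection} applies to any simplicial vector field, the vanishing of $\check{\sigma}_k \cup \check{\sigma}_{k+1}$ implies $\check{\sigma}_k \cap \phi_t^V(\check{\sigma}_{k+1}) = \emptyset$ for all sufficiently small $t > 0$.

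The main technical point I expect is controlling the $O(t^2)$ deviation of $\Psi_t^{-1}$ from $\phi_t^V$: the two flows agree only to first order in $t$, and one must rule out that a spurious intersection appears at the quadratic scale. I would handle this by a local transversality analysis along the locus $\overline{\check{\sigma}_k} \cap \overline{\check{\sigma}_{k+1}}$, showing that when the cup product vanishes, the simplicial flow moves $\check{\sigma}_{k+1}$ off $\check{\sigma}_k$ with non-zero normal speed, so that the displacement distance between $\check{\sigma}_k$ and $\phi_t^V(\check{\sigma}_{k+1})$ grows linearly in $t$ and dominates the quadratic error term. This yields emptiness of $\check{\sigma}_k \cap \Psi_t^{-1}(\check{\sigma}_{k+1})$ for all small $t > 0$, and hence no perturbed gradient shrub exists in a neighbourhood of $\vee_k$ in $\Shrubbar_{\vI}$. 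The ``in particular'' clause then follows because the top-dimensional part of $\Shrubbar(x_0,\vsig)$ projects into $\inte(\Shrubbar_{\vI})$, which has been shown to be empty near $\vee_k$.
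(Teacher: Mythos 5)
Your proof is correct in spirit and shares the paper's local, intersection-theoretic strategy, but the two arguments diverge at the key step. The paper's proof is very short: it invokes a ``slight generalisation'' of Lemma~\ref{lem:cup_is_intersection} in which the dual cells $\check{\sigma}_k$ and $\check{\sigma}_{k+1}$ are moved by \emph{two} flows $\phi_k$, $\phi_{k+1}$ whose generating fields have simplicial difference, and observes that the perturbed gradient flows on the short edges $e_k$, $e_{k+1}$ near $\vee_k$ are exactly of this form, so the vanishing of the cup product forces the intersection $\phi_k(\check{\sigma}_k)\cap\phi_{k+1}(\check{\sigma}_{k+1})$ to be empty. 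You instead reduce to the \emph{original} one-flow statement by composing: you correctly record the equidistance constraint forcing $|e_k|=|e_{k+1}|=t$, and expand $\Psi_t^{-1}=\phi_{-t}^{V_1}\circ\phi_t^{V_2}=\phi_t^{V}+O(t^2)$ with $V=V_2-V_1$ simplicial. This is a legitimate alternative, and conceptually it makes visible exactly what the paper's ``slight generalisation'' must secretly prove. The cost is that you now have to dominate the $O(t^2)$ commutator error by the $O(t)$ separation coming from Lemma~\ref{lem:cup_is_intersection}, and you correctly identify this as the crux. That step is plausible (compactness of the closed cells $\overline{\check{\sigma}_k}\cap\overline{\check{\sigma}_{k+1}}$ plus the open-cone condition in the definition of a simplicial vector field give a uniform lower bound on the outward normal speed, so the linear separation dominates the quadratic error for $t$ small uniformly over the stratum), but as written it remains a sketch rather than a proof; if you want your argument to stand on its own without appealing to the black box the paper uses, this estimate is the piece you need to flesh out.
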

\begin{proof}
The proof is completely local.  A slight generalisation of Lemma \ref{lem:cup_is_intersection}, implies that the condition that the cup product of $\check{\sigma}_{k}$ and $\check{\sigma}_{k+1}$ vanish is equivalent to the absence of intersection points between the images of $\check{\sigma}_{k}$ under a flow $\phi_{k}$ and $\check{\sigma}_{k+1}$ under a flow $\phi_{k+1}$, whenever $\phi_{k}$ and $\phi_{k+1}$ are generated by sufficiently small vector fields $X_k$ and $X_{k+1}$ whose difference is compatible with the simplicial triangulation.  If $(S,g_{S})$ is a singular shrub with sufficiently short incoming edges $e_{k}$ and $e_{k+1}$, the flows obtained by integrating the perturbed gradient vector field along these edges satisfy this property due to our assumption that the perturbation datum $X^{\Shrub}$ is smooth and compatible with the subdivision $\check{\sQ}$.  The desired result follows immediately.
\end{proof}

We also need a compactness result analogous to Lemma \ref{lem:compactness_stasheff}
\begin{lem} \label{lem:compactness_shrubs}
Given $x_0$ a critical point of $f_{0,d}$, and  $(\check{\sigma}_1, \ldots, \check{\sigma}_d)$ cells of the subdivisions $\sQ_{i_k, i_{k+1}}$, the moduli space $\Shrubbar(x_0, \vsig)$ is compact.
\end{lem}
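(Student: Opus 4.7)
The plan is to adapt the proof of Lemma \ref{lem:compactness_stasheff} to the shrub setting, which has three ingredients: (I) preventing the images of edges labelled by mixed pairs from escaping to the non-Hausdorff locus of $Q$; (II) applying standard compactness for perturbed gradient trees on the smooth manifolds $Q_1$ and $Q_2$; and (III) identifying the possible limit configurations with the boundary strata encoded by the maps \eqref{eq:boundary_shrub_labelled-tree} and \eqref{eq:boundary_shrub_labelled-length-0} defining $\Shrubbar_{\vI}$.

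For step (I), I would prove an analogue of the Claim within the proof of Lemma \ref{lem:compactness_stasheff}: because $\bfX^{\Shrub}$ is consistent, there is a uniform neighborhood of $\partial Q_{1,2}\cong\partial Q_{2,1}$ throughout which the perturbed negative gradient flow on any edge labelled by a mixed pair $(i,j)$, $i\neq j$, moves strictly away from the boundary. As in the Stasheff case one then traces a maximal monochromatic arc of edges labelled by a fixed mixed pair: if its image were to lie entirely in this neighborhood, both of its endpoints would also have to lie in it. The outgoing end of such an arc must eventually feed into the outgoing leaf (through label-compatible propagation at internal vertices), where the map converges to a critical point $x_0$ of $f_{i_0,i_d}$, which by Morse--Smale genericity lies away from the boundary. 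The incoming end, new to the shrub setting, lies on a cell $\check{\sigma}_k\in\check{\sQ}_{i_k,i_{k+1}}$; by Condition \eqref{cond:morse_simp_compatible} these cells meet $\partial Q_{i_k,i_{k+1}}$ transversely, so the inward-pointing nature of the perturbed flow forbids a positive-length monochromatic arc from sitting entirely in the shrunken neighborhood.

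For step (II), once (I) is in place, images of mixed-labelled edges stay in compact subsets of the interior of $Q_{i,j}$, while for edges labelled $(i,i)$ the lift to $Q_i$ is prescribed by the labelling, so the analysis is performed locally in genuine smooth Riemannian manifolds. The standard compactness theory for moduli of perturbed gradient trees (cf.\ \cite{FO}, and Appendix B of \cite{abouzaid} for the shrub-specific features, in particular the preservation of the equidistance constraint) then shows that any sequence $\psi_n\in\Shrub(x_0,\vsig)$ subconverges, with potential non-convergence only through (a) internal edges whose lengths grow without bound, producing a singular shrub in the image of \eqref{eq:boundary_shrub_labelled-tree} with a Stasheff-tree stratum grafted on the outgoing side, or (b) incoming edges collapsing to zero length, producing a singular shrub in the image of \eqref{eq:boundary_shrub_labelled-length-0}. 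Step (III) is then immediate: the constraint that the $k$th incoming leaf lies on the closed cell $\check{\sigma}_k$ and that the outgoing leaf converges to $x_0$ are both closed conditions that pass to the limit, so the limiting object is a singular perturbed gradient shrub in $\Shrubbar(x_0,\vsig)$.

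The main obstacle will be step (I): the argument used in Lemma \ref{lem:compactness_stasheff} relies crucially on incoming leaves limiting to critical points that lie uniformly away from $\partial Q_{i,j}$, whereas the cells $\check{\sigma}_k$ may touch $\partial Q_{i,j}$. The resolution is that transversality in Condition \eqref{cond:morse_simp_compatible} together with the flow pushing points inward guarantees that an arc of positive length cannot be trapped in a sufficiently thin boundary neighborhood, but one must verify this carefully for each of the label-propagation cases (monochromatic $(1,2)$ arcs, monochromatic $(2,1)$ arcs, and their junctions at internal vertices) exactly as in the proof of the Claim in Lemma \ref{lem:compactness_stasheff}.
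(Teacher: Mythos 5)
Your overall strategy — adapt the Claim from Lemma~\ref{lem:compactness_stasheff}, trace arcs of mixed-labelled edges, then invoke compactness for $(i,i)$-labelled edges and match limits to the strata \eqref{eq:boundary_shrub_labelled-tree} and \eqref{eq:boundary_shrub_labelled-length-0} — does match the paper's proof at a high level, and you correctly isolate the genuinely new difficulty: the incoming leaves of a shrub end on the cells $\check{\sigma}_k$ rather than on critical points, and those cells can (and generically do) meet $\partial Q_{i,j}$.

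However, the specific way you propose to deal with that difficulty is different from the paper's and, as stated, has a gap. You assert that ``transversality in Condition~\eqref{cond:morse_simp_compatible} together with the flow pushing points inward guarantees that an arc of positive length cannot be trapped in a sufficiently thin boundary neighborhood.'' This does not follow. First, the shrub's incoming leaves have arbitrary (in particular, arbitrarily small) length, so no a priori positive lower bound on the arc length is available; a short $(2,1)$-labelled leaf whose initial point on $\check\sigma_k$ is near the boundary has image entirely inside the collar no matter how inward-pointing the flow is. Second, along a maximal arc of edges mapping to $Q_{1,2}\cong Q_{2,1}$ the labels \emph{alternate} between $(1,2)$ and $(2,1)$ (this follows from the cyclic compatibility of labels at each vertex), so the distance to $\partial Q_{i,j}$ does not vary monotonically along such an arc; it oscillates, and one cannot conclude both endpoints lie in the collar just because one interior edge does. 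Transversality of $\check\sigma_k$ with $\partial\bar U_i$ is used in the paper elsewhere (in controlling the simplicial vector fields near the boundary), but it does not by itself force arcs out of a thin neighbourhood.

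What the paper actually exploits, and what your proposal misses, is that the $(2,1)$-labelled input cells are \emph{forced}: the morphism space $\Hom^{\S}_*(Q_2,Q_1)=C^*(\Delta,\partial\Delta)$ has rank one, so the only cell $\check{\sigma}_k$ that can label a $(2,1)$ input is $\check{\Delta}$, the cell dual to the interior of $\Delta$, and Condition~\eqref{cond:morse_simp_compatible} places $\check\Delta_i\subset U_i$, i.e.\ strictly in the interior of $Q_{2,1}$. That is exactly why the paper opens by fixing a neighbourhood of $\partial Q_{2,1}$ \emph{disjoint from the cell dual to the interior of $\Delta$}: this pegs the $(2,1)$-labelled ends of the mixed arcs uniformly away from the boundary, and one then argues along the alternating arc to rule out the collar meeting any mixed edge, dealing separately with the ends of the arc that are $(1,2)$-labelled. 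Your proposal never makes this choice of neighbourhood, never invokes the rank-one structure of $\Hom(Q_2,Q_1)$, and therefore lacks the anchor that makes the escape argument go through.
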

\begin{proof} 
  Again, it suffices to prove that no edge labelled by $Q_{1,2}$ or $Q_{2,1}$ may escape to the boundary, which can be done in three case, of which we explain only one.   We fix a neighbourhood of the boundary of $Q_{2,1}$ which does not intersect the cell dual to the interior of $\Delta$.   Assuming that the image of an edge $e$ labelled by $Q_{2,1}$ intersects this neighbourhood, we follow an ascending arc all of whose edges are labelled by $Q_{2,1}$, and derive the contradictory conclusion that an input cells labelled by $(1,2)$ intersects a neighbourhood of the boundary of $Q_{2,1}$.
\end{proof}

In Section \ref{app:stasheff}, we explain how to prove:
\begin{lem} \label{lem:moduli_shrubs_nice_manifold}
$\Shrubbar(x,\vsig)$ is compact, and for generic universal perturbation data, all strata are smooth manifolds of the expected dimension.    Moreover, the closure of the top-dimensional strata is a compact manifold with boundary  of dimension
\begin{equation}
d -1 + \deg(x_0) - \sum_{k} \deg( \check{\sigma}_{k})
\end{equation}
which we denote $\Shrubhat(x_0,\vsig)$.
\end{lem}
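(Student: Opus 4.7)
The plan is to establish the three claims of the lemma --- compactness of $\Shrubbar(x_0,\vsig)$, smoothness of each stratum, and the manifold-with-boundary structure of $\Shrubhat(x_0,\vsig)$ --- through the standard package of compactness, transversality, and gluing, adapted to perturbed gradient shrubs. Compactness is essentially provided by Lemma \ref{lem:compactness_shrubs}: the argument there rules out escape of any edge to the non-Hausdorff locus of $Q$, so all analysis may be carried out on the manifolds $Q_1$ and $Q_2$ separately, where the classical compactness for gradient trees (following \cite{FO}) applies after passing to a subsequence.

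For transversality I would set up a universal moduli space parametrising tuples $\bigl((S,g_S),\psi,\bfX^{\Shrub}\bigr)$ with perturbation datum varying over the (open, non-empty) space of universal consistent data compatible with $\check{\sQ}$, and argue by induction on $d$. At each step the perturbation datum is already fixed on the boundary of $\Shrubbar_{\vI}$ by consistency with \eqref{eq:boundary_shrub_labelled-tree} and \eqref{eq:boundary_shrub_labelled-length-0}, and one extends it to the interior. Applying Sard--Smale stratum by stratum produces, for each combinatorial type of shrub, a residual set of perturbation data for which the corresponding stratum of the universal moduli space is a Banach manifold with Fredholm projection to the parameter space; taking a countable intersection gives a single residual set of universal perturbation data making every stratum of every $\Shrubbar(x_0,\vsig)$ a smooth manifold of the expected dimension. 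The key analytic input is that on any edge $e$ the vector field $X_e$ may be varied freely on compact subintervals away from the endpoints, which suffices for surjectivity of the linearisation; the simplicial compatibility \eqref{eq:simplicial_condition_perturbation} imposes only an open condition and so does not obstruct genericity.

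The dimension formula follows from an index / determinant-line calculation in the spirit of \eqref{eq:iso_det_bunldes_stasheff}: the abstract moduli $\Shrub_d$ has dimension $d-1$, the incidence condition at the $k$\th input cuts the dimension by $\deg(\check{\sigma}_k)$ (the codimension of $\check{\sigma}_k$ in $Q_{i_k,i_{k+1}}$), and the asymptotic constraint at the output cuts it by $n-\deg(x_0)$, yielding the claimed value $d-1+\deg(x_0)-\sum_k\deg(\check{\sigma}_k)$. To identify $\Shrubhat(x_0,\vsig)$ as a manifold with boundary, I would invoke standard gluing theorems for gradient trees: for the Stasheff-tree/shrub strata of \eqref{eq:boundary_shrub_labelled-tree} the gluing is parallel to the usual $A_{\infty}$ gluing for Morse theory from Appendix C of \cite{abouzaid}, while for the length-zero input strata \eqref{eq:boundary_shrub_labelled-length-0} one glues by opening up a short incoming edge and solving an implicit function problem against the simplicial perturbation datum. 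Deeper combinatorial degenerations of $S$ have codimension at least two, so by transversality they do not contribute to the boundary of the top stratum.

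The main obstacle, as I see it, is the length-zero gluing: unlike the classical neck-stretching gluings (in which edges become infinite), collapsing a vanishing edge must interact correctly with the simplicial condition \eqref{eq:simplicial_condition_perturbation}, and it is precisely this interaction that encodes the cup product on the simplicial side through Lemma \ref{lem:simplicial_perturbation_shrubs_gives_cup}. Handling this carefully is what makes $\Shrubhat(x_0,\vsig)$ behave well with respect to the eventual $A_{\infty}$ functor $\cF\co \Simp(\sQ_1,\sQ_2)\to\Morse(Q_1,Q_2)$.
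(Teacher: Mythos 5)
Your overall outline is sound---compactness from Lemma \ref{lem:compactness_shrubs}, Sard--Smale for transversality, and two types of gluing for the manifold-with-boundary claim---but the transversality step has a gap that the paper resolves with a specific device you have not reproduced. When you propose to build the universal perturbation datum ``by induction on $d$,'' extending from the boundary of $\Shrubbar_{\vI}$ into the interior, you are tacitly relying on the fact that a smooth function on a skeleton can always be extended smoothly over the next cell. That extension argument is what powers the construction for Stasheff trees (cf.\ Definition \ref{defin:smooth_pert_data_stasheff} and Lemma \ref{lem:inductive_construction_pert_data_stasheff}), and it works there because each cell $\Stasheffbar^{T\to T^{\infty}}$ is a manifold with corners. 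The paper's explicit point in Section \ref{app:stasheff} is that this fails for shrubs: the strata of $\Shrubbar_{d}$ are \emph{not} manifolds with corners, so one may not extend an arbitrary smooth function from one skeleton to the next. The fix is to formally embed
\begin{equation*}
\Shrubbar_{d}\hookrightarrow\Stasheffbar_{2d}\equiv\Shrubbarext_{2d}
\end{equation*}
by attaching two infinite edges to every incoming vertex of a shrub, and then to \emph{define} smoothness of a perturbation datum on $\Shrubbar_{\vI}$ as the condition that it be the restriction of a universal perturbation datum on $\Shrubbarext$ (Definition \ref{defin:smooth_pert_data_shrub}). Commutativity of the diagram \eqref{eq:gluing_shrubs_is_gluing_trees} then ensures that consistency on $\Shrubbarext$ implies consistency with the gluing maps \eqref{eq:boundary_shrub_labelled-tree}, and the inductive construction proceeds on the ambient manifold-with-corners exactly as for Stasheff trees. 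Without some analogue of this device, your step ``one extends it to the interior'' is not justified.

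The rest of your sketch matches the paper's treatment: the fibre-product interpretation \eqref{eq:fibre_product_picture_shrubs} is indeed the engine of the Sard--Smale argument, the dimension count is correct, the Gromov-breaking gluing is handled as for Stasheff trees (Lemma \ref{lem:gluing_shrubs}), and you correctly isolate the length-zero gluing and its interaction with the simplicial compatibility condition via Lemma \ref{lem:simplicial_perturbation_shrubs_gives_cup} as the delicate ingredient controlling the boundary of $\Shrubhat(x_0,\vsig)$.
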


We now restrict attention to the case where the moduli space of shrubs is rigid, i.e. to a sequence of cells $\vsig$ and a critical point $x_0$ such that
\begin{equation}
 \deg(x_0) = 1- d + \sum_{k} \deg( \check{\sigma}_{k})
\end{equation}
The description of the moduli space of shrubs as a fibred product (see Section \ref{sec:orienting_maps}) gives an isomorphism
\begin{equation} \label{eq:isomorphism_orientation_bundles_shrubs} \lambda(\Shrub_{d}(x_0, \vsig)) \otimes \lambda(Q^{d+1}) \cong \lambda(Q) \otimes \lambda(\Shrub_{d}) \otimes \lambda(W^{s}(x_0)) \otimes \lambda( \vsig) ,\end{equation}
where the last factor on the right hand side is a tensor product
\begin{equation}  \lambda( \vsig) \cong \lambda(\check{\sigma}_{1}) \otimes \cdots \otimes \lambda(\check{\sigma}_{d}) .\end{equation}
Using the isomorphism
\begin{equation} \label{eq:factor_tangent_space} \lambda(Q) \cong \lambda(\sigma_k) \otimes \lambda(\check{\sigma}_k),  \end{equation}
we conclude that a rigid shrub $\psi \in \Shrub_{d}(x_0, \vsig)$  determines an isomorphism
\begin{equation}  \lambda(\sigma_{1}) \otimes \cdots \otimes \lambda ({\sigma_{d}} )\to \ro_{x_0} . \end{equation}
Writing $\cF^{\psi}$ for the induced map on the orientation line, the $d$\th order map in our functor from the simplicial to the Morse category is defined via the formula
\begin{align} \label{eq:functor_simp_morse_formula}
 \cF^{d} \co C^{*}(\sQ_{i_{d-1}, i_{d}}) \otimes \cdots \otimes C^{*}(\sQ_{i_{0}, i_{1}}) & \to CM^{*}(f_{0,d}) \\
\check{\sigma}_{d} \otimes \ldots \otimes \check{\sigma}_{1} & \mapsto (-1)^{(n+1)\dagger(\vsig)} \sum_{\stackrel{x_0}{\psi \in \Shrub_{d}(x_0, \vsig)}} \cF^{\psi}(\vsig),
\end{align}
where the sign $\dagger(\vsig)$ is determined by a choice of orientation on the moduli of shrubs.  With our sign conventions, we would find
\begin{equation}  \label{eq:twist_signs_shrubs} \dagger(\vsig) = \sum_{k=1}^{d} k  \deg(\check{\sigma}_{k}).   \end{equation}

To illustrate the case of one input, let us fix a simplex $\sigma_1$ of $\sQ_{i,j}$, and a critical point $x_0$ of $f_{i,j}$ such that $\check{\sigma}_1$ and $W^{s}(x_0)$ intersect transversely.  At every such intersection point $\psi$, we have an isomorphism
\begin{equation}  \label{eq:isomorphism_rigid_shrub} \lambda(Q) \cong \lambda(W^{s}(x_0)) \otimes \lambda(\check{\sigma}_1), \end{equation}
which, upon using the decomposition \eqref{eq:factor_tangent_space} yields an isomorphism
\begin{equation} \lambda(\sigma_1) \cong \lambda(W^{s}(x_0)) .\end{equation}
The sign from Equation \eqref{eq:twist_signs_shrubs} is equal to
 \begin{equation} (-1)^{(n+1)\deg(\check{\sigma})} \end{equation}
in this case,  which determines the contribution of $\psi$ to the map 
\begin{equation} \label{eq:sign_F1}  \cF^{1} \co C^{*}( \sQ_{i,j}) \to CM^{*}(f_{i,j}). \end{equation}

To prove that this is a chain map, we must consider, as usual, $1$-dimensional moduli spaces of shrubs with one input.  This occurs whenever we are given a critical point $x_0$ such that
\begin{equation} \deg(x_0) = \deg(\check{\sigma}_1) +1.  \end{equation}
Since the space $\Shrubbar(x_0, \check{\sigma}_1)$ is the closure of the intersection of $\check{\sigma}_1$ with $W^{s}(x_0)$, its boundary is easily seen to consist of two strata: either (i) a family of gradient trajectories from $x_0$ escapes to a cell $\check{\sigma}_0 $ on the boundary of $\check{\sigma}_1$, or (ii) a family of such trajectories converges to a broken one, corresponding to the concatenation of an ascending gradient flow line from $x_0$ to a critical point $x_1$ of degree equal to $\deg(\check{\sigma}_1)$, followed by a gradient flow line from $x_1$ to $\check{\sigma}_1$.  These two configurations correspond to the two terms whose equality implies that $\cF^{1}$ is a chain map:
\begin{equation} \label{eq:chain_map_simplicial_morse} \cF^{1} \circ  \mu_1^{\S}  = \mu_{1}^{\M} \circ \cF^{1}.  \end{equation}
To prove the correctness of the signs, we note that we have a natural isomorphism
\begin{equation} \label{eq:isomorphism_1_dim_moduli_shrub} \lambda( \Shrub(x_0,\check{\sigma}_1)) \otimes \lambda(Q) \cong \lambda(W^{s}(x_0)) \otimes \lambda(\check{\sigma}_1)  \end{equation}
which we must compare with the isomorphisms \eqref{eq:isomorphism_rigid_shrub} and \eqref{eq:isomorphism_rigid_line} when we pass to the compactification $\Shrubbar(x_0, \check{\sigma}_1)$.

First, we obtain an isomorphism
\begin{equation} \lambda(\bR) \otimes \lambda(W^{s}(x_1)) \cong \lambda( W^{s}(x_0)) \end{equation}
from Equation \eqref{eq:isomorphism_rigid_line} and \eqref{eq:decom_tangent_space_crit_points}.  The result of taking the tensor product with the two sides of Equation \eqref{eq:isomorphism_rigid_shrub} gives:
\begin{equation}  \label{eq:analyse_shrub_breaking_trajectory} \lambda(\bR)  \otimes \lambda(W^{s}(x_1)) \otimes \lambda(Q) \cong \lambda(W^{s}(x_0)) \otimes  \lambda(W^{s}(x_1)) \otimes \lambda(\check{\sigma}_1) \end{equation}
The reader may easily check that the gluing theorem for gradient flow lines gives an isomorphism $\lambda(\bR) \cong \lambda(\Shrub(x_0,\check{\sigma}_1) )$ near a broken trajectory, with the positive direction in $\bR$ corresponding to the \emph{outward} pointing normal vector.  In order to pass from this to the isomorphism \eqref{eq:isomorphism_1_dim_moduli_shrub}, we must switch some factors taking care to record the appropriate Koszul signs:
\begin{enumerate}
\item Permute the first copy of $\lambda(W^{s}(x_1)) $ past $\lambda(\bR)$, introducing a parity change of  $\deg(\check{\sigma}_1)$, then
\item permute  the two factors of $\lambda(W^{s}(x_0)) \otimes  \lambda(W^{s}(x_1))$, introducing a sign equal to $\deg(x_0) \deg(x_1) $, which vanishes modulo $2$, then
\item cancel the $ \lambda(W^{s}(x_1))$ factors on either side.
\end{enumerate}
The result of the operations above is exactly the isomorphism of Equation \eqref{eq:isomorphism_1_dim_moduli_shrub}, with a change in signs given by the parity of $\deg(\check{\sigma}_1) $.

We must now analyse the same problem near the other type of boundary strata.  Near such a stratum, we have a natural isomorphism between  $\lambda( \Shrub(x_0,\check{\sigma}_1))$ and the normal vector of the boundary to $\check{\sigma}_1$.  In particular, Equation \eqref{eq:isomorphism_1_dim_moduli_shrub} gives an isomorphism
\begin{equation}  \lambda( \Shrub(x_0,\check{\sigma}_1)) \otimes \lambda(Q) \cong \lambda(W^{s}(x_0)) \otimes  \lambda( \Shrub(x_0,\check{\sigma}_1)) \otimes  \lambda( \partial \check{ \sigma}_1)  \end{equation}
which, after introducing a sign with parity $\deg(x_0)= \deg(\check{\sigma}_1) +1$, gives exactly the isomorphism of \eqref{eq:isomorphism_rigid_shrub} for the boundary of $\sigma$.   Note that this would naively seem to imply that $\cF^{1}$ satisfies the desired equation, since the two types of boundary strata have \emph{opposite relative orientations.}

Unfortunately, there are additional signs coming from (i) the signs in formulas \eqref{eq:formula_differential_morse} and  \eqref{eq:twist_diff_dg_to_A_infty} for the differentials in the Morse and simplicial categories and  (ii) the sign in the definition of the map $\cF^{1}$.  Remark \ref{rem:sign_difference} explains that the sign introduced  in the definition of $\Simp(\sQ_1, \sQ_2)$ contributes a $-1$ to the total formula because it corrects for the fact that duals cells do not have compatible orientations on their boundaries.  On the other hand, both the Morse differential and  $\cF^{1}$ contribute $(-1)^{n}$ and $(-1)^{n+1}$ respectively, so the artificial signs we have introduced cancel in this case.  The generalisation for higher products, gives:

\begin{prop} \label{prop:cup_product_agrees_perturbation}
The maps of Equation \eqref{eq:functor_simp_morse_formula} define an $A_{\infty}$ equivalence.  In particular, $\cF^{1}$ is a chain map inducing an isomorphism on cohomology, and
\begin{multline}
\sum_{1 \leq k \leq d} (-1)^{\maltese_{k}} \cF^{d}   \left(\id^{d-k-1} \otimes \mu_1^{\S} \otimes \id^{k} \right) + \sum_{1 \leq k \leq d-1} (-1)^{\maltese_{k}} \cF^{d-1}  \left(\id^{d-k-2} \otimes \mu_{2}^{\S} \otimes \id^{k} \right) \\
= \sum_{d_1+ \ldots +d_r =d} \mu_{r}^{\M}  \left( \cF^{d_1} \otimes \cdots \otimes \cF^{d_r} \right) \label{eq:simp_to_morse_A_infty_holds}
\end{multline}
where 
\begin{equation*} \maltese_{k}(\vp) = k + \sum_{j=1}^{k} \deg(p_j). \end{equation*}
\end{prop}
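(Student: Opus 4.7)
The plan is to prove Equation \eqref{eq:simp_to_morse_A_infty_holds} by analysing the codimension one boundary of the one-dimensional components of the moduli spaces $\Shrubbar(x_0, \vsig)$, and then deduce the $A_\infty$ equivalence from the fact that $\cF^1$ induces an isomorphism on cohomology (using that all morphism groups are free abelian, as in the deduction of Theorem \ref{thm:cotangent_bundle}).

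I would fix $\vsig$ and $x_0$ with $\deg(x_0) = 2 - d + \sum \deg(\check{\sigma}_k)$ so that $\Shrubhat(x_0, \vsig)$ is a compact $1$-manifold with boundary (Lemma \ref{lem:moduli_shrubs_nice_manifold}). By the description of the boundary of $\Shrubbar_\vI$ in Equations \eqref{eq:boundary_shrub_labelled-tree} and \eqref{eq:boundary_shrub_labelled-length-0}, together with the standard Morse-theoretic degenerations (a gradient trajectory on some edge breaks, or an incoming cell $\check{\sigma}_k$ degenerates onto its boundary), the boundary decomposes into four types of strata:
\begin{enumerate}
\item A gradient trajectory along the outgoing edge breaks off at a critical point $x'$, producing a product $\Stasheffbar_r(x_0; x'_1, \ldots, x'_r) \times \prod_j \Shrubbar(x'_j, \vsig_j)$, which contributes the right hand side of \eqref{eq:simp_to_morse_A_infty_holds}.
\item An incoming cell $\check{\sigma}_k$ is replaced by a cell on its boundary, contributing the first sum on the left of \eqref{eq:simp_to_morse_A_infty_holds} via $\mu_1^\S$.
\item Two adjacent incoming edges (the $k$\th and $(k{+}1)$\st) collapse to length zero, corresponding to the image of $\vee_k$. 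By Lemma \ref{lem:simplicial_perturbation_shrubs_gives_cup}, such contributions are nonzero only when $\check{\sigma}_k \cup \check{\sigma}_{k+1} \neq 0$, in which case the local intersection model (Lemma \ref{lem:cup_is_intersection}) identifies this stratum with the moduli space $\Shrub(x_0, \check{\sigma}_1, \ldots, \check{\sigma}_{k} \cup \check{\sigma}_{k+1}, \ldots, \check{\sigma}_d)$. This contributes the second sum on the left of \eqref{eq:simp_to_morse_A_infty_holds} via $\mu_2^\S$.
\item A gradient trajectory on an internal edge breaks, giving a pair of shrubs joined by a Stasheff tree of more than one input, already accounted for in (1).
\end{enumerate}
The fact that the signed count of boundary points vanishes yields the $A_\infty$ relation.

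The signs require the most care. The model computation is the one already done in the excerpt for $\cF^1$: one starts from the fibred-product isomorphism \eqref{eq:isomorphism_orientation_bundles_shrubs}, compares it with the isomorphisms coming from gluing gradient trajectories and from the boundary inclusion of cells, and tracks the Koszul signs involved in reshuffling the factors $\lambda(W^s(x_0))$, $\lambda(\check\sigma_k)$, and $\lambda(\Stasheff_r)$ or $\lambda(\Shrub_d)$. Combined with the twists $(-1)^{(n+1)\dagger(\vsig)}$, the ambient sign $(-1)^{(n+1)(\deg x_0 + \dagger(\vx))}$ appearing in $\mu^\M_r$, and the dg-to-$A_\infty$ twist \eqref{eq:twist_diff_dg_to_A_infty}, one obtains precisely the sign $(-1)^{\maltese_k}$, with the various $(-1)^{n+1}$ contributions cancelling as in the analysis of $\cF^1$. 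The computation is the direct analogue of the corresponding computation in Appendix C of \cite{abouzaid}, to which I would appeal for the book-keeping.

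For the equivalence statement, I would show $\cF^1$ induces an isomorphism on cohomology by comparing it to the classical identification of simplicial cochains with Morse cochains: for a simplicial vector field, the intersection of $\check{\sigma}$ with $W^s(x_0)$ is the standard Morse-theoretic realisation of the Poincaré dual of $\check\sigma$ as a Morse cocycle, and hence $\cF^1$ is a quasi-isomorphism between the two chain complexes by a spectral sequence argument (or directly by identifying it with a known quasi-isomorphism in \cite{abouzaid}). Since the Hom groups are free abelian, Seidel's homological perturbation argument (recalled after Theorem \ref{thm:cotangent_bundle}) upgrades the quasi-isomorphism $\cF^1$ to the $A_\infty$ equivalence $\cF$.

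The main obstacle is not the existence of the cancellations but the \emph{sign bookkeeping}; the second most delicate point is verifying that the ``length-zero'' boundary stratum does contribute exactly the cup product of adjacent inputs on the nose (not merely up to homotopy), which crucially uses that $\bfX^\Shrub$ was chosen to be compatible with $\check{\sQ}$ and the local intersection interpretation of the cup product.
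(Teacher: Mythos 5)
Your proposal is correct and follows essentially the same route as the paper: analyze the boundary of the one-dimensional moduli spaces $\Shrubhat(x_0,\vsig)$, identify the three sources of boundary (Stasheff-tree degeneration via \eqref{eq:boundary_shrub_labelled-tree}, escape of a gradient segment to the boundary of a cell, and collapse of adjacent incoming edges via $\vee_k$), invoke Lemma \ref{lem:simplicial_perturbation_shrubs_gives_cup} and compatibility of $\bfX^\Shrub$ with $\check{\sQ}$ for the $\vee_k$ stratum, and defer the sign bookkeeping to the orientation section. The only small imprecision is that you phrase case (1) so as to allow $r\ge 1$ and then list the $r>1$ Stasheff degeneration again as case (4) — breaking on the semi-infinite outgoing edge gives only the $r=1$ term, while $r>1$ arises from internal edges becoming infinite — but this affects neither the enumeration nor the conclusion.
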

\begin{proof}
The claim that $\cF^{1}$ is a chain isomorphism is the familiar statement that the cohomology of smooth manifolds can be computed using Morse theory. To prove the $A_{\infty}$ equation \eqref{eq:simp_to_morse_A_infty_holds}, we study, as usual, moduli spaces $\Shrub(x,\vsig)$ of expected dimension $1$.  By Lemma \ref{lem:moduli_shrubs_nice_manifold} the closure of the interior of this moduli space is a $1$-dimensional manifold with boundary projecting to the interior or top dimensional strata of the boundary of $\Shrubbar_{d}$ (this follows from the fact that all strata of $\Shrub(x,\vsig)$ are of the expected dimension, hence that those lying over higher codimension strata of $\Shrubbar_{d}$ are empty).

It suffices to check that the boundary points of $\Shrubhat(x,\vsig)$ account for all the terms of \eqref{eq:simp_to_morse_A_infty_holds}.  The first sum in the left hand side of \eqref{eq:simp_to_morse_A_infty_holds} corresponds to boundary points of  $\Shrubhat(x,\vsig)$ which project to the interior of $\Shrubbar_{d}$.  The top dimensional boundary strata of $\Shrubbar_{d}$ are of two types: those which are the image of \eqref{eq:boundary_shrub_tree} correspond to the right hand side of \eqref{eq:simp_to_morse_A_infty_holds}, and we claim that the second term of the left hand side of that equation correspond to those points which project to the image of \eqref{eq:boundary_shrub_lenght_0} and are in the closure of  $\Shrub(x,\vsig)$.  Indeed, points in $\Shrubbar(x,\vsig)$ lying over the image of \eqref{eq:boundary_shrub_lenght_0} correspond to $\Shrub(x,\vsig[\cap k])$, where $\vsig[\cap k]$ is obtained from $\vsig$ by replacing the cells $\check{\sigma}_{k}$ and $\check{\sigma}_{k+1}$ by their \emph{intersection}. Lemma \ref{lem:simplicial_perturbation_shrubs_gives_cup} implies that whenever this intersection does not contribute to the cup product, the corresponding cell does not lie in the boundary of $\Shrubhat(x,\vsig)$, so, modulo signs, the Proposition follows from the next Lemma.  We briefly discuss signs and orientations in Section \ref{sec:orienting_maps}.
\end{proof}

\begin{lem}
If $\mu_{2}^{\C}(\check{\sigma}_{k},\check{\sigma}_{k+1}) \neq 0$, and the moduli space $\Shrub(x,\vsig[\cap k])$ is non-empty, the corresponding points of $\Shrubbar(x,\vsig)$ lie in the boundary of $\Shrubhat(x,\vsig)$.
\end{lem}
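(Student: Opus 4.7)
The plan is to run a gluing argument dual to the one in Lemma \ref{lem:simplicial_perturbation_shrubs_gives_cup}: whereas that lemma exploited simplicial compatibility to rule out nearby shrubs when the cup product vanishes, the same compatibility will now be used to construct nearby shrubs when the cup product is nonzero. A rigid $\psi_0 \in \Shrub(x,\vsig[\cap k])$ corresponds, via the inclusion $\vee_k$, to a singular point of $\Shrubbar(x,\vsig)$ whose domain $(S_0,g_{S_0})$ has $k$-th and $(k+1)$-st incoming edges $e_k, e_{k+1}$ of length zero meeting at an internal vertex $v_0$; the image $\psi_0(v_0)$ lies on the intersection cell $\check{\rho}$ dual to $\mu_{2}^{\C}(\check{\sigma}_{k},\check{\sigma}_{k+1})$. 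Using the manifold-with-boundary structure of $\Shrubbar_d$ near the image of $\vee_k$, I would pick a smooth one-parameter family $(S_t,g_{S_t}) \in \Shrub_d$ for $t \in (0,\epsilon)$ deforming $(S_0,g_{S_0})$ by letting $e_k$ and $e_{k+1}$ acquire length of order $t$.

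The key step is then to construct, for small $t>0$, a perturbed gradient shrub $\psi_t\co S_t \to Q$ with inputs on $\vsig$ and output $x$ that converges to $\psi_0$ as $t \to 0$. This reduces to finding a vertex value $v(t)$ near $\psi_0(v_0)$, together with a compatible deformation of the rest of $\psi_0$, such that flowing backward along $e_k$ and $e_{k+1}$ from $v(t)$ by the respective perturbed gradient flows lands in $\check{\sigma}_k$ and $\check{\sigma}_{k+1}$; equivalently, $v(t)$ must lie in the intersection of the images of $\check{\sigma}_k$ and $\check{\sigma}_{k+1}$ under two flows $\phi_k$ and $\phi_{k+1}$ in the sense of Lemma \ref{lem:cup_is_intersection}. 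By the compatibility of $\bfX^{\Shrub}$ with $\check{\sQ}$, the difference of these flows is generated at $v_0$ by a simplicial vector field in the sense of \eqref{eq:simplicial_condition_perturbation}, so the slight generalisation of Lemma \ref{lem:cup_is_intersection} invoked in the proof of Lemma \ref{lem:simplicial_perturbation_shrubs_gives_cup} identifies this intersection, for small $t$, with a smooth submanifold through $\psi_0(v_0)$ parameterized by $\check{\rho}$ and varying smoothly in $t$---rather than being empty, as was the case when the cup product vanished.

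An implicit function theorem argument then glues this local deformation at $v_0$ to the remainder of $\psi_0$: rigidity of $\psi_0$ in $\Shrub(x,\vsig[\cap k])$ translates into transversality of the evaluation of the remainder of the shrub into $\check{\rho}$, which suffices to determine $v(t)$ and the rest of the deformation of $\psi_0$ uniquely for small $t>0$. A dimension check is consistent: since $\deg(\check{\rho}) = \deg(\check{\sigma}_k) + \deg(\check{\sigma}_{k+1})$ when the cup product is nonzero, Lemma \ref{lem:moduli_shrubs_nice_manifold} gives $\dim \Shrub(x,\vsig) = \dim \Shrub(x,\vsig[\cap k]) + 1 = 1$, and the family $\{\psi_t\}$ sweeps out a smooth arc in $\Shrub(x,\vsig)$ whose closure contains $\psi_0$, placing $\psi_0 \in \partial \Shrubhat(x,\vsig)$. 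I expect the main obstacle to be organising the implicit function theorem cleanly---simultaneously deforming the tree metric, the vertex position, and the rest of the map while preserving the equidistance of incoming leaves, the perturbed flow equations on each edge, and the incidence conditions with $\vsig$; the crucial \emph{geometric} input, namely that simplicial compatibility converts the relevant intersection into a smooth transverse submanifold rather than an empty set, is already supplied by Lemma \ref{lem:cup_is_intersection}, so once the analytic framework is in place the remaining gluing is routine.
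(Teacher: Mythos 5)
Your proposal identifies the correct geometric input (simplicial compatibility turns the relevant intersection into a smooth submanifold parameterized by $\check{\rho}$, rather than emptying it as in Lemma \ref{lem:simplicial_perturbation_shrubs_gives_cup}), and the overall structure---gluing via the implicit function theorem, driven by the regularity/rigidity of $\Shrub(x,\vsig[\cap k])$---is the same as the paper's. However, what you flag as ``the main obstacle,'' namely simultaneously deforming the metric tree, the vertex position, and the rest of the map while preserving equidistance of the incoming leaves, is precisely what the paper resolves with a specific device you did not supply: it introduces an auxiliary moduli space $\Shrub_{d-1}^{k,\epsilon}$ of ``offset'' shrubs with $d-1$ inputs in which the $k$\textsuperscript{th} leaf sits at distance $\epsilon$ less from the outgoing edge than the others. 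This factors the problem into two clean steps. First, one flows the cells $\check{\sigma}_k$ and $\check{\sigma}_{k+1}$ forward along the short incoming edges and, via compatibility with $\check{\sQ}$, replaces their flowed intersection by a $C^1$-small diffeomorphic image $\vsig[\cap k]_{\phi_{k,k+1}^{S,g_S}}$ of $\check{\sigma}_k\cap\check{\sigma}_{k+1}$; second, regularity of $\Shrub(x,\vsig[\cap k])$ plus the IFT gives a bijection with $\Shrub^{k,\epsilon}(x,\vsig[\cap k]_{\phi})$ for small $\epsilon$, which is then matched with the top stratum of $\Shrub_{(S,g_S)}(x,\vsig)$ near the boundary. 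So you should not expect the IFT bookkeeping to be routine in the form you set it up; the auxiliary offset moduli space is the missing organizing idea that avoids having to deform all the data at once, and without it the equidistance constraint genuinely obstructs a direct one-step gluing.
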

\begin{proof}
For the purpose of this argument we introduce a moduli space
\begin{equation}
 \Shrub_{d-1}^{k,\epsilon}
\end{equation}
consisting of ribbon trees satisfying all the conditions for being a shrub except that the distance from the outgoing edge to the incoming leaves numbered $k$ and $k+1$ are smaller by $\epsilon$ than the distances to the other incoming vertices.  Note that the regularity of $\Shrub(x,\vsig[\cap k])$ and the implicit function theorem imply that, whenever $\epsilon$ is sufficiently small, we have a bijective correspondence between its elements and those of $\Shrub^{k,\epsilon}(x,\vsig[\cap k]_{\phi})$, where $\vsig[\cap k]_{\phi}$ is obtained from $\vsig[\cap k]$ by replacing $\check{\sigma}_{k} \cap \check{\sigma}_{k+1}$ by its image under a diffeomorphism sufficiently close (in the $C^1$ norm) to the identity, and keeping all other cells unchanged. Using the fact that our chosen perturbations are compatible with the subdivision $\check{\sQ}$, we find that whenever $\mu_{2}^{\C}(\check{\sigma}_{k},\check{\sigma}_{k+1}) \neq 0$, there is a diffeomorphism $\phi_{k,k+1}^{S,g_S}$ such that
\begin{equation}
 \phi_{k,k+1}^{S,g_S}( \check{\sigma}_{k} \cap \check{\sigma}_{k+1}) = \phi_{k}^{S,g_S}( \check{\sigma}_{k}) \cap \phi_{k+1}^{S,g_S}( \check{\sigma}_{k+1})
\end{equation}
where $\phi^{S,g_S}_{k}$ and $\phi^{S,g_S}_{k+1}$ are the diffeomorphisms obtained by integrating the perturbed gradient flow along the respective incoming leaves.  Moreover, as $(S,g_S)$ converges to the relevant boundary stratum of $\Shrubbar_{d}$, the map $\phi_{k,k+1}^{S,g_S}$ may be chosen to converge to the identity.  Writing $\epsilon_{k}(S,g_S)$ for the length of the $k$\th incoming leaf, the claim follows by noticing that the elements of 
\begin{equation*} 
 \Shrub^{k,\epsilon_{k}(S,g_{S})}\left(x,\vsig[\cap k]_{\phi_{k,k+1}^{S,g_S}} \right)
\end{equation*}  are in bijective correspondence with the elements of the top dimensional stratum of $\Shrub_{(S,g_S)}(x,\vsig)$ which are sufficiently close to $\Shrub(x,\vsig[\cap k])$.
\end{proof}


\section{A convenient model for Multiplihedra} \label{sec:mushrooms}
Boardman and Vogt \cite{BV} introduced spaces of ``colored trees'' as a model for  multiplihedra; this collections of polyhedra controls $A_{\infty}$ functors.  Since our goal in later sections is to construct an $A_{\infty}$ functor whose source is the Fukaya category, it seems reasonable to replace one of the trees by holomorphic discs.  Keeping relatively close to our previous botanical terminology, we shall call these spaces moduli spaces of mushrooms.

\begin{defin}
A {\bf mushroom}  $C$ with $d$ inputs is defined by the following data 
\vspace{-10pt}
\begin{enumerate}
\item A partition $d = d_{1} + d_{2} + \cdots + d_{r}$ (in particular $d_{k} \geq 1$) and
\item for each integer $1 \leq k \leq r$, a {\bf cap}  $P_{k}(C)$ which is a disc with $d_{k} + 2$ marked boundary points of which $d_k$ successive ones are distinguished as incoming, and
\item a {\bf stem} $S(C)$, which is a a shrub with $k$ inputs.
\end{enumerate}
\end{defin}
\begin{rem} \label{caps_are_discs}
Sometimes, it shall be convenient to discuss caps separately.  By a $d$-cap, we will simply mean a disc with $d+2$ marked points of which $d$ successive ones are distinguished as incoming.  Alternatively, we can distinguish the pair of outgoing vertices, or the arc connecting them.  The {\bf moduli space of caps} $\Pil_{d}$  is simply the (open) Stasheff moduli space $\Stasheff_{d+1}$ under another guise.  Later, we shall find it convenient to fix the identification with the moduli space of discs $\cR_{d+1}$ for which the boundary marked point $v_{1}^{out}$  in Figure \ref{basic_mushrooms} is mapped to the (unique) outgoing marked point for discs in $\cR_{d+1}$.
\end{rem}
\begin{figure}[h] 

 \input{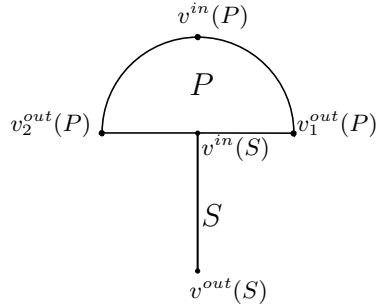}
   \caption{The unique mushroom with one input, with cap $P$ and stem $S$.}
   \label{basic_mushrooms}
\end{figure}
\begin{rem}
Even though we will draw mushrooms as in Figure \ref{basic_mushrooms}, we remind the reader that the incoming vertices of the stem are not associated to a marked point on the outgoing arc.
\end{rem}

An isomorphism  $C \to C'$ of mushrooms is an isomorphism of the underlying caps and stem.  Explicitly, such a map is a collection of $r+1$ isomorphisms:
\begin{align*}
P_{k}(C) \to P_{k}(C')  & \quad 1 \leq k \leq r\\
S(C) \to S(C') & 
\end{align*}

\begin{lem}
The only non-trivial automorphism of a mushroom is the identity. 
\end{lem}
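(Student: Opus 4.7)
The plan is to treat the caps and the stem separately, showing that each piece admits only the identity automorphism, and then assemble these observations.

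First I would handle the caps. Each cap $P_{k}(C)$ is a disc with $d_{k}+2 \geq 3$ marked boundary points, with two of them distinguished as outgoing (and thus with a prescribed ordering of the remaining incoming ones). The automorphism group of the closed disc is $PSL(2,\bR)$, which acts simply transitively on ordered triples of distinct boundary points. In particular, an automorphism that preserves three prescribed boundary marked points together with their labelling must be the identity. Since every cap has at least three distinguished marked points (the two outgoing vertices $v^{out}_{1}(P_k)$, $v^{out}_{2}(P_k)$ together with any incoming one, or, in the lowest case $d_{k} = 1$, exactly three marked points), any automorphism of a cap is trivial.

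Next I would handle the stem. The stem $S(C)$ is a shrub, i.e.\ a metric ribbon tree with one distinguished infinite outgoing edge and $r$ finite incoming edges whose endpoints are equidistant from the outgoing edge. Let $\phi$ be an automorphism. Since $\phi$ preserves the ribbon (cyclic) order at each vertex, preserves the distinction of infinite vs.\ finite edges, and preserves the metric, we can argue by induction on the combinatorial distance from the outgoing edge: $\phi$ fixes the outgoing edge, hence fixes its unique adjacent internal vertex, hence---using the cyclic ordering at that vertex, which becomes a linear ordering once the outgoing direction is singled out---fixes each incident edge setwise and isometrically. Repeating this at each successive vertex forces $\phi$ to be the identity on each edge and hence on the whole tree.

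Finally, assembling: an automorphism of $C$ is by definition a collection of automorphisms of each $P_{k}(C)$ together with an automorphism of $S(C)$, with no further compatibility data to impose (the gluing points between stem and caps are not part of the data of a mushroom in the sense used here). By the two steps above each of these component automorphisms must be the identity, so the automorphism of $C$ is the identity as well. The only step at which one could worry is the rigidity of the stem when it carries metric symmetries, but the ribbon structure together with the marked outgoing edge breaks any potential symmetry of the underlying abstract metric tree.
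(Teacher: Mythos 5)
Your proof is correct and takes essentially the same approach as the paper. The paper's proof is much terser—it simply notes that any automorphism reduces to automorphisms of the stem and caps, and that caps have at least three marked points (leaving the rigidity of the stem, a rooted metric ribbon tree, implicit)—whereas you spell out both the cap rigidity via $PSL(2,\bR)$ and the inductive argument for the stem.
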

\begin{proof}
Such an automorphism would correspond to an automorphism of the stem or of one of the caps.  However, we have excluded discs with less than $3$ marked points as caps.
\end{proof}

By definition, the set $\Champ$ of mushrooms with $d$ inputs admits a bijective map to
\begin{equation} \label{eq:moduli_mushrooms_disjoint_union}
   \coprod \Shrub_{r} \times \Pil_{d_1} \times \cdots \times \Pil_{d_r}
\end{equation}
which we later use to define a topology.  Recall that whereas the moduli space $\Stasheff_{d}$ is an open manifold, the moduli space of shrubs is a (non-compact) manifold with boundary (the boundary consists of trees with outgoing edges of vanishing length).  Note that the dimension of the components of \eqref{eq:moduli_mushrooms_disjoint_union} depends only on the number of inputs:
\[ (r-1) + \sum_{i=1}^{r} (d_i - 1) = d - 1 .\]

\begin{defin}
A {\bf singular mushroom} with $d$ inputs is defined by the following data
\vspace{-10pt}
\begin{enumerate}
\item A partition $d = d_{1} + d_{2} + \cdots + d_{r}$ with $d_{k} \geq 1$, and
\item for each integer $1 \leq k \leq r$, a {\bf cap}, which is a singular disc with $d_{k} + 2$ marked boundary points with a distinguished arc connecting two successive (outgoing) marked points, and 
\item a {\bf stem}, which is a singular shrub with $r$ inputs.
\end{enumerate}
\end{defin}
\begin{rem}
Again, we consider singular caps whose moduli spaces $\overline{\Pil}_{d}$ can be identified with the Stasheff polytopes in their holomorphic disc model $\Rbar_{d+1}$.  Decomposing a singular cap into its irreducible components, we observe that those components whose boundary intersects the distinguished arc can themselves be interpreted as lower dimensional caps.  Not all irreducible components are of this type (See Figure \ref{singular_cap}).

\end{rem}

\begin{figure}[h]
\epsfxsize=2in
\epsffile{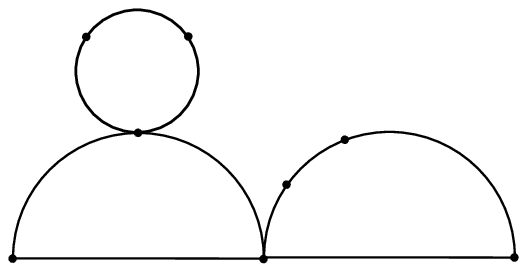} 
\caption{A singular cap consisting of two caps and an ``ordinary'' disc.}
\label{singular_cap}
\end{figure}

As usual, we will focus attention on {\bf stable} singular mushrooms.  These are characterized by stable caps and stem.  As before, we can write this space as 
\[ \coprod    \Shrubbar_{r}  \times \overline{\Pil}_{d_1} \times \cdots \times \overline{\Pil}_{d_r}.\] 
However, it is natural to think of the moduli space of stable singular mushrooms as a quotient of this space.

For each $d'$ and $d''$, we consider the embedding $\circ$
\begin{equation} \label{eq:gluing_caps_unlabelled} \overline{\Pil}_{d'} \times \overline{\Pil}_{d''} \to \partial \overline{\Pil}_{d' + d''} \hookrightarrow  \overline{\Pil}_{d' + d''}  \end{equation}
whose image is the stratum of $\partial \overline{\Pil}_{d' + d''} $ consisting of discs which can be decomposed into two components (not necessarily irreducible), each of which contains an outgoing marked point, and which have respectively $d'$ and $d''$ incoming marked points. 

On the other hand, for each integer $1 \leq k \leq r -1$, we have a map
\[ \vee_{k} \co \overline{\Shrub}_{r-1} \to \overline{\Shrub}_{r} \]
which creates two incoming edges of vanishing length at the $k$\th vertex.  We therefore have maps
\[ \xymatrix{  \Shrubbar_{r} \times   \overline{\Pil}_{d_1} \times \cdots  \times \overline{\Pil}_{d_{k-1}} \times \overline{\Pil}_{d_{k}} \times \cdots \times \overline{\Pil}_{d_r} \\
 \Shrubbar_{r-1} \times \overline{\Pil}_{d_1} \times \cdots \times \overline{\Pil}_{d_{k-1}} \times \overline{\Pil}_{d_{k}} \times \cdots \times \overline{\Pil}_{d_r} \ar[u]^{\vee_{k} \times \id^{r}} \ar[d]^{\id^{k-1} \times \circ \times \id^{r-k}} \\
 \Shrubbar_{r-1} \times  \overline{\Pil}_{d_1} \times \cdots \times \overline{\Pil}_{d_{k-1} + d_k} \times \cdots \times \overline{\Pil}_{d_r} }
 \]

We say that two stable singular mushrooms are {\bf equivalent} if they are identified upon gluing along the maps $\vee_{k} \times \id$ and $\id \times \circ \times \id$.  We write $\sim$ for the equivalence relation generated by all such diagrams:

\begin{defin} \label{defin:mod_space_stable_mushrooms}
The {\bf moduli space of stable mushrooms} with $d$ inputs is the quotient space
\[ \overline{\Champ}_{d} = \coprod   \Shrubbar_{r} \times \overline{\Pil}_{d_1} \times \cdots \overline{\Pil}_{d_r} / \sim .\]
\end{defin}

We will equip $\Champ_{d}$ with its inherited topology as a subset of $\overline{\Champ}_{d}$.  The spaces $\Champbar_{d}$ admit two important families of maps of operadic type relating them to the other moduli spaces we have studied. On the one hand, by grafting $r$ mushrooms at the endpoints of a Stasheff tree, we obtain a map
\begin{equation} 
\label{eq:apply_m_k_target_mushrooms}
 \Stasheffbar_{r}  \times \Champbar_{d_1} \times \cdots \times \Champbar_{d_r}  \to \Champbar_{d}
\end{equation}
where $\sum d_j = d$.  Alternatively, by attaching a singular disc at any of the incoming marked point, we obtain $d_1$ maps
\begin{equation}
\label{eq:apply_m_k_source_mushrooms}
 \Champbar_{d_1} \times \Rbar_{d_2}  \to  \Champbar_{d_1+d_2-1}.
\end{equation}

\begin{figure}[h]
\begin{center}
$\begin{array}{c@{\hspace{.5in}}c}
\epsfxsize=2in
\epsffile{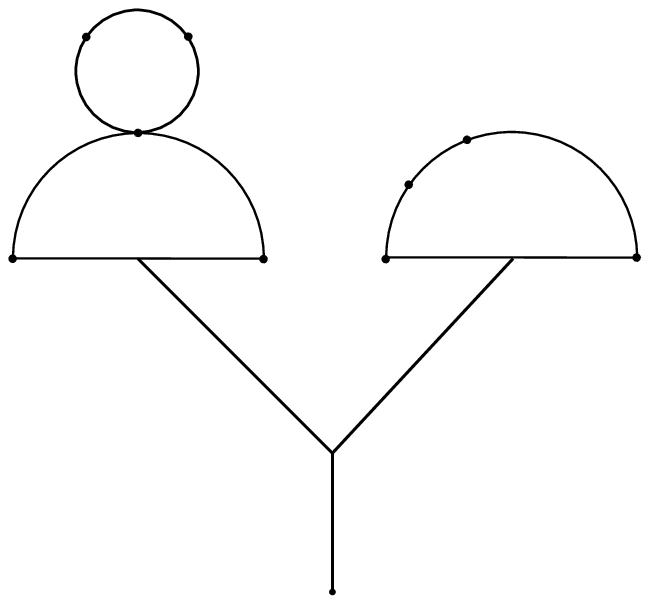} &
\epsfxsize=2in
\epsffile{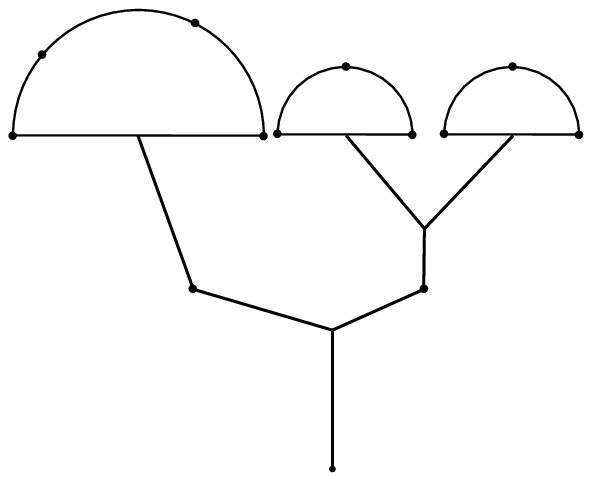}
\end{array}$
\end{center}
\caption{Singular mushrooms lying on two different boundary strata of $\overline{\Champ}_{4}$.} 
 \label{top_mushroom_boundary_stratum}
\end{figure}

The moduli space $\Rbar_{d+1}$ is stratified by cells labelled by topological types of ribbon trees with $d+2$ leaves, which induces a stratification of $\Pilbar_{d}$.  When considering this latter space, we shall assume that two such successive leaves are distinguished as outgoing.  Note that the trees in which the two outgoing leaves are not adjacent (i.e. share a vertex) are exactly those that label singular caps whose outgoing edge meets more than one irreducible component.   The reader may compare this constraint with the condition imposed by Boardman and Vogt on their coloured trees \cite{BV} (see also, \cite{forcey}), which shows that our moduli space $\Champbar_{d}$ is homeomorphic to the multiplihedron of the same dimension.  Moreover, the homeomorphism may be chosen so that the maps \eqref{eq:apply_m_k_target_mushrooms} and \eqref{eq:apply_m_k_source_mushrooms} are the operadic structure maps which make multiplihedra into a bimodule over the Stasheff operad.  We conclude:

\begin{lem} \label{lem-top_mushroom_boundary_stratum}
$\overline{\Champ}_{d}$ is a compact manifold with boundary stratified into finitely many (smooth) manifolds with corners. The top boundary strata are the images of the map \eqref{eq:apply_m_k_target_mushrooms} and \eqref{eq:apply_m_k_source_mushrooms} (See Figure \ref{top_mushroom_boundary_stratum}). \noproof
\end{lem}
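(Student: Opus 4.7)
The plan is to reduce the statement to the known manifold-with-corners structure on the Boardman--Vogt multiplihedra, and then identify the top-dimensional boundary strata.

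First, each stratum $\Shrubbar_r \times \Pilbar_{d_1} \times \cdots \times \Pilbar_{d_r}$ appearing in the disjoint union defining $\Champbar_d$ is a product of compact manifolds with corners: $\Shrubbar_r$ has this structure by the lemma in Section~\ref{sec:simp_to_morse} (and Appendix~B of \cite{abouzaid}), while $\Pilbar_{d_k} \cong \Rbar_{d_k+1}$ is an associahedron. I would next verify that the equivalence relation $\sim$ realises exactly Boardman--Vogt's gluing of coloured trees: each stable mushroom corresponds to a coloured tree in which the stem's edges are uncoloured, the caps' outgoing arcs are coloured, and the incoming leaves of each cap are coloured. Under this dictionary the two pairs of maps $(\vee_k \times \id^r,\, \id^{k-1}\times \circ \times \id^{r-k})$ realise the two ways of reading the same coloured tree that arise when an incoming edge of the stem has length zero and the two caps meeting it share a node. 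The quotient therefore inherits the convex polytopal structure of the multiplihedron described in \cite{BV} and \cite{forcey}.

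Second, I would identify the top boundary strata. A point of $\Champbar_d$ is interior iff its stem lies in $\Shrub_r$ (all stem edges of finite, positive length, in particular none of the incoming leaves has length zero), and each of its caps lies in $\Pil_{d_k}$. A codimension-one degeneration is therefore either (a) a single degeneration of the stem with all caps smooth, or (b) a single degeneration of one cap with all other caps and the stem smooth. Case (a) splits into two sub-cases: either one of the incoming leaves of the stem becomes length zero (which, after identification by $\sim$, is the \emph{interior} of a lower-dimensional stratum and does not contribute to the boundary), or one finite edge becomes infinite. The latter separates the stem into a Stasheff tree (on the outgoing side) whose leaves are the outputs of smaller mushrooms; this is precisely the image of the map \eqref{eq:apply_m_k_target_mushrooms}. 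Case (b) occurs when a cap acquires a single node; the component carrying the outgoing arc is a cap of smaller input number, and the component carrying a subset of the incoming marked points is a disc with no distinguished outgoing arc. This is exactly the image of \eqref{eq:apply_m_k_source_mushrooms}.

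The main obstacle is the bookkeeping needed to see that the equivalence relation $\sim$ does not collapse any genuine codimension-one face to a lower-dimensional stratum, and conversely that all identifications glue codimension-one walls inside the moduli space rather than on its boundary. For this I would argue that $\vee_k$ increases codimension by one and $\circ$ decreases the dimension of the factor $\Pilbar_{d_{k-1}+d_k}$ by one, so the two maps have matching images of the same dimension; consequently the quotient merges interior faces of adjacent top-dimensional cells while leaving untouched the faces coming from \eqref{eq:apply_m_k_target_mushrooms} and \eqref{eq:apply_m_k_source_mushrooms}. Disjointness of the two families of top boundary strata (modulo corners) is immediate because they involve degenerations of distinct pieces of the mushroom (the stem versus a cap), and exhaustiveness follows by inspecting the standard stratifications of $\Shrubbar_r$ and of $\Pilbar_{d_k}$. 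Compactness of $\Champbar_d$ is then automatic from compactness of each factor and the fact that $\sim$ is defined by the closed operation of identifying images of continuous maps between compact spaces.
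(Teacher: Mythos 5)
Your proof is correct and follows the same approach as the paper: reduce the polytopal structure to the Boardman--Vogt multiplihedron via coloured trees, then enumerate the codimension-one strata of $\coprod \Shrubbar_r \times \prod_k \Pilbar_{d_k}$ and sort out which are glued into interior walls by $\sim$ (the $\vee_k$/$\circ$ matched pair) and which survive as the boundary facets \eqref{eq:apply_m_k_target_mushrooms} and \eqref{eq:apply_m_k_source_mushrooms}. The paper's own proof is left to the reader after the multiplihedron comparison, so your case analysis simply fills in the same details the text leaves implicit.
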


Note that the top boundary strata of $\overline{\Champ}_{d}$ are the boundary strata of 
\[\coprod  \Shrubbar_{r} \times  \overline{\Pil}_{d_1} \times \cdots \times \overline{\Pil}_{d_r}  \]
which are not identified by the equivalence relation.  In fact, the top boundary strata which are identified by the equivalence relation are exactly those for which a pair of successive external edges have vanishing length.  It is easy to check that the remaining strata are exactly as described above.

As with trees (and discs), this construction can be done with labels.   Given a sequence $\vI = (i_0, \ldots, i_d)$, we shall consider the moduli space of mushrooms labelled by $\vI$, denoted $\Champbar_{\vI}$,  which is a copy of $\Champbar_{d}$.   We start by setting $\Pilbar_{\vI[k]}$ to be a copy of $\Pilbar_{|\vI|-1}$ where the incoming arcs are labelled by the sequence $\vI[k]$, which allows us to define $\Champbar_{\vI}$  to be the quotient (under the identification of used in Definition \ref{defin:mod_space_stable_mushrooms}) of the disjoint union
\begin{equation} \label{eq:product_description_mushroom}
\coprod  \Shrubbar_{\vR} \times  \Pilbar_{\vI[1]} \times \cdots \times \Pilbar_{\vI[r]} 
\end{equation}
where the initial elements of $\vI[k]$ agrees with the last element of $\vI[k-1]$, the sequence  $\vI$ is obtained by concatenating $\vI[1]$ through $\vI[r]$ and dropping the redundant terms, and $\vR$ is the sequence of length $r+1$ obtained by taking $i_0$ together with the last elements of  $\vI[1]$ through $\vI[r]$.   Whenever $\vR$ and $(\vI[1], \ldots, \vI[r])$ are any sequences satisfying this condition, we obtain a map
\begin{equation} \label{eq:labelled_mushrooms_left_module_stasheff}
 \Stasheffbar_{\vR}  \times \Champbar_{\vI[1]} \times \cdots \times \Champbar_{\vI} \to \Champbar_{\vI} 
\end{equation}
which is the analogue of \eqref{eq:apply_m_k_target_mushrooms} in the presence of labels.  The analogue of Equation
\eqref{eq:apply_m_k_source_mushrooms} is a map
\begin{equation}\label{eq:labelled_mushrooms_right_module_stasheff}
 \Champbar_{\vI[1]}  \times \Rbar_{\vI[2]}  \to  \Champbar_{\vI}
\end{equation}
where $\vI$ is obtained from $\vI[1]$ and $\vI[2]$ as in Equation \eqref{eq:compose_stasheff_labels}.  This later map is induced by a map
\begin{equation} \label{eq:gluing_disc_cap}
\Pilbar_{\vI[1]} \times \Rbar_{\vI[2]}   \to  \Pilbar_{\vI}.
\end{equation}

Finally, the gluing operation \eqref{eq:gluing_caps_unlabelled} also has a labelled analogue
\begin{equation} \label{eq:gluing_cap_cap} \Pilbar_{\vI[1]} \times \Pilbar_{\vI[2]}     \to  \Pilbar_{\vI} \end{equation}
where the last element of $\vI[1]$ is assumed to agree with the first element of  $\vI[2]$, and $\vI$ is obtained by concatenating the two sequences, and dropping the repeated term.

\section{From Floer to Morse cochains} \label{sec:floer_to_morse}
In this section, we construct a functor
\begin{equation} \label{eq:Fuk_to_Morse}
 \cG \co \Fuk(Q_1,Q_2) \to \Morse(Q_1,Q_2)
\end{equation}
using ``moduli spaces of mushrooms maps'' which we shall define in Section \ref{sec:mushroom_maps}, and a Lagrangian foliation on the plumbing, parametrized by $Q$, which we shall define in Section \ref{sec:lagrangian_foliation} (recall that $Q$ is the space obtained by gluing $Q_1$ and $Q_2$ along open balls).     Informally, a mushroom map consists of a gradient tree whose source is the stem, and a holomorphic map with Lagrangian boundary conditions on each cap.  The Lagrangian condition on the boundary components connecting incoming vertices (or an incoming vertex and an outgoing one) will be one of the Lagrangians $Q_i$ that we have been studying.  The outgoing boundary component of a cap (the one connecting the two outgoing vertices) will be required to map to a leaf of the Lagrangian foliation on $M$ whose label (a point in $Q$) agrees with the image of the corresponding endpoint of the stem.  From now on, we shall assume: 
\begin{equation} \label{eq:morse-floer-compatible-conditions_0}
 \parbox{36em}{The open sets $U_i$ used to define the Morse category are the images of the balls of radius $2$ in $\bR^{n}$ under the charts used in Section \ref{sec:introduce_fukaya}.}
\end{equation}

\subsection{A Lagrangian foliation} \label{sec:lagrangian_foliation}
Returning to the local model of Section \ref{sec:introduce_fukaya}, we find that the submanifold
\begin{equation}
 L_{\Delta} = \{ x_i = y_i \}_{i=1}^{n}
\end{equation}
is also Lagrangian, and given $\vx \in \bR^{n}$, we let $(\vx,0)+L_{\Delta}$ denote the translate by $(\vx,0)$ of $L_\Delta$; the union of all such leaves is a foliation of $\bC^{n}$.  Note that a fixed leaf $(\vx,0)+L_{\Delta}$ intersects $L_2$ transversely at $(0, -\vx)$.  Using Stokes's theorem to integrate the action of a path from $L_1$ to $L_2$ along $(\vx,0)+L_{\Delta}$, we conclude:
\begin{lem} \label{lem:stokes_application}
The integral of $\omega$ over any disc with counter-clockwise boundary conditions $(L_2, L_1,(\vx,0)+L_{\Delta})$ is proportional to $-|x|^2$; in particular no such disc is holomorphic if $x \neq 0$, in which case only constant ones arise. \qed
\end{lem}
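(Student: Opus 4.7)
My plan is a direct Stokes computation, followed by the standard energy-nonnegativity argument.  First I would fix the Liouville primitive $\theta = \sum_i x_i\, dy_i$ of the standard symplectic form $\omega = \sum_i dx_i \wedge dy_i$ on $\bC^n$ and record its pull-back to each of the three Lagrangian sheets.  On $L_1 = \{\vy = 0\}$ and on $L_2 = \{\vx = 0\}$ the pull-back vanishes identically, so the zero function serves as a primitive on each.  On the translated leaf $(\vx, 0) + L_\Delta$, parametrizing a point by its first $n$ coordinates (call them $\vec{u}$) so that the defining relation becomes $\vy = \vec{u} - \vx$, one has $dy_i = du_i$ and
\begin{equation*}
 \theta|_{(\vx,0)+L_\Delta} \;=\; \sum_i u_i \, du_i \;=\; d\bigl(\tfrac{1}{2}|\vec{u}|^{2}\bigr),
\end{equation*}
so a primitive on this sheet is $f_\Delta = \tfrac{1}{2}|\vec{u}|^{2}$.

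Next I would identify the three vertices of the disc boundary with the pairwise intersections of the Lagrangians: $L_1 \cap L_2 = \{(0, 0)\}$, $L_1 \cap ((\vx,0)+L_\Delta) = \{(\vx, 0)\}$, and $L_2 \cap ((\vx, 0) + L_\Delta) = \{(0, -\vx)\}$.  Reading off the counter-clockwise cyclic order $(L_2, L_1, (\vx, 0)+L_\Delta)$, the arcs on $L_1$ and $L_2$ contribute nothing to $\int_{\partial D} u^{*}\theta$, while the $L_\Delta$ arc runs from the vertex $(\vx, 0)$ to the vertex $(0, -\vx)$.  Evaluating $f_\Delta$ at these endpoints and applying Stokes's theorem gives
\begin{equation*}
 \int_{D} u^{*}\omega \;=\; f_\Delta\bigl((0, -\vx)\bigr) - f_\Delta\bigl((\vx, 0)\bigr) \;=\; 0 - \tfrac{1}{2}|\vx|^{2} \;=\; -\tfrac{1}{2}|\vx|^{2},
\end{equation*}
which is the asserted proportionality to $-|\vx|^{2}$.

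For the holomorphicity claim, I would invoke the standard fact that for any almost complex structure $J$ compatible with $\omega$, the energy of a $J$-holomorphic disc equals its symplectic area and is non-negative.  When $\vx \neq 0$ the area is strictly negative, so no $J$-holomorphic disc with these boundary conditions can exist.  When $\vx = 0$ the area vanishes, forcing the energy, and hence the disc itself, to be constant.  There is no real obstacle to overcome; the only care required is the bookkeeping of primitives at the three vertices and the sign convention for the induced orientation on $\partial D$ in Stokes's theorem.
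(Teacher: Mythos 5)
Your proof is correct and follows the same route the paper indicates: identifying $\theta = \sum x_i\, dy_i$ as a primitive that vanishes on $L_1$ and $L_2$, computing a primitive of $\theta|_{(\vx,0)+L_\Delta}$, and applying Stokes's theorem to the single nontrivial boundary arc. The vertex identifications, the direction of the $L_\Delta$ arc under the counter-clockwise ordering, and the concluding energy-nonnegativity argument are all handled correctly, so there is nothing to add.
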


A minor deficiency with this model is that it differs from the standard fibration of $\bC^{n}$ as the cotangent bundle of $L_1$ or $L_2$, so we must modify it in order to extend it to $M$.    We write  $L$ for the result of gluing $L_1$ to $L_2$ along the identification of their balls of radius $1$ about the origin under the diffeomorphism
\begin{equation} \label{eq:leaf_local_model}
 (\vx,0) \mapsto (0,-\vx).
\end{equation}
This space is a local model for $Q$ in the sense that there is an open subset of $Q$, containing all non-Hausdorff points, which is diffeomorphic to $L$, and which maps $L_1$ to the image of $Q_1$ and $L_2$ to the image of $Q_2$.  

\begin{figure}
  \centering
$$  \begin{matrix}
\epsfxsize=2in \epsffile{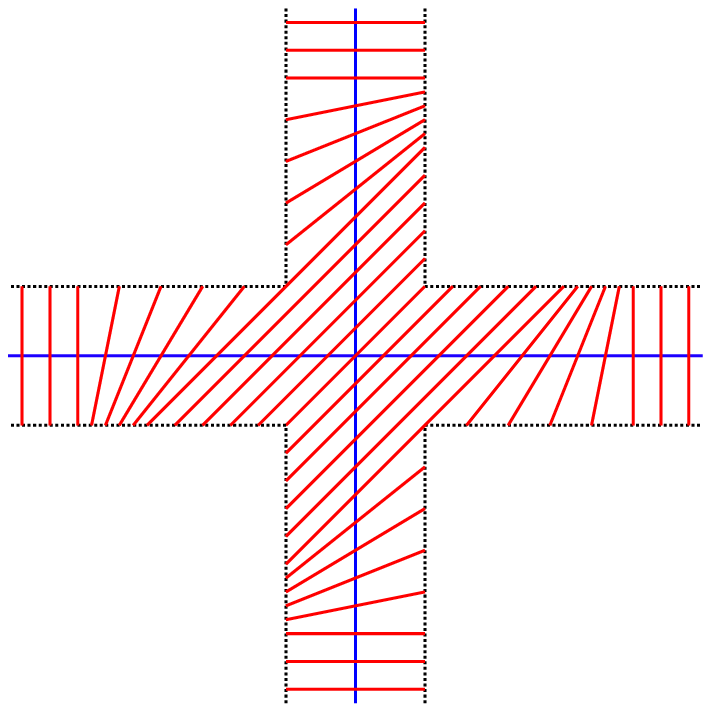}  & \epsfxsize=2in \epsffile{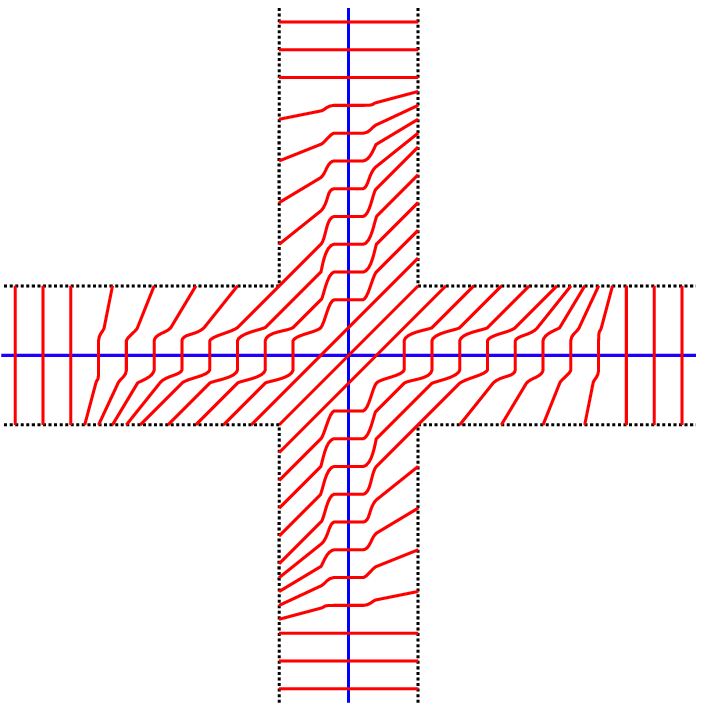} 
\end{matrix} $$
  \caption{ }
  \label{fig:foliation_model_dim_1}
\end{figure}

We would like to define a foliation of   $D^{*} L_1 \# D^* L_2 $, whose leaves $\sL_{\ell}$ are Lagrangian discs parametrised by $\ell \in L$.   and such that the following essential properties hold
\begin{align*}
&  \parbox{36em}{  $  \sL_{\ell} $ intersects $L_1$ at $(\vx,0)$ if and only if $(\vx,0)$ represents $ \ell $  in $L$ (with the same property for $L_2$)} \\
&  \parbox{36em}{  If $4 < |\vx|  $, then $ \sL_{(\vx,0)}$ and  $ \sL_{(0,\vx)}$ are respectively given by the affine Lagrangians orthogonal to $ L_1 $ and $L_2$ at  $(\vx,0)$ and $ (0,\vx) $.}
\end{align*}

The first condition gives the correspondence between leaves and points of $L$, while the second implies that this foliation agrees with the canonical foliation of the cotangent  bundles of $L_1$ and $L_2$ away from a neighbourhood of the origin, and hence can be extended to $M$.  In addition, we require three technical conditions which will be used to prove compactness for moduli spaces of holomorphic curves (see Section \ref{sec:comp-curv-leaves}), and which depend on parameters $\epsilon \ll \delta$ which can be made arbitrarily small; $\epsilon$ already appeared in the construction of $M^{\ins}$ Section \ref{app:review_seidel}:
  \begin{align}
\label{item:orthogonal_interior}   &  \parbox{36em}{   If $\delta  \leq |\vx|$ then the intersections $ \sL_{(\vx,0)} \cap M^{\ins}_1$ and  $ \sL_{(0,\vx)} \cap M^{\ins}_2$ agree with the Lagrangians orthogonal to $ L_1 $ and $L_2$ at  $(\vx,0)$ and $ (0,\vx) $.} \\
\label{item:diagonal_small_epsilong}   &  \parbox{36em}{  If $|\vx| \leq \delta $  then  $ \sL_{(\vx,0)}  $  and $ (\vx,0)+L_{\Delta} $ agree in the region where $  |\vx + \vy| \geq \delta$.} \\
\label{item:C-1-close-foliation}   &  \parbox{36em}{  If $ \delta \leq |\vx| \leq 2 $  then  $ \sL_{(\vx,0)}  $  and $ (\vx,0)+L_{\Delta} $ agree away from a $4 \epsilon$  neighbourhood of $L_1 \cup L_2$, and are $C^1$-close (with $C^1$-distance independent of $\delta$) in this region.} 
\end{align}
The reader should consult the picture on the right of Figure \ref{fig:foliation_model_dim_1} which illustrates the case $n=1$.  To see the fact that the leaves are parametrised by $L$, note  that, in the second and fourth quadrant, there are two leaves passing through the corner point; these form the non-Hausdorff locus of the space of leaves $\sL$.

In order to construct $\sL$, we choose any monotone function $\kappa \co [2,4] \to [0,1]$ which is identically $1$ near $2$ and identically $0$ near $4$ and whose derivative is uniformly bounded by $1$, and consider the affine Lagrangian planes
\begin{equation} \label{eq:linear_foliation}
(\vx[0],0)+\{ (\vx,\vy) |  \kappa(|\vx[0]|) x_i = y_i \}.
\end{equation}
The proof that these planes do not intersect each other in  $D^{*} L_1 \# D^* L_2 $, and hence form a foliation, is an elementary computation left to the reader (see the left side of Figure  \ref{fig:foliation_model_dim_1}).  Note that this foliation satisfies all but Condition \ref{item:orthogonal_interior} above.  In order to ensure that the leaf corresponding to a vector $\vx[0]$ whose norm lies between $4 \epsilon$ and $4$ intersects $M^{\ins}$ in Lagrangian planes orthogonal to $L_1$ and $L_2$, we deform this foliation by affine Lagrangians using an appropriate Hamiltonian flow.  The existence of such a flow can be proved, for example, by noting that the region
\begin{align} \label{eq:x-region_rectify}
  3 \epsilon < |\vx| <4 & \textrm{ and } |\vy| <  \epsilon
\end{align}
carries a unique function $H_0$ which vanishes along $L_1$, and whose restriction to any leaf \eqref{eq:linear_foliation} has differential that agrees with the restriction of $- \sum (x_i - x_i^{0}) dy_i$; we can write its restriction to a leaf explicitly:
\begin{equation*}
H_0|\sL_{(\vx[0], 0)} =    \frac{\kappa(|\vx[0]|)}{2}| \vy|^2 .  
\end{equation*}
We define $H_{T}$ implicitely via the property that it also vanishes along $L_1$ and that the restriction of $H_{T}$ to the image of a leaf under the flow $\phi^{T}$ generated by the Hamiltonians $\{ H_{t} \}_{t \leq T}$ also has differential agreeing with the restriction of  $- \sum (x_i - x_i^{0}) dy_i$.  The reader may easily check that the image of each leaf under $\phi^{T}$ is a linear Lagrangian, passing through the same point of $L_1$, and that the norm of the restriction of $\sum (x_i - x_i^{0}) dy_i $ to each leaf decreases with $T$.   Whenever $T=1$, we find that the image of each leaf is in fact orthogonal to $L_1$.  We extend $H_{T}$ to a function on $M$ so that it satisfies the analogous conditions in the region
\begin{align} \label{eq:y-region_rectify}
  3 \epsilon < |\vy| < 4 & \textrm{ and } |\vx| <  \epsilon
\end{align}
and define the new foliation $\sL$ to be the image of \eqref{eq:linear_foliation} under $\phi^{1}$.  More precisely, we require $H_{T}$ to have support contained in a $4 \epsilon$ neighbourhood of the union of the  regions \eqref{eq:x-region_rectify} and \eqref{eq:y-region_rectify}, and have $C^{0}$ and $C^1$ norms bounded by (say) $2 \epsilon$.  These support conditions imply that $\sL$ inherits the remaining conditions from the foliation \eqref{eq:linear_foliation}.  Choosing $\epsilon \ll \delta$ implies that the estimate of Lemma \ref{lem:stokes_application} applies to $\sL$ as well:
\begin{lem} \label{lem:stokes_application_perturbed}
The integral of $\omega$ over any disc with counter-clockwise boundary conditions $(L_2, L_1,\sL_{(\vx,0)})$ is strictly negative if $ \delta \leq |\vx|$. \qed
\end{lem}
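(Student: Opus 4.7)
The plan is to reduce the computation of $\int_D \omega$ to a boundary integral via Stokes's theorem, use exactness of the three Lagrangians to express it in terms of primitives evaluated at the three corners of $D$, and then compare with the analogous quantity for the unperturbed linear foliation \eqref{eq:linear_foliation}, which is amenable to an explicit calculation strengthening Lemma \ref{lem:stokes_application}. Since $\theta|_{L_1} = \theta|_{L_2} = 0$, we may choose primitives on $L_1$ and $L_2$ that vanish identically, so that
\[
\int_D \omega \;=\; \int_{\partial D} \theta \;=\; g_{\vx}(p_{\sL L_2}) - g_{\vx}(p_{L_1 \sL}),
\]
where $g_{\vx}$ is any primitive of $\theta|_{\sL_{(\vx,0)}}$ and $p_{L_1\sL}$, $p_{\sL L_2}$ are the two corners of $D$ lying on $L_1$ and $L_2$ respectively.

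First I would carry out the unperturbed computation. Parametrizing the linear leaf in \eqref{eq:linear_foliation} and substituting into the standard primitive $\theta = \tfrac{1}{2}(\vx \cdot d\vy - \vy \cdot d\vx)$ gives a primitive whose difference at the two intersection corners is proportional to $-|\vx|^2$, with a constant depending only on $\kappa(|\vx|)$. In particular this is strictly negative for all $\vx \neq 0$ with $\kappa > 0$, and for $|\vx| \geq \delta$ it is bounded above by a fixed negative constant times $\delta^2$.

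Next I would pass from the linear foliation to $\sL$ using the fact that $\sL = \phi^1(\text{linear foliation})$ for the Hamiltonian flow $\phi^T$ of $H_T$. Two properties of this flow are crucial. First, since $H_T$ vanishes along both $L_1$ and $L_2$, the flow preserves each of them setwise, and moreover, as the paragraph preceding the lemma establishes, each image leaf meets $L_1$ at the same point of $L_1$ as the original leaf; the analogous property along $L_2$ follows from the symmetric construction in region \eqref{eq:y-region_rectify}. In particular, the corners $p_{L_1\sL}$ and $p_{\sL L_2}$ coincide with their unperturbed positions. Second, the standard action formula $(\phi^t)^*\theta - \theta = d\!\int_0^t (\phi^s)^*(\iota_{X_{H_s}}\theta - H_s)\, ds$ realises $g_{\vx} \circ \phi^1$ as $g_{\vx}^{\mathrm{linear}}$ plus an action integral. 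Since $\|H_T\|_{C^0}, \|H_T\|_{C^1} = O(\epsilon)$ and $\theta$ is bounded on the (bounded) support of $H_T$, this action correction is $O(\epsilon)$ at each corner.

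Combining, $\int_D \omega = -c\,|\vx|^2 + O(\epsilon)$ with $c > 0$ a constant depending on $\kappa$; the hypothesis $\epsilon \ll \delta$ (with the implicit understanding that $\epsilon$ has been chosen small enough compared to $\delta^2$, as the $C^1$-distance bound in \eqref{item:C-1-close-foliation} is independent of $\delta$) then forces the right-hand side to remain strictly negative whenever $|\vx| \geq \delta$. The main technical step is the verification that the two corners are genuinely preserved by $\phi^1$ and that the action correction has no hidden $\delta$-dependence; both reduce to the explicit bounds on $H_T$ already fixed in the construction of $\sL$.
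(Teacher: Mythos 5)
Your proposal follows the same route the paper takes: the lemma is marked $\qed$ precisely because the author regards the preceding paragraph (which fixes the support of $H_T$ in a $4\epsilon$-neighbourhood of the regions \eqref{eq:x-region_rectify}, \eqref{eq:y-region_rectify} and bounds its $C^0$ and $C^1$ norms by $2\epsilon$) as already establishing that the estimate of Lemma \ref{lem:stokes_application} carries over when $\epsilon \ll \delta$. Your elaboration — Stokes, vanishing of the chosen primitive on $L_1$ and $L_2$, the explicit primitive along the linear leaf, and preservation of the two corners by $\phi^1$ because $H_T$ vanishes on $L_1$ and $L_2$ — is the right filling-in of that sketch.

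One refinement makes your final step, and the worry about $\epsilon$ versus $\delta^2$, unnecessary. Write the action correction as $F(p) = \int_0^1 \bigl(\iota_{X_{H_s}}\theta - H_s\bigr)\bigl(\phi^s(p)\bigr)\,ds$. If $p$ is a corner on $L_1$, then $\phi^s(p) \in L_1$ for all $s$; since $H_s|_{L_1} \equiv 0$ (by construction in the region \eqref{eq:x-region_rectify}), the Hamiltonian vector field $X_{H_s}$ is tangent to $L_1$, and as $\theta|_{L_1} = 0$ this forces $\iota_{X_{H_s}}\theta$ to vanish there as well; likewise $H_s$ itself vanishes. Hence $F(p) = 0$ exactly, not merely $O(\epsilon)$, and symmetrically for the corner on $L_2$. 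So the primitive drop along the $\sL_{(\vx,0)}$-boundary equals the unperturbed one $-\tfrac{\kappa(|\vx|)}{2}|\vx|^2$ on the nose, which is manifestly strictly negative for $\delta \le |\vx|$ (whenever the leaf meets $L_2$ at all). The only role of $\epsilon \ll \delta$ is to guarantee that both corners actually lie inside the regions where $H_T$ was constructed to vanish on the respective Lagrangian, so the exact cancellation applies; no quadratic-versus-linear comparison is needed.
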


\subsection{Moduli spaces of caps}

In this section, we will discuss moduli spaces of maps from caps to the plumbing of two cotangent bundles.  Strictly speaking, the setup we will be using is not exactly the one we will later need to establish the desired results about maps of mushrooms.  However, since all the main ideas are already present and the notation is much simpler, we will start here.

Condition \eqref{eq:morse-floer-compatible-conditions_0} identifies $Q_{1,2}$ and $Q_{2,1}$ with balls of radius $2$ in flat Euclidean space.  We assume that the metrics on $Q_1$ and $Q_2$ restrict to this flat metric in $U_1$ and $U_2$.  The first step is to impose a final condition on the perturbation $H^{i,j}$ and the Morse functions $f_{i,j}$:
\begin{equation} \label{eq:morse-floer-compatible-conditions}
\parbox{36em}{The image of $Q_{i}$ under the time-$1$ Hamiltonian flow of $H^{i,j}$ agrees with the graph of the differential of $f_{i,j}$, in a neighbourhood of $Q_i \cap Q_{j}$  which is identified with a (small) disc cotangent bundle of $Q_{i,j}$ using the foliation $\sL_{q}$.  Moreover, the images of $Q_{1}$ and $Q_{2}$ under the time-$t$ Hamiltonian flow of $H^{i,j}$ are transverse to all leaves $\sL_{q}$ if $|t| \leq 1$.  Finally, all critical points of $f_{i,j}$ have distinct values.}
\end{equation}

Note that when $i =j$, these conditions may easily be achieved by taking any (sufficiently small) generic function $H^{i,i}$.  The image of $Q^i $ under such a map may be written uniquely up to a global constant as the graph of the differential of a function $f_{i,i}$.  The same strategy works for $i \neq j$ once we observe that $L_2$ (in $\bC^{n}$) is the graph of the differential of $|\vx|^{2}$ as a function on $L_{1}$ with respect to the linear foliation $(0, \vx) + L_{\Delta}$, of which $\sL_{q}$ is a small perturbation.  In particular, if we choose $H^{2,1} = H^{1,2} = 0$, we conclude:
\begin{lem}
 \label{lem:morse_function_grows}
The distance to  $b$ grows along gradient flow lines of $f_{1,2}$ (and decreases along gradient lines of $f_{2,1}$) in the subset of $U_{1}$ (respectively $U_2$) where this distance is greater than $ \delta$. \qed
\end{lem}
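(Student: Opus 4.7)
I would prove the statement for $f_{1,2}$; the argument for $f_{2,1}$ is strictly analogous (with signs reversed so that $|\vy|$ decreases).

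The plan is in two steps. First, I would use the observation highlighted in the paragraph just above the statement---that $L_2$ is the graph of the differential of a quadratic function on $L_1$ with respect to the linear foliation---together with Condition~\ref{item:diagonal_small_epsilong}, which guarantees that $\sL_{(\vx,0)}$ agrees with the linear foliation $(\vx,0)+L_{\Delta}$ in a neighbourhood of $b$. A short symplectic-pairing calculation (the leaf through $(\vx,0)\in L_{1}$ meets $L_{2}$ at $(0,-\vx)$, and the leaf-displacement $(-\vx,-\vx)$ corresponds via $\omega$ to the covector $-\sum x_{i}\,dx_{i}\in T^{*}_{\vx}L_{1}$) then identifies $f_{1,2}$, up to an additive constant, as a negative scalar multiple of $|\vx|^{2}$ in a neighbourhood of $b$. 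The overall sign is pinned down by the requirement from Definition~\ref{defin:pert_datum} that $-\nabla f_{1,2}$ be outward-pointing on $\partial Q_{1,2}$, i.e.\ that $\nabla f_{1,2}$ point radially inward there.

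Second, since Condition~\eqref{eq:morse-floer-compatible-conditions_0} makes $\bar U_{1}$ flat and isometric to $\bar B^{n}(2)$, I would simply take $f_{1,2}(\vx)=-\tfrac12|\vx|^{2}$ on all of $\bar U_{1}$. This function is Morse with a unique critical point at $b$ (so the distinct-values requirement of~\eqref{eq:morse-floer-compatible-conditions} is automatic), has radially-inward gradient throughout $U_1\setminus\{b\}$ (in particular on $\partial Q_{1,2}$), and agrees with the local form extracted in step one. The Morse (negative-gradient) flow is then $\dot{\vx}=-\nabla f_{1,2}(\vx)=\vx$, whose trajectories are $\vx(t)=e^{t}\vx(0)$, so that $|\vx(t)|=e^{t}|\vx(0)|$ grows strictly in $t$. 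A fortiori this growth is strict on the subset $\{|\vx|>\delta\}$, which is precisely the distance to $b$ in the chosen Euclidean metric.

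I expect the main subtlety to lie in the symplectic-pairing step: translating the informal assertion ``$L_{2}$ is the graph of $d|\vx|^{2}$'' into a precise formula for $f_{1,2}$ requires pinning down which foliation labels the leaves (the one introduced in Section~\ref{sec:lagrangian_foliation} labels leaves by points of $L_{1}$, not by points of $L_{2}$ as in the nearby paragraph), fixing the sign convention for the Liouville primitive, and matching all of this to the inward-pointing boundary condition. Once these conventions are reconciled, the Lyapunov property of $|\vx|^{2}$ for the flow of $-\nabla f_{1,2}$ delivers the conclusion, and the symmetric argument (with $\nabla f_{2,1}$ outward on $\partial Q_{2,1}$) gives $f_{2,1}(\vy)=\tfrac12|\vy|^{2}$ and decay of $|\vy|$ along the flow $\dot{\vy}=-\vy$.
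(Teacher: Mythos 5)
Your overall approach matches the paper's: the lemma is a consequence of the observation in the sentence that precedes it, namely that with $H^{1,2}=H^{2,1}=0$ the function $f_{1,2}$ looks like $-\tfrac12|\vx|^2$ (with the sign pinned down by the outward-pointing boundary requirement of Definition~\ref{defin:pert_datum}), whose negative-gradient flow is radially expanding. The symplectic-pairing computation you sketch is fine under Seidel's $\theta=-p\,dq$ convention.

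There is, however, a gap in your second step. You write that you ``would simply take $f_{1,2}(\vx)=-\tfrac12|\vx|^{2}$ on all of $\bar U_{1}$'' --- but with the choice $H^{1,2}=0$ under which the lemma is stated, $f_{1,2}$ is \emph{not} a free choice: Condition~\eqref{eq:morse-floer-compatible-conditions} determines it (up to a constant) as the unique function whose differential's graph is $Q_2$ in the identification of a neighbourhood of $b$ with a disc cotangent bundle of $Q_{1,2}$ provided by the foliation $\sL_q$. And $\sL_q$ is \emph{not} the linear diagonal foliation in the range $\delta \leq |\vx| \leq 2$: by Conditions~\eqref{item:orthogonal_interior} and \eqref{item:C-1-close-foliation}, the leaves there are orthogonal to $L_1$ inside $M^{\ins}$ and only $C^1$-close to $(\vx,0)+L_{\Delta}$ outside. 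Consequently $f_{1,2}$ is only a small perturbation of the quadratic, not the quadratic itself, and the correct argument --- which the paper invokes via the phrase ``of which $\sL_q$ is a small perturbation'' --- is that $-\nabla f_{1,2}$ is $C^0$-close to the outward radial field $\vx$ with error controlled by $\epsilon$, so the radial term dominates once $|\vx|>\delta$ (using $\epsilon\ll\delta$). This is exactly why the lemma's conclusion is restricted to $|\vx|>\delta$, a restriction your proof does not explain and in fact would render superfluous. Relatedly, your citation of Condition~\eqref{item:diagonal_small_epsilong} is misdirected: it controls $\sL_{(\vx,0)}$ only for $|\vx|\leq\delta$, which lies outside the region the lemma is about.
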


Assumption \eqref{eq:morse-floer-compatible-conditions} establishes a correspondence between critical points of $f_{i,j}$ and intersection points of the corresponding Lagrangians, which implies:
\begin{cor} \label{cor:morse_floer_generators_equal}
There is a bijective correspondence between the generators of $CM^{*}(f_{i,j})$ and those of $CF^{*}(Q_i,Q_j)$. \qed
\end{cor}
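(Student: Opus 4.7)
The plan is to read the bijection directly off Assumption~\eqref{eq:morse-floer-compatible-conditions}, which was engineered precisely to make the two generating sets coincide. First I would identify generators of $CF^{*}(Q_i,Q_j)$ with points of $\phi^{1}(Q_i) \cap Q_j$, where $\phi^{1}$ denotes the time-$1$ flow of $H^{i,j}$: a Hamiltonian chord is determined by its initial endpoint on $Q_i$, and the condition that the other endpoint lie on $Q_j$ is exactly membership in $\phi^{1}(Q_i) \cap Q_j$. Generators of $CM^{*}(f_{i,j})$ are by definition critical points of $f_{i,j}$ on $Q_{i,j}$.

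Next I would invoke Assumption~\eqref{eq:morse-floer-compatible-conditions}: on a neighbourhood $V$ of $Q_i \cap Q_j$ identified with a disc cotangent bundle $D^{*}Q_{i,j}$ via the foliation $\sL_{q}$, the image $\phi^{1}(Q_i)$ coincides with the graph of $df_{i,j}$ while $Q_j$ plays the role of the zero section. Thus on $V$ we have the tautology $\phi^{1}(Q_i) \cap Q_j = \Crit(f_{i,j})$, which already gives the desired bijection for intersections contained in $V$.

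The one remaining task is to rule out intersection points outside $V$. When $i = j$ the neighbourhood $V$ is a full tubular neighbourhood of $Q_i$ inside $M$; since $H^{i,i}$ was chosen with small $C^{1}$-norm and compactly supported in $M^{\ins}$, the Lagrangian $\phi^{1}(Q_i)$ remains $C^{1}$-close to $Q_i$ and all of its intersections with $Q_i$ occur in $V$. When $i \neq j$ the preceding discussion commits to $H^{1,2} = H^{2,1} = 0$, so $\phi^{1} = \id$ and the intersection $Q_1 \cap Q_2$ in the plumbing $M$ reduces to the single plumbing point $b \in V$; this matches the unique critical point of the (essentially) quadratic function $f_{i,j}$, consistent with the normalisation~\eqref{eq:normalisation_floer_complex}.

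I do not expect any genuine obstacle here: the statement is really a bookkeeping consequence of setup. The substantive work was done upstream in arranging the foliation $\sL$ and the compatibility requirements of Assumption~\eqref{eq:morse-floer-compatible-conditions}, so that once those are in place the matching of generators is immediate.
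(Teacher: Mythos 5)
Your proposal is correct and follows essentially the same approach as the paper, which simply invokes Assumption~\eqref{eq:morse-floer-compatible-conditions} with a terse \qed. You correctly unpack the implicit argument: identifying Floer generators with $\phi^{1}(Q_i)\cap Q_j$, reading off the local tautology $\phi^{1}(Q_i)\cap Q_j=\Crit(f_{i,j})$ from the graph condition (with $Q_j$ playing the role of the zero section, which the paper leaves implicit but which is forced by the intended correspondence), and ruling out intersection points outside the Weinstein neighbourhood using the smallness of $H^{i,i}$ when $i=j$ and the choice $H^{1,2}=H^{2,1}=0$ when $i\neq j$.
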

Floer proved in \cite{floer} that, for an appropriate choice of almost complex structure, this correspondence becomes a chain isomorphism.  Instead of using this result directly, we shall exhibit a chain equivalence, which will be the first term of an $A_{\infty}$ functor:

Fix a cap $P \in \Pil_{\vI}$ for a sequence $\vI$ of length $d+1$.  To each point $(z,q)$ of $\partial P \times Q$, we can associate a Lagrangian $L \subset M$ as follows:
\vspace{-10pt}
\begin{itemize}
\item If $s$ lies on the  outgoing segment, we let $L = \sL_{q}$,
\item otherwise, orienting the disc counterclockwise starting at the outgoing segment, $L = Q_{j}$ if $s$ lies on the $j$\th segment.
\end{itemize}

Let us assume, as in \cite{seidel-book}*{Section 9g}, that we have chosen a universal consistent choice of strip-like ends on the moduli space of discs, i.e. a choice of strip-like end on every stable holomorphic disc which is compatible with gluings for small parameters.  Note that the isomorphism $\Pil_{d} \cong \cR_{d+2}$ therefore determines a choice of strip-like ends on the cap $P$.  In addition, for points sufficiently close to the boundary of $\Pilbar_{d}$, we obtain a decomposition  into a thin part (obtained as the image of a strip-like end) and a thick part (the complement).  The thin part decomposes as a union of strip-like ends with the images of maps
\begin{equation} [0,1] \times [-R,R] \to P. \end{equation}
Whenever both boundary components of a strip in the thin part are labelled by a Lagrangian $Q_{i}$, the Floer datum determines a perturbation of the $\dbar$ equation.  We extend this choice (which was fixed in the construction of $\Fuk(Q_1,Q_2)$ to the rest of the thin part as follows:
\begin{equation} \label{eq:Floer_datum_caps_outgoing}
\parbox{36em}{the Floer datum for $Q_{i}$ and $\sL_{q}$ is given by a fixed almost complex structure $J_{q}$ and vanishing Hamiltonian.}
\end{equation}
This is a reasonable assumption because $\sL_{q}$ intersects $Q_i$ transversely at a single point.  No breaking of holomorphic strips can therefore occur, so one need perturb neither the complex structure nor the Lagrangians in order to avoid it.

\begin{defin}
A {\bf $Q$-parametrized perturbation datum} on $P$ is a pair $(K^{\vI}, J^{\vI})$ where
\begin{align*}
K^{\vI} & \in \C^{\infty}(P \times Q, T^{*}P \otimes \sH ) \\
J^{\vI} & \in \C^{\infty}(P \times Q,  \sJ ).
\end{align*}
We require that (i) the restriction of the perturbation data $(K^{\vI}, J^{\vI})$ to the thin part agrees with the labelling Floer data and (ii) if $(z,q) \in \partial P  \times Q$ is labelled by the Lagrangian $L$, then for each tangent vector $\xi \in T_{z} \partial P$, we have $K^{\vI}(z,q)(\xi)|_{L} \equiv 0$.
\end{defin}

In other words, we choose a family of almost complex structures $J^{\vI}$ on $M$ parametrized by $Q$ and by the points of the disc $P$, which are required to agree with the choices made in defining the Fukaya category near the incoming marked points, and with $J_q$ near the outgoing points.  We also choose a family of $1$-forms $K^{\vI}$ on $P$, parametrized by $Q$, and with values in the space of functions on $M$.  The $1$-forms are also compatible with the Hamiltonian perturbations used to define the Fukaya category.

Note that $K^{\vI}(z,q)(\xi)$ induces a Hamiltonian vector field $Y^{\vI}(z,q)(\xi)$. Given such a datum, for each $q \in Q$ we can therefore consider the perturbed Cauchy-Riemann equation on $P$:
\begin{equation} \label{perturbed-CR} (du - Y^{\vI})^{0,1} = 0 \end{equation}
where the $(0,1)$ part is taken with respect to the almost complex structure $J^{\vI}(z,q)$, and the Lagrangian boundary conditions are prescribed by the labels.

\begin{defin}
A {\bf $Q$-parametrized perturbation datum} on a singular $d$-cap $P$ is the choice of a $Q$-parametrized perturbation datum on each irreducible component.  
\end{defin}

As the Fukaya category requires counting curves parametrised over the moduli space $\cR_{d}$, we consider the analogous generalisation for caps:
\begin{defin}
A {\bf universal $Q$-parametrized perturbation datum for caps} $(\bfK^{\Pil}, \bfJ^{\Pil})$ is a choice of a $Q$-parametrized perturbation datum for each singular cap $P$, smoothly varying with respect to the modulus, and which  is compatible with the perturbation data $(\bfK^{\cR}, \bfJ^{\cR})$ and the maps \eqref{eq:gluing_disc_cap} and \eqref{eq:gluing_cap_cap}.
\end{defin}
\begin{rem}
The compatibility condition can be stated explicitly as follows:  Every component of a singular cap in the image of the maps \eqref{eq:gluing_disc_cap} or \eqref{eq:gluing_cap_cap} is a priori equipped with two perturbation data, one coming from being the component of a disc or cap in a factor of the source, the other from being the component of a cap in $\Pilbar_{\vI}$. We require these data to agree.  
\end{rem}

The proof of the following result follows immediately from the inductive method of proof for the analogous Lemma 9.5 of \cite{seidel-book}.
\begin{lem} \label{lem:extend_pert_data_caps}
Every perturbation datum on a fixed (smooth) cap may be extended to a universal perturbation datum. \noproof
\end{lem}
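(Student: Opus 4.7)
My plan is to proceed by induction on $|\vI|$, following the standard Seidel-style construction of consistent perturbation data (cf.\ \cite{seidel-book}, Lemma 9.5). Writing $P_{\ast} \in \Pilbar_{\vI_{\ast}}$ for the fixed smooth cap of the hypothesis and $(K_{\ast}, J_{\ast})$ for the given datum, I would build the universal datum one moduli space at a time, arranging to hit $(K_{\ast}, J_{\ast})$ exactly at the induction step $\vI = \vI_{\ast}$.

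For the base case $|\vI| = 2$, the moduli space $\Pilbar_{\vI}$ consists of a single smooth cap with one input. The perturbation datum is already prescribed on the strip-like ends: the two incoming ends inherit the Floer datum for $(Q_{i_0}, Q_{i_1})$, while the outgoing end must agree with $(J_q, 0)$ by \eqref{eq:Floer_datum_caps_outgoing}. On the thick part one is free to choose any datum compatible with these restrictions and the boundary condition $K^{\vI}(z,q)(\xi)|_L = 0$; in particular if $\vI_{\ast} = \vI$ one simply takes $(K_{\ast}, J_{\ast})$.

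For the inductive step, assume $(\bfK^{\Pil}, \bfJ^{\Pil})$ has been constructed on every $\Pilbar_{\vI'}$ with $|\vI'| < |\vI|$. The boundary $\partial \Pilbar_{\vI}$ is covered by the images of \eqref{eq:gluing_disc_cap} and \eqref{eq:gluing_cap_cap}, so a perturbation datum on $\partial \Pilbar_{\vI}$ is already uniquely prescribed: cap-cap gluings \eqref{eq:gluing_cap_cap} inherit data from the induction hypothesis, and disc-cap gluings \eqref{eq:gluing_disc_cap} inherit data from the induction hypothesis on the cap factor and from the universal disc datum $(\bfK^{\cR}, \bfJ^{\cR})$ on the disc factor. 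I would then extend from the boundary to the interior by exploiting the fact that, for each fixed cap $P$, the space of $Q$-parametrized perturbation data satisfying the boundary condition on $\partial P$, the prescribed Floer behavior on strip-like ends, and any additional prescription on a closed subset of $P$, is a non-empty convex (indeed affine) subset of a topological vector space. Combining this fiberwise convexity with a partition of unity subordinate to a collar neighborhood of $\partial \Pilbar_{\vI}$ yields a smooth family of perturbation data extending the prescribed boundary values. If $\vI = \vI_{\ast}$, I would additionally require the extension to equal $(K_{\ast}, J_{\ast})$ at $P_{\ast}$; since $P_{\ast}$ is a smooth (interior) cap, this is a single additional pointwise constraint that preserves convexity of the admissible fiber, hence can be imposed.

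The main obstacle is coherence of the prescribed data at codimension $\geq 2$ corner strata of $\partial \Pilbar_{\vI}$, where two (or more) gluing formulas yield competing prescriptions. This coherence follows inductively from the consistency already built into the lower-dimensional universal data: gluings at distinct pairs of boundary components commute, so the two orders of degeneration produce the same datum on the common corner stratum. Once this compatibility is in place, the remaining content of the lemma is the standard smooth extension of boundary data across a manifold with corners, which the convexity of the fiber makes routine.
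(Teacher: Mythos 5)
Your proposal is correct and follows exactly the approach the paper intends: the paper gives no proof of its own but cites the inductive method of Seidel's Lemma 9.5, and your argument is a faithful, detailed implementation of that method (induction on $|\vI|$, data determined on $\partial \Pilbar_{\vI}$ by consistency with lower strata, extension inward via convexity of the fiber and a collar/partition-of-unity argument, with the extra pointwise constraint at the fixed interior cap $P_{\ast}$ absorbed by convexity).
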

Let $\vp = \{ p_{i_k} \}_{i=1}^{d}$ denote a sequence of time-$1$ Hamiltonian chords of  $H^{i_{k-1}, i_{k}}$ points starting at $Q_{i_{k}}$ and ending on $Q_{i_{k-1}}$  In addition, we will denote  the point of intersection $\sL_q \cap Q_i$ by $q_i$.

\begin{defin}
The {\bf moduli space of caps with inputs} $\vp$ is the space $\Pil(\vp) = \coprod_{q \in Q} \Pil_{q}(\vp)$ of solutions to the perturbed Cauchy Riemann equation \eqref{perturbed-CR} with incoming marked points mapping to $\vp$ and outgoing marked points mapping to $q_{i_0}$ and $q_{i_d}$ which project to the same point $q \in  Q$.
\end{defin}

In Section \ref{app:transversality_cap} we prove transversality for the moduli space of caps:
\begin{lem} \label{cap_transversality}
For a generic choice of  universal consistent $Q$-parametrized cap perturbation data,  $\Pil(\vp) $ is a smooth manifold.  More generally, any stratum of $\Pilbar(\vp)$ consisting of a single cap (and an arbitrary number of discs) has the expected dimension, and,  if $N \subset Q$ is a submanifold, the projection map $\Pil(\vp) \to Q$ (at every smooth point of the source) is generically transverse to $N$ on every such stratum. 
 \end{lem}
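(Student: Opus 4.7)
The plan is a Sard--Smale argument on a universal moduli space in which both the parameter $q \in Q$ and the perturbation data vary. Fix a sequence $\vI$ and a smooth cap $P \in \Pil_{\vI}$. Let $\mathcal{B}(\vp)$ denote the Banach manifold of pairs $(u,q)$ with $q \in Q$ and $u \co P \to M$ a $W^{1,p}$-map satisfying the labelled boundary conditions (with $\sL_{q}$ on the outgoing arc) and converging at the marked points to $\vp$, $q_{i_{0}}$, and $q_{i_{d}}$. Let $\mathcal{P}$ denote a Banach space of $Q$-parametrised perturbation data (with $C^{\ell}$ regularity, as in Floer's trick), and assemble the universal perturbed Cauchy--Riemann operator
\[ \dbar_{\mathrm{univ}} \co \mathcal{B}(\vp) \times \mathcal{P} \to \mathcal{E}, \]
whose zero locus is the universal moduli space $\widetilde{\Pil}(\vp)$.

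The first step is to show that $\widetilde{\Pil}(\vp)$ is a smooth Banach manifold, i.e.\ that $D\dbar_{\mathrm{univ}}$ is surjective at every solution. By the standard Floer--Hofer--Salamon argument this reduces to the existence of a somewhere injective point of $u$ at which $(\dot{\bfK}, \dot{\bfJ})$ varies freely. For non-constant $u$ such points are dense in the interior of $P$; constant solutions with outgoing boundary on $\sL_{q}$ for $|q| \geq \delta$ are excluded by Lemma \ref{lem:stokes_application_perturbed}, and the remaining constant configurations are controlled by the transversality of the labelled intersections established in Section \ref{sec:lagrangian_foliation}. Applying Sard--Smale to the projection $\widetilde{\Pil}(\vp) \to \mathcal{P}$ produces a comeager subset of $\mathcal{P}$ for which $\Pil(\vp)$ is a smooth manifold of the expected dimension.

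For a stratum of $\Pilbar(\vp)$ consisting of a single cap with bubbled holomorphic discs attached at incoming marked points, I proceed inductively on the combinatorial type: the disc bubble moduli spaces are already regular from the construction of $\Fuk(Q_{1}, Q_{2})$, so the stratum is a fibre product of a lower-dimensional cap moduli space $\Pil(\vp')$ with products of disc moduli spaces over the evaluation maps at the attaching chords. Transversality of this fibre product is again Sard--Smale, achieved by varying the cap perturbation data in small neighbourhoods of the attaching marked points; Lemma \ref{lem:extend_pert_data_caps} ensures these choices extend to globally consistent universal data, and downward induction through the stratification of $\Pilbar_{\vI}$ yields a single comeager set of universal data regularising all such strata simultaneously.

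For the transversality of the projection $\Pil(\vp) \to Q$ with respect to a submanifold $N$, the essential observation is that the evaluation map $\ev_{Q} \co \widetilde{\Pil}(\vp) \to Q$, $(u,q,\bfK,\bfJ) \mapsto q$, is a submersion: since $q$ is a free coordinate on $\mathcal{B}(\vp)$, its differential is already surjective at every point of the universal moduli space. Hence $\ev_{Q}^{-1}(N)$ is a Banach submanifold of $\widetilde{\Pil}(\vp)$, and Sard--Smale applied to its projection to $\mathcal{P}$ yields a comeager set of perturbation data for which $\Pil(\vp) \to Q$ is transverse to $N$ on the smooth locus of every relevant stratum. The main obstacle throughout is the somewhere-injectivity analysis, particularly at boundary strata where the cap could have a nearly-constant irreducible component; the required freedom comes from the combined flexibility of the $q$-dependent Lagrangian $\sL_{q}$ and the $q$-dependent perturbation data $(K^{\vI}, J^{\vI})$, and the detailed argument is deferred to Section \ref{app:transversality_cap}.
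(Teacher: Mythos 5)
Your overall strategy — universal moduli space over the perturbation data, Sard--Smale, reduction to surjectivity of a linearized operator, and the observation that the disc-bubble strata inherit regularity from the construction of $\Fuk(Q_1,Q_2)$ — matches the paper's proof, and your remark that the evaluation to $Q$ is already a submersion on the universal moduli space is essentially the paper's trick of adding the projection to $T_q Q$ as an extra factor of the target of the linearization.

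The gap is in how you propose to prove surjectivity of the universal linearized operator. You reduce it to the Floer--Hofer--Salamon somewhere-injectivity argument and then have to worry about constant caps, which you attempt to exclude using Lemma \ref{lem:stokes_application_perturbed}. This is the wrong tool for this setting, for two reasons. First, the perturbation data $(K^{\vI},J^{\vI})$ here are \emph{domain-dependent} (functions on $P\times Q$, not just on $M$); varying them near a single interior point $z_0\in P$ perturbs the linearization only at $z_0$, regardless of whether $u(z_0)$ has other preimages. This is precisely why such perturbations are used (following \cite{seidel-book}) and why the paper can simply cite surjectivity of the restricted operator $\sT_P\oplus \C^{\infty}_{\red}(P,u^*TM)\to \C^\infty(P,\Omega^{0,1}\otimes u^*TM)$ as a standard fact, with no somewhere-injectivity hypothesis at all. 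Second, even setting that aside, your handling of constant caps is incorrect: Lemma \ref{lem:stokes_application_perturbed} only constrains caps whose boundary runs through both $Q_1$ and $Q_2$ (it is an estimate for $(L_2,L_1,\sL_{(\vx,0)})$-boundary conditions) and says nothing about the $i_0=i_d$ case; and constant caps in the moduli space cannot be excluded in general --- they actually occur and are required to be regular, since the proof that $\cG^1$ is a chain isomorphism counts exactly such constant solutions at the critical points of $f_{i_0,i_1}$. Dropping the somewhere-injectivity detour entirely, and appealing directly to the freedom coming from domain-dependent perturbations together with unique continuation, closes the gap and gives the paper's argument.
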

We shall also prove compactness in Section  \ref{sec:comp-curv-leaves}, by appropriately generalising Lemma \ref{lem:stokes_application}:
\begin{lem} \label{lem:cap_cpactness}
Whenever $i_0=i_d$ or $(i_0,i_d) = (2,1)$, the Gromov bordification $\Pilbar(\vp)$ is compact.  In the second case, $  \Pil_{q}(\vp) $ is in addition empty whenever $q$ lies more than $4 \epsilon$ away from $b$.

If $(i_0,i_d )=(1,2)$, then the Gromov bordification of the moduli space of caps with boundaries on leaves corresponding to points within $4 \epsilon$ of $q$ is compact.
\end{lem}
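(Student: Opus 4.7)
The plan is to reduce the three compactness claims to a single a priori energy bound plus a confinement argument inside $M^{\mathrm{ins}}$, after which standard Gromov compactness for the perturbed Cauchy--Riemann equation applies. Specifically, I will establish: (a) that the image of any cap $u \in \Pil(\vp)$ avoids a neighbourhood of $\partial M^{\mathrm{ins}}$; (b) that $E(u)$ is uniformly bounded as $q$ varies in the compact set dictated by each case; and, in the mixed case $(i_0, i_d) = (2,1)$, (c) that no solution exists when $q$ is more than $4\epsilon$ from $b$.

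For (a), observe that $K^{\vI}$ takes values in $\sH$ and is therefore compactly supported in $M^{\mathrm{ins}}$, that $J^{\vI}$ is of contact type near $\partial M^{\mathrm{ins}}$, and that the Lagrangian labels $Q_1$, $Q_2$, $\sL_q$ all lie in the interior of $M^{\mathrm{ins}}$ (the leaves by design, cf.\ Section~\ref{sec:lagrangian_foliation}). The integrated maximum principle, in the form of Lemma~7.2 of \cite{abouzaid-seidel} (reproduced as Lemma~\ref{lem:convexity_weak_assumptions} below), then forbids $u(P)$ from touching $\partial M^{\mathrm{ins}}$ and the same argument near $\partial M^{\mathrm{mi}}$ keeps the image in a fixed compact set.

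For (b), I will apply Stokes' theorem to $u^{*}\omega$, organised as in Seidel's action/energy formula. Since the $Q_i$ are exact with fixed primitives and each leaf $\sL_q$ is a Lagrangian disc and hence admits a primitive $g_q$ for $\theta|_{\sL_q}$ which may be chosen to depend smoothly on $q$ (with uniformly bounded $C^{0}$-norm on any compact subset of $Q$), the topological energy is the sum of action differences at the Hamiltonian chord asymptotics $\vp$ (depending only on $\vp$) and a boundary contribution along the outgoing arc on $\sL_q$. The latter is controlled by $\sup_q \|g_q\|_{C^{0}}$, which is finite in every case: when $i_0 = i_d$ the two outgoing endpoints coincide and the contribution telescopes; when $(i_0,i_d) = (2,1)$ the next step confines $q$ to a neighbourhood of $b$; and when $(i_0, i_d) = (1,2)$ the hypothesis restricts $q$ to the ball of radius $4\epsilon$. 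The remaining Hamiltonian perturbation term is bounded by $\|K^{\vI}\|_{C^{0}}\cdot \mathrm{Area}(P)$ and is therefore harmless. This yields the required uniform bound $E(u) \leq C(\vp)$.

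For (c), I will invoke Lemma~\ref{lem:stokes_application_perturbed}: in the $(2,1)$ case, the outgoing arc on $\sL_q$ is bordered (counter-clockwise) by the $L_{1}$-labelled segment following it and the $L_{2}$-labelled segment preceding it, matching exactly the boundary data for which Lemma~\ref{lem:stokes_application_perturbed} gives strictly negative symplectic area once $|q - b| \geq \delta$. Choosing $\epsilon \ll \delta$ and using that $\|K^{\vI}\|_{C^{0}} = O(\epsilon)$ by construction of the Floer data in Condition~\eqref{eq:morse-floer-compatible-conditions}, one sees that the total energy $E(u)$ remains strictly negative, contradicting $E(u) \geq 0$. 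Hence $\Pil_{q}(\vp) = \emptyset$ whenever $|q-b| > 4\epsilon$, which confines the $q$-projection of $\Pil(\vp)$ to a compact set. With (a), (b), (c) in hand, Gromov compactness for the perturbed $\dbar$-equation with Lagrangian boundary, as developed in \cite{seidel-book}, produces subsequential limits in $\Pilbar(\vp)$, with limit strata precisely those described by the gluing maps \eqref{eq:gluing_disc_cap} and \eqref{eq:gluing_cap_cap} together with Floer strip breaking at the chord asymptotics. The main delicate point is step (b) with a moving outgoing boundary; once primitives $g_q$ for $\theta|_{\sL_q}$ are chosen smoothly in $q$, which is possible because the leaves are contractible and the foliation is smooth, the remainder of the argument is routine.
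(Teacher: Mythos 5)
Your outline captures the right ingredients (the integrated maximum principle, Lemma~\ref{lem:stokes_application_perturbed}, the distinction of cases by $(i_0,i_d)$), but it contains two genuine gaps that the paper's argument avoids.

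First, the claim in your step (a) that the leaves $\sL_q$ ``all lie in the interior of $M^{\ins}$ by design'' is false. By construction (Condition~\ref{item:orthogonal_interior} and the surrounding discussion) the leaf $\sL_q$ is, away from $b$, a cotangent fibre and extends well outside $M^{\ins}$; near $\partial M^{\ins}$ it intersects $\partial M^{\ins}$ in a Legendrian only when $q$ is at distance $\geq \delta$ from $b$. This is precisely why the paper introduces the second domain $M^{\mid}$: for $q$ within $\delta$ of $b$ the hypotheses of Lemma~\ref{lem:convexity_weak_assumptions} fail at $\partial M^{\ins}$ but hold at $\partial M^{\mid}$ (there $\sL_q$ meets $\partial M^{\mid}$ along a Legendrian of the contact form $\theta$ restricted to the hypersurface $|\vx+\vy|^2 = 4-\delta$, thought of as a disc cotangent bundle of $L_{-\Delta}$). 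You mention $M^{\mid}$ only in passing as ``the same argument'' — but the whole point of the proof is the case split between $q$ far from $b$ (confine to $M^{\ins}$, Lemma~\ref{lem:half_of_compactness_caps}) and $q$ near $b$ (confine to $M^{\mid}$), and your argument as written collapses that distinction by a false premise.

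Second, in your step (c) you derive emptiness of $\Pil_q(\vp)$ for $(i_0,i_d)=(2,1)$, $|q-b| > 4\epsilon$, from the inequality ``energy $<$ (strictly negative Stokes contribution) $+$ ($O(\epsilon)$ Hamiltonian error),'' using the assertion $\|K^{\vI}\|_{C^0} = O(\epsilon)$. That smallness is nowhere imposed: Condition~\eqref{eq:morse-floer-compatible-conditions} constrains the graphs of the time-$1$ Hamiltonian flow but says nothing quantitative about $\|K^{\vI}\|$, and the cap perturbation data can be of order $1$ on the interior. The paper circumvents this entirely: it applies Lemma~\ref{lem:compactness_2_boundaries} not to the whole cap but only to $u|\Sigma$ where $\Sigma = u^{-1}(M - \inte(M^{\ins}))$, and on that region the curve is genuinely $J$-holomorphic (since $K^{\vI}$ is $\sH$-valued, hence supported in $M^{\ins}$). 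So the positivity-of-energy argument runs without any Hamiltonian term at all, and the strict negativity from Lemma~\ref{lem:stokes_application_perturbed} gives a clean contradiction. Your proposed global-energy version would require an unavailable smallness hypothesis.

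Your step (b) — bounding topological energy by primitives $g_q$ of $\theta|_{\sL_q}$ — is fine in spirit but is not the load-bearing step; once the image is confined to a compact Liouville subdomain, the topological energy bound in terms of actions is automatic in this exact setting. The place where care is required, and where your proposal falls short, is confining the image in the first place.
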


\subsection{Mushroom maps} \label{sec:mushroom_maps}

Given the seeming complexity of the definitions coming up, a word of explanation is in order.  Naively, a mushroom map is simply a holomorphic map defined on each cap together with a gradient tree satisfying  certain compatibility conditions. Let us fix the homeomorphism type of a tree $S$ with $r$ inputs, and a sequence of integers $d_j$ such that $\sum d_j = d$.  Assume we have fixed in addition, a sequence of labels $\vI$ of length $d$, a sequence $\vp = \vp[1] \cup \cdots \cup \vp[r]$ of Hamiltonian chords with endpoints on the Lagrangians $Q_{i_{k-1}}$ and $Q_{i_{k}}$, and a critical point $x_0$ of $f_{i_0, i_d}$.  Each sequence $\vp[k]$ determines a subsequence  $\vI[k]$ of $\vI$, whose elements we denote $\{i_{j}^{k} \}_{j=1}^{d_k}$.

We would have liked to define the space of mushroom maps with inputs $\vp$, output $x_0$, and fixed topological type as a fibre product
\begin{equation} \label{eq:naive_mushroom}
\Shrub_{S}(x_0, Q_{i_0=i^{1}_0, i^2_0}, \ldots Q_{i_0^{r}, i_{d_r}^{r} = i_{d}}) \times_{Q^{r}} \left( \Pil(\vp[1]) \times \cdots \times \Pil(\vp[r]) \right)
\end{equation}
where $\vI[k]=\{i^{k}_{j} \}$ is the sequences of labels for the incoming segments of an element of $\Pil(\vp[j])$. Whenever $S$ has no incoming leaves of vanishing length, one may easily choose perturbation data on the moduli space of shrubs and caps once and for all, so that all such fibre products are transverse (we shall see later how the lack of Hausdorffness is circumvented).  However there is no way to achieve transversality in this way for the stratum shown in Figure \ref{deepest_stratum} whenever some of the caps have the same inputs.  

\begin{figure}[h]
\epsfxsize=2.5in
\epsffile{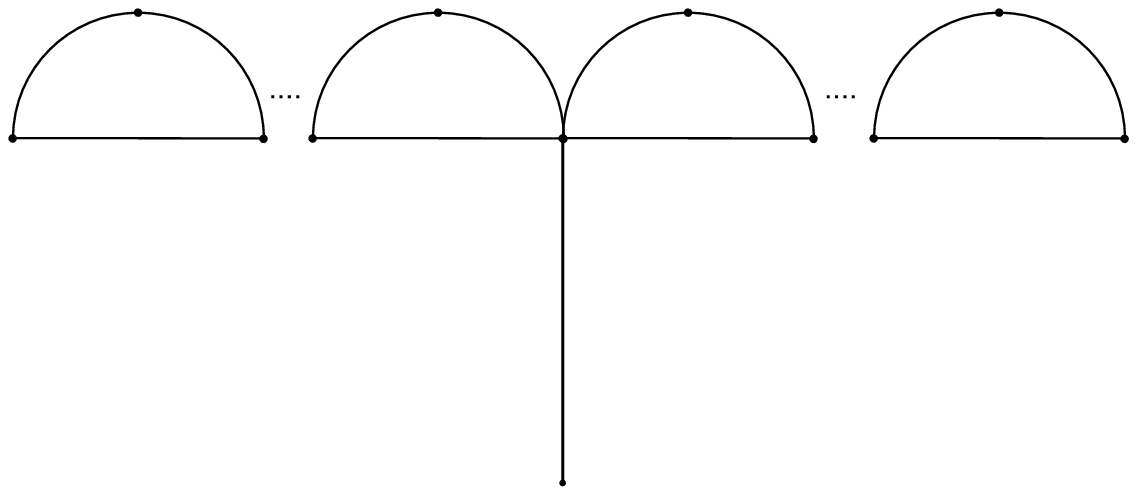} 
\caption{The deepest stratum of $\Champ_{d}$.}
\label{deepest_stratum}
\end{figure}

The main observation (we used a similar trick in achieving regularity for the moduli space of cascades in \cite{abouzaid-seidel}) is that there is no reason for the perturbation data on the various caps of a fixed mushroom to be defined in exactly the same way.   In particular, even if all the caps of the mushroom shown in Figure \ref{deepest_stratum} have marked points labelled by the same Hamiltonian chords, our setup will allow up to choose different perturbation data for each of them, thereby making transversality easy to achieve.  The rest of this section implements this idea, which we shall return to repeatedly (see in particular Remark \ref{rem:free_choice} and the discussion following Lemma \ref{lem:surjective_restriction_pert_data})

We first consider a simplest case of a mushroom with one input whose incoming boundary segments are labelled $(i_0,i_1)$. 
\begin{defin} \label{lem:base_case}
A {\bf $Q$-parametrized perturbation datum} on the unique mushroom with one input is the choice of (i)  a family of almost complex structures in $\sJ$ parametrised by a disc with three punctures, agreeing with the respective complex structure at the end and (ii) a closed $1$-form $\gamma$ on the cap with three punctures whose pullback under the positive end with boundary labels $(Q_{i_0}, Q_{i_1})$ and the negative end with boundary $(Q_{i_0},\sL_{q})$ agrees with $dt$, and which vanishes near the remaining end. 
\end{defin}

In this case, we fix the Hamiltonian perturbation datum as
\begin{equation}  K^{i_0,i_1} \equiv H^{i_0, i_1} \otimes \gamma. \end{equation}

If we have more inputs, we shall also require a perturbation of the stem.  Recall that a labelling $\vI$ of the inputs of a mushroom $C$ is equivalent to the data of compatible labellings $\vI[k]$ of all caps, and $\vR$ of the stem.  In particular, each edge of the stem is equipped with a gradient flow equation which we shall have to perturb.  Unlike the usual setup, we shall allow the Floer data on the ends meeting the outgoing segment of a cap to vary depending on the modulus.  This does not cause any difficulties as no breaking of holomorphic strips can happen at these ends.

\begin{defin}
A {\bf $Q$-parametrized perturbation datum} on a singular mushroom $C$ is the choice of (i) a gradient perturbation datum on the stem of $C$ and (ii) a $Q$-parametrized perturbation datum on each (possibly singular) cap such that
\begin{equation} \label{eq:varying_floer_datum} \parbox{36em}{The Floer datum for $(\sL_{q}, Q_{i,d})$ and $(Q_{i_0}, \sL_{q})$ is given by the constant choice of Condition \eqref{eq:Floer_datum_caps_outgoing} with the following exception:  If the $j$\th cap has exactly $2$ incoming segments with Lagrangian labels $(Q_{i_0},Q_{i_1})$, then the Hamiltonian perturbation associated to the end $(Q_{i_0}, \sL_{q})$ is given by  multiplying $H^{i_0,i_d}$ by a non-negative constant which is bounded by $1$.  This constant vanishes whenever the $j$\th incoming leaf of the stem is short (say, has length less than $10$) and equal $1$ whenever it is sufficiently long.}  \end{equation}
\end{defin}
\begin{rem}
The perturbation of the gradient flow need not be compatible with the simplicial subdivision $\sQ$ as it was in Section \ref{sec:simp_to_morse}.  However, we still require that the gradient flow be transverse to the boundary of $Q_{i,j}$ (and pointing in the appropriate direction).
\end{rem}
\begin{rem}
The constraint \eqref{eq:varying_floer_datum} is designed to achieve two things:  (i) whenever a mushroom with a single input breaks off, we need the perturbation datum to agree with the one chosen in Definition \ref{lem:base_case} and (ii) whenever two successive incoming edges of the stem have vanishing length, we require vanishing Floer data in order for the gluing theorem to hold.
\end{rem}
Of course, the perturbation datum is not strictly speaking parametrized by $Q$ anymore, but rather by $Q^{r}$.  For the next definition, we fix, as before a sequence $\vp =(p_1, \ldots, p_{d})$ of Hamiltonian chords with endpoints on $Q_1$ and $Q_2$ and a critical point $x_0$ of $f_{i_0,i_d}$.

\begin{defin}
A {\bf singular mushroom map} $\Psi$ with inputs $\vp$ and output $x_0$  is a collection of maps
\begin{align*}
\Psi_{S} & \co S(C) \to Q \\
\Psi_{j} & \co P_{j}(C) \to M.
\end{align*}
where $\Psi_{S}$ is a (singular) perturbed gradient tree (as in Definition \ref{defin:shrub_map}), and $\Psi_j$ are finite energy solutions to the perturbed Cauchy-Riemann equation \eqref{perturbed-CR} on the relevant (possibly singular) caps satisfying the following conditions
\begin{enumerate}
\item If the $k$\th incoming vertex lies on $P_{j}(C)$, then $\Psi_{j}$ converges on the corresponding end to $p_k$, and the incoming boundary segments of $P_j(C)$ are mapped to the labelling Lagrangians.
\item The outgoing arc on $P_{j}(C)$ is mapped to $\sL_{\Psi_{S}(v_{j})}$, where $v_j$ is the $j$\th incoming leaf of the stem.
\end{enumerate}
\end{defin}
Note that the last condition and the removal of singularities theorem forces the outgoing marked points of the $j$\th cap to be mapped to the lifts of $\Psi_{S}(v_{j})$ to the appropriate manifold $Q_1$ or $Q_2$.

The next result is the main tool in establishing compactness for maps of mushrooms:
\begin{lem}
The image of every cap of a singular mushroom map is contained in a compact subset of $M$, and every edge of a stem labelled by $(i,j)$  with $i \neq j$ is mapped to the ball of radius $\delta$ in $Q_{i,j}$.  In particular, if the distance between $q$ and $b$ is greater than $\delta$, there is no mushroom map admitting a cap whose first and last labels differ, and whose boundary condition on the outgoing arc is $\sL_{q}$.
\end{lem}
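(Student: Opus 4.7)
The proof combines three ingredients: the combinatorial Claim from the proof of Lemma \ref{lem:compactness_stasheff}, which depends only on the cyclic label structure of a ribbon tree and therefore transfers to the stem of a mushroom map verbatim; the monotonicity of the flows of $-\nabla f_{1,2}$ (outward from $b$) and $-\nabla f_{2,1}$ (inward toward $b$) in the region of $Q_{i,j}$ at distance greater than $\delta$ from $b$ (Lemma \ref{lem:morse_function_grows}); and Lemma \ref{lem:cap_cpactness}, which forces a cap with outgoing labels $(2,1)$ to have $q$-value within $4\epsilon$ of $b$. The plan is first to establish the $\delta$-ball confinement for stem edges with $i(e) \neq j(e)$, then to deduce cap compactness from it, and finally to observe that the ``in particular'' statement is immediate.

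Fix such an edge $e$ and suppose for contradiction that some point of its image lies at distance greater than $\delta$ from $b$. If $(i(e), j(e)) = (2,1)$, the Claim exhibits $e$ as the bottom of a descending arc $(e_0, \ldots, e_r = e)$ of $(2,1)$-labelled edges whose top $e_0$ is an incoming leaf of the stem attached to a $(2,1)$-cap. Lemma \ref{lem:cap_cpactness} then places the image $q_0$ of the corresponding incoming vertex within $4\epsilon$ of $b$. Since $-\nabla f_{2,1}$ has strictly negative radial component in the $\delta$-exterior, and the perturbation datum is small enough there not to reverse this sign, the perturbed forward trajectory cannot cross the sphere of radius $\delta$ outward, so the image of the entire descending arc---in particular of $e$---stays inside the closed $\delta$-ball, a contradiction.

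When $(i(e), j(e)) = (1,2)$, the Claim gives two possibilities. In case (i), $e$ lies on a descending arc of $(1,2)$-edges terminating at the outgoing leaf, which converges as $t \to +\infty$ to a critical point $x_0$ of $f_{1,2}$; by Condition \eqref{eq:morse-floer-compatible-conditions} $x_0$ sits well inside the $\delta$-ball. However, in the $\delta$-exterior the flow of $-\nabla f_{1,2}$ has strictly positive radial component, so a trajectory that enters the exterior can never return---incompatible with convergence to $x_0$ in the interior, contradicting the assumption on $e$. In case (ii), the descending $(1,2)$-arc from $e$ ends at an interior vertex $v$ adjacent to an ascending arc of $(2,1)$-edges leading up to an incoming leaf attached to a $(2,1)$-cap. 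Lemma \ref{lem:cap_cpactness} bounds that cap's $q$-value within $4\epsilon$ of $b$; the inward behaviour of the $(2,1)$-flow in the $\delta$-exterior then prevents the trajectory on the ascending arc from escaping, placing $\Psi_S(v)$ inside the $\delta$-ball; and finally the outward behaviour of the $(1,2)$-flow on the descending arc---whose endpoint is $v$---forces every earlier point to sit at distance no greater than that of $\Psi_S(v)$ from $b$, again contradicting the assumption.

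With the edge claim in hand, cap compactness follows quickly. Lemma \ref{lem:cap_cpactness} directly handles caps with outgoing labels $(1,1)$, $(2,2)$, and $(2,1)$; for $(1,2)$-caps the attached incoming leaf of the stem is labelled $(1,2)$, so the claim just proved places the relevant $q$-value inside the $\delta$-ball (tracing the argument one actually sees $|q-b|<4\epsilon$), putting it in the regime where Lemma \ref{lem:cap_cpactness} yields Gromov compactness. The ``in particular'' statement is then immediate: a cap with differing outgoing labels and outgoing arc on $\sL_q$ would force the attached incoming leaf of the stem to have $i(e) \neq j(e)$, hence $|q-b| < \delta$, so no such cap can exist when $|q-b| > \delta$. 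The main technical point to verify throughout is the implicit assumption that the perturbation data $X_e$ are small enough in the $\delta$-exterior not to reverse the sign of the radial component of $-\nabla f_{i,j}$; once this is granted, the argument is a purely combinatorial propagation of distance bounds along the arc structure dictated by the Claim.
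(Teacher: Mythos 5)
Your proof follows essentially the same strategy as the paper's sketch: trace the arcs of $(1,2)$- and $(2,1)$-labelled stem edges using the combinatorial Claim from the proof of Lemma \ref{lem:compactness_stasheff}, propagate a distance bound along these arcs via the radial monotonicity of Lemma \ref{lem:morse_function_grows}, and anchor the endpoints with Lemma \ref{lem:cap_cpactness} and Condition \eqref{eq:morse-floer-compatible-conditions}. The caveat you flag---that the perturbation data must be small enough in the $\delta$-exterior not to reverse the radial sign of $-\nabla f_{i,j}$---is equally left implicit in the paper's sketch, which speaks of the perturbed flow ``pointing outward'' without noting that Definition \ref{defin:pert_datum} only constrains it at $\partial Q_{i,j}$.
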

\begin{proof}
The proof is a combination of Lemma \ref{lem:cap_cpactness} and the proof of Lemma \ref{lem:strips_moduli_spaces_cpact}.  We focus on the case of a cap $P_{j}(C)$ whose first segment (in the counterclockwise order as usual) is mapped to $Q_1$, and penultimate one to $Q_2$ (Lemma \ref{lem:cap_cpactness} excludes the other possibility).

As in the proof of Lemma  \ref{lem:strips_moduli_spaces_cpact}, we find that either (i) there is a descending arc on the stem $S(C)$, starting at the $j$\th incoming leaf of $S(C)$ and ending on the outgoing edge, all of whose edges are labelled $(1,2)$ or (ii) there is an arc from the $j$\th incoming leaf of $S(C)$ to another leaf meeting an incoming cap $P_{j'}(C)$ which can be broken into a descending arc with edges labelled $(1,2)$, followed by an ascending arc with edges labelled by $(2,1)$.  Since the perturbed negative gradient flow along an edge labelled by $(1,2)$ points outward (and inward for edges labelled by $(2,1)$), we conclude that the critical point of $f_{1,2}$ which is the output of our mushroom map (or the image of the cap $P_{j'}(C)$ under the projection to $Q_{2,1}$) lies outside the ball of radius $\delta$ about the base point in $Q_{1,2}$ (or in $Q_{2,1}$ in the second case).  This contradicts our assumption \eqref{eq:morse-floer-compatible-conditions} on the gradient flow $f_{1,2}$, or Lemma \ref{lem:cap_cpactness} applied to the cap $P_{j'}(C)$.
\end{proof}

Following the construction for discs and trees, we shall consider perturbation data varying with the modulus of the mushroom:

\begin{defin} \label{lem:universal_datum_mushrooms}
A {\bf universal consistent $Q$-parametrized perturbation datum for mushrooms} is a choice $(\bfX^{\Champ}, \bfK^{\Champ}, \bfJ^{\Champ})$ of $Q$-parametrized perturbation data for each singular mushroom $C$,  varying smoothly in each stratum of $\Champbar$, and which is compatible with (i) the maps  \eqref{eq:labelled_mushrooms_right_module_stasheff} and the perturbation data  $(\bfK^{\cR}, \bfJ^{\cR})$ used to define $\Fuk(Q_1,Q_2)$ and (ii) the maps  \eqref{eq:labelled_mushrooms_left_module_stasheff} and the perturbation datum $\bfX^{\Stasheff}$ used  to define $\Morse(Q_1.Q_2)$.  Moreover, whenever the incoming leaf of the stem meeting a given cap $P_{j}$ has lenght smaller than $1$, we require that the Floer datum on the ends adjacent to its outgoing segment to be the constant ones fixed in \eqref{eq:Floer_datum_caps_outgoing}.
\end{defin}
\begin{rem}\label{rem:free_choice}
Had we naively pulled back the perturbation datum for a mushroom using the projections of components of $ \Champ_{d} $ to a product as in Equation \eqref{eq:product_description_mushroom}, we would not have been able to achieve transversality in the situation pictured in Figure  \ref{deepest_stratum}.  However, while the stratum labelled by the topological type of such mushrooms lies on the boundary of the Gromov compactification of a component of $\Champ_{d}$, it in fact lies in the interior of $\Champbar_{d}$.  In particular, the choice of perturbation data for a fixed $d$ on this part of the moduli space is essentially independent from the choices made for smaller value of $d$.  This implies that the choice of datum of each cap is unconstrained by the other choices, thereby allowing us to achieve transversality as explained in the discussion following Lemma \ref{lem:surjective_restriction_pert_data}.
\end{rem}
Given such a perturbation datum, we can define the moduli spaces which allows us to construct a functor from $\Morse(Q_1,Q_2)$ to $\Fuk(Q_1,Q_2)$.

\begin{defin} \label{mushroom-maps}
The {\bf moduli space of mushroom maps} 
\[ \Champ(x_{0};\vp)  \]
is the space of maps $\Psi$ from a non-singular mushroom $C$ with incoming marked points $\vp$ and outgoing marked point $x_0$.  The space of stable maps from a possibly singular mushroom is denoted
\[ \Champbar(x_0;\vp). \]
\end{defin}

The topology on $\Champbar(x_0;\vp)$ is defined as usual using the Gromov  $\C^{\infty}$ topology for the underlying caps and stem.  Moreover, this space carries a natural stratification coming from the stratifications on $\Pilbar$ and $\Shrubbar$ (and possible breakings of gradient flow lines between critical points, or holomorphic strips at the incoming ends).  We state the main result of Section \ref{sec:transverse_mushrooms}:
\begin{lem} \label{lem:mushrooms_expected_dimension}
For a generic choice of perturbation data, all strata of  $\Champbar(x_0;\vp)$ are of the expected dimension.
\end{lem}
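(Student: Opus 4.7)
The plan is to prove the lemma by induction on the dimension of the stratum, simultaneously with a compatible inductive construction of the universal perturbation datum promised by Definition \ref{lem:universal_datum_mushrooms}. Each stratum of $\Champbar(x_0;\vp)$ has a canonical description as a fibre product
\begin{equation*}
\Shrub_{\vR}(x_0, \vQ) \times_{Q^{r}} \left( \Pil(\vp[1]) \times \cdots \times \Pil(\vp[r]) \right),
\end{equation*}
possibly with additional breaking off of Floer strips or Morse trajectories at the external edges. The standard package of transversality for shrubs (used in Lemma \ref{lem:existence_morse_category}) and for caps (Lemma \ref{cap_transversality}) already guarantees that each factor is smooth of the expected dimension for a generic choice of the underlying data, and the standard Floer/Morse breaking analysis handles external edges. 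The content of the lemma is therefore transversality of the evaluation maps $\Pil(\vp[k]) \to Q$ at the outgoing marked points against the evaluation of the stem; once that is in place, the dimension count from \eqref{eq:iso_det_bunldes_stasheff} combined with the analogue of \eqref{eq:isomorphism_orientation_bundles_shrubs} for shrubs gives the expected dimension formula for the fibre product.

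First I would induct on the total number of inputs $d$, then within a fixed $d$ on the codimension of the stratum of $\Champbar_{\vI}$ over which we work, starting with the deepest strata where every cap carries the smallest possible number of inputs. At each inductive step, the compatibility conditions built into Definition \ref{lem:universal_datum_mushrooms} completely determine the perturbation datum on the boundary of $\Champbar_{\vI}$ via the maps \eqref{eq:labelled_mushrooms_left_module_stasheff} and \eqref{eq:labelled_mushrooms_right_module_stasheff}, and I would verify (as in Lemma \ref{lem:extend_pert_data_caps}) that the restriction map from universal consistent data on $\Champbar_{\vI}$ to data on the boundary is surjective. Applying the Sard--Smale theorem to the universal moduli space parametrised by the remaining free choices then yields a residual set of perturbation data for which the interior stratum is cut out transversely. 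Intersecting the (countably many) residual sets arising for each $d$, each $\vI$, and each stratum of each $\Champbar_{\vI}$ yields the desired generic universal datum.

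The key point on which the argument hinges, and what I expect to be the main subtlety, is precisely the situation flagged in Remark \ref{rem:free_choice} and illustrated in Figure \ref{deepest_stratum}: a mushroom whose stem has all incoming edges of length zero, with several caps carrying identical lists of labels and inputs. Naively a fibre product of several copies of the same moduli space $\Pil(\vp[k])$ with itself over $Q$ cannot be made transverse because the diagonal has positive codimension while the factors are cut out by the same equation. The resolution, which I would emphasise in the write-up, is that such a configuration lies in the \emph{interior} of $\Champbar_{d}$ (the equivalence relation of Definition \ref{defin:mod_space_stable_mushrooms} identifies the boundary strata of length-zero incoming edges with a lower-dimensional piece), and the perturbation datum on each cap is allowed to be chosen independently --- the consistency requirements of Definition \ref{lem:universal_datum_mushrooms} only relate the data to \emph{previously constructed} pieces in the induction, not to the other caps sitting in the same mushroom. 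Thus one really has a product of independent parameter spaces to vary, and the universal moduli space
\begin{equation*}
\Pil(\vp[1]) \times \cdots \times \Pil(\vp[r])
\end{equation*}
fibred over the space of allowable $r$-tuples of cap perturbation data is cut out transversely; the projection to $Q^{r}$ is then a submersion on the universal moduli, and the fibre product with the (already transverse) stem becomes transverse for a residual set of data. Compactness of the resulting strata follows from the already-established Lemmata \ref{lem:compactness_stasheff} and \ref{lem:cap_cpactness}, completing the induction.
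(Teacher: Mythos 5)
Your proposal follows essentially the same route as the paper's proof in Section~\ref{sec:transverse_mushrooms}: you reduce to the fibre-product description of strata (as in Lemma~\ref{lem:mushroom_fibre_product}), invoke the prior regularity of the Floer and Morse perturbation data to handle the components that are plain discs or Stasheff trees, apply Sard--Smale to the evaluation maps from the cap moduli spaces to $Q$, and---correctly identifying the crux---observe that the deepest stratum of Figure~\ref{deepest_stratum} is interior to $\Champbar_{d}$ and that the perturbation data on distinct caps of the same mushroom are unconstrained relative to one another, so that even caps with identical labels and inputs can be made mutually transverse (this is exactly Remark~\ref{rem:free_choice}). The one thing you pass over somewhat quickly is the surjectivity of the restriction map from universal consistent data to data on a fixed mushroom: because $\Champbar_{d}$ is not itself a manifold with corners (the quotient by $\sim$ identifies some top boundary strata), the paper cannot extend smooth perturbation data skeleton-by-skeleton directly, and instead works on the auxiliary space $\Champext_{d}$ built from $\Shrubbarext$; this is the apparatus that makes ``smooth'' and the analogue of Lemma~\ref{lem:inductive_construction_pert_data_stasheff} precise, and you would need it to justify the inductive extension step you sketch. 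With that caveat, the argument is correct and aligned with the paper's.
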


The proof of this next result is given in Section \ref{sec:gluing_mushrooms}, and requires a gluing result in codimension $1$ for moduli spaces of caps:
\begin{prop} \label{prop:1_dim_moduli_space_manifold}
Given generic perturbation data,  $\Champbar(x_0;\vp)$ is a compact $1$-dimensional manifold with boundary whenever
\begin{equation} \deg(x_0) = 1-d+ \sum \deg(\vp) . \end{equation}
\end{prop}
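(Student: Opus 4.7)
The plan is to deduce compactness from the pieces already assembled and then to identify the boundary by a case analysis of the codimension-one degenerations, applying an appropriate gluing theorem in each case.

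First I would establish compactness of $\Champbar(x_0;\vp)$. The stem has bounded length along each edge labelled $(1,2)$ or $(2,1)$ by the argument of Lemma \ref{lem:compactness_stasheff} combined with the claim proved in Section \ref{sec:mushroom_maps} that edges labelled by a mixed pair cannot enter a uniform neighbourhood of $\partial Q_{i,j}$; for edges labelled $(i,i)$, the standard Morse compactness in $Q_i$ applies since our perturbations on such edges are supported in the Hausdorff locus. The caps are controlled by Lemma \ref{lem:cap_cpactness}, using crucially Lemma \ref{lem:stokes_application_perturbed} to bound the energy of any disc bubble whose outgoing arc lies on some $\sL_{q}$: such a disc can exist only if the corresponding $q$ lies within $4\epsilon$ of $b$, but the stem then forces the image under $\Psi_{S}$ to stay in this neighbourhood. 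The Hamiltonian perturbations on the strip-like ends at incoming punctures are controlled by Lemma \ref{lem:strips_moduli_spaces_cpact}. Thus a Gromov-type limit of a sequence in $\Champ(x_0;\vp)$ exists as a stable mushroom map, possibly with breaking of gradient trajectories at the outgoing end and of holomorphic strips at the incoming ends.

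Next, invoking Lemma \ref{lem:mushrooms_expected_dimension} applied to all possible limiting configurations, every stratum has its expected dimension. Because the virtual dimension of any stratum obtained by breaking off a $k$-dimensional configuration from a $1$-dimensional moduli space drops by $k$, the only strata that can appear as limits are the top-dimensional stratum (interior points) and codimension-one strata (boundary points). These codimension-one strata are exhausted by the following list, coming from the boundary description in Lemma \ref{lem-top_mushroom_boundary_stratum} and the breaking phenomena above: (i) a broken gradient flow line at the output, contributing a product $\cM(x_0,x)\times\Champ(x;\vp)$; (ii) a broken Floer strip at an incoming puncture, contributing $\Champ(x_0;\vp')\times\cR(p,p_k)$; (iii) a Stasheff-tree breaking of the stem via \eqref{eq:labelled_mushrooms_left_module_stasheff}, giving $\Stasheff_{\vR}(x_0,\vx)\times\prod_{j}\Champ(x_j;\vp[j])$; (iv) a disc-bubbling degeneration of a cap via \eqref{eq:labelled_mushrooms_right_module_stasheff}, giving $\Champ(x_0;\vp')\times\cR(p,\vp'')$. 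Lemma \ref{lem:mushrooms_expected_dimension} guarantees that each such product has total virtual dimension $0$, and the identifications \eqref{eq:gluing_disc_cap}, \eqref{eq:gluing_cap_cap} together with the consistency built into Definition \ref{lem:universal_datum_mushrooms} ensure that the perturbation data from the two sides of each broken configuration match.

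Finally, I would show that each boundary point has a collar neighbourhood in $\Champbar(x_0;\vp)$. For cases (i) and (ii) this is standard gluing of gradient flow lines and of Floer strips respectively; for case (iii) it is the Stasheff-tree gluing of Lemma \ref{lem:existence_morse_category} combined with independent gluing of the mushrooms above each new vertex, using the condition in Definition \ref{lem:universal_datum_mushrooms} that the Floer data on caps whose stem-leaf has length smaller than $1$ are the constant ones of \eqref{eq:Floer_datum_caps_outgoing}; for case (iv) one applies the disc-gluing theorem, with no new wrinkle because the perturbation data are prescribed to agree on both sides via the maps \eqref{eq:gluing_disc_cap} and \eqref{eq:gluing_cap_cap}. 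The main obstacle will be case (iii) near a configuration where a stem edge degenerates to a vanishing incoming leaf of a cap: here the Lagrangian boundary condition $\sL_{q}$ on the outgoing arc varies with the gluing parameter (since $q$ is the image of the glued vertex under $\Psi_S$), so the gluing must be set up as a parametrized implicit function theorem in which the pre-glued map only approximately satisfies the perturbed Cauchy--Riemann equation because of the motion of the boundary condition. The requirement in \eqref{eq:varying_floer_datum} that the outgoing Hamiltonian vanishes on short stem-leaves makes the linearized problem uncoupled from the stem, so a standard Newton iteration in weighted Sobolev norms produces the required gluing parameter, yielding the collar structure and completing the proof.
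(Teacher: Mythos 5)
Your compactness argument and the dimension-counting reduction to codimension-one degenerations are both fine and match the paper in spirit. The genuine gap is in the enumeration of codimension-one strata. You list only the configurations coming from the \emph{boundary} of the abstract moduli space $\Champbar_{d}$ (the images of \eqref{eq:apply_m_k_target_mushrooms} and \eqref{eq:apply_m_k_source_mushrooms}) plus trajectory/strip breaking at the ends. But $\Champbar_{d}$ was defined in Definition \ref{defin:mod_space_stable_mushrooms} as a quotient by the relation $\sim$, which glues the stratum where two adjacent incoming leaves of the stem have length zero to the stratum where the corresponding two caps are merged by $\circ$. That identified locus is a codimension-one stratum in the \emph{interior} of $\Champbar_{d}$ (see the paragraph after Lemma \ref{lem-top_mushroom_boundary_stratum}), and zero-dimensional strata of $\Champbar(x_0;\vp)$ do lie over it. Proving that $\Champbar(x_0;\vp)$ is a manifold requires showing that those points are interior points, i.e.\ have open-interval neighbourhoods. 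This is precisely the content of cases \eqref{item:mushroom_collapse} and \eqref{item:cap_break} in Section \ref{sec:gluing_mushrooms}: one half-interval of nearby mushroom maps is obtained from the shrub side by separating the collapsed stem leaves (using the argument from Proposition \ref{prop:cup_product_agrees_perturbation}), and the other from the cap side by ungluing the merged cap (using Lemma \ref{lem:gluing_caps}), and the point is that these two half-intervals match up across the identified stratum. Your proposal establishes manifold-with-boundary structure only at actual boundary points, not at these interior ones, so the proof is incomplete.

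A secondary remark: the complication you flag in case (iii) --- that the Lagrangian boundary condition $\sL_{q}$ moves with the gluing parameter --- is attributed to the wrong degeneration and is not really an obstacle there. In the Stasheff-breaking of the stem, the moduli space is a transverse fibre product over $Q^{r}$ (Lemma \ref{lem:mushroom_fibre_product}); the shrub factor is handled by Lemma \ref{lem:gluing_shrubs} with smooth, $C^{\infty}$-convergent evaluation maps, and the cap factor is untouched, so the fibre-product argument goes through without any parametrized Newton iteration. The issue you should instead be worried about in that case is that the \emph{cap perturbation data} are allowed to depend on the stem modulus (Remark \ref{rem:free_choice}), which the paper handles by working fibrewise over $\Champ_{\sD}$ rather than treating the factors as independent.
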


\subsection{Construction of the functor}
We begin by defining the linear term of the functor $\cG$ promised in the beginning of this section.  To do this, we first observe that the index theorem (say, Equation (12.2) in \cite{seidel-book}) gives a natural isomorphism 
\begin{equation} \lambda (\Pil_{q}(p_1))  \cong \ro_{q_{i_0}}^{\vee} \otimes \ro^{\vee}_{p_1}  \otimes  \ro_{q_{i_1}} \cong  \ro^{\vee}_{p_1} \end{equation}
where $\Pil_{q}(p_1)$ is the moduli space of caps projecting to a fixed point $q$ in $Q$, and we are writing $q_{i}$ for the intersection point between $Q_i$ and $\sL_{q}$.    It follows immediately that the determinant bundle of the moduli space of caps admits a natural isomorphism
\begin{equation} \label{eq:orientation_cap_one_input} \lambda(\Pil(p_1)) \cong \ro^{\vee}_{p_1} \otimes  \lambda(Q). \end{equation}

Given a mushroom $\Psi$ in $\Champ(x_0,p_1)$, the description of the moduli space of such mushrooms as the fibre product of $\Pil(p_1)$ with $W^{s}(x_0)$ over $Q$ gives an isomorphism
\begin{equation} \label{eq:orientation_moduli_mushrooms} \lambda(\Champ(x_0,p_1)) \otimes  \lambda(Q) \cong\lambda(W^{s}(x_0)) \otimes   \lambda(\Pil(p_1)) . \end{equation}
Specialising to the case $\deg(x_0) = \deg(p_1)$, and using the previous result for  $\lambda(\Pil(p_1))$ and the definition of $\ro_{x_0}$, we conclude that every mushroom $\Psi$ determines an isomorphism
\begin{equation}  \ro_{p_1}  \cong \ro_{x_0},  \end{equation}
and hence a map
\begin{equation} \cG^{\Psi}\co   |\ro_{p_1}|  \to |\ro_{x_0}| \end{equation}
which is the contribution of $\Psi$ to the linear part of the functor $\cG$:
\begin{align*}
\cG^{1} \co  CF^{*}(Q_i,Q_j) & \to CM^{*}(f_{i,j}) \\
[p_1]  & \mapsto  (-1)^{n \deg(p_1)} \sum_{\stackrel{\deg(x_0) = \deg(p_1)}{\Psi \in  \Champ(x_0,p_1) }  }   \cG^{\Psi}( [p_1]).
\end{align*}
Ignoring signs, the fact that $\cG^{1}$ is a chain map is a consequence of Proposition \ref{prop:1_dim_moduli_space_manifold}.  Indeed, considering $p_1$ and $x_0$ such that
\begin{equation} \deg(x_0) = \deg(p_1) + 1, \end{equation}
the boundary strata of $\Champbar(x_0,p_1)$ consist of either (i) a critical point $x_1$ of $f_{i,j}$ and an element of $\Stasheff(x_0,x_1) \times \Champ(x_1,p_1)$ (i.e., a rigid mushroom with input $p_1$ and output $x_1$ together with a rigid gradient trajectory from $x_1$ to $x_0$) or (ii) an element of $ \Champ(x_0,p_0) \times \cR(p_0,p_1)$ for a time-$1$ chord $p_0$ from $Q_i$ to $Q_j$ for the Hamiltonian $H^{i,j}$.  These two cases correspond to the two sides of the equation
\begin{equation} \label{eq:chain_map_fuk_morse} \cG^{1} \circ  \mu_1^{\F}  = \mu_{1}^{\S} \circ \cG^{1}.  \end{equation}

To prove the correctness of the signs, we must compare the natural isomorphism \eqref{eq:orientation_moduli_mushrooms}, with a product isomorphism coming from its boundary strata.  The case (i) above is virtually indistinguishable from the analogous situation for the moduli spaces of shrubs interpolating between simplicial and Morse cochains,  analysed in the discussion surrounding Equation  \eqref{eq:analyse_shrub_breaking_trajectory}.  The result of that discussion is that there is a sign difference given by the parity of $\deg(p_1)$ between the product orientation, and the one induced from the interior.

In the other case, we first note that a rigid strip gives a canonical isomorphism 
\begin{equation} \lambda(\bR) \cong \ro_{p_0} \otimes \ro_{p_1}^{\vee} \end{equation}
with the left hand side corresponding to translation.  Taking the tensor product with Equation \eqref{eq:orientation_moduli_mushrooms} with $p_1$ replaced by $p_0$, and  $T\Champ(x_0,p_0)$ trivialised canonically by virtue of being rigid, we obtain an isomorphism
\begin{equation} \lambda(Q) \otimes  \lambda(\bR)  \cong \lambda(W^{s}(x_0)) \otimes  \ro^{\vee}_{p_0} \otimes  \lambda(Q) \otimes\ro_{p_0} \otimes \ro_{p_1}^{\vee}  . \end{equation}
Since  gluing the strip to the cap at $p_0$ gives an identification $\bR \cong T\Champbar(x_0,p_1)$ with the positive direction in $\bR$ pointing inward we recover the isomorphism of \eqref{eq:orientation_moduli_mushrooms}, multiplied by $-1$.

To conclude that \eqref{eq:chain_map_fuk_morse} holds, we must keep in mind that there are signs of $(-1)^{\deg(p_1)}$ and $(-1)^{n}$ in the definitions of the differential in the Floer and Morse theories, as well as a sign in our formula for $\cG^{1}$, which contribute $n \deg(p_1)$ in one case, and $n \deg(p_1) +n$ in the other.  

\begin{prop}
The map $\cG^{1}$ is a chain isomorphism.
\end{prop}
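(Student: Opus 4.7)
The plan is to exhibit $\cG^{1}$ as a filtered map whose induced map on the associated graded is the identity (up to signs), making it invertible as a map of free abelian groups; combined with the chain map property already established, this forces $\cG^{1}$ to be a chain isomorphism.

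\textbf{Setting up an action filtration.} On $CF^{*}(Q_i,Q_j)$, define the action of a Hamiltonian chord $p$ by $\cA^{\F}(p) = \int p^{*}\theta - \int_{0}^{1} H^{i,j}(p(t))\,dt$, where $\theta$ is a fixed primitive for $\omega$ with the property that $\theta|_{Q_{i}}$ is exact. On $CM^{*}(f_{i,j})$, define $\cA^{\M}(x)=f_{i,j}(x)$. Under the bijection of Corollary \ref{cor:morse_floer_generators_equal}, the chord $p$ corresponds to a critical point $x_{p}$, and the geometric compatibility \eqref{eq:morse-floer-compatible-conditions} (which identifies $\phi^{1}_{H^{i,j}}(Q_j)$ with the graph of $df_{i,j}$ near the intersection) implies $\cA^{\F}(p)=\cA^{\M}(x_{p})+c$ for a constant $c$ depending only on $(i,j)$. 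A standard energy identity shows that both $\mu_{1}^{\F}$ and $\mu_{1}^{\M}$ strictly decrease action on non-constant solutions.

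\textbf{Filtration compatibility of $\cG^{1}$.} For a mushroom $\Psi=(\Psi_{S},\Psi_{P})\in\Champ(x_{0},p_{1})$, the energy identity for the perturbed Cauchy--Riemann equation on the cap $\Psi_{P}$ with boundary on $Q_{i}$, $Q_{j}$, and the leaf $\sL_{q}$ (where $q$ is the image of the incoming leaf of the stem) expresses the non-negative geometric energy of $\Psi_{P}$ as the difference between $\cA^{\F}(p_{1})$ and the value of $f_{i,j}$ at the intersection point of $\sL_{q}$ with the appropriate sheet---which under \eqref{eq:morse-floer-compatible-conditions} equals $f_{i,j}(q)$ plus the universal constant. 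Combined with the monotone decrease of $f_{i,j}$ along the perturbed gradient trajectory $\Psi_{S}$ from $q$ to $x_{0}$, this yields $\cA^{\M}(x_{0})+c\leq\cA^{\F}(p_{1})$, with equality if and only if the cap has zero energy and the gradient trajectory is stationary.

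\textbf{Identifying the diagonal.} For each chord $p_{1}$, let $x_{p_{1}}$ be the corresponding critical point. Under \eqref{eq:morse-floer-compatible-conditions} the graph of $\phi^{1}_{H^{i,j}}$ meets $Q_{i}$ at $p_{1}(0)$, which lies on the leaf $\sL_{x_{p_{1}}}$; there is then a distinguished trivial mushroom $\Psi_{0}$ consisting of the covariantly constant cap $u(z,t)=\phi^{t}_{H^{i,j}}(x_{p_{1}})$ capped off by the constant trajectory at $x_{p_{1}}$. The regularity of $\Psi_{0}$ follows from the transverse intersection of $\phi^{1}_{H^{i,j}}(Q_{j})$ with $Q_{i}$ at $p_{1}$ together with the transverse intersection of $\sL_{x_{p_{1}}}$ with each $Q_{k}$---these cut out the cap linearization with trivial cokernel---while the gradient trajectory contributes no extra kernel or cokernel because the stem Morse operator at a critical point is surjective with zero-dimensional kernel in this setup. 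By the equality case in Step~2, no other mushroom in $\Champ(x_{p_{1}},p_{1})$ shares the same filtration level as $\Psi_{0}$.

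\textbf{Main obstacle and conclusion.} The principal technical point is verifying that the covariantly constant solution $\Psi_{0}$ is transversely cut out in the presence of the chosen universal perturbation datum of Definition~\ref{lem:universal_datum_mushrooms}, and computing its sign from the orientation isomorphism \eqref{eq:orientation_moduli_mushrooms}. The boundary-tangency condition $K^{\vI}(z,q)(\xi)|_{L}\equiv 0$ and the vanishing Hamiltonian on the outgoing Floer data \eqref{eq:Floer_datum_caps_outgoing} guarantee that $\Psi_{0}$ genuinely solves the perturbed equation, and the local index identification (matching the determinant line of the linearized cap operator with $\ro_{p_{1}}^{\vee}\otimes\lambda(Q)$) together with the standard convention for $\ro_{x_{p_{1}}}$ can be checked to yield sign $+1$. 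Consequently, ordering the common basis from Corollary~\ref{cor:morse_floer_generators_equal} by decreasing action, $\cG^{1}$ is represented by an upper triangular matrix with unit diagonal. Such a matrix is invertible over $\bZ$, so $\cG^{1}$ is an isomorphism of the underlying free abelian groups; since it is also a chain map, it is a chain isomorphism.
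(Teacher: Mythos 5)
Your proposal takes essentially the same route as the paper: order generators by the value of $f_{i,j}$, use the Stokes-theorem energy identity for the cap combined with the monotone decrease of $f_{i,j}$ along the stem to establish upper-triangularity, and identify the diagonal entries with regular zero-energy solutions. The extra remarks about strict filtration of $\mu_1$ and the detailed description of the trivial solution are optional elaborations; the key steps and the invertibility-over-$\bZ$ conclusion match the paper's argument.
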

\begin{proof}
 We shall use the energy filtration to see that, if we order the critical points by energy, the matrix for $\cG^{1}$ is upper triangular with ones along the diagonal, hence is invertible.

We first show that $\Champ(x_0; p_1)$ is empty whenever 
\begin{equation} \label{eq:contradiction_energy} f_{i_0,i_1}(x_1) <  f_{i_0,i_1}(x_0) \end{equation}  
where $x_1$ is the critical point corresponding to $p_1$ in Corollary \ref{cor:morse_floer_generators_equal}.  Assume by contradiction that a mushroom map $\Psi$ with these asymptotic conditions exists.   Let $q$ denote the point of $Q$ such that the cap of $\Psi$ has a boundary segment mapping to $\sL_{q}$, and write $q_{i_0}$ for the lift of this point to $Q_{i_0}$.  Since the stem is a descending flow line of $f_{i_0,i_1}$, we conclude that
\begin{equation} f_{i_0,i_1}(x_0) \leq f_{i_0,i_1}(q),\end{equation}
and hence, as we are assuming \eqref{eq:contradiction_energy}, we find that
\begin{equation} f_{i_0,i_1}(x_1) < f_{i_0,i_1}(q) . \end{equation}
We claim that this inequality contradicts positivity of energy for the cap.  Indeed, an application of Stokes's theorem analogous to Lemma \ref{lem:stokes_application},  the closedness of the $1$-form in Definition \ref{lem:base_case} implies that 
\begin{equation} \label{eq:action_integral} \int |d\Psi_{P}|^{2} = \cA_{0}(q_{i_1}) + \cA_{H^{i_0,i_1}}(q_{i_0})-  \cA_{H^{i_0,i_1}}(p)  \end{equation}
where $q_{i_0}$ and $q_{i_1}$ now stand for the intersection points of $\sL_{q}$ with the respective Lagrangian, and $\cA$ for the appropriate action functional for the Hamiltonian perturbations fixed in Definition \ref{lem:base_case}. Using the primitive $\theta_{i_0}$ which vanishes on $Q_{i_0}$ and $\sL_{q}$, we compute that $\cA_{0}(q_{i_1})$ vanishes, while $\cA_{H^{i_0,i_1}}(q_{i_0})$ and $\cA_{H^{i_0,i_1}}(p)$ are respectively equal to $ -f_{i_0,i_1}(q)$ and   $-f_{i_0,i_1}(x_1)$.   As the left hand side of Equation \eqref{eq:action_integral} is non-negative, we conclude, as desired, that  $\Champ(x_0; p_1)$ is empty whenever the inequality \eqref{eq:contradiction_energy} holds.

Applying this energy analysis to the case  $p$ is the Hamiltonian chord  from $Q_{i_0}$ to  $Q_{i_1}$ corresponding to the critical point $x$ of $f_{i_0,i_1}$, we find that the only solutions in this case must have constant  cap and stem (any non-constant solution must carry some energy).   The proof that there are ones along the diagonal and hence that $\cG^{1}$ is a chain isomorphism now follows from the fact that such constant solutions are regular.
\end{proof}

The higher order analogues of this map are defined in essentially the same manner as the maps $\cF^{d}$ which define a functor from Floer to Morse theory.  Whenever $\deg(x_{0})= 1-d + \sum_{i} \deg p_{i}$,  we may assign  to each tree $\Psi \in \Champ(x_0, \vp)$ an isomorphism
\begin{equation*}
\ro_{p_1} \otimes \cdots \otimes \ro_{p_{d}} \to \ro_{x_0},
\end{equation*}
 and a sign $\ddagger(\Psi)$.  We discuss a set of possible choices for the isomorphism and the sign in Section \ref{sec:orienting_mush_maps}.  We define the $d$\th higher term of $\cG$
\begin{equation*} \cG^{d} \co CF^{*}(Q_{i_{d-1}}, Q_{i_d}) \otimes \cdots \otimes CF^{*}(Q_{i_0}, Q_{i_1})  \to CM^{*}(f_{i_0,i_d}) \end{equation*}
to be a sum of the maps $\cG^{\Psi}$ induced by $\Psi$ on orientation lines, multiplied by the appropriate sign: 
\begin{equation*}
[p_d] \otimes \ldots \otimes [p_1] \mapsto \sum_{\stackrel{\Psi \in \Champ(x_0, \vp)}{\deg(x_{0})= \sum_{k} \deg p_{k} + d -1}} (-1)^{\ddagger(\Psi)} \cG^{\Psi}([p_f] \otimes \ldots \otimes [p_1])
\end{equation*}

Modulo signs, the analysis of the boundary strata of $1$-dimensional moduli spaces of mushrooms given in Section \ref{sec:gluing_mushrooms} (i.e. the proof of Proposition \ref{prop:1_dim_moduli_space_manifold})  immediately implies:
\begin{prop}
The maps $\cF^{d}$ satisfy the $A_{\infty}$ equation for a functor:
\begin{equation} \label{a_oo-floer-morse}
 \sum_{\substack{r \\ d_1 + \ldots + d_r = d} } \mu^{\M}_{r} ( \cF^{d_1} \otimes \cdots \otimes \cF^{d_r}) =  \sum_{ \stackrel{d_1 + d_2 = d +1}{i < d_1}}  (-1)^{\maltese_{i}}\cF^{d_1}  (\id^{d_1-i-1} \otimes \mu^{\F}_{d_2} \otimes \id^{i}) \end{equation}
\noproof
\end{prop}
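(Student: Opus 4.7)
The plan is to analyse the boundary of the one-dimensional moduli spaces $\Champbar(x_0,\vp)$ of mushroom maps, following the template used in the proof of Proposition \ref{prop:cup_product_agrees_perturbation}.  I would first fix input data satisfying $\sum_k\deg(p_k)-\deg(x_0)=d-2$, so that $\Champ(x_0,\vp)$ has expected dimension one.  Combining Lemma \ref{lem:mushrooms_expected_dimension} (transversality) with the gluing and compactness results of Section \ref{sec:gluing_mushrooms} (a mild one-index extension of Proposition \ref{prop:1_dim_moduli_space_manifold}), the closure of the top stratum of $\Champbar(x_0,\vp)$ is a compact topological $1$-manifold with boundary; the signed count of its boundary points then vanishes, and the task reduces to matching these boundary points term-by-term with the two sides of \eqref{a_oo-floer-morse}.

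Next I would enumerate the codimension-one boundary strata, which come in exactly two families.  Along the image of \eqref{eq:labelled_mushrooms_left_module_stasheff}, $\Stasheffbar_{\vR}\times\Champbar_{\vI[1]}\times\cdots\times\Champbar_{\vI[r]}\to\Champbar_{\vI}$, rigid configurations consist of a rigid Stasheff gradient tree in $\Morse$ whose $r$ inputs are each fed by a rigid mushroom; compatibility of the universal mushroom perturbation datum with $\bfX^{\Stasheff}$ (Definition \ref{lem:universal_datum_mushrooms}) shows that these strata account precisely for the summands $\mu^{\M}_{r}(\cG^{d_1}\otimes\cdots\otimes\cG^{d_r})$ on the left-hand side of \eqref{a_oo-floer-morse}, with the case $r=1$ absorbing a single gradient trajectory breaking off the output.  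Along the image of \eqref{eq:labelled_mushrooms_right_module_stasheff}, $\Champbar_{\vI[1]}\times\Rbar_{\vI[2]}\to\Champbar_{\vI}$, rigid configurations consist of a rigid mushroom whose $i$-th incoming end has been replaced by a rigid disc contributing to $\mu^{\F}_{d_2}$; compatibility with the Fukaya perturbation data $(\bfK^{\cR},\bfJ^{\cR})$ shows that these strata account for the right-hand side of \eqref{a_oo-floer-morse}, with $d_2=1$ absorbing a single holomorphic strip breaking off an input.  No other boundary phenomena arise: sphere and disc bubbling is ruled out by exactness, while the caps and the non-trivially-labelled edges of the stem cannot escape by Lemma \ref{lem:cap_cpactness} and the Claim proved inside Lemma \ref{lem:compactness_stasheff}, respectively.

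The main obstacle, as usual, will be the bookkeeping of signs.  At a smooth rigid point the relevant determinant-line isomorphism is assembled from the fibre-product description of $\Champ(x_0,\vp)$ together with the index isomorphism \eqref{eq:index_iso_strip} on each cap and the conventions recalled in Section \ref{sec:orienting_mush_maps}.  Near a stratum of the first type I would iterate the manoeuvre surrounding Equation \eqref{eq:analyse_shrub_breaking_trajectory} $r-1$ times, comparing the outward normal produced by the Stasheff gluing parameters with the product orientation on $\Stasheffbar_{\vR}\times\Champbar_{\vI[1]}\times\cdots\times\Champbar_{\vI[r]}$; the resulting parity change should be exactly the sign $(-1)^{(n+1)(\deg(x_0)+\dagger(\vx))}$ built into $\mu^{\M}_{r}$, combined with the intrinsic signs carried by each $\cG^{d_j}$.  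Near a stratum of the second type, the outward normal is an inward gluing parameter at the $i$-th incoming end (as was already verified for $\cG^1$ earlier in this section), and permuting the orientation lines $\ro_{p_k}$ past one another produces precisely the Koszul sign $\maltese_i(\vp) = i + \sum_{j=1}^{i}\deg(p_j)$ appearing in \eqref{a_oo-floer-morse}.

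Since each of these sign computations is local in a single Stasheff tree or disc factor, I expect the full verification to reduce to the signed calculations already carried out in Proposition \ref{prop:cup_product_agrees_perturbation} and in Appendix C of \cite{abouzaid}, with no new phenomenon beyond careful tracking of Koszul reorderings at the interface between the Floer, mushroom, and Morse orientation conventions.
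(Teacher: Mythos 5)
Your plan is the right one and matches the paper's strategy: study one‑dimensional moduli spaces $\Champbar(x_0,\vp)$, show they are compact $1$--manifolds with boundary, identify the boundary with the two sides of \eqref{a_oo-floer-morse}, and sort out signs.  There is, however, a genuine gap in your enumeration of the degenerations.  You assert that the codimension‑one boundary strata come in ``exactly two families'' and that ``no other boundary phenomena arise,'' supporting this only with compactness results that rule out escape to infinity and bubbling.  But those compactness lemmas say nothing about the \emph{internal} degenerations of $\Champ(x_0,\vp)$: the strata where two adjacent incoming edges of the stem shrink to length zero, so that two caps share a boundary leaf $\sL_q$, and the matching strata where a single cap decomposes into two caps along the outgoing arc.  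These are cases \eqref{item:mushroom_collapse} and \eqref{item:cap_break} in Section \ref{sec:gluing_mushrooms}.  Such degenerations do occur (they sit over interior points of $\Champbar_d$ --- this is precisely the identification built into Definition \ref{defin:mod_space_stable_mushrooms}), and showing that the one‑dimensional moduli space is a manifold there requires an additional cap gluing argument (Lemma \ref{lem:gluing_caps}) to glue the two branches together, after which the two families of contributions cancel in pairs.  Citing Proposition \ref{prop:1_dim_moduli_space_manifold} does hide this work, but your explicit reasoning for why the boundary has only two families is incomplete, since it does not even mention these configurations.

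The same omission affects your sign sketch.  For boundary points of types \eqref{item:tree_break} and \eqref{item:disc_bubble} the Koszul bookkeeping is indeed an adaptation of the computations from Proposition \ref{prop:cup_product_agrees_perturbation} and \cite{seidel-book} that you gesture towards, but the bulk of the orientation analysis in Section \ref{sec:orienting_mush_maps} is devoted to the internal strata, verifying --- after the correction by $\ddagger(\Psi)$ in Equation \eqref{eq:twisting_orient_mushroom} and the one‑dimensional twist \eqref{eq:twisting_orient_mushroom_1_dim} --- that the boundary orientations induced from the two branches of cases \eqref{item:mushroom_collapse} and \eqref{item:cap_break} agree, so that these contributions cancel.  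Appealing to the shrub‑breaking manoeuvre of Equation \eqref{eq:analyse_shrub_breaking_trajectory} will get you through the Morse‑side breaking but not through the internal cancellation, which is a genuinely new sign computation involving the identification of $\Pilbar_d$ with $\Rbar_{d+1}$ and Lemma \ref{lem:boundary_orientation_stasheff}.  In short: the strategy is right, but you have not accounted for the internal boundary strata, either geometrically or in the signs, and these are the hardest part of the argument.
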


\section{Compactness for caps} \label{sec:comp-curv-leaves}
The goal of this section is to prove Lemma \ref{lem:cap_cpactness}.  We start in the first section by discussing a general result which constrains holomorphic curves to remain within compact domains, which we shall then apply in the second section to prove the desired statement:
\subsection{Generalities}
We begin with a generalisation of Lemma 7.2 of \cite{abouzaid-seidel}, which  precludes holomorphic curves from escaping from a domain.  Let us assume therefore that we are given an almost complex structure $I_{W}$ on a manifold $W$, and a $1$-form $\alpha$ whose differential is non-negative on every complex plane.  Let $W^{\ins}$ be a (codimension 0) submanifold with boundary such that $\ker(\alpha|  \partial W^{\ins} ) $ is a complex distribution (in particular, $\alpha$ does not identically vanish near $\partial W^{\ins}$), and 
\begin{equation} \label{eq:weak_Liouville}
 \parbox{36em}{if $\alpha(X) = 1$ for  $X \in T  \partial W^{\ins}$ then  (i) $I_{W} X$ is inward pointing and (ii) $d \alpha$ is positive on the complex plane spanned by $X$.}
\end{equation}
Assuming moreover that we are given a submanifold  $R \subset W - W^{\ins}$   transverse to $\partial W^{\ins}$ such that 
\begin{equation} \label{eq:exact_totally_real}
  \parbox{36em}{ $ \alpha $   restricts to an exact form on $R$ whose primitive is constant on $R \cap \partial W^{\ins} $,} 
\end{equation}
we conclude
\begin{lem} \label{lem:convexity_weak_assumptions}
Every $I_{W}$ holomorphic curve in $W - \inte(W^{\ins})$ with boundary on $R \cup \partial  W^{\ins} $ which intersects $ \partial W^{\ins} $ is contained therein.
\end{lem}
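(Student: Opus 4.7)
The plan is to run a maximum-principle / open-and-closed argument. Writing $u \co \Sigma \to W - \inte(W^{\ins})$ for the holomorphic curve, the preimage $\Sigma' = u^{-1}(\partial W^{\ins})$ is automatically closed in $\Sigma$. If $\Sigma'$ is also open, then on each connected component of $\Sigma$ meeting $\Sigma'$ one has $\Sigma' = \Sigma$, so the hypothesis that $u$ intersects $\partial W^{\ins}$ forces $u(\Sigma) \subset \partial W^{\ins}$. Openness will follow from a local strong maximum principle at each point $z_0 \in \Sigma'$.

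To set this principle up, I would work in a collar of $\partial W^{\ins}$ inside $W - \inte W^{\ins}$ obtained by flowing outward along the vector field $-I_W X$, where $X$ is a local extension of the Reeb-like field on $\partial W^{\ins}$ specified by $\alpha(X) = 1$; the first half of assumption \eqref{eq:weak_Liouville} guarantees that $-I_W X$ is transverse and outward pointing, yielding a collar coordinate $s \geq 0$. The key local step is to construct a smooth function $\rho$ on the collar with $\rho > 0$ off $\partial W^{\ins}$, $\rho = 0$ on $\partial W^{\ins}$, and $-\rho$ strictly $I_W$-plurisubharmonic. Three inputs combine here: the global non-negativity of $d\alpha$ on complex planes, which controls the tangential directions in $\ker \alpha$; the assumption that $\ker(\alpha|_{\partial W^{\ins}})$ is complex, giving $J$-convexity of the hypersurface; and the strict positivity of $d\alpha$ on the complex plane spanned by $X$ coming from the second half of \eqref{eq:weak_Liouville}, which provides strictness transverse to $\ker\alpha$. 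A natural candidate for $\rho$ mimics the Liouville expression $-e^{-s}$, adjusted by a locally chosen primitive so that \eqref{eq:exact_totally_real} can be used to force $\rho$ to be constant along $R$ near $R \cap \partial W^{\ins}$, after which it may be normalised to vanish there.

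Given such a $\rho$, the interior case is immediate: for $z_0 \in \inte \Sigma$, the composite $\rho \circ u$ is non-negative on the open neighbourhood $u^{-1}(N)$ of $z_0$, vanishes at $z_0$, and is superharmonic for the conformal structure on $\Sigma$ since $-\rho$ is plurisubharmonic and $u$ is $I_W$-holomorphic; the strong minimum principle forces $\rho \circ u \equiv 0$ in a neighbourhood of $z_0$. For $z_0 \in \partial\Sigma$, the Hopf boundary-point lemma takes the place of the strong minimum principle, using that $R$ is totally real and that $\rho|_R$ has been arranged to vanish along $R \cap \partial W^{\ins}$. The principal obstacle is the construction of the plurisubharmonic defining function: one must verify that the weak hypotheses of the lemma (substantially milder than those for a full Liouville domain) still yield strict plurisubharmonicity, which reduces to a linear-algebra calculation on the complexified tangent space of $\partial W^{\ins}$ followed by a controlled extension into the collar that does not degrade the strict inequality.
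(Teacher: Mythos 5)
Your proposal takes a genuinely different route from the paper: the paper runs a global Stokes argument — integrate $u^*d\alpha$ over the curve, observe that non-negativity of $d\alpha$ on complex lines makes this integral $\geq 0$ and strictly positive unless the image lies in $\partial W^{\ins}$, then compute the boundary integral $\int_{\partial\Sigma}u^*\alpha$ and show each boundary contribution is $\leq 0$ (vanishing on $R$ by \eqref{eq:exact_totally_real}, non-positive on $\partial W^{\ins}$ by \eqref{eq:weak_Liouville}) — whereas you propose a local maximum-principle / open-and-closed argument via a plurisubharmonic defining function. This is a real conceptual divergence, not a rephrasing.

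However, there is a genuine gap, located exactly where you flag ``one must verify that the weak hypotheses \ldots still yield strict plurisubharmonicity.'' They do not. The Levi form of $\partial W^{\ins}$ on the complex tangent distribution $\xi=\ker(\alpha|\partial W^{\ins})$ is, up to a positive conformal factor, $d\alpha|_{\xi}$, and the hypothesis only gives $d\alpha\geq 0$ on complex planes; the strict positivity in \eqref{eq:weak_Liouville}(ii) lives on the single complementary complex line spanned by $X$, so no linear-algebra argument can upgrade the total Levi form to being definite. More fundamentally, even \emph{weak} plurisubharmonicity of a collar coordinate built from $\alpha$ is not for free here: the hypothesis $d\alpha\geq 0$ constrains the closed $2$-form $d\alpha$, not $dd^{c}\rho$ of any function, and to transfer one to the other you would need something like $\alpha=d^{c}\rho$ near $\partial W^{\ins}$ — i.e.\ a contact-type condition for $I_{W}$ relative to a Liouville structure. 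That condition is deliberately \emph{not} assumed in this lemma ($d\alpha$ need not even be non-degenerate), which is precisely why the paper works integrally rather than pointwise: Stokes's theorem converts $d\alpha\geq 0$ directly into a usable inequality with no need for a plurisubharmonic potential. A secondary issue is the boundary case: $R$ is only assumed transverse to $\partial W^{\ins}$ with an exactness condition on $\alpha|_{R}$; total reality of $R$, and the constancy-along-$R$ of $\rho$ needed for a Hopf/reflection step, are not given and would require further hypotheses. If you want a maximum-principle proof you would need to strengthen the assumptions to something like the standard contact-type setting; under the hypotheses as written, the Stokes argument is the one that closes.
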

\begin{proof}
Since the proof is essentially indistinguishable from that of Lemma 7.2 in \cite{abouzaid-seidel} we shall only sketch it:  The assumption that $d \alpha$ is non-negative on holomorphic planes implies that its integral over a holomorphic curve is non-negative.  Moreover, given that $d \alpha$ does not vanish on a plane transverse to $ \partial W^{\ins}  $  this integral must be strictly positive for every holomorphic curve which intersects $ \partial W^{\ins} $ but is not contained in it.  In particular, Stokes's theorem implies that the integral of $\alpha$ over the boundary of such a holomorphic curve must be positive.  However, the contribution of the subset mapping to $R$ vanishes by Condition \eqref{eq:exact_totally_real}, while the contribution of the subset mapping to $\partial W^{\ins} $ is non-positive by Condition \eqref{eq:weak_Liouville}.
\end{proof}

Still requiring that $\alpha |R $ be exact, we now assume that the primitive is only locally constant on $ R \cap \partial W^{\ins}$.  In this case, let us consider maps
\begin{equation*}
 u \co \Sigma \to W - \inte(W^{\ins})
\end{equation*}
such that whenever $\gamma \co [0,1] \to \partial \Sigma$ is a parametrisation of a boundary segment mapping to $R$ which is compatible with the complex orientation,
\begin{equation} \label{eq:negative_action_components}
    \parbox{36em}{ the difference between the values of a  primitive of  $\alpha |   R$ at $u(1)$ and $u(0)$ is strictly positive.}
\end{equation}
The previously sketched argument in fact proves:
\begin{lem} \label{lem:compactness_2_boundaries}
There is no holomorphic curve in $W - \inte(W^{\ins})$ with boundary on $R \cup \partial  W^{\ins} $ for which Condition \eqref{eq:negative_action_components} holds. \qed
\end{lem}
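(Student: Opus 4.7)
The plan is to mimic the Stokes-theoretic argument sketched in the proof of Lemma \ref{lem:convexity_weak_assumptions}, modified only in how the contribution of boundary arcs mapping to $R$ is estimated. Given a holomorphic map $u \co \Sigma \to W - \inte(W^{\ins})$ satisfying the hypothesis, I would first apply Stokes to $u^{*}\alpha$ to obtain
\[
0 \leq \int_{\Sigma} u^{*} d\alpha = \int_{\partial \Sigma} u^{*}\alpha,
\]
where the left-hand inequality uses that $d\alpha$ is non-negative on complex planes.

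Next I would decompose $\partial \Sigma$ into arcs, each of which, by the boundary condition, maps either to $R$ or to $\partial W^{\ins}$. The contribution of arcs mapping to $\partial W^{\ins}$ is non-positive, by exactly the computation used in the previous lemma: it depends only on the convexity hypothesis \eqref{eq:weak_Liouville}, which is preserved in the present setting. For an arc $\gamma \co [0,1] \to \partial \Sigma$ mapping to $R$, exactness of $\alpha|R$ allows one to write
\[
\int_{\gamma} u^{*}\alpha = f(u(\gamma(1))) - f(u(\gamma(0)))
\]
for a locally defined primitive $f$ of $\alpha|R$; the endpoints $u(\gamma(0))$ and $u(\gamma(1))$ lie in $R \cap \partial W^{\ins}$, where $f$ is only locally constant, so one follows a definite branch of $f$ along the arc. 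Condition \eqref{eq:negative_action_components} is precisely the assertion that the resulting difference has strict sign in the direction opposite to that of $\int_{\Sigma} u^{*} d\alpha$; summing the contributions, the right-hand side of the Stokes identity is then strictly negative whenever at least one $R$-boundary arc is present, contradicting the lower bound from holomorphicity.

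I expect the main subtlety to be purely bookkeeping of signs and orientations. In particular I would need to verify that the complex orientation on $\partial \Sigma$ restricts to each arc in a way compatible with the sign convention in Condition \eqref{eq:negative_action_components}, and that at each corner of $\partial \Sigma$ where an $R$-arc meets a $\partial W^{\ins}$-arc the two branches of the primitive $f$ being used on the adjacent $R$-arcs are the correct ones — this is exactly what the local constancy of $f$ on $R \cap \partial W^{\ins}$ makes unambiguous. Once these orientation checks are in place, the Stokes argument above runs identically to that of Lemma \ref{lem:convexity_weak_assumptions}, with the strict negativity of the $R$-contributions replacing the strict positivity of $\int_{\Sigma} u^{*} d\alpha$ as the source of the final contradiction.
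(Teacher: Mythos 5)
Your proposal is essentially the same Stokes argument the paper intends; the paper gives no further details ("The previously sketched argument in fact proves" followed immediately by \textsc{qed}), so your reconstruction is the right one. You run Stokes on $u^{*}\alpha$, observe $\int_{\Sigma}u^{*}d\alpha\ge 0$ by holomorphicity, use Condition \eqref{eq:weak_Liouville} to show the $\partial W^{\ins}$ arcs contribute non-positively, and use Condition \eqref{eq:negative_action_components} to show the $R$ arcs contribute with strict sign, forcing a contradiction. One point you should make explicit rather than gloss over: Condition \eqref{eq:negative_action_components} as literally written in the paper says the primitive differences are \emph{strictly positive}, which would make the $R$-contribution to $\int_{\partial\Sigma}u^{*}\alpha$ positive, i.e.\ the \emph{same} sign as $\int_{\Sigma}u^{*}d\alpha$, and then no contradiction follows. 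Your phrase "opposite to that of $\int_{\Sigma}u^{*}d\alpha$" therefore silently reverses the sign in the stated condition. That reversal is surely what is intended — the label \emph{negative\_action\_components} and the consistency with Lemma \ref{lem:stokes_application_perturbed} both point that way, and without it the lemma is false — but a careful write-up should either note the likely typo or pin down the orientation convention on the $R$-arcs (whether ``compatible with the complex orientation'' means the boundary orientation of $\Sigma$ or its opposite) so that the inequality in Condition \eqref{eq:negative_action_components} and the sign it produces in $\int_{\partial\Sigma}u^{*}\alpha$ are unambiguously aligned. Once that is settled, your argument is complete and identical in substance to the paper's.
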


\subsection{Proof of compactness} \label{sec:proof-compactness}
If in the discussion of the previous section, $\alpha$ is a Liouville form whose restriction to $\partial W^{\ins}$ is a contact form, then Condition \eqref{eq:weak_Liouville} essentially says that $I_{W}$ is of contact type near $\partial W^{\ins}$, and Condition \eqref{eq:exact_totally_real} that $R$ is exact with Legendrian boundary (and in addition admits a primitive for the Liouville form which strictly vanishes on this boundary).  In this case, we exactly recover Lemma 7.2 of \cite{abouzaid-seidel}, and we may conclude half of Lemma \ref{lem:cap_cpactness}:
\begin{lem} \label{lem:half_of_compactness_caps}
If the distance between $q$ and $b$ is greater than $\delta$ then $\Pilbar_{q}(\vx)$ is compact whenever $i_0=i_d$ and empty whenever $(i_0,i_d) = (2,1)$.
\end{lem}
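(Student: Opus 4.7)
The plan is to handle the two assertions separately. Both rest on exactness properties of the leaves $\sL_q$ delivered by the hypothesis $|q| > \delta$ and the careful construction of the foliation in Section \ref{sec:lagrangian_foliation}.

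For the emptiness when $(i_0, i_d) = (2, 1)$: the cyclic counterclockwise order of boundary labels for any cap in $\Pil_q(\vp)$ is $(\sL_q, Q_2, Q_{i_1}, \ldots, Q_{i_{d-1}}, Q_1)$, which in the basic case $d = 1$ is precisely the setting of Lemma \ref{lem:stokes_application_perturbed}. That lemma shows, via Stokes's theorem applied to a primitive $\theta$ of $\omega$ whose restriction to $\sL_q \cap M^{\ins}$ is essentially constant, that the $\omega$-energy of such a disc is strictly negative when $|q| \geq \delta$, contradicting positivity of the $J$-holomorphic energy. For general $d$ one extends this computation by tracking the action contributions at the incoming Hamiltonian chords; these remain controllable because $H^{2,1} = 0$ by \eqref{eq:morse-floer-compatible-conditions}, so the perturbation does not disturb the dominant outgoing-arc contribution of order $-|q|^2$. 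Alternatively, one may invoke Lemma \ref{lem:compactness_2_boundaries} directly, with the strict action inequality \eqref{eq:negative_action_components} supplied by the same estimate.

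For the compactness when $i_0 = i_d$, I would apply Lemma \ref{lem:convexity_weak_assumptions} with $W = M$, $W^{\ins} = M^{\ins}$, and $\alpha = \theta$. Hypothesis \eqref{eq:weak_Liouville} follows from the requirement in \eqref{eq:allowed_J-H} that the almost complex structures in $\sJ$ be of contact type near $\partial M^{\ins}$. Since $Q_1$ and $Q_2$ are closed and contained in the interior of $M^{\ins}$, the only boundary condition exiting $M^{\ins}$ is $\sL_q$, and I would set $R = \sL_q \setminus \inte(M^{\ins})$. Condition \ref{item:orthogonal_interior} is precisely what makes \eqref{eq:exact_totally_real} hold: for $|q| > \delta$, $\sL_q$ coincides with a cotangent fiber in $T^*Q_i$, on which the Liouville form $\theta$ vanishes identically, so the zero function is trivially a primitive constant on $\sL_q \cap \partial M^{\ins}$. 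Given $u \in \Pil_q(\vp)$, restricting to $\sigma = u^{-1}(M \setminus \inte(M^{\ins}))$ produces a curve to which Lemma \ref{lem:convexity_weak_assumptions} applies componentwise, forcing each component either to avoid $\partial M^{\ins}$ or to lie entirely within it; the standard strong maximum principle excludes the latter, so $u(P) \subset \overline{M^{\ins}}$. Gromov compactness over this relatively compact target then yields compactness of $\Pilbar_q(\vp)$.

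The principal difficulty is that $\sL_q$, unlike the compact Lagrangians $Q_1$ and $Q_2$, genuinely exits $M^{\ins}$; the design of the foliation is tuned so that in the regime $|q| > \delta$ this exit takes place along a cotangent fiber, where the Liouville primitive is trivial and both the maximum principle and the Stokes energy computation become tractable.
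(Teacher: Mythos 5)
Your proposal takes essentially the same route as the paper's proof: for $i_0 = i_d$ you apply Lemma \ref{lem:convexity_weak_assumptions} with $W^{\ins} = M^{\ins}$ and $R$ the part of $\sL_q$ outside $M^{\ins}$, while for $(i_0,i_d)=(2,1)$ you derive Condition \eqref{eq:negative_action_components} from the Stokes estimate of Lemma \ref{lem:stokes_application_perturbed} and then invoke Lemma \ref{lem:compactness_2_boundaries}. This is exactly the paper's structure, and your identification of the ``alternative'' via Lemma \ref{lem:compactness_2_boundaries} is in fact the argument the paper uses (the direct extension of the $d=1$ energy estimate to general $d$ is not obviously correct, because the cap has extra boundary segments on $Q_1$ and $Q_2$ and extra Hamiltonian chord contributions, so it is best to work with the restriction to $\Sigma = u^{-1}(M \setminus \inte(M^{\ins}))$, where only the $\sL_q$-part of the boundary survives).

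There is, however, a slip in the case analysis for $i_0=i_d$. Lemma \ref{lem:convexity_weak_assumptions} leaves two alternatives for each component $C$ of $\Sigma$: either $C$ misses $\partial M^{\ins}$ or $C$ is contained in $\partial M^{\ins}$. To conclude $u(P)\subset \overline{M^{\ins}}$ you must rule out the \emph{former}, not the latter. A component contained in $\partial M^{\ins}$ already has image in $\overline{M^{\ins}}$ and causes no trouble; this is what the paper means by ``the image of $u|\Sigma$ is contained in $\partial M^{\ins}$.'' The former case is excluded by point-set topology rather than a maximum principle: a component $C$ disjoint from $\partial M^{\ins}$ maps into the open set $M\setminus\overline{M^{\ins}}$, hence is open in $P$; being a connected component of the closed set $\Sigma$ in the locally connected surface $P$, it is also closed; so $C=P$, which is impossible since the $Q_i$-boundary conditions lie in $\inte(M^{\ins})$. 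A secondary imprecision: Condition \ref{item:orthogonal_interior} makes $\sL_q$ a cotangent fibre only on $\sL_q\cap M^{\ins}_i$, not on all of $R$; but since $\theta$ vanishes on $\sL_q\cap M^{\ins}_i$, any primitive of $\theta|\sL_q$ restricts to a (locally) constant function on $\sL_q\cap\partial M^{\ins}$, and that is all Condition \eqref{eq:exact_totally_real} requires.
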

\begin{proof}
As usual, it suffices to prove the existence of a compact subset of $M$ (in this case $M^{\ins}$) containing  the image of every element of $ \Pil_{q}(\vx) $, so that the usual proof of Gromov compactness applies.  Let $u \co S \to M$ be such a map, and denote by $\Sigma$ the inverse image of $M - \inte(M^{\ins})$.  If $i_0 = i_d$, then the hypothesis of Lemma \ref{lem:convexity_weak_assumptions} holds if $\alpha$ is a primitive for the symplectic form which agrees with the natural primitive near $M^{\ins}$, and the image of $u|\Sigma$  is contained in $\partial M^{\ins}$, hence the image of $u$ is contained in $M^{\ins}$.  If $(i_0, i_d)=(2,1)$, then the computation of Lemma \ref{lem:stokes_application_perturbed} shows that Condition \eqref{eq:negative_action_components} holds, so that Lemma \ref{lem:compactness_2_boundaries} implies the desired result.
\end{proof}

\begin{figure}
  \centering
  \epsfxsize=2in
\epsffile{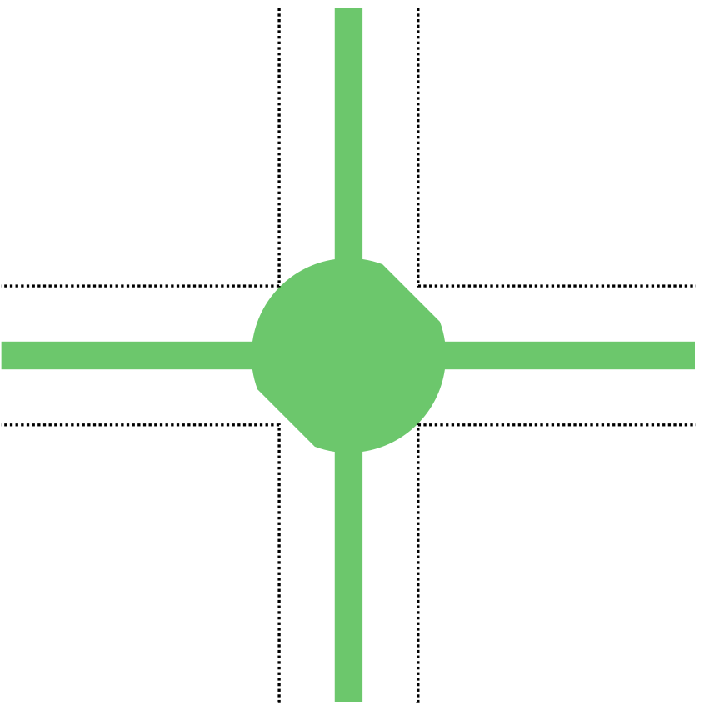} 
  \caption{ }
  \label{fig:middle_manifold}
\end{figure}
Note that while the almost complex structure is of contact type near any point in $\partial M^{\ins}$, the Lagrangians $\sL_{q}$ may not intersect it in Legendrian submanifolds if $q$ is within $\delta$ of $b$.  We shall construct a different submanifold $M^{\mid}$ which will play the same role as $M^{\ins}$ did in the previous Lemma (this manifold was mentioned in Condition \eqref{eq:allowed_J-H} and we are finally about to define it).   Its local model in $\bC^{n}$ is  a smoothing of the manifold with corners
\begin{equation} \label{eq:M_mid_corners}
  \{(\vx,\vy) | |\vx|^{2} \leq 1/4  \textrm{ or } |\vy|^{2} \leq 1/4 \} \cup   \{(\vx,\vy) | |\vy|^{2} +  |\vx|^{2}  \leq 2  \textrm{ and } |\vy + \vx|^{2}  \leq 4 - \delta \},
\end{equation}
see Figure \ref{fig:middle_manifold}.  To understand the construction, it is helpful to keep in mind that the sphere of radius $2$ about the origin meets the hypersurface $ |\vy + \vx|^{2}  = 4 $ precisely on its intersection with the diagonal.  Hence this intersection takes place far away from the hypersurfaces $  |\vx|^{2} = 1/4  $ and $  |\vy|^{2} = 1/4 $.  In particular, this manifold has only codimension $2$ corners, and the smoothing construction which produces  $M^{\mid}$  is entirely local.  We leave the details of the smoothing to the reader, requiring only that
\begin{equation*}
  \parbox{36em}{if the distance between $b$ and $q$ is smaller than $\delta$, the intersection of  the leaf $  \sL_{q} $ with $\partial M^{\mid}$ agrees with its intersection with the hypersurface $ |\vy + \vx|^{2}  = 4 - \delta $.}
\end{equation*}
Note that  $\{(\vx,\vy) | |\vy + \vx|^{2}  \leq 4 - \delta \}$ can be naturally identified with a disc cotangent bundle of the Lagrangian 
\begin{equation*}
  L_{- \Delta} = (\vx, - \vx) \subset \bC^{n}
\end{equation*}
such that affine planes parallel to $L_{\Delta}$ correspond to cotangent fibres.  In particular, the leaves $\sL_{q}$  (whenever $q$ is close to $b$) intersect $M^{\mid}$ in Legendrians.  With this at hand, we complete this Section with the
\begin{proof}[Proof of Lemma \ref{lem:strips_moduli_spaces_cpact}]
Given that we have already established Lemma \ref{lem:half_of_compactness_caps} it remains to show that caps with boundary conditions along $\sL_{q}$ cannot escape $ \partial M^{\mid} $ whenever $q$ is within $\delta$ of $b$.  Condition \eqref{eq:allowed_J-H} implies that all almost complex structures are of contact type near $\partial M^{\mid} $, and that there are no Hamiltonian perturbations in $M - M^{\mid}$; as in Lemma \ref{lem:half_of_compactness_caps}, we may directly apply Lemma \ref{lem:convexity_weak_assumptions} to conclude the inverse image of the complement of $M^{\mid}$ is empty.
\end{proof}

\section{Transversality}

\subsection{Transversality and gluing for gradient trees}  \label{app:stasheff}
In this section, we explain the proofs of results from Sections \ref{sec:stasheff} and \ref{sec:simp_to_morse}. 

\subsubsection{Transversality for Stasheff trees}

In order to prove transversality, we must study the moduli space $\Stasheffbar_{d}$ more carefully.  First, we recall that its boundary strata  are labelled by trees with $d$ inputs
\begin{equation} T^{\infty} \end{equation}
An element of $\Stasheffbar_{d}$ lies in the cell labelled by $T^{\infty}$ if the result of collapsing all its finite edges yields a (ribbon) tree combinatorially isomorphic to $T^{\infty}$, see \cite{seidel-book} which gives a detailed discussion of the parallel case in which the Stasheff polytopes are modeled by moduli spaces of holomorphic discs.  The space $\Stasheffbar_{d}$ also has a cell decomposition with cells $\Stasheff^{T \to T^{\infty}}$ labelled by surjective maps of ribbon trees 
\begin{equation}
 T \to T^{\infty}
\end{equation}
where both  $T$ and $T^{\infty}$ are isomorphism classes of trees with $d$ inputs, and such that the inverse image of every vertex is a connected tree. The stratum determined by such a map consists of trees which are homeomorphic to $T$ and whose finite edges are those collapsed in $T^{\infty}$.  In particular, the set of infinite edges in such a stratum is fixed, so the structure of a metric tree is determined by the lengths of the remaining edges, and we have a canonical isomorphism
\begin{equation} \Stasheff^{T\to T^{\infty}} \cong (0,+\infty)^{|\cE(T)|-|\cE(T^{\infty})|}, \end{equation}
with closure (allowing edges to reach length $0$ or $\infty$):
\begin{equation} \Stasheffbar^{T\to T^{\infty}} \cong [0,+\infty]^{|\cE(T)|-|\cE(T^{\infty})|}. \end{equation}
The cell structure on $\Stasheffbar^{T\to T^{\infty}}$ induced from its inclusion in $\Stasheffbar_{d}$ is compatible with the natural cell structure on $[0,+\infty]^{|\cE(T)|-|\cE(T^{\infty})|}$.  Given a sequence $\vI$ of elements of the set $\{1,2\}$, there is an obvious meaning to the moduli space 
\begin{equation}
 \Stasheffbar^{T \to T^{\infty}}_{\vI}
\end{equation}
of labelled Stasheff trees which lie in the appropriate stratum.

We shall now formally describe what we mean by a smooth choice of perturbation data.  Let us once and for all fix a smooth structure on $[0,+\infty]$ using the identification
\begin{equation}
e^{-x} \co [0,+\infty] \cong [0,1].
\end{equation}
 In particular, we obtain a smooth structure on $\Stasheffbar^{T \to T^{\infty}}$. Note that every flag in $T$ (a pair $(e,v)$ with $v$ a vertex of the edge $e$) and the metric $g_T$ determine a unique orientation and length preserving map
\begin{equation} \label{eq:map_tree_R}
 e \to \bR
\end{equation}
taking $v$ to the origin and $e$ to the appropriate segment on either side of the origin.  In particular, this flag determines a polyhedral subset in
\begin{equation} 
[0,+\infty]^{|\cE(T)|-|\cE(T^{\infty})|} \times \bR
\end{equation}
whose fibre is the image of the segment $e$ under \eqref{eq:map_tree_R}.
\begin{defin} \label{defin:smooth_pert_data_stasheff}
A choice of perturbation data for each element of $\Stasheffbar_{\vI}$ is said to be {\bf smooth} if, given a flag $(e,v)$ of a tree $T$, the perturbation data for the edge $e$ of metric trees in $\Stasheffbar^{T\to T^{\infty}}_{\vI}$  extends to a smooth map
\begin{equation} \label{eq:extended_perturbation_data_stasheff}
\tilde{\bfX}^{T \to T^{\infty}}_{(e,v) } \co [0,+\infty]^{|\cE(T)|-|\cE(T^{\infty})|} \times \bR \to \C^{\infty}(TM).
\end{equation}
\end{defin}

Recall that a universal perturbation is a smooth choice of perturbation data for all labelled Stasheff trees, which is moreover consistent in the sense that the perturbation data for a singular tree is given by the data of its components. 
\begin{lem} \label{lem:inductive_construction_pert_data_stasheff}
Given any sequence $\vI$, every smooth and consistent choice of perturbation data on the union of
\begin{equation}
\coprod_{|\vI[']| < |\vI|} \Stasheffbar_{\vI[']} 
\end{equation}
with the $k$-skeleton of $\Stasheffbar_{\vI}$ extends smoothly and consistently to the $k+1$ skeleton of $\Stasheffbar_{\vI}$. Moreover, the restriction map from the space of all perturbation data to the space of perturbation data for a given singular labelled Stasheff tree $(T,g_{T})$ is surjective.
\end{lem}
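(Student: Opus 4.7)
The plan is a double induction: an outer induction on $|\vI|$, and for each $\vI$, an inner induction on the skeleton dimension $k$. The outer base case $|\vI|=1$ forces the perturbation datum to vanish by $\bR$-translation invariance together with the bounded support condition of Definition~\ref{defin:pert_datum}. For the inner base case, the $0$-skeleton of $\Stasheffbar_{\vI}$ is the set of fully broken trees, whose perturbation data are already completely determined by those on the smaller $\Stasheffbar_{\vI[']}$ via consistency with the gluing maps \eqref{eq:compose_stasheff_labels}; one simply checks that this determination is compatible with what the inner hypothesis provides.

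For the inductive step, I would fix an open $(k+1)$-cell $\Stasheff^{T\to T^{\infty}}_{\vI} \cong (0,+\infty)^{k+1}$ whose closure is the cube $[0,+\infty]^{k+1}$. Its topological boundary decomposes into faces of two types: those on which a finite edge of $T$ shrinks to length $0$ (landing in a lower cell of $\Stasheffbar_{\vI}$, where data is fixed by the inner hypothesis) and those on which a finite edge stretches to infinite length (landing in the image of a gluing map from a product of smaller $\Stasheffbar_{\vI[']}$, where data is fixed by the outer hypothesis). The required extension then amounts to the following: for each flag $(e,v)$ of $T$, extend the smooth map $\tilde{\bfX}^{T\to T^{\infty}}_{(e,v)}$ from the boundary of $[0,+\infty]^{k+1}\times \bR$ into its interior, landing in the space of vector fields on $TQ_{i(e),j(e)}$ of bounded support satisfying the boundary pointing conditions of Definition~\ref{defin:pert_datum}. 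Since this target is a convex open subset of $\C^{\infty}(TQ_{i(e),j(e)})$ containing $0$, a standard collar construction combined with a partition of unity produces such a smooth extension. Different flags on the same edge determine compatible data automatically since the parametrisation \eqref{eq:map_tree_R} depends only on the global metric $g_T$.

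For the surjectivity claim, given a perturbation datum prescribed at a single tree $(T,g_T)$, I would first carry out the inductive extension above starting from the zero datum to produce some universal perturbation datum $\bfX^{\Stasheff}_0$ with value $\bfX^{\Stasheff}_0(T,g_T)$ at the chosen tree. Then I pick a bump function on the cube containing $(T,g_T)$ that equals $1$ at $(T,g_T)$ and is supported in a small open neighbourhood disjoint from all proper boundary strata of the cube, and use it to interpolate between $\bfX^{\Stasheff}_0$ and the prescribed value in a neighbourhood of $(T,g_T)$; convexity of the space of allowable vector fields ensures the interpolation remains a valid perturbation datum, and the support condition preserves consistency with the boundary. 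The main obstacle I expect is not analytic but combinatorial: one must verify that the boundary data supplied by the two inductive hypotheses on the various faces of $[0,+\infty]^{k+1}$ are indeed smoothly compatible at their shared corners, so that the extension problem is genuinely well-posed. This compatibility is however precisely what the smoothness clause of Definition~\ref{defin:smooth_pert_data_stasheff} and iterated consistency with the gluing maps are designed to guarantee, so the verification reduces to unwinding the definitions.
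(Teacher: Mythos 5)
There is a genuine gap: you dismiss as ``automatic'' the compatibility between the two flags $(e,v_{-})$ and $(e,v_{+})$ on a common interior edge $e$, but this is precisely the non-trivial content of the extension step, and it is not automatic. The perturbation datum on $e$ is a single function from $e$ to $\C^\infty(TQ_{i(e),j(e)})$, and the two flags parametrize $e$ by intervals $[0,t_e]$ and $[-t_e,0]$ respectively; the transition between these two coordinate systems is $(t_e,y)\mapsto(t_e,y-t_e)$, which at $t_e=0$ is smooth (so smoothness of one flag-extension implies that of the other there), but which in the compactified coordinate $e^{-t_e}$ is singular as $t_e\to+\infty$, since it involves translation by an unbounded quantity. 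A generic collar/partition-of-unity extension of the map $\tilde{\bfX}^{T\to T^\infty}_{(e,v_-)}$ across the facet $t_e=+\infty$ will therefore \emph{not} produce a smooth $\tilde{\bfX}^{T\to T^\infty}_{(e,v_+)}$ — the smoothness requirement of Definition~\ref{defin:smooth_pert_data_stasheff} is genuinely per-flag, and there is no reason a one-flag extension transfers.

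The paper's remedy is the ``pregluing'' observation: because the perturbation datum is compactly supported on each edge, one may, for $t_e\gg 0$, define the datum on the edge $[0,t_e]$ to vanish on the middle third $[t_e/3,2t_e/3]$, and be the pullback of the two half-infinite pieces (from the broken tree on the facet $t_e=+\infty$) on the outer thirds. A direct computation then shows that this explicit candidate is smooth in both the $(e,v_-)$ and $(e,v_+)$ coordinate systems, and interpolating with cutoff functions toward the data on the other facets completes the extension. Your argument should include this step; without it the well-posedness of the extension problem fails at $t_e=+\infty$. A secondary point: your bump-function argument for surjectivity implicitly places $(T,g_T)$ in the interior of a cell disjoint from boundary strata, but the lemma allows $(T,g_T)$ to be a \emph{singular} tree; one needs to argue componentwise, modifying the datum near the interior point of each $\Stasheff_{d'}$ determined by the components of $(T,g_T)$.
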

\begin{proof}
Since the perturbation data have been fixed on the lower dimensional moduli spaces it suffices to extend the perturbation datum to interior cells.  Fix a tree $T$ labelling such a cell, (in this case $T^{\infty}$ is a tree with a unique $d+1$-valent vertex, and all interior edges are collapsed, so we may drop this data from the notation).  By assumption, perturbation data have been chosen for the boundary of the cell labelled by $T$.  If we fix an interior edge $e$, writing $(e,v_{\pm})$ for the two flags it lies on, then the existence of a smooth extension of a function from the boundary of 
\[ [0,+\infty]^{|\cE(T)|-|\cE(T^{\infty})|} \times \bR \]
is obvious since this is a manifold with corners (we proceed by induction, extending over small neighbourhoods of the skeleta).  It suffices therefore to make sure that the extension can be chosen so the maps associated to the two flags containing $e$ are both smooth.  Writing $t_{e}$ for the coordinate of the factor of  $[0,+\infty]^{|\cE(T)|-|\cE(T^{\infty})|}$ corresponding to the edge $e$, we find that the only potentially interesting extension problem occurs at the boundary facets $t_{e}= 0, +\infty$.  Since the map $(t_{e},y) \mapsto (t_e,y+t_{e})$ is smooth on $\bR^{2}$, and takes the polyhedron associated to $(e,v_-)$ to the one associated to $(e,v_+)$, smoothness of the extensions $\tilde{\bfX}^{T \to T^{\infty}}_{(e,v_-)}$ and $ \tilde{\bfX}^{T \to T^{\infty}}_{(e,v_+)}$ are equivalent near $t_{e} =0$.

In remains to check the existence of an extension from $t_{e}=+\infty$.  The key fact is that the perturbation datum is required to be compactly supported on every edge. This defines a naive way of gluing the perturbation data associated to the flags  $(e,v_{\pm})$ whenever $t_{e} \gg 0$ by ``pregluing'' the two half-infinite intervals as follows: if we identify the edge with the interval $[0,t_e]$, then the perturbation datum on $[t_{e}/3, 2t_{e}/3]$ is set to vanish, while those on $[0,t_{e}/3]$ and $[2t_{e}/3,t_{e}]$ are obtained by pulling back the perturbation datum from the natural inclusions in $[0,+\infty)$ and $[-\infty,0]$ (the second is obtained after translation by $-t_e$).  A simple computation shows that the resulting perturbation datum extends smoothly with respect to both coordinate systems, and the proof proceeds by induction using cutoff functions, proving the first part of the result. The second part follows easily by observing that once one has constructed a perturbation datum over the boundary of a cell, it can be arbitrarily modified at any point in its interior.  
\end{proof}

The next technical result is the infinitesimal version of the fact that  perturbing of the gradient flow equation on a bounded  subset of an edge integrates to an essentially arbitrary diffeomorphism (see the discussion preceding Definition \ref{defin:pert_datum}) .
\begin{lem}
For each gradient tree $\psi \co (T,g_T) \to Q$, $x$ a point in the appropriate ascending or descending manifold, and tangent vector $X$ at $ev_{k}(x)$, there exists a $1$-parameter family of universal perturbation data $\bfX^{\Stasheff}_{\tau}$ whose evaluation maps $\ev^{\tau}_{k'}$ are independent of $\tau$ for $k' \neq k$, and such that
\begin{equation}
\frac{d \ev_{k}^{\tau}(x)}{d \tau} {\Big|}_{\tau = 0} = X.
\end{equation}
\noproof
\end{lem}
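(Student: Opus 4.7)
\noindent\emph{Proof plan.}  The plan is to modify the given universal perturbation datum only on the edge $e_{k}$ of $(T,g_{T})$ containing the $k$\th external leaf, and only in a small neighbourhood of $(T,g_{T})$ inside $\Stasheffbar_{\vI}$.  Since the equation defining $\ev_{k'}(x')$ involves only the perturbation data on the edge containing the $k'$\th leaf, any modification confined to $e_{k}$ automatically leaves $\ev_{k'}^{\tau}$ independent of $\tau$ for $k' \neq k$.  It therefore suffices to prove infinitesimal surjectivity of $\ev_{k}$ under variation of $X_{e_{k}}$ alone.

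Parametrise $e_{k}$ by $[0,+\infty)$ so that $s=0$ corresponds to the interior endpoint and $s \to +\infty$ to the leaf, and write $\psi_{e_{k}}$ for the perturbed gradient trajectory on $e_{k}$ determined by $x$.  For any smooth family of vector fields $Y_{s}$ on $Q$, compactly supported in the $s$-variable in the sense of Definition \ref{defin:pert_datum}, the variation-of-parameters formula for ODEs gives
\begin{equation}
\frac{d\, \ev_{k}^{\tau}(x)}{d\tau}\Big|_{\tau=0}
= -\int_{0}^{+\infty} \Phi_{s}\bigl( Y_{s}(\psi_{e_{k}}(s)) \bigr)\, ds,
\end{equation}
where $\Phi_{s}\co T_{\psi_{e_{k}}(s)}Q \to T_{\ev_{k}(x)}Q$ is the linearisation of the perturbed gradient flow along $e_{k}$ from parameter $s$ to parameter $0$.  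Since $\Phi_{s}$ is a linear isomorphism, concentrating $Y_{s}$ (via a bump function) near a fixed interior parameter $s_{0}$ and specifying its value at $\psi_{e_{k}}(s_{0})$ suffices to realise any prescribed $X \in T_{\ev_{k}(x)}Q$ as the derivative above.

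To globalise, multiply this pointwise modification by a cutoff function on $\Stasheffbar_{\vI}$ equal to $1$ at $(T,g_{T})$ and supported in a small neighbourhood of it in the interior of the relevant cell, so that no singular tree or other point of the moduli space is disturbed.  By the surjectivity part of Lemma \ref{lem:inductive_construction_pert_data_stasheff}, the resulting pointwise $1$-parameter variation extends to a smooth consistent $1$-parameter family $\bfX^{\Stasheff}_{\tau}$ of universal perturbation data, agreeing with the original outside the support of the cutoff.

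The main technical worry is preserving the inward/outward pointing condition of Definition \ref{defin:pert_datum} on $\partial Q_{1,2}$ and $\partial Q_{2,1}$; this is automatic provided $Y$ is supported on an interior portion of $e_{k}$ whose image under $\psi_{e_{k}}$ avoids these boundary regions, a condition we can always arrange by choosing $s_{0}$ generically along $\psi_{e_{k}}$.
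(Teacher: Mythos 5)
The lemma is stated with \verb|\noproof|—the paper gives no argument of its own—so there is nothing to directly compare against; your proof supplies the intended standard argument: vary only on the external edge $e_{k}$, apply the variation-of-parameters formula, cut off over the moduli space, and extend using the surjectivity clause of Lemma \ref{lem:inductive_construction_pert_data_stasheff}. The core mechanism is correct.

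Two points deserve clarification. First, your opening claim ``the equation defining $\ev_{k'}(x')$ involves only the perturbation data on the edge containing the $k'$\th leaf'' is not accurate: from the proof of Lemma \ref{lem:stasheff_smooth}, $\phi_{k'}$ is the \emph{composition} of the diffeomorphism on the $k'$\th leaf with the diffeomorphisms along every interior edge on the descending arc to the outgoing segment, so $\ev_{k'}$ depends on perturbation data on all of those edges too. What is true, and what you need, is the weaker statement that a modification supported on the \emph{external} edge $e_{k}$ does not affect $\ev_{k'}$ for $k' \neq k$, since distinct external leaves never share a descending arc; your $\Phi_{s}\co T_{\psi_{e_{k}}(s)}Q \to T_{\ev_{k}(x)}Q$ correctly absorbs the differentials of the interior flows, but the justification preceding the formula should be rephrased. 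Second, the way to guarantee that the modified perturbation still satisfies the inward/outward pointing condition of Definition \ref{defin:pert_datum} is not to ``choose $s_{0}$ generically'' but to invoke the Claim inside the proof of Lemma \ref{lem:compactness_stasheff}: the image of any edge of a gradient tree labelled by $Q_{1,2}$ or $Q_{2,1}$ stays a uniform distance away from the boundary, so one can support the added vector field $Y$ in a region of $Q_{i(e),j(e)}$ disjoint from the boundary strip where the pointing condition constrains the datum. With these two clarifications the argument is complete and matches what is intended.
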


With this in mind, we can prove the desired transversality result.  
\begin{lem} \label{lem:stasheff_smooth}
 All moduli spaces $\Stasheffbar(x_0,\vx)$ are manifolds of the expected dimension for a generic choice of universal perturbation data.
\end{lem}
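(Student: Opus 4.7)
The plan is to run a standard Sard--Smale argument on a universal moduli space of perturbed gradient trees, with the perturbation datum playing the role of the ambient parameter. To respect the consistency requirement, the argument must be carried out stratum-by-stratum, inductively on $d=|\vx|$ and, for fixed $d$, on the dimension of each stratum of $\Stasheffbar_{\vI}$.

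Fix a sequence $\vI$ and a stratum $\Stasheff^{T\to T^{\infty}}_{\vI}$. By induction, assume that a smooth consistent perturbation datum has already been chosen on $\coprod_{|\vI[']|<|\vI|}\Stasheffbar_{\vI[']}$ and on all lower-dimensional strata of $\Stasheffbar_{\vI}$, and that every previously considered moduli space is smooth of the expected dimension. By Lemma \ref{lem:inductive_construction_pert_data_stasheff}, the space $\cP$ of smooth consistent extensions to the interior of the given stratum is non-empty. After passing to an appropriate Floer-style $C^{\varepsilon}$ completion, $\cP$ becomes a separable Banach manifold. Form the universal moduli space
\begin{equation*}
\Stasheff^{\mathrm{univ}}(x_0,\vx;T\to T^{\infty})
= \bigl\{\, ((T,g_T),\psi,\bfX^{\Stasheff}) \,\bigr\},
\end{equation*}
where $(T,g_T)\in\Stasheff^{T\to T^{\infty}}_{\vI}$, $\bfX^{\Stasheff}\in\cP$, and $\psi$ is a perturbed gradient tree with asymptotics $(x_0,\vx)$.

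The first real step is to show this universal moduli space is a Banach manifold. The gradient-flow equation on each edge is an ODE, so Fredholmness reduces to the finite-dimensional matching conditions at vertices and the asymptotic conditions at leaves; these present $\Stasheff(x_0,\vx)$ (with $\bfX^{\Stasheff}$ fixed) as a fibred product of evaluation maps involving $W^s(x_0)$, $W^u(\vx)$, and the time-$\ell_e$ flows along finite interior edges. The linearization with $\bfX^{\Stasheff}$ fixed has finite-dimensional cokernel concentrated at the matching points, and the unnamed infinitesimal lemma immediately preceding Lemma \ref{lem:stasheff_smooth} shows that varying the perturbation datum on a single edge produces an arbitrary prescribed tangent vector at the corresponding evaluation point. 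Since interior edges can be perturbed independently (the compatibility conditions only constrain how data on an edge glue with data on its neighbours in lower-codimension strata, and by Lemma \ref{lem:inductive_construction_pert_data_stasheff} this leaves the restriction to the interior of the current stratum unconstrained), the linearization becomes surjective once $\bfX^{\Stasheff}$ is allowed to vary. Hence $\Stasheff^{\mathrm{univ}}(x_0,\vx;T\to T^{\infty})$ is a Banach manifold of the correct dimension.

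Applying Sard--Smale to the projection to $\cP$ yields a residual set of perturbation data for which the corresponding fibre $\Stasheff(x_0,\vx)\cap\Stasheff^{T\to T^{\infty}}_{\vI}$ is a smooth manifold of dimension
\begin{equation*}
d-2 + \deg(x_0) - \sum_{k=1}^{d} \deg(x_k),
\end{equation*}
as predicted by an index count combining the Morse indices with the dimension of the stratum. Taking the intersection of the countably many such generic sets (over $\vI$, asymptotics, and combinatorial types $T\to T^{\infty}$) produces the required universal perturbation datum. The boundary strata of $\Stasheffbar(x_0,\vx)$ then automatically appear as transverse fibred products of already-regular lower-dimensional moduli spaces, which, combined with the compactness result of Lemma \ref{lem:compactness_stasheff} and the standard gluing theorem for gradient trees (as in \cite{FO}), endows $\Stasheffbar(x_0,\vx)$ with the structure of a compact topological manifold with boundary of the stated dimension.

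The main obstacle is bookkeeping: at each inductive step one must verify that the perturbation data used to cancel obstructions to surjectivity at an interior point are actually free to be chosen without spoiling the transversality already achieved on lower strata. This is precisely what the surjectivity clause in Lemma \ref{lem:inductive_construction_pert_data_stasheff} provides, so this potential difficulty dissolves once the induction is set up carefully.
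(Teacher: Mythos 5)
Your proposal follows essentially the same strategy as the paper: express the moduli space (for a fixed metric tree) as the preimage of the diagonal in $Q^{d+1}$ under the evaluation map built from the stable/unstable manifolds composed with the perturbed-flow diffeomorphisms $\phi_k$ along arcs of the tree, then invoke the infinitesimal lemma preceding this one to deform evaluation points independently and conclude transversality by a Sard-type argument; the paper runs the finite-dimensional Sard theorem directly, whereas you package the same argument in the Banach-manifold/Sard--Smale universal-moduli-space formalism, a routine difference. Your closing paragraph correctly attributes the manifold-with-boundary structure to the gluing analysis, which the paper indeed defers to the subsequent subsubsection.
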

\begin{proof}[Sketch of proof]
If $\vx$ consists of a single element  $x_1$, this follows from the assumption that all functions $f_{i,j}$ are Morse-Smale.  To prove the general case, note that the choice of a perturbation datum associates a diffeomorphism of one of the manifolds $Q_{i,j}$ to each finite edge of a metric tree $(T,g_T)$; the diffeomorphism is simply the flow of the associated perturbed gradient vector field.  By picking finite segments in each incoming edge containing the support of the perturbation datum, we may also associate a diffeomorphism (the perturbed gradient flow on this segment) to a leaf of $T$.  In particular, we define for each integer $0 \leq k \leq d$ a diffeomorphism $\phi_{k}$ obtained by composing the diffeomorphism attached to the $k$\th leaf with the diffeomorphisms associated to the sequence of finite edges which form an arc from the $k$\th leaf to the outgoing segment.  The moduli space of gradient trees for a fixed metric tree is the inverse image of the diagonal in $Q^{d+1}$ under a map
\begin{equation} \label{eq:fibre_product_moduli_space}
\ev \co W^{s}(x_0) \times W^{u}(x_1) \times \cdots \times W^{u}(x_d) \to Q^{d+1}
\end{equation}
whose $k$\th component $\ev_{k}$ is the composition of the inclusion of the given stable or unstable manifold in the appropriate factor with the diffeomorphism $\phi_{k}$. A standard argument using Sard's theorem then reduces the regularity of the moduli space to the previous Lemma.

\end{proof}
\begin{rem}
Lemma \ref{lem:compactness_stasheff} implies that we may discuss transversality of the evaluation map \eqref{eq:fibre_product_moduli_space} to the diagonal even though the target space is not Hausdorff; the image of a point under consideration may only lie in the singular strata of $Q$ if it is labelled by $Q_{1,1}$ or $Q_{2,2}$, in which case all the arguments take place in the appropriate manifold.
\end{rem}

\subsubsection{Transversality for shrubs}
We briefly explain how to extend the techniques of the previous section to obtain moduli spaces of maps of shrubs which are smooth manifolds.  The main difference are that the strata of $\Shrubbar_{d}$ are not manifolds with corners, so one may not extend an arbitrary smooth function from one skeleton to the next. The quickest way to resolve this problem is to consider the inclusion
\begin{equation} \Shrubbar_{d} \to \Stasheffbar_{2d} \equiv \Shrubbarext_{2d} \end{equation}
obtained by attaching two infinite edges to every incoming vertex of a shrub.  We insist that this is a purely formal construction; the right hand 
side should not be thought of as controlling gradient trees for Morse theory in this construction, and so we have given it a different name.  It is clear that the natural stratification of  $\Shrubbar_{d}$ agrees with the  pullback  of the stratification of $\Stasheffbar_{2d}$ under the inclusion.

Given a sequence $\vI$, we define $2\vI$ to be the sequence obtained by repeating every element but the last, allowing us to define the analogue of this inclusion in the presence of labels
\begin{equation}\label{eq:strange_inclusion} \Shrubbar_{\vI} \to \Stasheffbar_{2\vI}  \equiv \Shrubbarext_{2\vI} .\end{equation}
The reader may easily check the commutativity of the diagram,
\begin{equation}\label{eq:gluing_shrubs_is_gluing_trees}
\xymatrix{  \Stasheffbar_{\vR} \times \Shrubbar_{\vI[1]} \times \ldots \times \Shrubbar_{\vI[r]}  \ar[r] \ar[d]  & \Shrubbar_{\vI} \ar[d] \\ 
\Stasheffbar_{\vR} \times \Stasheffbar_{2\vI[1]} \times \ldots \times \Stasheffbar_{2\vI[r]}  \ar[r] \ar[d]^{\equiv} & \Stasheffbar_{2 \vI} \ar[d]^{\equiv} \\
\Stasheffbar_{\vR} \times \Shrubbarext_{2\vI[1]} \times \ldots \times \Shrubbarext_{2\vI[r]} \ar[r] & \Shrubbarext_{2\vI}}
\end{equation}
with vertical arrows obtained by applying \eqref{eq:strange_inclusion}, and the horizontal ones by applying the map \eqref{eq:boundary_shrub_labelled-tree} in the first row, and $r$ iterations of the map \eqref{eq:compose_stasheff_labels} in the second.  Note that the map  \eqref{eq:compose_stasheff_labels} can be interpreted in our new notation as the existence of a map
\begin{equation} \label{eq:structure_map_extended_shrubs}
\Stasheffbar_{\vI_1}  \times \Shrubbarext_{\vI_2}  \to \Shrubbarext_{\vI}
\end{equation}
where $\vI$ is obtained by removing one of the inputs of $\vI_1$ and replacing it by the inputs of $\vI_{2}$.

\begin{defin} \label{defin:smooth_pert_data_shrub}
A {\bf universal perturbation datum on $\Shrubbarext$} is a perturbation datum for every tree in $\Shrubbar_{\vI}$ varying smoothly, and which is compatible with the perturbation datum $\bfX^{\Stasheff}$ used to define $\Morse(Q_1,Q_2)$ and the maps \eqref{eq:structure_map_extended_shrubs}.

A choice of perturbation data for each element of $\Shrubbar_{\vI}$ is said to be {\bf smooth} if it is the restriction of a universal perturbation datum on $\Shrubbarext$ under the map \eqref{eq:strange_inclusion}.
\end{defin}
\begin{rem}
Of course, a universal perturbation datum on $\Shrubbarext$ involves many choices which are discarded by restricting to $\Shrubbar_{\vI}$.  One might reasonably a universal perturbation datum only on neighbourhoods of the image of $\Shrubbar_{\vI}$ which are compatible with the operadic structure maps, but there seems to be no reason to desire this additional level of complexity.  
\end{rem}

The existence of a universal perturbation datum on $\Shrubbarext$ is proved by induction as we did in the construction of $\bfX^{\Stasheff}$.  The induction step uses the fact that any smooth function on a subcomplex of the natural stratification of a manifold with corner may be extended to an additional cell.

The commutativity of the diagram \eqref{eq:gluing_shrubs_is_gluing_trees} implies that the compatibility of a perturbation datum with respect to the structure maps of $\Shrubbarext$ implies compatibility of the restriction to $\Shrubbar$ with \eqref{eq:boundary_shrub_labelled-tree}.   Lemma \ref{lem:simplicial_perturbation_shrubs_gives_cup} implies that the simplicial condition is realised in an open set of such perturbations.  To conclude the desired transversality result, the key ingredient, as in the proof of Lemma \ref{lem:stasheff_smooth} is an intersection-theoretic interpretation of $\Shrubbar_{S,g_{S}}(x,\vsig)$ whenever $\vsig = (\check{\sigma}_{1}, \ldots, \check{\sigma}_{d})$ is a composable sequence of cells with $\sigma_{k}$ a cell of $\sQ_{i_{k},i_{k+1}}$, and $x$  is a critical point of $f_{0,d}$.  As we did for Stasheff trees in the proof of Lemma \ref{lem:stasheff_smooth}, this moduli space is the inverse image of the diagonal under a map
\begin{equation} \label{eq:fibre_product_picture_shrubs}
 W^{s}(x) \times \check{\sigma}_{1} \times \cdots \times \check{\sigma}_{d} \to Q^{d+1}
\end{equation}
whose $k$\th  factor is obtained by composing the inclusion of $\check{\sigma}_{k}$ with the composition of perturbed gradient flows associated to the edges of $S$ lying on an arc from the $k$\th incoming leaf to the outgoing edge.  We conclude:

\begin{lem}
For generic universal perturbation data, all strata of $\Shrubbar(x,\vsig)$ are smooth manifolds of the expected dimension. \noproof
\end{lem}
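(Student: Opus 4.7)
The plan is to proceed stratum-by-stratum, mimicking the proof of Lemma \ref{lem:stasheff_smooth} with the additional bookkeeping forced by the shrub boundary (length-zero incoming edges) and by the compatibility condition with $\check{\sQ}$.  Fix a surjection $S \to T^{\infty}$ of ribbon trees and a compatible labelling $\vI$, so that $\Shrub^{S \to T^{\infty}}_{\vI}$ is a smooth cell of $\Shrubbar_{\vI}$.  Over this cell, and for a fixed choice of universal perturbation datum $\bfX^{\Shrub}$, the integration of the perturbed gradient flow along each finite edge of $S$ yields a diffeomorphism of the relevant $Q_{i,j}$; composing these along the unique arc from the $k$\th incoming leaf to the outgoing edge gives a diffeomorphism whose domain we use to define the evaluation map $\ev_k$.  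Exactly as in \eqref{eq:fibre_product_picture_shrubs}, $\Shrub_{S \to T^{\infty}}(x,\vsig)$ is then the inverse image of the diagonal in $Q^{d+1}$ under the map
\[
  \ev \co  W^{s}(x) \times \check{\sigma}_{1} \times \cdots \times \check{\sigma}_{d} \to Q^{d+1}.
\]

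The core technical input is that, at any fixed gradient shrub, one can find $1$-parameter families $\bfX^{\Shrub}_{\tau}$ of universal perturbation data (respecting the simplicial-compatibility condition and Definition \ref{defin:smooth_pert_data_shrub}) realising any prescribed infinitesimal motion of $\ev_k$ at a single leaf, leaving the other $\ev_{k'}$ unchanged.  This is the shrub analogue of the infinitesimal lemma used for Stasheff trees, and is proved by the same construction: one picks a small bounded subsegment of the $k$\th incoming leaf, varies $X_{e}$ there, and applies Lemma \ref{lem:inductive_construction_pert_data_stasheff} (extended from $\Shrubbarext$ via \eqref{eq:strange_inclusion}) to extend the local change to a smooth, consistent global perturbation.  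The open nature of the simplicial-compatibility condition, established just before the statement, shows that this extension can be performed without leaving the allowed class of perturbation data.  A standard Sard–Smale argument then produces a residual set of universal perturbation data for which the evaluation map $\ev$ is transverse to the diagonal on every stratum; on this residual set, each $\Shrub_{S \to T^{\infty}}(x,\vsig)$ is a smooth manifold of the expected dimension
\[
  \dim \Shrub^{S \to T^{\infty}}_{\vI} + \deg(x) - \sum_k \deg(\check{\sigma}_k),
\]
which assembles to the announced dimension on the top stratum.

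Finally, we handle compactness and the non-Hausdorff target: Lemma \ref{lem:compactness_shrubs} ensures that $\Shrubbar(x,\vsig)$ is compact, and the claim inside its proof shows that edges labelled $(i,j)$ with $i\neq j$ stay uniformly away from the non-Hausdorff locus of $Q$.  Thus transversality of $\ev$ to the diagonal makes sense (it is tested only in the Hausdorff strata of $Q^{d+1}$, or locally on one of the two sheets $Q_i$), and standard Morse-theoretic gluing across codimension-one boundary strata, together with Lemma \ref{lem:simplicial_perturbation_shrubs_gives_cup} to exclude $\vee_k$-type strata when the cup product vanishes, identifies the top stratum's closure with a compact manifold with boundary $\Shrubhat(x,\vsig)$.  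The main obstacle is keeping the simplicial-compatibility condition compatible with the inductive extension of perturbation data across cells of $\Shrubbarext_{2\vI}$; this is where the commutativity of \eqref{eq:gluing_shrubs_is_gluing_trees} and the open-ness of the simplicial condition are used in tandem.
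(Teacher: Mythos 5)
Your proposal matches the paper's intended argument: the lemma is stated without explicit proof precisely because the preceding discussion (the intersection-theoretic reformulation \eqref{eq:fibre_product_picture_shrubs}, the reduction of smoothness via the embedding $\Shrubbar_{\vI}\hookrightarrow\Shrubbarext_{2\vI}$, openness of the $\check{\sQ}$-compatibility condition, and the same infinitesimal-surjectivity plus Sard--Smale argument used in Lemma \ref{lem:stasheff_smooth}) is meant to constitute it, and your sketch reconstructs exactly that chain of reasoning. The closing paragraph about $\Shrubhat(x,\vsig)$ and the gluing analysis is extraneous to this particular lemma (it belongs to Lemma \ref{lem:moduli_shrubs_nice_manifold}), but it is not incorrect.
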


\subsubsection{Codimension $1$ gluing for trees}

We briefly describe how to use the approach of \cite{KM} in order to prove the desired gluing theorems for moduli spaces of gradient trees.  The results of this section prove that the compactifications of moduli spaces of gradient trees are manifolds with boundary whenever they have virtual dimension $1$.    However, as we shall need a description of a neighbourhood of the codimension $1$ strata of moduli spaces of shrubs of arbitrary dimension, we shall discuss matters in that generality.

Consider a Morse-Smale function $f$ on a smooth submanifold, and $N_1$ and $N_2$  smooth manifolds which are transverse to the gradient flow of $f$.  The manifold of gradient flow lines of length $R$ starting on $N_1$ and ending on $N_2$ will be denoted $\Grad^{R}(N_1,N_2) $.  The union of these spaces admits a compactification $\Gradbar(N_1,N_2) $ by allowing broken trajectories.  We write
\begin{equation} \Grad^{\infty}(N_1,x,N_2)  \equiv W^{s}(x) \times W^{u}(x) \cap N_1 \times N_2 \subset Q \times Q \end{equation}
for broken gradient trajectories passing through a critical point $x$.  Note that all strata of virtual codimension $1$ in $\Gradbar(N_1,N_2) $ are of this type.  The following result is a minor generalisation of Proposition 18.1.4 in \cite{KM}:
\begin{lem} \label{lem:KM}
Each point $(\gamma_1,\gamma_2) \in  \Grad^{\infty}(N_1,x,N_2) $ admits a neighbourhood $U(N_1,x,N_2)$, and an embedding 
\begin{equation} U(N_1,x,N_2) \times (R_0, +\infty]  \to \Gradbar(N_1,N_2) \end{equation}
for $R_0 \gg 0$, such that the image of $U(a_1,x,a_2) \times \{ R \}$ consists of all gradient segments of length $R$ which are sufficiently close to  $(\gamma_1,\gamma_2)$.  Moreover, the evaluation map
\begin{equation}U(N_1,x,N_2) \times \{ R \} \to  Q \times Q \end{equation}
converges in the $\C^{\infty}$ topology as $R$ converges to infinity. \noproof
\end{lem}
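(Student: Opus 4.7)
The plan is to follow the standard pregluing-and-correct paradigm for Morse-theoretic gluing, as carried out by Kronheimer--Mrowka in Proposition 18.1.4 of \cite{KM}; the only novelty here is allowing the endpoints to range over submanifolds $N_1$ and $N_2$ rather than being fixed points. First I would choose Morse coordinates in a ball $B$ around $x$ in which the gradient flow takes its linear normal form, so that $W^s(x) \cap B$ and $W^u(x) \cap B$ become coordinate subspaces. Transporting $N_1$ forward by the flow and $N_2$ backward by the flow until they first meet $\partial B$ gives rise to two submanifolds $\tilde{N}_1, \tilde{N}_2 \subset B$ which, by the transversality hypothesis, meet $W^u(x)$ and $W^s(x)$ transversely at the points $\gamma_1 \cap \partial B$ and $\gamma_2 \cap \partial B$.

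Next, for each pair $(a_1, a_2) \in \Grad^{\infty}(N_1, x, N_2)$ near $(\gamma_1, \gamma_2)$ and each $R$ sufficiently large, I would construct a pregluing by flowing $a_1$ forward for time $R/2$ and $a_2$ backward for time $R/2$, and interpolating between the two approximate segments in the annular region around $x$ using a cutoff function written in Morse coordinates. Measured in an exponentially weighted $\C^k$ norm (with weight adapted to the Morse index of $x$), the failure of the preglued path to be a true gradient trajectory decays like $e^{-\lambda R}$, where $\lambda > 0$ is governed by the smallest absolute value of an eigenvalue of the Hessian of $f$ at $x$.

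The main step is to correct this pregluing to an honest gradient segment of length $R$ connecting $N_1$ to $N_2$ via a quantitative implicit function theorem. Because $N_1$ is transverse to $W^u(x)$ and $N_2$ to $W^s(x)$ at $(\gamma_1, \gamma_2)$, the linearisation of the gradient flow equation with boundary conditions in $N_1$ and $N_2$ is a Fredholm isomorphism, and the weighted norms can be arranged so that the operator norm of its inverse is bounded uniformly in $R$ for $R \geq R_0$. A Banach fixed point argument then produces a unique small correction of the preglued path, smoothly depending on $(a_1, a_2)$ and on $R$. This defines the embedding $U(N_1,x,N_2) \times (R_0, +\infty] \to \Gradbar(N_1,N_2)$, and it is a local homeomorphism onto the set of length-$R$ gradient segments near $(\gamma_1, \gamma_2)$ by uniqueness of the fixed point and an open-closed argument.

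Finally, the $\C^{\infty}$ convergence of the evaluation map as $R \to +\infty$ is essentially automatic from the construction: by design the endpoints are $a_1$ and the image of $a_2$ under the flow for time $R$, and the correction near $x$ is bounded in the weighted norm by the exponentially small pregluing error, so its contribution to the evaluation at either endpoint tends to zero in every $\C^k$ norm. The principal technical obstacle, which already underlies the argument in \cite{KM}, is establishing uniform invertibility of the linearised operator in weighted spaces that interpolate smoothly between finite $R$ and $R=+\infty$; this is what allows the parameter $(R_0,+\infty]$ to be included with its smooth structure at infinity. The generalisation to submanifold boundary conditions requires only replacing the pointwise Dirichlet conditions in \cite{KM} by the transversality conditions above, which leaves the analytic structure of the gluing problem unchanged.
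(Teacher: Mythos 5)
Your sketch is correct and follows exactly the route the paper intends: the paper itself gives no argument, simply citing Proposition 18.1.4 of Kronheimer--Mrowka and noting that the passage from fixed endpoints to submanifold boundary conditions $N_1$, $N_2$ is a minor modification. Your pregluing, exponentially weighted norms, uniform invertibility of the linearised operator, and Banach fixed-point correction is precisely the content of that cited result, so you have reconstructed rather than replaced the paper's reasoning.
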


Assuming that the perturbation data we have chosen are generic, this result implies that the moduli spaces $\Stasheffbar(x_0, \vx)$ are manifolds with boundary, with boundary stratified by smooth manifolds of the expected dimension.  For simplicity of notation (and as this is the only case that is needed for our arguments), we focus on a neighbourhood of the strata of codimension $1$.  Such a stratum is a product 
\begin{equation} \Stasheff(x_0, \vx[1]) \times \Stasheff(y, \vx[2]) \end{equation} 
with a fixed choice of an element $x_{j}^{1} \in \vx[1]$ which equals $y$, and such that $\vx$ is the result of replacing $x_{j}^{1}$ by $\vx[2]$ in $\vx[1]$.  The outgoing and incoming edge with endpoint $y$ are naturally isometric to $[0,+\infty)$ and $(-\infty,0]$.  We fix a constant $K$ sufficiently large that the support of the perturbation data on these edges are respectively contained in $[0,K]$ and $[-K,0]$, and write $b_{K}$ for the image of $\pm K$ on this edge.  We define evaluation maps
\begin{align*} \Stasheff_{d_1} \times W^{s}(x_0) \times W^{u}(x_{1}^{1}) \times  \cdots \times  W^{u}(x_{j-1}^{1}) \times  W^{u}(x_{j+1}^{1}) \times \cdots \times  W^{u}(x_{d_1}^{1}) & \to Q^{d_1} \\
 \Stasheff_{d_2} \times W^{u}(x_{2}^{1}) \times  \cdots \times  W^{u}(x_{2}^{d_1}) & \to Q^{d_2}
 \end{align*}
which are obtained in the second case by composing the inclusion of  $W^{u}(x_{2}^{j})$ into the $j$\th factor with the flows associated with a descending arc from the $j$\th leaf to the point $b_k$ (the first case is a straightforward modification).  We write $N^{\infty}(x_0, \vx[1] - y)$ and $N^{\infty}(\vx[2])$ for the inverse images of the diagonal under these maps.  By definition, the stratum we are considering is given by
\begin{equation}  \Grad^{\infty}(N^{\infty}(x_0, \vx[1] - y),y,N^{\infty}(\vx[2]) ). \end{equation}
Our regularity assumption can be restated as the fact that this is a transverse fibre product.

Consider the ``pregluing'' map
\begin{equation} (0, +\infty] \times \Stasheff_{d_1} \times \Stasheff_{d_2} \to  \Stasheffbar_{d} \end{equation}
obtained by replacing the bi-infinite edge in the source with a finite one whose length is given by the projection to the first factor in the source.    Let us define 
\begin{equation} N^{R}(x_0, \vx[1] - y) \textrm{ and } N^{R}(\vx[2]) \end{equation}
for $R \in [0,+\infty)$ in the same way as their analogue for $R= +\infty$, but using the perturbation data on $\Stasheffbar_{\vI}$ pulled back under the inclusion of  $\{ R \}  \times \Stasheff_{\vI[1]} \times \Stasheff_{\vI[2]}$.  A neighbourhood of $\Stasheff(x_0, \vx[1]) \times \Stasheffbar(y, \vx[2])$ in $\Stasheffbar(y, \vx)$ consists of gradient trees whose source lies in the image of this pregluing map, and is therefore given by the union
\begin{equation} \coprod_{0 \leq R  \leq +\infty} \Grad^{R}(N^{R}(x_0, \vx[1] - y),y,N^{R}(\vx[2]) ). \end{equation}
The smoothness condition on our choice of perturbation data implies that the evaluation maps on $N^{R}(x_0, \vx[1] - y)$ and $N^{R}(\vx[2])$ converge on compact subsets, and in the $C^{\infty}$ topology to the evaluation maps at $R=+\infty$.  Together with Lemma \ref{lem:KM}, elementary differential topology implies
\begin{lem}
Each point $(\psi_1,\psi_2)   \in  \Stasheff(x_0, \vx[1]) \times \Stasheff(y, \vx[2])$  admits a neighbourhood $U(\psi_1,\psi_2)$, and an embedding 
\begin{equation} U(\psi_1,\psi_2) \times  (R_0, +\infty]  \to \Stasheffbar(x,\vx) \end{equation}
for $R_0 \gg 0$, whose image is a neighbourhood of $(\psi_1,\psi_2)$ in the Gromov topology.  \noproof
\end{lem}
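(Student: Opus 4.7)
The strategy is to combine the preceding infinite-length gluing Lemma (of Kronheimer--Mrowka type) with the description of a neighbourhood of the stratum $\Stasheff(x_0, \vx[1]) \times \Stasheff(y, \vx[2])$ which was laid out in the paragraphs above. First I would fix the point $(\psi_1,\psi_2)$ and identify, via the pregluing map
\begin{equation*}
(0,+\infty] \times \Stasheff_{d_1}\times \Stasheff_{d_2} \to \Stasheffbar_d,
\end{equation*}
the $R$-slice of a neighbourhood of $(\psi_1,\psi_2)$ in $\Stasheffbar(x_0,\vx)$ with the set
\begin{equation*}
\Grad^{R}\bigl(N^{R}(x_0,\vx[1]-y),\,y,\,N^{R}(\vx[2])\bigr),
\end{equation*}
where $N^{R}(\cdot)$ are the fibre products obtained by imposing the balancing (incidence) constraints coming from the tree obtained by pregluing at parameter $R$. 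By definition of transversality of the perturbation data (Lemma~\ref{lem:stasheff_smooth}) the $R=+\infty$ fibre products $N^{\infty}(x_0,\vx[1]-y)$ and $N^{\infty}(\vx[2])$ are cut out transversely near $\psi_1$ and $\psi_2$, so they are smooth submanifolds of the relevant products of stable/unstable manifolds near those points.

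Second, I would invoke Lemma~\ref{lem:KM} at the critical point $y$ with the submanifolds $N_{1}=N^{\infty}(x_0,\vx[1]-y)$ and $N_{2}=N^{\infty}(\vx[2])$: this gives an open neighbourhood $U(\psi_1,\psi_2)$ of the broken trajectory through $y$ together with an embedding
\begin{equation*}
U(\psi_1,\psi_2)\times(R_0,+\infty] \hookrightarrow \Gradbar\bigl(N^{\infty}(x_0,\vx[1]-y),\,N^{\infty}(\vx[2])\bigr)
\end{equation*}
parametrising trajectories of length $R$ near $(\psi_1,\psi_2)$, with $C^\infty$ convergence of the evaluation maps at $R=+\infty$.

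Third, to bridge the gap between the fixed-target moduli space at $R=+\infty$ and the $R$-dependent ones, I would use the smoothness clause in Definition~\ref{defin:smooth_pert_data_stasheff}: the perturbation data on $\Stasheffbar_{\vI}$ extend smoothly across the face $R=+\infty$ in the pregluing coordinates. Consequently the evaluation maps defining $N^{R}$ depend smoothly on $R\in(R_0,+\infty]$ and converge in $C^\infty$ to those defining $N^{\infty}$. Since transversality to the diagonal is an open condition and holds at $R=+\infty$ by assumption, it persists for $R\gg 0$; the submanifolds $N^{R}$ thus form a smooth family, as do their gradient-flow images. Applying the parametric implicit function theorem (i.e.\ varying the $R=+\infty$ picture of Lemma~\ref{lem:KM} through this family) yields, after possibly shrinking $U(\psi_1,\psi_2)$ and increasing $R_0$, the required embedding
\begin{equation*}
U(\psi_1,\psi_2)\times(R_0,+\infty]\to\Stasheffbar(x_0,\vx).
\end{equation*}

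The main obstacle to be careful about is the last step: one needs the $C^\infty$ convergence of the family of evaluation maps defining $N^{R}$ to be uniform on compact subsets of $U(\psi_1,\psi_2)$, so that the transversality and the parameter-dependent implicit function argument go through uniformly. This is precisely what the smoothness of the universal perturbation datum (in particular its compatibility with the pregluing boundary face in the chart $e^{-R}$) is designed to provide. Checking that the image is indeed a full Gromov-topology neighbourhood of $(\psi_1,\psi_2)$ is routine: any sequence in $\Stasheffbar(x_0,\vx)$ Gromov-converging to the broken configuration eventually has underlying tree in the image of the pregluing map, and its gradient-tree part is captured by Lemma~\ref{lem:KM}.
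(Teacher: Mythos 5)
Your proposal matches the paper's argument: both pass through the pregluing map to identify a neighbourhood of the broken tree with $\coprod_{R} \Grad^{R}(N^{R}(x_0,\vx[1]-y),y,N^{R}(\vx[2]))$, both invoke Lemma \ref{lem:KM} at $R=+\infty$, and both use the smoothness of the universal perturbation datum across the $R=+\infty$ face to propagate transversality to large finite $R$. The only difference is cosmetic: where the paper compresses the final step into the phrase ``together with Lemma \ref{lem:KM}, elementary differential topology implies,'' you spell out that this elementary differential topology is a parametric implicit function theorem applied to the smoothly varying family of constraint manifolds $N^{R}$.
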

This result has a straightforward generalisation to corners of higher codimension, but we shall not need that fact in this paper.

The same proof technique works for shrubs:  Let $\psi_0$ be a rigid gradient tree in $\Stasheff(x_0, \vx)$, and $\vpsi = ( \psi_1, \ldots, \psi_{d})$  a sequence of shrub maps with outputs $x_j$.  We write $(\vsig[1], \ldots, \vsig[r])$ for the collections of inputs of all shrub maps $\psi_{j}$, and $\vsig$ for the sequence obtained by concatenating the sequences $\vsig[j]$. By attaching $\psi_{j}$ to the $j$\th input of $\psi_0$, we obtain a singular shrub
\begin{equation} (\psi_0, \vpsi) \in  \Shrubbar(x_0, \vsig). \end{equation} 
Let $U(\psi_0, \vpsi)$ denote a sufficiently small neighbourhood of this point in the boundary stratum of $ \Shrubbar(x_0, \vsig) $ wherein it lies (note that his stratum has virtual codimension $1$).  Applying the discussion for trees to this situation, we conclude:
\begin{lem} \label{lem:gluing_shrubs}
There exists an embedding
\begin{equation}  U(\psi_0, \vpsi) \times (R_0, +\infty] \to   \Shrubbar(x_0, \vsig)   \end{equation}
whose image is a neighbourhood of $(\psi_0,\vpsi)$ in  $ \Shrubbar(x_0, \vsig) $. \noproof
\end{lem}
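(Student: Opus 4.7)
The plan is to mirror the codimension-one gluing construction already carried out for Stasheff trees (in the paragraphs preceding the Lemma), adapting it to the shrub setting. The key combinatorial feature that makes a single gluing parameter $R$ suffice — even though there are $r$ critical points $x_1, \ldots, x_r$ being glued simultaneously — is the equidistance condition built into the definition of shrubs: on the stem $S$ of any element of $\Shrub$, all paths from the outgoing leaf to an incoming vertex have the same length, so any collection of edges that simultaneously become infinite must all have the same length. Consequently the boundary stratum $\Stasheff_r \times \Shrub_{d_1} \times \cdots \times \Shrub_{d_r}$ enters $\Shrubbar_d$ with codimension one, not codimension $r$.

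First I would describe the local combinatorics. The singular shrub $(\psi_0, \vpsi)$ has stem obtained by attaching the stems of the $\psi_j$ to the incoming vertices of $\psi_0$; at each attaching vertex there is a broken trajectory through $x_j$. Fixing $K \gg 0$ so that the perturbation data on the outgoing edge of each stem of $\psi_j$ and on the $j$th incoming edge of $\psi_0$ are supported within distance $K$ of the relevant vertices, I would define, following the tree case, submanifolds
\begin{equation*}
N^R_j \subset W^s(x_{j}) \times \text{(relevant factors)}, \qquad R \in [0, +\infty],
\end{equation*}
obtained by flowing the evaluation at the cutoff points along the perturbed gradient vector fields associated to the edges of $\psi_j$ (respectively of $\psi_0$) for time determined by the glued metric. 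The description of $\Shrub(x_0, \vsig)$ as the inverse image of the diagonal under the evaluation map \eqref{eq:fibre_product_picture_shrubs} then gives, near $(\psi_0, \vpsi)$, a description as a union over $R$ of fibre products built from the $N^R_j$.

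Second I would define the pregluing map. For $R \in (R_0, +\infty]$, replace each of the $r$ broken bi-infinite edges at the critical points $x_j$ by a single finite edge of length $R$; this is legitimate precisely because the equidistance constraint forces the same $R$ for all $j$. This gives a smooth map
\begin{equation*}
(R_0, +\infty] \times \Stasheff_{\vR} \times \Shrub_{\vI[1]} \times \cdots \times \Shrub_{\vI[r]} \to \Shrubbar_{\vI},
\end{equation*}
and by Definition \ref{defin:smooth_pert_data_shrub} (smoothness of $\bfX^{\Shrub}$ pulled back from $\Shrubbarext$) the evaluation maps $N^R_j \to Q \times Q$ converge in the $C^\infty$ topology on compact sets as $R \to \infty$ to the broken evaluation maps defining $N^\infty_j$.

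Third, I would apply Lemma \ref{lem:KM} independently at each of the $r$ critical points $x_1, \ldots, x_r$, producing for each $j$ a local chart for gradient trajectories near the broken configuration. Combined with the $C^\infty$ convergence above and the transversality of the fibre product cutting out $\Shrub(x_0, \vsig)$ (Lemma \ref{lem:moduli_shrubs_nice_manifold}), the implicit function theorem yields the desired embedding
\begin{equation*}
U(\psi_0, \vpsi) \times (R_0, +\infty] \to \Shrubbar(x_0, \vsig)
\end{equation*}
whose image is a Gromov neighbourhood of $(\psi_0, \vpsi)$. The only point where care is needed — and what I expect to be the main obstacle — is ensuring that the $r$ independent applications of Lemma \ref{lem:KM} are genuinely coordinated by a single parameter $R$; this is verified by construction, since the pregluing uses the same $R$ on every edge and the smoothness requirement on $\bfX^{\Shrub}$ guarantees that the resulting family of perturbed problems depends smoothly on $R \in (R_0, +\infty]$ in the appropriate coordinates at $R = +\infty$.
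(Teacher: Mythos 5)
Your overall approach is the same as the paper's, which simply defers to the preceding Stasheff-tree gluing argument with the phrase ``applying the discussion for trees to this situation, we conclude''; you have usefully filled in the details, and you correctly identify the combinatorial reason the boundary stratum is codimension one.

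However, there is one imprecision in the pregluing step. You write that one should ``replace each of the $r$ broken bi-infinite edges at the critical points $x_j$ by a single finite edge of length $R$'' and that ``the equidistance constraint forces the same $R$ for all $j$.'' This is not what equidistance says. If $D_j$ is the distance in $\psi_0$ from the output to its $j$th incoming vertex and $h_j$ is the common height of the shrub $\psi_j$, then a gluing with lengths $(a_1,\ldots,a_r)$ yields a metric tree whose distance from output to any input of $\psi_j$ is $D_j + a_j + h_j$; this lies in $\Shrubbar_d$ only if $D_j + a_j + h_j$ is independent of $j$. Since the quantities $D_j + h_j$ are \emph{not} all equal in general (and in fact vary over $U(\psi_0,\vpsi)$), the lengths $a_j$ cannot all be taken equal to a single $R$ — the image of that naive pregluing simply misses $\Shrubbar_d$. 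What equidistance actually does is cut the $r$-parameter family $(a_1,\ldots,a_r)$ down to the $1$-parameter family $a_j = H - D_j - h_j$, with free parameter the total height $H$; this is the single gluing parameter, and everything else in your argument (applying Lemma~\ref{lem:KM} once per broken critical point, $C^\infty$ convergence of the evaluation maps from the smoothness of $\bfX^{\Shrub}$, transversality of the fibre product) goes through as you describe once $R$ is understood as this normalized total height rather than a common edge length. So the gap is small and easily repaired, but as literally stated the pregluing does not land in the right space.
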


\subsection{Transversality and gluing for caps and mushrooms} \label{app:transversality_cap}
The proofs of technical results from Section \ref{sec:floer_to_morse} are contained in this section.  However, we omit the analytical parts of the proofs, so that we work with smooth functions throughout, without setting up the relevant Banach spaces.
\subsubsection{Transversality for caps}

In this section, we prove Lemma \ref{cap_transversality}.
One way to define the topology on $\Pil(\vp)$, is to introduce the space
\[ \C^{\infty} \Pil( \vp) \]
of all smooth maps $u \co P \to M$ for some cap $P$ whose boundary arcs are mapped to $(Q_{i_0}, \cdots, Q_{i_d}, \sL_q)$, where  $q$ is allowed to vary arbitrarily in $Q$, and whose incoming marked points are mapped to the Hamiltonian chords $\vp$.  Note that $\Pil(\vp)$ is simply the zero-level set of 
\begin{equation} \label{perturbed_d_bar} (du - Y^{\vI})^{0,1} \end{equation}
as a section of the vector bundle over $ \C^{\infty} \Pil( \vp)$ with fibre
\[  \C^{\infty}(P, \Omega^{0,1}(P) \otimes u^{*} T M) \]
at a map $u$ with source $P$.  
\begin{proof}[Proof of Lemma \ref{cap_transversality}]
The proof is a standard application of Sard-Smale to pseudo-holomorphic curve theory.  We note that the assumption that all moduli spaces of holomorphic discs $\cR(p_0,\vp)$ are regular implies that every stratum consisting of one cap and several discs will have the expected dimension as soon as the component corresponding to the cap is regular.  We introduce the ingredients necessary to prove that this is the case for generic perturbation data:

Fix $q \in Q$, and let $u \co P \to M$ be a smooth map .  We define
\[ T_{u} \C^{\infty} \Pil( \vp) \]
to be the subspace of $  \C^{\infty}(P, u^{*}(T M)) $ consisting of vector fields whose restriction to a boundary segment of $P$ labelled by $Q_{i}$ is valued in the tangent space $Q_i$, and whose restriction to the outgoing boundary segment projects to a constant vector in $Q$ (note that the foliation $\sL_{q}$ defines a smooth map $M \to Q$).  The projection map
\begin{equation} \label{projection_restricted_tangent} T_{u} \C^{\infty} \Pil( \vp) \to T_{q}Q \end{equation}
is obviously surjective with kernel consisting of vector fields $X$ which, on each arc point in the direction of the Lagrangian label.  We write
\begin{equation} \C_{\res}^{\infty}(P, u^{*}(T M)) \end{equation}
for this space.

Let $\sT^{\Pil}$ denote the space of infinitesimal deformations of the universal perturbation data $\bfK^{\Pil}$ and $\bfJ^{\Pil}$, and let $\sT_{P}$ denote the analogous space for fixed $P$.  By taking the direct sum of the linearized $\bar \partial $ operator and the projection above, we obtain a map
\begin{equation} \sT_{P} \oplus T_{P} \Pil \oplus  T_{u} \C^{\infty} \Pil( \vp) \to \C^{\infty}(P, \Omega^{0,1}(P) \otimes u^{*} T M) \oplus T_{q}Q.\end{equation}
Note that the surjectivity of this operator implies the desired result by the Sard-Smale theorem, and the infinitesimal version of Lemma \ref{lem:extend_pert_data_caps} which shows that $\sT^{\Pil}$ surjects onto $\sT_{P}$.

The surjectivity of this operator itself follows from the surjectivity of the linearized $\bar \partial$ operator restricted to the kernel of the projection to $T_{P} \Pil \oplus T_{q} Q$:
\[ \sT_{P}\oplus  \C_{\res}^{\infty}(P, u^{*}(T M)) \to \C^{\infty}(P, \Omega^{0,1}(P) \otimes u^{*} T M).\]
This fact, however, is a standard result in pseudo-holomorphic curve theory.  For closed holomorphic curves, this appears in the textbook \cite{MS}*{Proposition 3.2.1} (the Lagrangian case can also be found, for example, in \cite{seidel-les}*{Lemma 2.4}).
\end{proof}

The next result is the key to our proof of the gluing theorem for mushrooms:  Let $u$ be an element of a moduli space of caps $\Pil(\vp[1])$ of dimension $k$ such that the evaluation map
\begin{equation} \Pil(\vp[1]) \to Q \end{equation}
is an immersion near $u$; let $U$ denote a neighbourhood in $\Pil(\vp[1])$ where this property holds.  Let $v$ be a rigid element of a moduli space of discs $\cR(p, \vp[2])$ such that $p$ agrees with the $j$\th input of $\vp[1]$.  Denoting by $\vp$ the result of replacing this input in $\vp[1]$ with $\vp[2]$, we write $(u,v)$ for the singular cap in $\Pilbar(\vp)$ obtained by attaching $v$ to $u$.  The next result is standard gluing theorem in pseudo-holomorphic curve theory:

\begin{lem} \label{lem:gluing_caps}
There exists an embedding
\begin{equation}[0,1) \times  U \to  \Pilbar(\vp) \end{equation}
whose image is a neighbourhood of $(u,v)$ in  $\Pilbar(\vp)$, and whose restriction to $\{\epsilon \} \times U$ is smooth for any value of $\epsilon$.  Moreover, the composition of the evaluation map
\begin{equation}  \Pilbar(\vp) \to Q  \end{equation}
with the inclusion of $\{ \epsilon \} \times U$ converges in the $C^1$ topology to its value at $\epsilon = 0$.
\end{lem}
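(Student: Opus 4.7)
The plan is to implement the standard Newton--Picard gluing scheme for pseudo-holomorphic curves, with the twist that the outgoing Lagrangian boundary condition $\sL_q$ itself varies in a family parametrised by $q \in Q$. I expect this latter feature to be the main source of subtlety.

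First, I would build a pre-gluing family. For each $u' \in U$ and each small $\epsilon \geq 0$, form a surface $P_{u',\epsilon}$ by inserting a neck of length $R(\epsilon) \sim -\log \epsilon$ at the $j$-th strip-like end of the cap underlying $u'$ and attaching to it the disc underlying $v$. Using cutoff functions supported on this neck, patch together $u'$, the chord $p$, and $v$ into a pre-glued map $w_{u',\epsilon} \co P_{u',\epsilon} \to M$. Since both $u'$ and $v$ converge exponentially to $p$ on the respective strip-like ends, $\|\bar\partial w_{u',\epsilon}\|_{L^p}$ tends to zero uniformly in $u'$ as $\epsilon \to 0$.

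Next I need uniformly bounded right inverses for the linearised perturbed $\bar\partial$ operator at $w_{u',\epsilon}$. The regularity of $u$ (together with the immersion hypothesis on the evaluation map $U \to Q$) and the rigidity of $v$ provide right inverses at each of the two components. The crucial point, which I expect to be the main analytical obstacle, is that the linearised operator at $u'$ must act on the augmented space $T_{u'}\C^\infty\Pil(\vp[1])$ of Lemma \ref{cap_transversality}, which incorporates the variation of the outgoing Lagrangian through the projection \eqref{projection_restricted_tangent}. The immersion hypothesis is precisely what guarantees that this projection, restricted to the kernel of the linearised $\bar\partial$, is surjective onto $T_q Q$; this is what allows the $T_q Q$-component of a deformation to be controlled uniformly as $\epsilon$ varies. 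Once this is set up, the standard pre-gluing-of-right-inverses construction produces a right inverse $Q_{u',\epsilon}$ at $w_{u',\epsilon}$ with norm bounded independently of $(u',\epsilon)$.

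A Picard iteration, exactly as in \cite{MS}*{Chapter~10}, then yields for each $(u',\epsilon)$ with $\epsilon$ small a unique section $\xi_{u',\epsilon}$ in a complement of $\ker Q_{u',\epsilon}$ such that $\exp_{w_{u',\epsilon}}(\xi_{u',\epsilon})$ lies in $\Pilbar(\vp)$; this defines the required embedding $[0,1) \times U \to \Pilbar(\vp)$. Smoothness in $u'$ at each fixed $\epsilon > 0$ is inherited from the smooth dependence of the pre-gluing and Picard iteration on $u'$; injectivity is built into the construction, and surjectivity onto a Gromov neighbourhood of $(u,v)$ is the usual consequence of Gromov compactness combined with uniqueness in the implicit function theorem. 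Finally, for the $C^1$-convergence of the evaluation map as $\epsilon \to 0$, I would observe that the correction $\xi_{u',\epsilon}$ decays exponentially away from the gluing neck, so its $C^1$-norm on the outgoing boundary arc, which is disjoint from the neck and along which the evaluation to $Q$ is measured, tends to zero in the limit. This is exactly the $C^1$-convergence statement claimed.
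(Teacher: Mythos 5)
Your pregluing, right-inverse, and Picard-iteration outline matches the paper's approach, and your identification of the varying outgoing Lagrangian and the immersion hypothesis as the key structural input is on target. But your argument for the $C^1$-convergence of the evaluation map misses the point of what needs to be proved, and this is the only part of the Lemma that actually differs from the completely routine gluing theorem.

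The claim is that the map $U \to Q$, $u' \mapsto \ev\bigl(\exp_{w_{u',\epsilon}}(\xi_{u',\epsilon})\bigr)$, converges to $u' \mapsto \ev(u')$ in the $C^1$ topology on $U$; the derivative in question is in the $u'$ direction, not along the domain surface. Your argument — that $\xi_{u',\epsilon}$ decays exponentially away from the neck and hence has small $C^1$ norm on the outgoing arc — controls the \emph{size} of the correction for each fixed $u'$, which only yields $C^0$ convergence of the evaluation map on $U$. It says nothing about how $\xi_{u',\epsilon}$ varies with $u'$, which is what is needed: one must show that $D_{u'}\xi_{u',\epsilon}$ also decays with $\epsilon$. (Moreover, the ``$C^1$ norm on the outgoing arc'' is not really the relevant quantity: the evaluation map collapses the whole outgoing arc to the single leaf label $q$, so it has no interesting derivative along the arc.) The paper handles this by using, at each $u'$ near $u$, a right inverse obtained by parallel-transporting the one constructed at $u\#_\epsilon v$, and then invoking a refined form of the implicit function theorem in a chart centered at $u\#_\epsilon v$ that tracks the $C^1$ dependence of the Newton--Picard solution on the base point (citing Floer's Proposition~24 and a detailed version in \cite{abouzaid-exotic}*{Proposition~5.4}). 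You would need to incorporate some version of this step; as written, the last sentence of your proof proves a weaker statement.
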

\begin{proof}[Sketch of proof:]
The key ingredient in the construction of a gluing map is a choice of right inverse.  Let $q$ denote the image of $u$ under the evaluation map, and fix a neighbourhood of $q$ with a diffeomorphism to a neighbourhood of the origin in $\bR^{n}$ such that the image of $U$ under the evaluation map is the intersection of this subset with $\bR^{k}$. 

We consider the codimension $n-k$ subspace
\begin{equation} \C_{\bR^{n-k}}^{\infty}(P, u^{*}(T M)) \subset \C_{\res}^{\infty}(P, u^{*}(T M)) \end{equation}
consisting of vector fields whose image under the projection map to $TQ$ and the local diffeomorphism with $\bR^{n}$ lie in the copy of $\bR^{n-k}$ complementary to the image of the moduli space  under evaluation.  By construction, the restriction of the parametrized (perturbed) $\dbar$ operator to
\begin{equation}  T_{P} \Pil \oplus  \C_{\bR^{n-k}}^{\infty}(P, u^{*}(T M)) \end{equation}
is an isomorphism.

This choice of right inverse determines a gluing construction as follows: since $v$ is rigid its $\dbar$ operator is an isomorphism.  For every real number $\epsilon$, one constructs a ``preglued'' map $u \#_{\epsilon} v$ whose source is a surface obtained by removing parts of the strip-like ends of the domains of $u$ and $v$ and attaching the complements.  This preglued map comes equipped with a right inverse also obtained by gluing the right inverses on both side.  Applying the implicit function theorem to this right inverse, we conclude the existence of a unique vector field $\sol_{u\#_{\epsilon} v}$ along the pre-glued curve, in the image of the right inverse and of sufficiently small norm,  whose image is an honest solution to the $\dbar$ problem being studied.  The standard argument in holomorphic curve theory shows that the norm of  $\sol_{u\#_{\epsilon} v}$ (in any reasonable space of functions) decays as $\epsilon$ goes to $0$.

The same construction can be done for curves $u'$ near $u$, and it is easy to see that the preglued curve varies smoothly with $u$ and $\epsilon$, from which $C^0$ convergence of the evaluation map follows. In order to prove $C^1$ convergence, we must therefore prove that the vector field  $\sol_{u\#_{\epsilon} v}$ decays with $\epsilon$ in the $C^1$ topology as a function of $u$.  The easiest way to do this is to use a right inverse at $u' \#_{\epsilon} v$ obtained by parallel transport of the one constructed by pregluing at $u \#_{\epsilon} v$.  Decay in the $C^1$ topology follows from a standard version of the implicit function theorem applied to a chart for the relevant space of functions which is ``centered at $u \#_{\epsilon} v$'' (see \cite{floer}*{Proposition 24} for the version of the implicit function theorem one should use, and \cite{abouzaid-exotic}*{Proposition 5.4} for a detailed version of this argument in the setting of bubbling of holomorphic discs).
\end{proof}

\subsubsection{Transversality for mushrooms} \label{sec:transverse_mushrooms}
In this section, we prove the existence of universal perturbation data for which the moduli spaces of mushrooms are smooth manifolds.  We adopt the strategy used  for shrubs and consider the disjoint union over sequences with $\sum_{j} d_j = d$:
\begin{equation}  \label{eq:extended_mushrooms} \coprod \Shrubbarext_{2r} \times \Pilbar_{d_1} \times \cdots \times \Pilbar_{d_r} .\end{equation} 
Recall that we constructed $\Champbar_{d}$ starting with a disjoint union of polyhedra each admitting a natural embedding:
\begin{equation} \label{eq:inclusion_mushrooms_into_extended} \Shrubbar_{2r} \times \Pilbar_{d_1} \times \cdots \times \Pilbar_{d_r} \subset \Shrubbarext_{2r}  \times  \Pilbar_{d_1} \times \cdots \times \Pilbar_{d_r}.  \end{equation} 

The moduli space $\Champbar_{d}$ was defined as a quotient under an equivalence relation which naturally extends to the cells of the polyhedra appearing in the disjoint union \eqref{eq:extended_mushrooms}.  The cellular complex $\Champext_{d}$ obtained by taking the quotient under this relation is not a manifold (it doesn't even have pure dimension).

Let us consider the maps which make the moduli space of mushrooms into a bimodule over (two different models) of the Stasheff associahedron:  A short verification shows that \eqref{eq:apply_m_k_source_mushrooms} is compatible with the inclusion \eqref{eq:inclusion_mushrooms_into_extended}, and a natural map
\begin{equation} \label{eq:replace_m_k_source}  \Champext_{d_1} \times \Rbar_{d_2} \to  \Champext_{d_1 + d_2 -1} \end{equation} 
obtained by attaching the disc the one of the $d_1$ inputs of an element of  $\Champext_{d_1}$.  On the other hand, and as in Equation \eqref{eq:structure_map_extended_shrubs}, the module map \eqref{eq:apply_m_k_target_mushrooms} can be recovered from the inclusion \eqref{eq:inclusion_mushrooms_into_extended} and a repeated application of the map
\begin{equation} \label{eq:replace_m_k_target}
\Stasheffbar_{d_1}  \times \Champext_{d_2}  \to \Champext_{d_1+d_2-1}
\end{equation}

All these constructions have analogues for labels, so we shall consider a sequence $\vI$ of such.  As all the components of \eqref{eq:extended_mushrooms} are smooth manifolds with corners, it makes sense to speak of smooth functions on $\Champext_{\vI}$. We define a universal $Q$-parametrised perturbation datum in $\Champext$ to be a $Q$-parametrised perturbation datum on all elements of  $\Champext_{\vI}$ which is smooth on each component, invariant under the equivalence relation $\sim$, and compatible with the perturbation data  $(\bfX^{\Stasheff},\bfK^{\cR}, \bfJ^{\cR})$ and the analogue of the maps \eqref{eq:replace_m_k_source} and \eqref{eq:replace_m_k_target} in the presence of labels.

\begin{defin}
A family of perturbation data on $\Champbar_{\vI}$ is {\bf smooth} if it is the restriction of a universal perturbation datum on $\Champext_{\vI}$.
\end{defin}

The analogue of Lemma \ref{lem:inductive_construction_pert_data_stasheff} holds.  As the proof is the same, we shall not repeat it.  The only subtle point in ensuring that Condition \eqref{eq:varying_floer_datum} holds.  However, the inductive construction starts by choosing perturbation data on the simplest space, which is the moduli of shrubs with $1$ input.  Definition \ref{lem:base_case} dictates how this initial choice should be made, and  Condition \eqref{eq:varying_floer_datum} is compatible with the requirement that the perturbation data be chosen consistently. 
\begin{lem} \label{lem:surjective_restriction_pert_data}
The restriction map from the space of $Q$-parametrised perturbation data on $\Champext_{\vI}$ to the space of perturbation data for a fixed mushroom is surjective. \noproof
\end{lem}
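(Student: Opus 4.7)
The plan is to prove surjectivity by adapting the inductive construction of Lemma \ref{lem:inductive_construction_pert_data_stasheff} to the setting of mushrooms. Fix the mushroom $C_{0}\in\Champext_{\vI[0]}$ together with its prescribed perturbation datum $(\bfX_{0},\bfK_{0},\bfJ_{0})$, and let $\cC_{0}$ denote the (unique) cell of $\coprod \Shrubbarext_{2r}\times\Pilbar_{d_{1}}\times\cdots\times\Pilbar_{d_{r}}$ containing a representative of $C_{0}$; this cell is a smooth manifold with corners. I would proceed by induction on the number of inputs $|\vI|$ and, within a fixed $|\vI|$, by codimension of strata of $\Champext_{\vI}$.

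The base case is the unique mushroom with one input, where Definition \ref{lem:base_case} prescribes the structure of the perturbation datum and any choice consistent with that structure is admissible. For the inductive step, assume universal perturbation data have been constructed on $\Champext_{\vI[']}$ for $|\vI[']|<|\vI|$ and on all strata of $\Champext_{\vI}$ of higher codimension. Consistency with the operadic maps \eqref{eq:replace_m_k_source} and \eqref{eq:replace_m_k_target} and invariance under the equivalence relation $\sim$ then determine the perturbation datum on the boundary of every top-dimensional cell. Since each such cell is a manifold with corners, extending a smooth function from its boundary to its interior while also prescribing the value at an interior point is a routine cutoff-function construction. I would handle $\cC_{0}$ first, arranging the extension to agree with $(\bfX_{0},\bfK_{0},\bfJ_{0})$ at $C_{0}$, and then extend independently over the remaining cells in each codimension to complete the induction.

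The step I expect to be the main obstacle is verifying that the value prescribed at $C_{0}$ can be chosen independently of all the previously constructed data, i.e.\ that no consistency condition secretly forces a constraint at $C_{0}$. This is precisely the content of Remark \ref{rem:free_choice}: the equivalence relation $\sim$ identifies only boundary strata arising from collapsed incoming stem edges, so a sufficiently small neighborhood of $C_{0}$ inside $\cC_{0}$ maps injectively to $\Champext_{\vI}$, and the prescribed datum at $C_{0}$ propagates to no other point of the quotient. The remaining conditions are easily preserved: Condition \eqref{eq:varying_floer_datum} depends only on the length of the incoming stem edge adjacent to each cap, and can therefore be enforced uniformly throughout the interpolation without conflicting with any interior choice; compatibility with the already-fixed data $(\bfX^{\Stasheff},\bfK^{\cR},\bfJ^{\cR})$ is forced only on the boundary strata, which have already been accounted for by the inductive hypothesis. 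With these observations in hand, the usual partition-of-unity argument on each manifold-with-corners cell yields the desired universal perturbation datum extending the prescribed value at $C_{0}$.
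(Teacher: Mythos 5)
Your proposal is correct and follows essentially the same inductive strategy the paper has in mind: the paper simply remarks that "the analogue of Lemma~\ref{lem:inductive_construction_pert_data_stasheff} holds, as the proof is the same," and singles out Condition~\eqref{eq:varying_floer_datum} as the one subtle point, resolved by the base case of Definition~\ref{lem:base_case}. You reconstruct that inductive argument (induction on $|\vI|$ and on codimension of cells, extension over manifolds-with-corners via cutoff functions, then free modification at an interior point) and you correctly bring in Remark~\ref{rem:free_choice} to explain why the equivalence relation $\sim$ imposes no hidden constraint at the chosen mushroom; this is exactly the point the paper expects the reader to have internalised from that remark, so it is a welcome expansion rather than a departure.
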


The transversality result we desire now follows by applying the idea of Lemma \ref{cap_transversality}.  We start by studying interior strata of $\Champ_{d}$:

The strata of $\Champ_{d}$ are labelled by the following data: a partition $d = d_1 + \ldots + d_r$, an increasing sequence of integers $1 = r_0 < r_1 < \ldots < r_{e} = r$, and a tree $S$ with $e$ inputs.  Indeed, write $\sD$ for this data, and first consider the stratum of $\Shrub_{\sD} \subset \Shrub_{r}$ consisting of trees homeomorphic to $S$ for which two vertices $v_i$ and $v_j$ are collapsed to the same point whenever $i$ and $j$ lie between consecutive integers $r_l$ and $r_{l+1}$, but are otherwise distinct.  The stratum of $\Champ_{d}$ consisting of mushrooms with $r$ smooth caps with respectively $(d_1, \ldots, d_r)$ inputs and a stem in $\Shrub_{\sD}$ will be denoted by $\Champ_{\sD}$; it is a product
\begin{equation} \Shrub_{\sD} \times \Pil_{d_1} \times \cdots \times \Pil_{d_r}.   \end{equation}

Define
\[ \C^{\infty} \Pil_{(d_1, \ldots, d_r)} (\vp) \equiv \C^{\infty} \Pil(\vp[d_1]_{0}) \times \cdots \times   \C^\infty \Pil(\vp[d]_{d - d_{r}}) ,\]
to be the space of smooth $r$-tuples of maps from the disc to $M$ satisfying the appropriate boundary conditions for the caps of a mushroom map in $\Champ(x_0, \vp)$ whose underlying mushroom has $r$ smooth caps with respectively $(d_1, \ldots, d_r)$ inputs.  Further, define
\[ \Pil_{\sD} (\vp)  \subset \C^{\infty} \Pil_{(d_1, \ldots, d_r)} (\vp)  \times \Champ_{\sD} \]
to be the vanishing locus of the $\bar{\partial}$ equation \eqref{perturbed_d_bar} perturbed according to the chosen perturbation datum (which now depends on the modulus in $\Champ_{\sD}$ rather than depending only on the corresponding moduli space of caps as explained in Remark \ref{rem:free_choice}).

We have a map 
\begin{equation}   \Pil_{\sD} (\vp)  \to \Champ_{\sD} \times Q^r \end{equation}
which, on the second factor, agrees with the projection of the ``outgoing arc" of each cap.

Let us now consider the stem. Generalising Equation \eqref{eq:fibre_product_picture_shrubs}, and  writing $\vQ = (Q_{i_0}, \ldots, Q_{i_r})$ we define
\begin{equation}  \Shrub_{\sD}(x_0, \vQ) \end{equation} 
to be the inverse image of the diagonal under a map 
\begin{equation} \Champ_{\sD} \times W^{s}(x_0) \times Q_{i_0,i^{2}_0} \times \cdots \times Q_{i^{r}_{0},i_d} \to Q^{r+1}, \end{equation}
whose factors are a composition of the natural inclusion with a diffeomorphism of the target obtained by integrating the flow of a descending arc of $S$ as in Equation \eqref{eq:fibre_product_picture_shrubs} (the flow depends on the perturbation  data, which are parametrised by the first factor).  Here, as in Equation \eqref{eq:naive_mushroom}, the sequence $\{ i^{k}_j \}_{j=0}^{d_k}$ records the boundary conditions on $\Pil(\vp[k])$.   Observe that  the moduli space of shrubs with a fixed source, output $x_0$, and arbitrary inputs is naturally an open submanifold of the ascending manifold of $x_0$.    The reader should also keep in mind that while there are $r$ evaluation maps, we are potentially considering a tree in which an arbitrary number of these maps necessarily agree because the corresponding incoming vertices lie on a subset of the shrub which is collapsed to a point.

The next result is simply a reinterpretation of the definition of a mushroom map (compare also Equation \eqref{eq:naive_mushroom}):

\begin{lem} \label{lem:mushroom_fibre_product}
The subset of $\Champ(x_0,\vp)$ consisting of maps whose source is a mushroom in  $\Champ_{\sD}$ is naturally diffeomorphic to the fibre product
\begin{equation} \label{eq:moduli_space_shrubs_fibre} \Champ_{\sD}(x_0,\vp) = \Shrub_{\sD}(x_0; \vQ)   \times_{\Champ_{\sD} \times Q^r }\Pil_{\sD} (\vp) .\end{equation} \noproof
\end{lem}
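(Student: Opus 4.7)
The proof is essentially a matter of unwinding definitions, so I would organize it around producing a bijection between the two sides and then checking that it respects the natural smooth structures. First I would construct a map from the fibre product to $\Champ_{\sD}(x_0,\vp)$. An element of $\Shrub_{\sD}(x_0;\vQ) \times_{\Champ_{\sD} \times Q^r} \Pil_{\sD}(\vp)$ consists of a perturbed gradient tree $\Psi_S$ satisfying the prescribed boundary conditions on $\vQ$ and output $x_0$, together with an $r$-tuple of cap maps $(\Psi_1, \ldots, \Psi_r)$, the two pieces being defined over the same underlying stem in $\Champ_{\sD}$ (this is the $\Champ_{\sD}$-component of the fibre product) and having matching projections to $Q^r$ (this is the $Q^r$-component). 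The latter matching is precisely the geometric condition that the outgoing arc of the $j$\th cap $\Psi_j$ lie on the leaf $\sL_{q_j}$ with $q_j = \Psi_S(v_j)$, where $v_j$ is the $j$\th incoming leaf of the stem. Therefore the data glue to a genuine mushroom map in $\Champ(x_0,\vp)$ whose underlying mushroom lies in $\Champ_{\sD}$.

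The inverse map is equally tautological: given $\Psi = (\Psi_S, \Psi_1, \ldots, \Psi_r) \in \Champ_{\sD}(x_0,\vp)$, the removal of singularities (invoked just after the definition of a singular mushroom map) forces the outgoing marked points of $P_j(C)$ to map to the appropriate lifts of $\Psi_S(v_j)$, and the outgoing arc to lie on $\sL_{\Psi_S(v_j)}$. Setting $q_j = \Psi_S(v_j)$ and recalling the definition of $\Shrub_{\sD}(x_0;\vQ)$ as a fibre product over the diagonal in $Q^{r+1}$, we recover a point on the shrub side; the cap maps define the point on the $\Pil_{\sD}(\vp)$ side; the matching of $Q^r$-evaluations is automatic. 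These two constructions are manifestly inverse to each other.

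Finally, I would verify that this bijection is a diffeomorphism. The smooth structure on $\Champ_{\sD}(x_0,\vp)$ near a point is controlled by the evaluation maps of its constituent caps and stem, exactly as in \eqref{eq:fibre_product_picture_shrubs}: Lemma \ref{cap_transversality} asserts that the cap projection $\Pil(\vp[k]) \to Q$ is smooth on the relevant strata, and the parallel statement for the map $\Shrub_{\sD} \to Q^{r+1}$ follows from the construction in Section \ref{app:stasheff}. Smoothness of the bijection is then a formality from the universal property of transverse fibre products.

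The only subtlety, which is really conceptual rather than technical, is that $\Pil_{\sD}(\vp)$ has been set up so that the perturbation datum on each cap depends on the full modulus in $\Champ_{\sD}$ rather than only on the modulus of that individual cap. This is the flexibility highlighted in Remark \ref{rem:free_choice}, and it is exactly what the fibre-product description captures: the $\Champ_{\sD}$-factor in the base of the fibre product lets the cap perturbation ``see'' the stem. No additional obstacle arises beyond keeping track of this dependence, and with the smoothness of the pertubation datum guaranteed by the inductive construction following Definition \ref{lem:universal_datum_mushrooms}, the identification is smooth and the lemma follows.
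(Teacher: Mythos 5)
Your proof is correct and takes exactly the approach the paper intends: the lemma is marked \noproof and preceded by the sentence "The next result is simply a reinterpretation of the definition of a mushroom map," so the bijection-by-unwinding-definitions you exhibit, together with the observation that both sides carry compatible smooth structures as fibre products (with the $\Champ_{\sD}$-factor in the base encoding the dependence of cap perturbation data on the stem modulus, as in Remark \ref{rem:free_choice}), is the whole content.
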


We complete this section by proving a Lemma from Section \ref{sec:mushroom_maps}

\begin{proof}[Proof of Lemma \ref{lem:mushrooms_expected_dimension}]

Since every component of a singular mushroom map consists of mushroom maps, discs perturbed with respect to the datum $(\bfK^{\cR}, \bfJ^{\cR})$, and gradient Stasheff trees perturbed with respect to $\bfX^{\Stasheff}$, the fact that we have chosen regular perturbation data to define $\Morse(Q_1,Q_2)$ and $\Fuk(Q_1,Q_2)$ implies that Lemma \ref{lem:mushrooms_expected_dimension} would follow once we show that all strata of $\Champ(x_0, \vp)$ have the expected dimension.  The previous Lemma reduces this to showing that for a generic choice of perturbation data, the fibre product \eqref{eq:moduli_space_shrubs_fibre} is transverse.   Of course, this will follow from an application of Sard-Smale.  To  simplify matters, we observe that the forgetful map 
\[ \Shrub_{\sD}(x_0; \vQ) \to \Shrub_{\sD} \]
is a submersion.  In particular, our result would follow once we show that the evaluation map
\begin{equation}\Pil_{\sD} (\vp) \to   \Pil_{d_1} \times \cdots \times \Pil_{d_r}\times Q^r  \end{equation}
can be made transverse to an arbitrary submanifold in the target by changing the perturbation datum.

Fix a mushroom map $\Psi$ with source $C$, and let $\cT_{\vP(C)}$ denote the space of infinitesimal deformations of perturbation data for the caps of $C$.  We introduce the notation
\begin{equation} \C_{\res}^{\infty}(\vP, \Psi^{*}(T M)) = \bigoplus_{1 \leq j \leq r}   \C_{\res}^{\infty}(P_{j}(C), \Psi_{j}^{*}(T M)).  \end{equation}
Using the surjectivity of the restriction map from the space of universal perturbation data to the space of perturbation data for the caps of $C$, we have reduced  the problem to showing that the evaluation map at the level of tangent spaces
\begin{equation}  \cT_{\vP(C)} \times  \C_{\res}^{\infty}(\vP, \Psi^{*}(T M)) \to T Q^r \end{equation}
is surjective.  Splitting the source and the target as a direct sum of $r$ factors, we find that this follows immediately from the proof of Lemma  \ref{cap_transversality} given in the previous section.

To prove transversality even in the base case of mushrooms with $1$ input, it is important to note that surjectivity holds even if we are allowing only perturbations of the almost complex structure (but which are of course allowed to vary along the caps, this is is exactly the setting in which transversality is proved in \cite{MS}).
\end{proof}

\subsubsection{Mushroom gluing: the rigid case} \label{sec:gluing_mushrooms}

Let us now consider the situation considered in Proposition \ref{prop:1_dim_moduli_space_manifold}: a critical point $x_0$ and a sequence of time-$1$ Hamiltonian chords $\vp$ such that the moduli space 
\begin{equation} \Champ(x_0, \vp) \end{equation}
has virtual dimension $1$.  By Lemma \ref{lem:mushrooms_expected_dimension} which is proved in the previous section, all strata of $\Champbar(x_0, \vp)$ are empty except those of virtual dimension $0$ and $1$.  In order to prove that $\Champbar(x_0, \vp)$ is a $1$ dimensional manifold with boundary, we must prove that points in the $0$ dimensional strata have neighbourhoods, in the Gromov topology, which are homeomorphic to an interval.

There are four distinct geometric situations to consider.
\begin{enumerate}
\item \label{item:tree_break} $\psi$ is a rigid gradient tree in $\Stasheff(x_0, \vx)$, and $\vPsi = ( \Psi_1, \ldots, \Psi_{k})$ is a sequence of rigid mushrooms with outputs $x_j$.  We write $(\vp[1], \ldots, \vp[r])$ for the collections of inputs of all mushrooms $\Psi_{j}$, and $\vp$ for the sequence obtained by concatenating the sequences $\vp[j]$. The pair $(\psi, \vPsi)$ defines an element of $\Champbar(x_0, \vp)$ obtained geometrically by attaching $\Psi_{j}$ to the $j$\th input of $\psi$.  Consider the stratum $\Champbar_{\sD}$ determined by the datum of $d_j = |\vp[j]|$ and a tree $S$ with exactly $r$ inputs.  If the perturbation data on the caps were independent of the stem, the result would follow from Lemma \ref{lem:gluing_shrubs}, and the observation that the moduli space we are studying has an alternative description as
\begin{equation} \Shrubbar(x_0, \Pil(\vp[1], \ldots, \vp[r]))  \end{equation}
where $\Pil(\vp[1], \ldots, \vp[r])$ is the product of the moduli spaces $\Pil(\vp[j])$, each which defines a chain on $Q$ by the evaluation map at the outgoing segment.  Using only Lemma \ref{lem:gluing_shrubs} as analytic input one can prove the general case by imitating the strategy for passing from Equation \eqref{eq:naive_mushroom} to Lemma \ref{lem:mushroom_fibre_product} (i.e. work fiberwise over  $\Champbar_{\sD}$).

\item \label{item:disc_bubble} $v$ is a rigid holomorphic curve in $\cR(p_0, \vp[2])$, and $\Psi$ a rigid mushroom in $\Champbar(x_0, \vp[1])$, such that $p_0$ agrees with the $k$\th input in $\vp[1]$.  In this case, we focus attention on the cap with inputs $\vp[1,j]$ wherein this $k$\th input lies.  Writing $\Psi_{j}$ for the map of caps along which we are attaching $v$, we assume again for simplicity that the perturbation datum on that cap is independent of the remaining caps and the stem, and use Lemma \ref{lem:gluing_caps} to find a neighbourhood of $(\Psi_{j},v)$ in $\Pilbar(\vp[1,j] \cup_{p_0} \vp[2])$ foliated by smooth codimension $1$ submanifolds which are the images of the boundary for gluing parameter $\epsilon$.   Note that the immersive assumption in Lemma \ref{lem:gluing_caps} is implied by the rigidity and regularity requirements on $\Psi$.  In addition,  Lemma \ref{lem:mushroom_fibre_product} gives a description of $\Champbar(x_0, \vp \#_{p_0} \vp[2])$ as a fibre product one of whose factors is  $\Pilbar(\vp[1] \#_{p_0} \vp[2])$.   The assumption that $\Champbar(x_0, \vp)$ is rigid and the $C^1$ convergence  of the evaluation maps to $Q$ on the image of the gluing map  implies that there is a unique solution for each such sufficiently small parameter.  The general case can be proved by working fibrewise over $\Champ_{\sD}$, or, alternatively,  by noticing that the smoothness in the choice of perturbation data implies that there is a neighbourhood of the singular mushroom we are considering where all moduli spaces together with their evaluation maps are perturbed by $C^{\infty}$ small amounts from moduli spaces with perturbation data independent of the stem.  After such a perturbation, a unique ray converging to the boundary persists.

\item \label{item:mushroom_collapse} $\Psi$ is a mushroom with $r$ caps, such that two successive incoming vertices (say $k-1$ and $k$) of the stem are collapsed to one point, which is moreover rigid among such mushrooms.  In particular, the boundary conditions on the outgoing arcs of the two caps $P_{k-1}(C)$ and $P_{k}(C)$ agree.  In this case, there is a unique $1$-parameter family of mushrooms $\Psi^{t}$, whose domains have stems for which the distance between the incoming vertex labelled $k$ and $k+1$ does not vanish, and which converge to $\Psi$.  The argument given in Lemma \ref{prop:cup_product_agrees_perturbation}  proves this.

\item \label{item:cap_break} In the same situation as before, we glue $\Psi_{P_{k-1}}$ and  $\Psi_{P_{k}}$ to obtain a cap whose inputs are given by concatenating those for  $\Psi_{P_{k-1}}$ and  $\Psi_{P_{k}}$.  Lemma  \ref{lem:gluing_caps} again applies here to show that this moduli space of caps is foliated by the images of the gluing map for sufficiently small gluing parameter $\epsilon$, and that the evaluation map converges in the $C^1$ topology as $\epsilon$ goes to $0$.  The argument of Case (2) yields the desired conclusion.
\end{enumerate}

\section{Orientations and signs} \label{sec:orientations}
\subsection{The differential in Morse and Floer theory}
We begin by describing the differential in the Morse complex $CM^{*}(f_{i,j})$.  We define  $\Stasheff(x_0,x_1)$ to be the fibre product of  $W^{s}(x_0) $ and $W^{u}(x_1)$ over their common inclusion in  $Q_{i,j}$, yielding a natural short exact sequence of vector spaces
\begin{equation}  0 \to T \Stasheff(x_0,x_1) \to T (W^{s}(x_0) \times W^{u}(x_1) ) \to T Q_{i,j} \to 0, \end{equation}
and hence an isomorphism of top exterior powers
\begin{equation} \label{eq:isomorphism_rigid_line}   \lambda(\Stasheff(x_0,x_1 )) \otimes \lambda(Q_{i,j}) \cong   \lambda(W^{s}(x_0)) \otimes \lambda(W^{u}(x_1)) . \end{equation} 
 Whenever $\deg(x_0) = \deg(x_1) +1$, the intersection between $W^{s}(x_0)$ and $W^{u}(x_1)$ is a segment, mapping to $\bR$ immersively under $f_{i,j}$, so the tangent space of 
 \begin{equation}  \psi \in  \Stasheff(x_0,x_1) \end{equation}
is canonically oriented, with our conventions dictating that a rigid gradient flow line is given the opposite orientation of the one induced by projection to $\bR$ (i.e. the orientation we pick identifies $x_1$ with $-\infty$).  Using the decomposition of the tangent space of $Q_{i,j}$ at $x_1$, we have an isomorphism 
\begin{equation} \label{eq:decom_tangent_space_crit_points}
\lambda(Q_{i,j}) \cong \lambda(W^{s}(x_1)) \otimes \lambda(W^{u}(x_1)),
\end{equation}
which, together with the map in \eqref{eq:isomorphism_rigid_line} determines an isomorphism
\begin{equation} \lambda(W^{s}(x_1)) \cong  \lambda(W^{s}(x_0)), \end{equation} 
and hence an induced map on orientation lines
\begin{equation} \mu_{\psi}^{\M}\co |\ro_{x_1}| \to |\ro_{x_0}|. \end{equation}
The differential is defined to be the sum of all such terms multiplied by a global sign:
\begin{equation} \label{eq:formula_differential_morse}   \mu_{1}^{\M}([x_1]) = \sum_{\stackrel{\deg(x_0) = \deg(x_1) +1}{\psi \in \Stasheff(x_0,x_1)}} (-1)^{n}  \mu_{\psi}^{\M}([x_0])\end{equation}

\subsection{Orienting abstract moduli spaces}
As noted in our discussion of the Fukaya and Morse categories, the construction of $A_{\infty}$ categories from counts of rigid discs or trees requires first a choice of orientations on the abstract moduli spaces controlling these categories.
\subsubsection{Stasheff trees}
Let $T$ be a trivalent Stasheff tree with $d$ inputs.  Given an integer $k$, let $A_{k}$ denote the descending arc starting at the $k$\th incoming leaf, and let $b_{k}$ denote the vertex of $T$ where $A_{k}$ and $A_{k-1}$ meet.  We shall label the edges of this trivalent vertex $e_{k}^{\ell}$, $e_{k}^{r}$ and $e_{k}^{d}$ as in Figure \ref{fig:right_turn}.  

\begin{figure}[h] 

 \input{right-turn.pstex_t}
   \caption{}
   \label{fig:right_turn}
\end{figure}

We  order the (finite) edges of $T$ using the following inductive procedure:
\begin{enumerate}
\item Set $T_{2}$ to be the union of all the external edges of $T$
\item If $e_{k}^{r}$ lies in $T_{k}$, then define $e_{k} = e_{k}^{d}$.  Otherwise, set $e_{k} = e_{k}^{r}$.
\item Set $T_{k}$ to be the union of $T_{k-1}$ with $e_{k}$.
\end{enumerate}

\begin{lem}
If $j \neq k$, $e_{j} \neq e_{k}$.
\end{lem}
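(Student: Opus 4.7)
The plan is to localise the analysis at the vertex to which each selected edge is attached. First I would observe that if $T$ is rooted at the outgoing edge, then $b_k$ is the nearest common ancestor of leaves $k-1$ and $k$, and that the assignment $k\mapsto b_k$ gives a bijection between $\{2,\ldots,d\}$ and the set of internal vertices of $T$. Indeed, the ribbon structure partitions the leaves above any internal vertex $v$ into two consecutive blocks, with the higher-indexed block on the left, and $v$ is the nearest common ancestor of a consecutive pair $(k-1,k)$ precisely when $k$ equals the split index of $v$. In particular $b_3,\ldots,b_d$ are pairwise distinct, and by construction $e_k$ is an edge incident to $b_k$, being one of $e_k^r$ or $e_k^d$.

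Now suppose for contradiction that $e_j=e_k$ for some $j<k$. The common edge $e$ must have both $b_j$ and $b_k$ as endpoints, so these two vertices are adjacent in $T$ and $e$ joins them. I would split into two subcases according to which of $b_j,b_k$ lies closer to the outgoing edge. In the first subcase $b_j$ lies above $b_k$ (further from the output), so $e$ is the downward edge at $b_j$, i.e.\ $e=e_j^d$, and $e$ is an upward edge at $b_k$. Since leaves $j-1$ and $j$ lie in the subtree above $b_k$ reached via $e$ and both have index $\leq k-1$, this edge is the right-upward one, $e=e_k^r$. The equality $e_k=e_k^r$ then requires $e_k^r\notin T_{k-1}$, but $e=e_j$ was already added to $T_j\subseteq T_{k-1}$ at step $j$, a contradiction. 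In the second subcase $b_k$ lies above $b_j$, and the analogous analysis at $b_j$ shows that leaves $k-1$ and $k$, both of which lie above $b_j$ along $e$ and have index $\geq j$, force $e$ to be the left-upward edge $e_j^\ell$; but the procedure only ever selects $e_j^r$ or $e_j^d$, so $e_j\neq e$, again a contradiction.

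The argument is purely combinatorial, and the only real subtlety---the ``hard part'' such as it is---is recognising that the procedure's asymmetric treatment of the two upward edges at $b_k$ (testing $e_k^r$ but never $e_k^\ell$) is precisely what makes the two adjacency subcases fail simultaneously. This asymmetry mirrors the fact that at $b_k$ the lower-indexed descending arc $A_{k-1}$ enters from the right, so the procedure is running consistently with the ribbon orientation.
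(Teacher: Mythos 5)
Your proof is correct. The preliminary observation that $k\mapsto b_k$ is a bijection onto the internal vertices is justified correctly via the ribbon/split-index argument, and the two subcases (according to which of $b_j,b_k$ lies farther from the output) are handled cleanly. The argument uses the same underlying combinatorics as the paper's sketch --- namely, identifying which of $e^\ell,e^r,e^d$ the common edge is at each of its two endpoints by looking at the leaf indices it separates --- but packages it differently, and in my view more transparently. The paper proves the stronger statement $e_k\notin T_{k-1}$ by induction and splits into: (i) $e_k^d\in T_{k-1}$ forces $e_k^d$ to be the output edge, and (ii) ``a simple check'' that in this case $e_k^r\notin T_{k-1}$. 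Your argument never needs step (ii): by assuming $e_j=e_k$ and deriving a contradiction directly, the dangerous scenario in the paper's step (ii) --- that $e_k^r$ equals some $e_j$ with $e_j=e_j^d$ --- is killed at once by the observation that $e_j\in T_j\subseteq T_{k-1}$, which is incompatible with the procedure having selected $e_k=e_k^r$. Unpacking the paper's ``simple check'' directly (without your reformulation) would seem to require an infinite descent on the indices of the roots of the nested right subtrees, which is less pleasant.

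One minor simplification: in Case 1, the leaf-index reasoning used to conclude $e=e_k^r$ is not strictly needed. Since $e=e_k$ and the procedure only ever selects $e_k^r$ or $e_k^d$, while $e$ is an upward edge at $b_k$ (hence $e\neq e_k^d$), one gets $e=e_k^r$ immediately. The leaf-index argument is essential only in Case 2, where one must show that $e$ is the \emph{left}-upward edge at $b_j$, which is not automatic from the structure of the procedure.
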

\begin{proof}
We must prove that it is impossible for both $e_{k}^{d}$ and $e_{k}^{r}$ to lie in $T_{k-1}$.  The edge $e_{k}^{d}$ lies in $T_{k-1}$ only if it is the outgoing external edge.   In that case, a simple check shows that $e_{k}^{r}$ cannot lie in $T_{k-1}$.
\end{proof}

We define the orientation on $\Stasheff_{d}$ to be given by
\begin{equation}  \label{eq:orientation_stasheff} (-1)^{r(T)}  dt_{e_{3}} \wedge \cdots  \wedge dt_{e_{d}}\end{equation}
where $t_{e}$ is the length of $e$, and $r(T)$ is the number of integers between $3$ and $d$ for which $e_{k} = e_{k}^{r}$.
\begin{lem} \label{lem:orientation_stasheff_good}
The orientation given by formula \eqref{eq:orientation_stasheff} is independent of the topological type of $T$.
\end{lem}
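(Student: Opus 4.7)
The strategy is to reduce to checking invariance under a single ``associativity move,'' since any two trivalent ribbon trees with $d$ inputs can be connected by a sequence of such moves. Each move replaces one internal edge by its ``partner'' at a $4$-valent intermediate tree, and geometrically corresponds to passing between two adjacent top-dimensional cells of $\Stasheff_d$ across a codimension-$1$ wall (the locus where one internal edge has length zero). Thus it suffices to verify that formula \eqref{eq:orientation_stasheff} gives compatible orientations on either side of each such wall.

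Fix two adjacent trivalent trees $T, T'$ sharing a $4$-valent tree $T_0$, and let $f$ and $f'$ denote the respective collapsed edges. The remaining $d-3$ internal edges of $T$ and $T'$ are in canonical bijection via their common identification with the internal edges of $T_0$, and their lengths define common coordinates on a neighborhood of the wall. A direct local calculation, carried out in a small neighborhood of the $4$-valent vertex (where everything is modeled by two small ``Y''s being glued in two different planar configurations), shows that the Jacobian of the coordinate change from $(t_g, t_f)$ to $(t_g, t_{f'})$ across the wall is $-1$. Invariance of the orientation therefore reduces to showing $r(T) + r(T') \equiv 1 \pmod{2}$.

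To verify the latter, I would run the edge-ordering algorithm on $T$ and $T'$ in parallel. The crucial observation is that for any vertex $b_k$ not adjacent to the $4$-valent vertex $v$ of $T_0$, the local data determining the algorithm's choice at $b_k$ is identical in $T$ and $T'$ (after identification of edges through $T_0$). Consequently the only differences between the two orderings occur at the vertices adjacent to $v$, and the sign change can be computed from a purely local case analysis at $v$, depending on how the leaf indices are distributed among the four arms. The case analysis consists of finitely many configurations distinguished by the interaction of the descending arcs from the relevant leaves with the arms of $v$; in each, one verifies the parity directly from the definition.

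The main obstacle is this combinatorial case analysis. The decision rule ``if $e_{k}^{r} \in T_{k-1}$ then $\ldots$'' depends on the \emph{global} history of the algorithm, so one must argue that the dependence is actually localized near the move. A useful shortcut, and perhaps the cleanest way to present the proof, is by induction on $d$ using the operadic composition maps \eqref{eq:compose_stasheff}: since an associativity move in a composed tree corresponds to an associativity move in one of the factors (except for a few configurations localized in a small subtree), the inductive step reduces the verification to the pentagon $\Stasheff_4$, whose five top cells and five walls can be inspected by hand.
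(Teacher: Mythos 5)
Your high-level strategy — reduce to the codimension-$1$ walls (trees with a unique $4$-valent vertex), identify edges across the wall, observe that the ordering algorithm's output can only change near that vertex, and finish with a local case analysis — is the same route the paper takes; the paper's own proof simply notes that $e_j$ is unchanged whenever $b_j$ is not an endpoint of the collapsed edge and then leaves a two-case check (whether $e_k = e_k^r$) to the reader.

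However, the step ``the Jacobian of the coordinate change from $(t_g, t_f)$ to $(t_g, t_{f'})$ is $-1$, hence invariance reduces to $r(T)+r(T')\equiv 1\pmod 2$'' drops a term. The orientation is defined using the \emph{algorithm's} edge ordering $(e_3,\dots,e_d)$, not the ordering ``shared edges first, collapsed edge last.'' Passing between the two involves a permutation, and the collapsed edge need not sit at the same position in the two orderings. What one actually needs is
\begin{equation*}
r(T)+r(T')+\operatorname{sgn}(\pi_T)+\operatorname{sgn}(\pi_{T'})\equiv 1 \pmod 2,
\end{equation*}
where $\pi_T$ reorders $(e_3,\dots,e_d)$ so as to put the collapsed edge last. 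Your later remark that ``the sign change can be computed from a purely local case analysis'' implicitly re-absorbs this permutation contribution, but as stated the intermediate reduction is not correct; any careful write-up has to track where the collapsed edge appears in each of the two orderings, and this is precisely the content of the paper's two-case dichotomy ($e_k$ a ``right turn'' or not). The proposed inductive reduction to $\Stasheff_4$ is a nice alternative way of organizing the residual combinatorics and would in principle work, but the locality argument it rests on — that an associativity move in a composed tree is either localized in one factor or confined to a small subtree straddling the grafting point — still needs to be made precise, and it would need to track the same permutation signs.
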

\begin{proof}
It suffices to check that this is the case in a neighbourhood of the codimension $1$ walls.  As these  consist of trees with a unique $4$-valent vertex, we start with a trivalent tree, and pass through the wall by collapsing an edge $e$.  There is a natural way to identify the edges on trivalent trees on either side of the wall, and the reader may easily check that, under this identification, the edges $e_{j}$ are unchanged whenever $b_{k}$ is not one of the endpoints of $e$. There are two cases to consider, depending on whether $e_{k}$ agrees with $e_{k}^{r}$, whose verification is left to the reader.
\end{proof}
\begin{figure}[t] 
 \input{stasheff_breaking.pstex_t}
   \caption{}
   \label{fig:stasheff_breaking}
\end{figure}
We now consider the boundary stratum of $\Stasheffbar_{d}$ which is the image of the map
\begin{equation}  \Stasheffbar_{d_1} \times \Stasheffbar_{d_2} \to \Stasheffbar_{d} \end{equation}
given by attaching a tree with $d_2$ inputs to the $k+1$\st incoming leaf of a tree with $d_1$ inputs  (here, $d_1 + d_2 = d +1$).   Since we know that our choices of orientations are consistent throughout the moduli space, it suffices to compare these orientations in a special case.  The author found the case shown in Figure \ref{fig:stasheff_breaking} to be particularly convenient.  The resulting computation implies:
\begin{lem} \label{lem:boundary_orientation_stasheff}
The product orientation on
\begin{equation}  (-1, 0]\times \Stasheffbar_{d_1} \times \Stasheffbar_{d_2} \into \Stasheffbar_{d}  \end{equation}
differs from the boundary orientation by a sign of
\begin{equation}  (d_1 -k) d_2  + d_2 + k .\end{equation} 
\noproof
\end{lem}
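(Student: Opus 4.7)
My plan is to exploit Lemma \ref{lem:orientation_stasheff_good}, which already guarantees that the orientation \eqref{eq:orientation_stasheff} glues consistently across the top-dimensional strata. Since both the product and boundary orientations vary continuously on the codimension-$1$ face, it suffices to verify the claimed sign discrepancy at one convenient element of $\Stasheffbar_{d_1} \times \Stasheffbar_{d_2}$. Following the hint given by Figure \ref{fig:stasheff_breaking}, I will take the model in which $T_1$ is a right-leaning comb with $d_1$ inputs whose $(k+1)$\st incoming leaf is the attaching point, and $T_2$ is a right-leaning comb with $d_2$ inputs. The resulting glued tree $T$ is again a comb, which means the inductive edge-ordering procedure reduces to traversing the spine of $T$ in a predictable way.

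The first step is a direct identification of the edge orderings. Running the inductive procedure on $T_1$, $T_2$, and $T$ separately, I can list the finite internal edges $e_3^{T_j},\ldots,e_{d_j}^{T_j}$ and $e_3^T,\ldots,e_d^T$ and compute the invariants $r(T_1)$, $r(T_2)$, $r(T)$ exactly. The second step is to locate the collapsing edge $e^*$ of $T$ (the one whose length parametrises the normal $(-1,0]$-direction) inside the sequence $(e_3^T,\ldots,e_d^T)$: for the comb configuration in Figure \ref{fig:stasheff_breaking}, $e^*$ sits at a predictable position determined by $k$ and $d_2$. The third step is to compare the wedge
\[ dt_{e^*} \wedge \bigl( dt_{e_3^{T_1}} \wedge \cdots \wedge dt_{e_{d_1}^{T_1}} \bigr) \wedge \bigl( dt_{e_3^{T_2}} \wedge \cdots \wedge dt_{e_{d_2}^{T_2}} \bigr), \]
which represents (up to the sign $(-1)^{r(T_1)+r(T_2)}$) the product orientation on $(-1,0]\times\Stasheffbar_{d_1}\times\Stasheffbar_{d_2}$, against $(-1)^{r(T)}dt_{e_3^T}\wedge \cdots \wedge dt_{e_d^T}$, which is the restriction to the boundary of the orientation of $\Stasheffbar_d$. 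The discrepancy is a combination of two Koszul contributions, coming from (i) permuting $dt_{e^*}$ past the forms $dt_{e_j^T}$ that precede it in the $T$-ordering but follow it as the normal coordinate, and (ii) the change in the $r$-counts between $r(T)$ and $r(T_1)+r(T_2)$, which records how many edges of $T$ land in the right-branch of a fresh trivalent vertex compared to the corresponding situation inside $T_1$ or $T_2$.

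The final step is the reconciliation: after carrying out the comb computation carefully, the two contributions should combine to exactly $(d_1-k)d_2 + d_2 + k$. The summand $(d_1-k)d_2$ arises from the Koszul permutation of $dt_{e^*}$ past $d_2$ coordinates for each of the $d_1-k$ edges of $T_1$ positioned to the right of the grafting point; the $d_2$ arises from an additional reshuffling of the $T_2$-coordinates caused by the inductive traversal crossing the graft; and the $+k$ is the correction from the change in the right-turn counts, plus the conventional boundary-orientation sign (inward normal versus $-\partial/\partial t_{e^*}$). The step I expect to be the main obstacle is bookkeeping-heavy rather than conceptual: one must verify that the same formula $(d_1-k)d_2+d_2+k$ emerges regardless of the model tree, which in practice means being extremely careful with the inductive labelling across the two stable trees and the glued one. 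This is why the consistency result of Lemma \ref{lem:orientation_stasheff_good} is essential—it lets us suppress the general combinatorial verification and trust the single computation above.
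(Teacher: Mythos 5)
Your strategy matches the paper's exactly: the lemma is stated with \verb|\noproof|, and the sentence immediately preceding it says precisely what you propose—invoke Lemma \ref{lem:orientation_stasheff_good} to reduce the comparison to a single model configuration, namely the comb shown in Figure \ref{fig:stasheff_breaking}, and compute there. Your closing attribution of the three summands $(d_1-k)d_2$, $d_2$, and $k$ to specific Koszul shuffles and right-turn counts is a plausibility sketch rather than a verified bookkeeping (and should be treated as such—e.g.\ permuting a single $dt_{e^*}$ past a block of forms contributes the block's length, not a product, so the factor $d_2$ in the first term needs a more careful source), but the reduction, the choice of model tree, and the overall method are exactly what the paper intends.
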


\subsubsection{Shrubs}

The moduli space of shrubs can be most easily oriented using the map
\begin{equation}  \Shrub_{d} \to \Stasheff_{d} \end{equation}
obtained by replacing every incoming leaf by one which has infinite length.  The fibre is isomorphic to a half-infinite line, which we trivialize using the distance from the outgoing edge to an incoming leaf.  We write $\ell$ for this function, and use the orientation
\begin{equation}  (-1)^{r(T)}  d\ell \wedge dt_{e_{3}} \wedge \cdots \wedge dt_{e_{d}}\end{equation}
for shrubs of homeomorphism type $T$ (note that the homeomorphism type is preserved by projection to $\Stasheff_d$).  Lemma \ref{lem:orientation_stasheff_good} implies immediately that this formula gives orientations on the top strata of $\Shrub_{d}$ compatible along their common boundary facets.  It remains to understand the inductive behaviour at the boundary of $\Shrubbar_{d}$:

\begin{figure} 

 \input{shrub_breaking.pstex_t}
   \caption{}
   \label{fig:shrub_breaking}
\end{figure}

\begin{lem} \label{lem:boundary_orientation_shrubs}
The product orientation on 
\begin{equation}  \label{eq:boundary_shrub_break}  (-1, 0] \times \Stasheffbar_{r} \times \Shrubbar_{d_1} \times \cdots \times \Shrubbar_{d_r} \to \Shrubbar_{d}  \end{equation}
differs from the boundary orientation by a sign of
\begin{equation} \label{eq:boundary_shrub_break_sign}  1 + \sum_{k=1}^{r} (r-k) (d_k+1) \end{equation}
while the orientation on the stratum
\begin{equation}  \label{eq:boundary_shrub_collapse} (-1, 0] \times \Shrubbar_{d-1} \to \Shrubbar_{d}  \end{equation}
whose image consists of shrubs whose $k$\th and $k-1$\st inputs are collapsed differs by a sign of
\begin{equation} k +1\end{equation}
from its orientation as a subset of the boundary.
\end{lem}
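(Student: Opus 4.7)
The plan is to follow the blueprint of Lemma \ref{lem:boundary_orientation_stasheff}. As with Stasheff trees (Lemma \ref{lem:orientation_stasheff_good}), the orientation on top-dimensional strata of $\Shrubbar_d$ extends consistently across codimension-one walls between distinct topological types, so the sign difference between the product and boundary orientations is a locally constant function on each boundary stratum. It therefore suffices to verify each assertion in a single conveniently chosen topological configuration.

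For the breaking stratum \eqref{eq:boundary_shrub_break}, I would work with the right-comb configuration of Figure \ref{fig:shrub_breaking}: $T_0$ a right comb with $r$ inputs and each $S_k$ a right-comb shrub with $d_k$ inputs. In this case $r(T_0) = 0$ and $r(S_k) = 0$, and the edge-ordering algorithm can be followed edge by edge on each factor. Choosing as coordinates the internal edge lengths $t_{e_3^{T_0}}, \ldots, t_{e_r^{T_0}}$ of $T_0$ together with $(\ell_k, t_{e_3^{S_k}}, \ldots, t_{e_{d_k}^{S_k}})$ for each shrub, the product orientation is an explicit wedge product, and the (smoothed) gluing parameter provides the normal direction, oriented so that moving inward corresponds to finite gluing. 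After applying the edge-ordering algorithm to the glued singular shrub and identifying $d\ell$ on the glued shrub with $d\ell_1$ plus terms in edges appearing later in the canonical ordering, the sign comparison reduces to a permutation count. The contribution $(r-k)(d_k+1)$ arises from pushing the block $(d\ell_k, t_{e_3^{S_k}}, \ldots, t_{e_{d_k}^{S_k}})$ past the blocks corresponding to shrubs $k+1, \ldots, r$ and past the associated edges of $T_0$, while the constant $1$ records the parity of moving the normal coordinate to the front of the wedge product.

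For the collapse stratum \eqref{eq:boundary_shrub_collapse}, I would similarly choose a right-comb representative $S'$ in $\Shrubbar_{d-1}$ and trace the edge-ordering algorithm through the map $\vee_k$. The normal coordinate is the common length $\epsilon$ of the two new incoming leaves created at the $k$\th vertex (forced to be equal by the equidistance condition), and the remaining coordinates on the stratum match those of $\Shrubbar_{d-1}$. Identifying the new internal edge created by $\vee_k$ and its position in the ordered list $e_3, \ldots, e_d$, and accounting for the change in $r(T)$ between $S'$ and $\vee_k(S')$ together with moving $d\epsilon$ to the head of the wedge product, produces the claimed sign $k + 1$.

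The main obstacle lies in the breaking case, specifically in expressing the coordinate $d\ell$ on the glued shrub correctly in terms of the natural coordinates on the boundary stratum. Because $\ell$ is a cumulative distance from the outgoing edge to any incoming leaf, and because the equidistance condition becomes trivial on the boundary (where the gluing edges are infinite) but imposes constraints once the gluing parameter is finite, careful bookkeeping is required to relate $d\ell$ to the $d\ell_k$'s and to the edge lengths of $T_0$. Once this identification is done modulo the later-appearing coordinates in the canonical ordering, the remaining sign count reduces to the permutation arithmetic that yields $1 + \sum_{k=1}^r (r-k)(d_k+1)$, and Lemma \ref{lem:orientation_stasheff_good} then extends the result to all topological types of $T_0$ and the $S_k$.
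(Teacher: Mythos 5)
Your proposal follows essentially the same approach as the paper's proof: invoke the consistency of orientations across topological types (the argument of Lemma~\ref{lem:orientation_stasheff_good}) to reduce to a single representative, then compute on a right-comb configuration — precisely the singular shrub of Figure~\ref{fig:shrub_breaking} for the breaking case, with the collapse case treated by a similar but easier direct check. The paper's own proof is even terser than yours (it merely declares the collapse case ``essentially trivial'' and points to Figure~\ref{fig:shrub_breaking} for the other), so your elaboration of the coordinate bookkeeping and the role of the equidistance constraint in identifying the normal direction is a faithful account of the intended computation.
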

\begin{proof}
Given the fact that the orientations are consistent throughout the moduli space, it suffices to compute the difference in sign orientation for one topological type of trees.  The case \eqref{eq:boundary_shrub_collapse} is essentially trivial, while that of \eqref{eq:boundary_shrub_break} can be conveniently checked using the singular shrub appearing in Figure \ref{fig:shrub_breaking}. 
\end{proof}

\subsection{Orienting moduli spaces of shrubs} \label{sec:orienting_maps}
We shall fix the following conventions:  whenever a space $X$ is given as a transverse fibre product of $X_1$ and $X_2$ over $X_0$, the short exact sequence
\begin{equation} 0 \to TX \to TX_1 \oplus TX_2 \to T X_0 \to 0 \end{equation}
determines a fixed isomorphism
\begin{equation} \lambda(X) \otimes \lambda(X_0) \cong \lambda(X_1) \otimes \lambda(X_2) .\end{equation}

For example, the isomorphism of Equation \eqref{eq:iso_det_bunldes_stasheff}  comes from the choices
\begin{equation}
\begin{array}{ll}
 X = \Stasheff(x_0, \vx)   & X_0 = Q^{d+1}  \\
X_1= Q  & X_{2} = \Stasheff_{d} \times W^{s}(x_0) \times W^{u}(x_1) \times \cdots \times  W^{u}(x_d) ,\end{array}
\end{equation}
 while Equation \eqref{eq:isomorphism_orientation_bundles_shrubs} comes from the choices
\begin{equation}
\begin{array}{ll}
 X = \Shrub(x_0, \vsig)   & X_0 = Q^{d+1}  \\
X_1= Q  & X_{2} = \Stasheff_{d} \times W^{s}(x_0) \times \check{\sigma}_{1} \times \cdots \times \check{\sigma}_{d}.\end{array}
\end{equation}

We first use this fixed choice in order to prove the validity of the signs in Proposition \ref{prop:cup_product_agrees_perturbation}.  More precisely, we shall prove the validity of the signs in the right hand side, leaving the left hand side to the reader.  For clarity of the argument, we introduce a change of sign of
\begin{equation} \label{eq:change_orient_1_dim_shrub} (n+1) ( r + \dagger(\vsig)) \end{equation}
which we use to change the orientation of all moduli spaces of shrubs with inputs $\vsig$ and dimension $1$.

The starting point is an element of $\Shrubbar(x_0, \vsig)$ which consists of a rigid gradient tree in $\Stasheff(x_0, \vx)$ with $\vx = (x_1, \ldots, x_r)$, and a collection of $r$ rigid shrubs in $\Shrub(x_r, \vsig[r])$, where $(\vsig[1], \ldots, \vsig[r])$ is a partition of $\vsig$. The orientation of the rigid tree is given by Equation \eqref{eq:iso_det_bunldes_stasheff} which can be simplified to:
\begin{equation*}  \lambda(Q^{r}) \cong \lambda( \Stasheff_{r}) \otimes \lambda(W^{s}(x_0)) \otimes \lambda (W^{u}(\vx))  \end{equation*}
by using the splitting of the inclusion of the tangent space of the $r+1$-fold diagonal whose kernel are the last $r$ factors.  Next, we decompose the $k$\th copy of $\lambda(Q^{r})$ on the left as a tensor product:
\begin{equation*} \lambda(W^{s}(x_{k})) \otimes  \lambda(W^{u}(x_{k})),  \end{equation*}
and rearrange the terms in the left hand side as
\begin{equation*}   \lambda(W^{s}(\vx)) \otimes \lambda(W^{u}(\vx)), \end{equation*}
where the first factor is the tensor product of $\lambda(W^{s}(x_{k})$ in their usual order, introducing a Koszul sign of 
\begin{equation} \label{eq:first_sign_shrubs_break}  \sum_{k=1}^{r} \deg(x_{k})  \left( n (k-1) + \sum_{j = 1}^{k-1} \deg(x_j) \right).  \end{equation}
Cancelling $\lambda(W^{u}(\vx))$ from both sides of the equation, we arrive at the isomorphism
\begin{equation} \label{eq:simplified_tree} \lambda(W^{s}(\vx)) \cong \lambda( \Stasheff_{r}) \otimes \lambda(W^{s}(x_0)) . \end{equation}

Rigid shrubs  are oriented via Equation \eqref{eq:iso_det_bunldes_stasheff}, which can be simplified (using the same splitting of the inclusion of the tangent space of the diagonal) as
\begin{equation*}  \lambda(Q^{d_k}) \cong  \lambda(\Shrub_{d_k}) \otimes \lambda(W^{s}(x_k)) \otimes \lambda( \vsig[k]) .\end{equation*}
Introducing a sign of
\begin{equation} \label{eq:second_sign_shrubs_break} \sum_{k=1}^{r}  \deg(x_k) ( d_k +1 ) \end{equation}
we obtain an isomorphism
\begin{equation} \label{eq:simplified_shrub} \lambda^{-1}(W^{s}(x_k)) \otimes  \lambda(Q^{d_k}) \cong  \lambda(\Shrub_{d_k}) \otimes \lambda( \vsig[k]) .\end{equation}

The product orientation of the singular shrub we are considering is given by taking the tensor product of Equation \eqref{eq:simplified_tree} with a copy of Equation \eqref{eq:simplified_shrub} for each shrub:
\begin{multline*}   \lambda(W^{s}(\vx)) \otimes  \lambda^{-1}(W^{s}(x_1)) \otimes  \lambda(Q^{d_1}) \otimes \cdots \otimes  \lambda^{-1}(W^{s}(x_r)) \otimes  \lambda(Q^{d_r})  \cong  \\
\lambda( \Stasheff_{r}) \otimes \lambda(W^{s}(x_0)) \otimes  \lambda(\Shrub_{d_1}) \otimes \lambda( \vsig[1]) \otimes \cdots \otimes  \lambda(\Shrub_{d_r}) \otimes \lambda( \vsig[r]).  \end{multline*}

We shall compare this with the induced orientation as a boundary.  On the left hand side, we permute factors so that we arrive at
\begin{equation*} \left( \bigotimes_{k=1}^{r}  \lambda^{-1}(W^{s}(x_k)) \otimes  \lambda(W^{s}(x_k)) \right)  \otimes \lambda(Q^{d}) \end{equation*}
which introduces a sign of
\begin{equation}  \label{eq:third_sign_shrubs_break} \sum_{k=1}^{r} \left( \deg(x_{k}) \left( n(d_1 + \cdots + d_{k-1}) +  \sum_{j=k}^{r} \deg(x_{j}) \right)  \right). \end{equation}
 
On the right hand side, we rearrange the terms so that they appear in the order
\begin{equation*} \lambda(\Stasheff_{r}) \otimes \lambda(\Shrubbar_{d_1}) \otimes  \cdots \otimes \lambda(\Shrubbar_{d_r}) \otimes \lambda(W^{s}(x_0)) \otimes \lambda( \vsig) .\end{equation*} The Koszul signs  amount to:
\begin{equation*}  \sum_{k=1}^{r} (d_k+1)\left(\deg(x_0)  + \sum_{j=1}^{k-1} (\deg(\vsig[j]) + n d_j) \right) ,\end{equation*}
which we prefer to rewrite as
\begin{equation}  \label{eq:fourth_sign_shrubs_break} (r+ d) \deg(x_0) + \sum_{j=1}^{k-1} nd_j(d_k+1) +  \sum_{k=1}^{r} \deg(\vsig[k]) \left(r-k-1+ \sum_{j = k+1}^{r} d_j \right).\end{equation}
The difference between the boundary and product orientations is given by the sum of Equations \eqref{eq:first_sign_shrubs_break},  \eqref{eq:second_sign_shrubs_break}, \eqref{eq:third_sign_shrubs_break}, and \eqref{eq:fourth_sign_shrubs_break}. The first three give
 \begin{equation*} \sum_{k=1}^{r} \deg(x_{k})  {\Big{(}} (k+1)(n+1) + k + d_k + n(d_1 + \cdots + d_{k-1}) +  r + \deg(x_0)   {\Big{)}}. \end{equation*}
The term $\deg(x_k) (k+1) (n+1)$ cancels with a term appearing in the definition of the Morse category, which introduces an additional sign of $(n+1)r$ . This can be see by writing
\begin{equation*} \deg(x_0)  + \sum_{k=1}^{r} k \deg(x_k) = r + \sum_{k=1}^{r} (k-1) \deg(x_k) \end{equation*}
using the fact that the gradient tree with inputs $x_k$ and output $x_0$ is rigid.  Incorporating these signs coming from the Morse category yields
\begin{equation*}   (n+1) r +   \sum_{k=1}^{r} (\deg(\vsig[k])+ d_k +1)  \left( 1 + k + d_k  + n(d_1 + \cdots + d_{k-1})   \right), \end{equation*}
and the sum with \eqref{eq:fourth_sign_shrubs_break} equals
\begin{multline*}  (r+ d) \deg(x_0) + (n+1) r +   \sum_{k=1}^{r} (d_k+1) (d_k+1+ k) \\
+  \sum_{k=1}^{r} \deg(\vsig[k]) \left( r + d + (n+1) \sum_{j =1}^{k-1} d_j  \right).
\end{multline*}

There are additional signs coming from the definition of the functor $\cF$ (see Equation \eqref{eq:functor_simp_morse_formula}), and the change in orientations  of $1$-dimensional moduli spaces given by Equation \eqref{eq:change_orient_1_dim_shrub}.    Together, these terms contribute
\begin{equation} (n+1) \left(r+  \sum_{k=1}^{r} \left( \deg(\sigma_{k}) \sum_{j = 1}^{k-1} d_k \right) \right), \end{equation}
after which we use the fact that $\sum \deg(\vsig)= \deg(x_0) + d$ to simplify the total remaining sign to
\begin{equation*}  (r+d)d + \sum_{k=1}^{r} (d_k+1) (1+d_k + k) =  (r+1)(d+r) + \sum_{k=1}^{r} (d_k+1) (1+ k) = \sum_{k=1}^{r} (d_k+1) (r+ k) . \end{equation*}
Adding the sign coming from Equation \eqref{eq:boundary_shrub_break_sign}, we conclude that the boundary and product orientations are exactly opposite with the convention we have chosen.

\subsection{Orientations for mushroom maps} \label{sec:orienting_mush_maps}
Throughout this section, we shall work with the simplified setup in which moduli spaces of mushroom maps are given by the fibre product in \eqref{eq:naive_mushroom}.  More precisely, the moduli space of mushrooms $\Champ(x_0, \vp)$ is stratified with top strata labelled by a choice of a partition of $\vp$ into sequences $(\vp[1], \ldots, \vp[r])$. The stratum determined by this data is the moduli space of shrubs with $r$ inputs lying on the images of the projection maps from the moduli spaces of caps $\{ \Pil(\vp[k]) \}_{k=1}^{r}$, and is therefore given by a fibre product as in the previous section with
\begin{equation}
\begin{array}{ll}
 X = \Shrub(x_0,\Pil(\vp[1]), \ldots , \Pil(\vp[r]) )  & X_0 = Q^{r+1}  \\
X_1= Q  & X_{2} = \Shrub_{r} \times W^{s}(x_0) \times \Pil(\vp[1]) \times \cdots \times \Pil(\vp[r]) \end{array}
\end{equation}
Every such mushroom $\Psi$ therefore determines an isomorphism
\begin{multline} 
\label{eq:orientation_iso_shrub}
\lambda(\Champ(x_0, \vp)) \otimes \lambda( Q^{r+1} )  \cong \\ \lambda(Q) \otimes \lambda(\Shrub_{r}) \otimes \lambda(W^{s}(x_0)) \times \lambda(\Pil(\vp[1])) \times \cdots \times \lambda(\Pil(\vp[r])) .\end{multline}

The description of $\lambda(\Pil(p_1)$ given in Equation \eqref{eq:orientation_cap_one_input} extends to an isomorphism
\begin{equation} \label{eq:orientation_caps}  \lambda(\Pil(\vp[k]))  \cong \lambda(\Pil_{d_k}) \otimes \ro^{\vee}_{p_1^{k}} \otimes \cdots \otimes  \ro^{\vee}_{p_{d_k}^{k}} \otimes \lambda(Q) .\end{equation}

When $\Psi$ is rigid, our choice of orientations on $\Shrub_{r}$, on $\cR_{d_k+1}$ (using the conventions from \cite{seidel-book}) and the identification $\Pil_{d_k} \cong  \cR_{d_k+1} $  fixed in Remark \ref{caps_are_discs}, therefore determines a unique homomorphism
\begin{equation} \ro_{p_1} \otimes \cdots \otimes  \ro_{p_d} \to \ro_{x_0}  .\end{equation} 

We write $\cF^{\Psi}$ for the induced map on orientation bundles.  In order to get the correct sign, we first introduce the notation
\begin{equation} \dagger(\vPil) \equiv \sum_{j=1}^{r} k \deg( \Pil(\vp[k])) = \sum_{j=1}^{k} \left( 1 - d_k + \sum_{\ell=1}^{d_{k}} \deg(\vp[k]) \right), \end{equation}
where $\deg(\vp[k])$ is the sum of the degrees of the elements of $\vp[k]$.   The sign used to define $\cG$ is given by 
\begin{equation} \label{eq:twisting_orient_mushroom} \ddagger(\Psi) = (n+1) \dagger(\vPil) + \sum_{k=1}^{r}  \dagger(\vp[k]) \end{equation}

Having fixed the signs in the functor $\cF$, we have the necessary ingredients to prove Proposition \ref{caps_are_discs}.  The proof considers the $4$ types of boundary strata discussed in Section \ref{sec:gluing_mushrooms}, and shows that each such stratum contributes with the correct sign to Equation \eqref{a_oo-floer-morse},  after taking into account the sign \eqref{eq:twisting_orient_mushroom}, as well as the signs that go into the construction of $\Fuk(Q_1,Q_2)$ and $\Morse(Q_1,Q_2)$.  More precisely, it is enough to show that, after taking these artificial signs into account, the difference between the boundary and product orientation on each stratum of $\Champbar(x_0, \vp)$ is given by the sign claimed in the $A_{\infty}$ equation \eqref{eq:simp_to_morse_A_infty_holds}, up to a universal constant, independent of the stratum (but that is allowed to depend on $\vp$).

The proof is a tedious computation, with the cases \eqref{item:tree_break} and \eqref{item:disc_bubble}  from Section \ref{sec:gluing_mushrooms} respectively following from the computations performed in \cite{seidel-book} and the previous section.  Indeed, whenever a rigid disc bubbles off the $k$\th cap, the only affected term in Equation \eqref{eq:twisting_orient_mushroom} is $\dagger(\vp[k])$.  This is exactly the sign convention for holomorphic discs we have used in constructing the Fukaya category, see \cite{seidel-book}*{Equation (12.24)}.   Similarly, when the stem breaks into a gradient tree together with several shrubs, only the term   $(n+1) \dagger(\vPil)$ changes.   The integer $\deg(\Pil(\vp[k])$ agrees with the codimension of $\Pil(\vp[k])$ as a chain on $Q$, so these are exactly the conventions used in the previous section while orienting moduli space of shrubs.

Therefore, we shall only discuss cases \eqref{item:mushroom_collapse} and \eqref{item:cap_break}, which correspond to terms which are supposed to cancel pairwise.  In terms of orientations, we must therefore show that, whenever given a singular mushroom map lying in such an ``internal'' boundary, there is no difference between the orientations obtained by considering it as a boundary stratum of the two components described in cases \eqref{item:mushroom_collapse} and \eqref{item:cap_break}.  As in the previous section, it is convenient to change the orientation of the $1$-dimensional moduli spaces as well.  The sign we use is given by
\begin{equation} \label{eq:twisting_orient_mushroom_1_dim}  (n+1)\left( r + \dagger(\vPil) \right) + \sum_{k=1}^{r}  \left( 1 + \dagger(\vp[k]) + d_{k} \deg( \vp[k]) \right) \end{equation}

We will start with case \eqref{item:mushroom_collapse}, for which the orientation is induced by the isomorphism of Equation \eqref{eq:orientation_iso_shrub}.  For simplicity, we begin by choosing a splitting of the inclusion of the tangent space $(r+1)$-diagonal of $Q$, corresponding to the inclusion of all but the first factor.  Introducing a sign of $n$, we can simplify to an isomorphism
\begin{equation} \label{eq:simplified_iso_shrubs} \lambda(\Champ(x_0, \vp)) \otimes \lambda(Q^{r}) \cong \lambda(\Shrub_{r}) \otimes \lambda(W^{s}(x_0)) \otimes \lambda( \Pil(\vp[1])) \otimes \cdots \otimes \lambda (\Pil(\vp[r])) .\end{equation}
The next step is to use the isomorphism \eqref{eq:orientation_caps} to decompose  $\lambda(\Pil(\vp[k-1]))$ into its constituent factors, then cancel the resulting copy of $\lambda(Q)$ with its matching factor on the right hand side (this is the $k-1$\st factor starting at the left).  The resulting Koszul sign is
\begin{multline} \label{eq:mushroom_internal_boundary_first_sign}  n (k +1) + n \left(r+1 + \deg(x_0) + n(k-2) +  \sum_{j=1}^{k-1}\deg(\Pil(\vp[j]))   \right) \\ = n \left(r + \deg(x_0) +  \sum_{j=1}^{k-1} ( 1 + d_j + \deg(\vp[j])) \right)  \end{multline}
Decomposing $\lambda(\Pil(\vp[k]))$ as well, we find the expression
\begin{equation*}  \lambda(\Pil_{d_{k-1}}) \otimes \ro^{\vee}_{p_{1}^{k-1}} \otimes \cdots \otimes \ro^{\vee}_{p_{d_{k-1}}^{k-1}} \otimes  \lambda(\Pil_{d_{k}}) \otimes \ro^{\vee}_{p_{1}^{k}} \otimes \cdots \otimes  \ro^{\vee}_{p_{d_{k}}^{k}} \otimes \lambda(Q)  \end{equation*}
in the right hand side.  Using the identification of $\Pil_{d}$ with $\cR_{d+1}$ discussed in Remark \eqref{caps_are_discs}, and Lemma \ref{lem:boundary_orientation_stasheff} we may replace this by
\begin{equation*}  \lambda(\bR) \otimes  \lambda(\Pil(\vp[k-1] \cup \vp[k])) \end{equation*}
at the cost of a sign given by
\begin{equation}  \label{eq:mushroom_internal_boundary_second_sign} d_{k-1} +  (d_k+1) \deg(\vp[k-1]). \end{equation}
Note that in Lemma \ref{lem:boundary_orientation_stasheff}, $d_1$ plays the role of $d_{k-1}+1$, $d_2$ that of $d_{k}+1$, and $k$ is our $d_{k-1}$ (making for incompatible notations).  We now move the normal co-vector $\lambda(\bR)$ to the beginning of the right hand side, using Lemma \ref{lem:boundary_orientation_shrubs} to cancel with the normal vector of the inclusion of $\Shrub_{r-1}$ into $\Shrub_{r}$ in which the $k$\th and $k-1$\st incoming leaves are collapsed.  The resulting sign is:
\begin{equation}\label{eq:mushroom_internal_boundary_third_sign} k + r + \deg(x_0) + \sum_{j=1}^{k-2}\left( 1 + d_j + \deg(\vp[j]) +n \right)   \end{equation}
The final output of this procedure is an isomorphism
\begin{multline}   \lambda(\Champ(x_0, \vp)) \otimes \lambda(Q^{r-1}) \cong \\ \lambda(\Shrub_{r-1}) \otimes \lambda(W^{s}(x_0)) \otimes \lambda( \Pil(\vp[1])) \otimes \cdots  \otimes \lambda(\Pil(\vp[k-1] \cup \vp[k]))  \otimes \cdots \otimes \lambda (\Pil(\vp[r])) , \end{multline} 
which is exactly the analogue of the isomorphism of Equation \eqref{eq:simplified_iso_shrubs} for the case \eqref{item:cap_break}.    In particular, the difference between the orientations of a singular shrub as a boundary in cases \eqref{item:cap_break} and \eqref{item:mushroom_collapse} is given by the sum of the expressions appearing in Equations \eqref{eq:mushroom_internal_boundary_first_sign}, \eqref{eq:mushroom_internal_boundary_second_sign}, and \eqref{eq:mushroom_internal_boundary_third_sign}:
\begin{equation}  \label{eq:sign_difference_internal_boundary} 1  +  d_k \deg(\vp[k-1])  + (n+1) \left(k + r + \deg(x_0) +  \sum_{j=1}^{k-1} ( 1 + d_j + \deg(\vp[j])) \right)\end{equation}
We now consider the effect of the sign changes encoded in Equation \eqref{eq:twisting_orient_mushroom_1_dim}.  First, we note that the difference between $\dagger(\vPil)$ on either side is given by
\begin{equation} (k-1) +  \sum_{j = k}^{r} \deg(\Pil(\vp[j])) , \end{equation}
from which we conclude that the second term changes by
\begin{equation} \label{eq:sign_contribution_caps_to_mushroom} (n+1)\left( k + \sum_{j = k}^{r} (1 + d_j + \deg(\vp[j]) ) \right) \end{equation}
Since the moduli space we are studying is $1$-dimensional, we have an equality (modulo $2$)
\begin{equation} \deg(x_0) = d + \sum_{j=1}^{r} \deg(\vp[j]) ,  \end{equation}
which readily implies that \eqref{eq:sign_contribution_caps_to_mushroom} cancels the third term of Equation \eqref{eq:sign_difference_internal_boundary}.  The reader may easily check that the remaining terms of Equation \eqref{eq:twisting_orient_mushroom_1_dim} exactly cancel with the first two terms of Equation \eqref{eq:sign_difference_internal_boundary}, completing our argument.

\appendix

\section{The case of clean intersections} \label{sec:case-clean-inters}
In this section, we shall show that our construction, which so far yields Theorem \ref{thm:general_result} whenever $B$ is a point, can be extended to the general case.

The starting point is the observation that, whenever two Lagrangian manifolds $Q_1$ and $Q_2$ meet cleanly along a submanifold $B$, we have a perfect pairing on the normal bundles $ N_{Q_1} B$ and $ N_{Q_2}B $; in particular these normal bundles are dual, hence isomorphic as unoriented vector bundles.  Conversely,  given smooth embeddings of a compact manifold $B$ into $Q_1$ and $Q_2$ with normal bundles isomorphic to a given vector bundle $N B$, the choice of such an isomorphism determines a plumbing of $T^*Q_1$ and $T^*Q_2$ along a neighbourhood of $B$.

Explicitly, we fix a Riemannian metric on $B$ and on the vector bundle $N B  $, which induces a metric on the total space.   We may assume that we are given a Riemannian metric on $Q_i$ whose restriction to a neighbourhood of $B$ is isometric to the disc bundle of radius $4$ in $NB$, with $U_i$ corresponding to the disc bundle of radius $2$.  By Weinstein's theorem, there is a neighbourhood of $B$
\begin{equation}
  \label{eq:M_B}
  M_{B} \subset T^*Q_{i}
\end{equation}
which is symplectomorphic to the disc bundle of a symplectic vector bundle over a neighbourhood of the zero section in $T^*B$; this vector bundle is isomorphic to the pullback of $N B \otimes \bC$ to the cotangent bundle of $B$.  By construction, $N B \otimes \bC$ carries two Lagrangian sub-bundles obtained as the inclusion of the real part $N B  $ and the imaginary part $\sqrt{-1} NB $.  The Weinstein chart may be chosen so that the image of $Q_{i}$ is the union (over all points in $B$) of the real parts.  We construct a manifold
\begin{equation}
M = D^* Q_1 \#_{D^*B} D^* Q_2,
\end{equation}
 by gluing disc cotangent bundles of $Q_1$ and $Q_2$ along this common subset after complex multiplication in $N B \otimes \bC$.  As this map is an exact symplectomorphism for the appropriate Liouville form, $M$ is again  an exact symplectic manifold with exact Lagrangian embeddings of $Q_1$ and $Q_2$ intersecting cleanly along $B$.  In fact, we can choose a Liouville form on $M$ which away from $B$ agrees with the canonical forms on $D^* Q_i$, and which near $B$ can be written as a sum
 \begin{equation} \label{eq:form_near_B}
   \theta_{T^*B} \oplus \theta_{NB}
 \end{equation}
 where the first form the pullback of the canonical form on the cotangent bundle, and the second defines a flow which radially rescales each fibre.  
\subsection{Fukaya category}  \label{add:generalise_floer}
The fibrewise squared norm of a point in $M_{B}$ (considered as a vector in either $ T^*Q_{1} $ or $ T^*Q_{2} $), defines a map
\begin{equation*}
  \rho \co M_{B} \to [0,+\infty)^{2}.
\end{equation*}
As in Equation \eqref{eq:inside_manifold_near_0}, we define
\begin{equation*}
  M_{B}^{\ins} = (\chi \circ \rho)^{-1}(\epsilon).
\end{equation*}
This is a smooth manifold whose boundary is transverse to the Liouville flow of \eqref{eq:form_near_B}, and which away from a neighbourhood of $B$ agrees with the $\epsilon$-neighbourhood of the zero section in $T^* Q_i$.  As before, we write
\begin{equation*}
  M^{\ins} \subset M
\end{equation*}
for the Liouville subdomain obtained by extending this submanifold to all of $M$.  Condition \eqref{eq:allowed_J-H} now makes sense in the clean intersection case as well, and the construction of the Fukaya category proceeds as in the transverse case.

The analogue of Condition \eqref{eq:normalisation_floer_complex} is the requirement that the Floer complexes $CF^{*}(Q_1,Q_2)$ vanish in degrees smaller than $0$ or larger than $\dim(B)$.  This is easy to achieve if $B$ is connected by shifting the grading on $Q_1$.  Otherwise, one needs to pick an appropriate complex volume form on the plumbing in order for this result to hold (using the fact that the space of complex volume forms is an affine space over $H^1(M, \bZ)$, and that $H^{0}(B, \bZ)$ controls the difference between this space and $H^1(Q_1, \bZ) \oplus H^1(Q_2, \bZ)$).

However, if we were considering the situation of Theorem \ref{thm:general_result}, the total space $W$ may not admit any complex volume form  for which the Floer cohomology groups are supported in such a degree.  In this case, there are integers $\{ m_{k} \}_{k=1}^{r}$ labelled by the components of $B$ such that the Floer cohomology $HF^{*}(Q_1,Q_2)$ splits as a direct sum, with the summand corresponding to $B^{k}$ supported in degrees lying in the interval $[m_k, m_k + \dim(B^{k})]$.  With appropriate choices of perturbation data, we may ensure that the Floer complexes also admit such a splitting.

\subsection{Morse category}

Let  $Q_{1,2}$ and $Q_{2,1}$ respectively denote the closures of $U_1$ and $U_2$ in $Q_1$ and $Q_2$; these are diffeomorphic to the disc bundle associated to $NB$, and the plumbing construction fixes such a diffeomorphism.  Note that the construction of the Morse category in Section \ref{sec:stasheff} did not make any assumption about $U_i$ (other than the fact that their closures are smooth manifolds with boundary).  In particular, the construction may be extended to the case of clean intersections.

However, whenever $Q_{1,2}$ is disconnected, we need a generalisation  which accounts for the fact Floer cochains are not necessarily supported in degree $[0, k]$ and $[n-k, n]$ for a fixed integer $k$.  We choose integers $\{ m_{k} \}_{k=1}^{r}$ indexed by the component of $Q_{1,2}$  and define morphism spaces
\begin{align} \label{eq:morphisms_morse_general}
Hom_{*}^{\M}(Q_1,Q_2) & = \bigoplus_{k=1}^{r} CM^{*}(f^{k}_{1,2})[m_k] \\\
Hom_{*}^{\M}(Q_2,Q_1) & = \bigoplus_{k=1}^{r} CM^{*}(f^{k}_{2,1})[-m_k]
\end{align}
where $f^{k}_{i,j}$ is the restriction of $f_{i,j}$ to the $k$\th component of $Q_{i,j}$ and we are using the standard notation from homological algebra for a shift in degree; the grading on $Hom_{*}^{\M}(Q_i,Q_i)$ is unchanged. The shifts in gradings do not affect the degree of the contribution of each gradient tree to the higher products, so that these groups are morphisms spaces in a $\bZ$-graded $A_{\infty}$ category.  To see this, assume for simplicity that we are considering a gradient tree with $i_0 = i_d$.  An argument analogous to the one given in the proof of Lemma \ref{lem:compactness_stasheff} implies that, in this case, every labelled gradient tree that contributes to the Morse category has the property that the numbers of external edges labelled by $f^{k}_{1,2}$ and  $f^{k}_{2,1}$ agree for every $1 \leq k \leq r$, so that the formula for expected dimension still holds.  The generalisation of this argument to the case $i_0 \neq  i_d$ is left to the reader.

\subsection{Simplicial category}
Let us now choose simplicial triangulations $\sQ_{i}$ of $Q_i$ and assume that there is a pair  of codimension $0$ submanifolds with boundary $N_{i} \subset Q_{i}$ which are the closures of top dimensional simplices, each properly including $U_i$ as a deformation retract.  We assume that the diffeomorphism $U_{1} \cong U_{2}$ determined by the plumbing construction extends to a cellular homeomorphism
\begin{equation*} N_{1} \cong N_{2} .\end{equation*} 

Writing $\sN$ for the induced subdivision on an abstract manifold $N$, and $\{ \sN^{k} \}_{k=1}^{r}$ for the triangulations on the components of this space, we define a category $\Simp_{B}(Q_1,Q_2)$ with objects $Q_1$ and $Q_2$, and morphism spaces
\begin{align*}
\Hom_{*}^{\S}(Q_i,Q_i) & = C^{*}(\sQ_i) \\
\Hom_{*}^{\S}(Q_1,Q_2) & = \bigoplus_{k=1}^{r} C^{*}(\sN^{k})[m_k] \\
\Hom_{*}^{\S}(Q_2, Q_1) & =\bigoplus_{k=1}^{r}  C^{*}(\sN^{k}, \partial \sN^{k})[-m_k],
\end{align*} 
The construction of composition maps and the proofs that the appropriate diagrams commute follows word by word the arguments given in the case $N_{i}$ is the closure of a simplex.  We simply observe that the composition maps still have degree $0$ because the different components of $\sN$ do not interact, and we have chosen changes in gradings which differ by a sign for the morphisms going in opposite directions.

\subsection{From simplicial to Morse}
The construction of a functor from the simplicial to the Morse category relied on an appropriate choice of dual subdivision.  In the clean intersection case, we choose $\check{\sQ}_{i}$, and write $\check{\sN}_{i}$ for those cells dual to interior cells of $\sN$,  replacing Condition  \eqref{cond:morse_simp_compatible} by the assumption that the cells of $\check{\sQ}_{i} $ intersect the boundary of $\bar{U}_{i}$ transversely, and that we have nested weak homotopy equivalences 
\begin{equation} \check{\sN}_{i} \subset \bar{U}_{i}  \subset N_{i} .\end{equation} 
Whenever $B$ is disconnected, we assume that the choices of shifts in grading in the simplicial and Morse categories agree.  All arguments extend with only minor modifications; e.g. in Lemma \ref{lem:compactness_shrubs}, we should require the appropriate neighbourhood of the boundary of $Q_{2,1}$ not to intersect any cell of $\sQ_{2,1}$ which is dual to a cell whose interior lies entirely in $N$.

\subsection{From Floer to Morse}
In order to construct a functor from the Fukaya category to the Morse category, we start by choosing the integers $m_{k}$ in Equation \eqref{eq:morphisms_morse_general},   whenever $B$ is disconnected, to agree with those coming from Floer theory as discussed in Addendum \ref{add:generalise_floer}.

The next step is to construct a foliation of $M_B$ which we shall extend to $M$.  Near each point $b \in B$,  $M_B$ splits as the product of the disc cotangent bundle of $B$ with a domain in $\bC^{n-\dim(B)}$.  We equip this region with the product of the cotangent foliation of $T^*B$ with the model foliation $\sL$ on $D^*L_1 \# D^* L_2$ (with $\bC^{n}$ replaced by $\bC^{n-\dim(B)}$) constructed in Section \ref{sec:floer_to_morse}.  

We claim that this foliation of  $\bC^{n-\dim(B)}$ can be chosen to be invariant under the action of the orthogonal group $O(n-\dim(B))$ embedded in  unitary group (in Figure \ref{fig:foliation_model_dim_1}, this corresponds to the invariance under rotation by $\pi$), which is the structure group for the normal bundle of $T^*B$ in $M$.  To see this, recall that we first constructed a foliation by affine Lagrangian planes in Equation \eqref{eq:linear_foliation}, which we then deformed, using a Hamiltonian flow.  Since only the norm of a point in $\bR^{n- \dim(B)}$ enters in the construction in Equation \eqref{eq:linear_foliation}, the corresponding foliation is tautologically invariant under $ O(n-\dim(B)) $; one can then easily check that the construction of the deforming Hamiltonian flow can be made invariant, say by averaging it over the action of the orthogonal group.  

We conclude that the foliations defined locally on neighbourhoods of points of $B$ are compatible with each other, and define a foliation on a neighbourhood of $B$ in $M$ which agrees with the cotangent bundle foliations of $T^*Q_i$ away from a neighbourhood of $B$, and hence extends to all of $M$.  To construct a functor in this case, note that Condition \eqref{eq:morse-floer-compatible-conditions} makes sense as stated, but that we cannot choose $f_{i,j}$  to be given by the squared length of a normal vector as this function is only Morse-Bott.  However, choosing $f_{i,j}$ to be a $C^1$-small generic perturbations of this fibrewise quadratic function, one concludes that, away from a small neighbourhood of $B$, the distance to $B$ increases (or decreases) along the appropriate gradient flow, to that the analogue of Lemma \ref{lem:morse_function_grows} holds.

With the choice of Hamiltonian perturbations coming from \eqref{eq:morse-floer-compatible-conditions}, we obtain an identification between generators of Floer and Morse complexes, and we may carry through the remainder of the construction of an $A_{\infty}$ functors.  The discussion of which moduli spaces are counted, as well as issues of transversality require no modification.  As to compactness, note that
\begin{equation*}
 L_{B, - \Delta} = \cup_{b \in B}  (1 - \sqrt{-1}) N_{b} B \subset  \cup_{b \in B}  N_{b} B \otimes \bC
\end{equation*}
defines a Lagrangian submanifold of the total space of $   NB \otimes \bC $, and hence a Lagrangian in $M$ locally near $B$.   In analogy with Equation \eqref{eq:M_mid_corners}, we consider the submanifold of $M$
\begin{equation*}
  D_{1/2}Q_1 \cup   D_{1/2}Q_2 \cup \left( D_{\sqrt{2}} B  \cap  D_{\sqrt{4 - \delta}} L_{B, - \Delta} \right),
\end{equation*}
where the subscripts stand for the radii of various neighbourhoods.  Working in charts near each point $b \in B$, the discussion of Section \ref{sec:comp-curv-leaves} shows that the boundary of this manifold with corners is transverse to the Liouville flow, and hence can be smoothed to a Liouville subdomain $ M^{\mid} \subset M$.  As long as the parameter $\epsilon$ in the construction of the foliation is chosen much smaller than $\delta$, Lemma \ref{lem:cap_cpactness}, proved in Section \ref{sec:comp-curv-leaves} by using the contact type condition near $\partial M^{\mid}$ and $\partial M^{\ins}$ applies in the clean intersection case as well.

\begin{bibdiv}
\begin{biblist}

\bib{abouzaid-exotic}{article}{
 author={Abouzaid, M.},
title={Framed bordism and Lagrangian embeddings of exotic spheres},
eprint={arXiv:0812.4781},
}

\bib{abouzaid}{article}{
   author={Abouzaid, M.},
   title={Morse Homology, Tropical Geometry, And Homological Mirror Symmetry For Toric Varieties},
   journal={Selecta Math. (N.S.)},
   volume={15},
   date={2009},
   number={2},
   pages={189--270},
   issn={1022-1824},
}

\bib{abouzaid-seidel}{article}{
   author={Abouzaid, M.},
   author={Seidel, Paul},
journal={Geometry and Topology},
title={An open string analogue of Viterbo functoriality},
eprint={arXiv:0712.3177},
status={in press},

}

\bib{akaho}{article}{
author = {Manabu Akaho},
author={Dominic Joyce},
  title = {Immersed Lagrangian Floer Theory},
  eprint = {arXiv.org:0803.0717},
}

\bib{BV}{book}{
   author={Boardman, J. M.},
   author={Vogt, R. M.},
   title={Homotopy invariant algebraic structures on topological spaces},
   series={Lecture Notes in Mathematics, Vol. 347},
   publisher={Springer-Verlag},
   place={Berlin},
   date={1973},
   pages={x+257},
   review={\MR{0420609 (54 \#8623a)}},
}

\bib{floer}{article}{
   author={Floer, A.},
   title={Monopoles on asymptotically flat manifolds},
   conference={
      title={The Floer memorial volume},
   },
   book={
      series={Progr. Math.},
      volume={133},
      publisher={Birkh\"auser},
      place={Basel},
   },
   date={1995},
   pages={3--41},
   review={\MR{1362821 (96j:58024)}},
}
\bib{forcey}{article}{
   author={Forcey, Stefan},
   title={Quotients of the multiplihedron as categorified associahedra},
   journal={Homology, Homotopy Appl.},
   volume={10},
   date={2008},
   number={2},
   pages={227--256 },
   issn={1532-0073},
   review={ },
}

\bib{FO}{article}{
   author={Fukaya, Kenji},
   author={Oh, Yong-Geun},
   title={Zero-loop open strings in the cotangent bundle and Morse homotopy},
   journal={Asian J. Math.},
   volume={1},
   date={1997},
   number={1},
   pages={96--180},
   issn={1093-6106},
   review={\MR{1480992 (99e:58038)}},
}

\bib{FOOO}{book}{
   author={Fukaya, Kenji},
   author={Oh, Yong-Geun},
   author={Ohta, Hiroshi},
   author={Ono, Kaoru},
   title={Lagrangian Intersection Floer theory: Anomaly and Obstruction},
   date={2009},
   series={AMS/IP Studies in Advanced Mathematics},
   volume={46},
   publisher={American Mathematical Society},
   place={Providence, RI},
    pages={800+404},
   isbn={978-0-8218-4831-9},
   isbn={0-8218-4831-3},
   review={\MR{2310372 (2008d:53033)}},
}

\bib{hatcher}{book}{
   author={Hatcher, Allen},
   title={Algebraic topology},
   publisher={Cambridge University Press},
   place={Cambridge},
   date={2002},
   pages={xii+544},
   isbn={0-521-79160-X},
   isbn={0-521-79540-0},
   review={\MR{1867354 (2002k:55001)}},
}

\bib{KS}{article}{
   author={Kontsevich, Maxim},
   author={Soibelman, Yan},
   title={Homological mirror symmetry and torus fibrations},
   conference={
      title={Symplectic geometry and mirror symmetry},
      address={Seoul},
      date={2000},
   },
   book={
      publisher={World Sci. Publ., River Edge, NJ},
   },
   date={2001},
   pages={203--263},
   review={\MR{1882331 (2003c:32025)}},
}

\bib{KM}{book}{
   author={Kronheimer, Peter},
   author={Mrowka, Tomasz},
   title={Monopoles and three-manifolds},
   series={New Mathematical Monographs},
   volume={10},
   publisher={Cambridge University Press},
   place={Cambridge},
   date={2007},
   pages={xii+796},
   isbn={978-0-521-88022-0},
   review={\MR{2388043}},
}
\bib{MS}{book}{
   author={McDuff, Dusa},
   author={Salamon, Dietmar},
   title={$J$-holomorphic curves and symplectic topology},
   series={American Mathematical Society Colloquium Publications},
   volume={52},
   publisher={American Mathematical Society},
   place={Providence, RI},
   date={2004},
   pages={xii+669},
   isbn={0-8218-3485-1},
   review={\MR{2045629 (2004m:53154)}},
}
		
\bib{milnor}{article}{
   author={Milnor, J.},
   title={Differential topology},
   conference={
      title={Lectures on Modern Mathematics, Vol. II},
   },
   book={
      publisher={Wiley},
      place={New York},
   },
   date={1964},
   pages={165--183},
   review={\MR{0178474 (31 \#2731)}},
}
\bib{pozniak}{article}{
   author={Po{\'z}niak, Marcin},
   title={Floer homology, Novikov rings and clean intersections},
   conference={
      title={Northern California Symplectic Geometry Seminar},
   },
   book={
      series={Amer. Math. Soc. Transl. Ser. 2},
      volume={196},
      publisher={Amer. Math. Soc.},
      place={Providence, RI},
   },
   date={1999},
   pages={119--181},
   review={\MR{1736217 (2001a:53124)}},
}

\bib{ruan}{article}{
author={Ruan, Wei-Dong},
title={  The Fukaya category of symplectic neighborhood of a non-Hausdorff manifold},
eprint={math.SG/0602119},
}

\bib{seidel-book}{book}{
   author={Seidel, Paul},
   title={Fukaya categories and Picard-Lefschetz theory},
   series={Zurich Lectures in Advanced Mathematics},
   publisher={European Mathematical Society (EMS), Z\"urich},
   date={2008},
   pages={viii+326},
   isbn={978-3-03719-063-0},
   review={\MR{2441780}},
}
\bib{seidel-les}{article}{
   author={Seidel, Paul},
   title={A long exact sequence for symplectic Floer cohomology},
   journal={Topology},
   volume={42},
   date={2003},
   number={5},
   pages={1003--1063},
   issn={0040-9383},
   review={\MR{1978046 (2004d:53105)}},
}

\bib{stasheff}{article}{   
author={Stasheff, James Dillon},
   title={Homotopy associativity of $H$-spaces. I, II},
   journal={Trans. Amer. Math. Soc. 108 (1963), 275-292; ibid.},
   volume={108},
   date={1963},
   pages={293--312},
   issn={0002-9947},
   review={\MR{0158400 (28 \#1623)}},
}

\end{biblist}
\end{bibdiv}

\end{document}